\documentclass[reqno,11pt]{amsart}
\usepackage{amssymb, amsmath,amssymb, amscd, amsbsy,latexsym}
\usepackage{amsthm, amsfonts,mathrsfs}
\usepackage[T1]{fontenc}
\usepackage{times}
\usepackage{epsfig}
\usepackage{color}
\input amssym.def
\input amssym.tex
\hoffset -0.8cm \voffset -0.8cm \textheight 228mm \textwidth 140mm

\def\inte#1{
\displaystyle\mathop{#1\kern0pt}^\circ }






\def\virgp{\raise 2pt\hbox{,}}
\def\cdotpv{\raise 2pt\hbox{;}}

\def\C{\mathop{\mathbb C\kern 0pt}\nolimits}
\def\DD{\mathop{\mathbb D\kern 0pt}\nolimits}
\def\EE{\mathop{{\mathbb E \kern 0pt}}\nolimits}
\def\K{\mathop{\mathbb K\kern 0pt}\nolimits}
\def\N{\mathop{\mathbb N\kern 0pt}\nolimits}
\def\Q{\mathop{\mathbb Q\kern 0pt}\nolimits}
\def\R{\mathop{\mathbb R\kern 0pt}\nolimits}
\def\SS{\mathop{\mathbb S\kern 0pt}\nolimits}
\def\ZZ{\mathop{\mathbb Z\kern 0pt}\nolimits}
\def\TT{\mathop{\mathbb T\kern 0pt}\nolimits}
\def\P{\mathop{\mathbb P\kern 0pt}\nolimits}




\newcommand{\beq}{\begin{equation}}
\newcommand{\eeq}{\end{equation}}
\newcommand{\ben}{\begin{eqnarray}}
\newcommand{\een}{\end{eqnarray}}
\newcommand{\beno}{\begin{eqnarray*}}
\newcommand{\eeno}{\end{eqnarray*}}


\newtheorem{defi}{Definition}[section]
\newtheorem{thm}{Theorem}[section]

\newtheorem{rmk}{Remark}[section]

\renewcommand{\theequation}{\thesection.\arabic{equation}}
\newtheorem{remark}{Remark}[section]
\newtheorem{lemma}{Lemma}[section]
\newtheorem{cor}{Corollary}[section]
\newtheorem{prop}{Proposition}[section]


\newdimen\eqjot \eqjot = 1\jot
\def\openupeq{\openup \the\eqjot}

\def\pofbox#1 #2$#3${\setbox0=\hbox{$#3$}\ht0=0pt\dp0=0pt\wd0=0pt\hskip-#1pt\raise#2pt\box0\hskip#1pt}
\begin{document}

\title[Reducibility of the Camassa-Holm equation with unbounded perturbations]
{Reducibility of the dispersive Camassa-Holm equation with unbounded perturbations}

\author{Xiaoping Wu}
\address{Xiaoping Wu\newline
Center for Nonlinear Studies and School of Mathematics, Northwest University, Xi'an 710127, P. R.
China}
\email{wuxiaopingnwu@163.com}

\author{Ying Fu\textsuperscript{*} }
\address{Ying Fu\newline
Center for Nonlinear Studies and School of Mathematics\\
Northwest University\\
Xi'an 710127\\
P. R. China} 
\email{fuying@nwu.edu.cn}

\author{Changzheng Qu}
\address{Changzheng Qu\newline
School of Mathematics and Statistics\\ Ningbo University\\Ningbo 315211\\P.R. China} \email{quchangzheng@nbu.edu.cn}

\maketitle

\begin{abstract}

Considered herein is  the reducibility of the quasi-periodically time dependent linear dynamical system with a diophantine frequency vector $\omega \in \mathcal{O}_0 \subset \mathbb{R}^{\nu}$. This system is derived from linearizing the dispersive Camassa-Holm equation with unbounded perturbations  at a small amplitude quasi-periodic function.
It is shown that there is a set $\mathcal{O}_{\infty} \subset \mathcal{O}_0$ of asymptotically full Lebesgue measure such that for  any $\omega \in \mathcal{O}_{\infty}$, the system can be reduced to the one with constant coefficients by a quasi-periodic linear transformation. The strategy adopted in this paper consists of two steps:
 (a) A reduction based on the   orders of the pseudo differential operators in the system which conjugates the linearized operator to a one with constant coefficients up to a small remainder; (b)  A perturbative reducibility scheme which completely diagonalizes the remainder of the previous step.  The main difficulties in the reducibility we need to tackle come from the operator $J=(1-\partial_{xx})^{-1}\partial_{x}$, which induces the symplectic structure of the dispersive Camassa-Holm equation.

\end{abstract}

\date{July 27, 2022}

\maketitle

\noindent {\sl Keywords\/}: Reducibility, the Camassa-Holm equation, integrable system, invariant tori,  unbounded perturbation.

\vskip 0.2cm

\noindent {\sl Mathematics Subject Classification} (2020): 35Q51, 37K55  

\renewcommand{\theequation}{\thesection.\arabic{equation}}
\setcounter{equation}{0}
\section{Introduction}
In this paper, we are mainly concerned with  the reducibility of the quasi-periodically time dependent linear dynamical system
\begin{equation*}
h_t - J \circ \big((a_0(\omega t, x)+m_0)+(a_2(\omega t, x))_x \partial_x+(a_2(\omega t, x)+m_2)\partial_{xx}\big)h=0,
\end{equation*}
where $x\in \mathbb{T}= \mathbb{R} / 2\pi \mathbb{Z},$ $m_0, m_2 \in \mathbb{R},$
 $ a_0, a_2$ are small functions, and $\omega \in \mathcal{O}_0 \subset \mathbb{R}^{\nu}$ is a diophantine frequency vector.
The  above system arises from linearizing the dispersive Camassa-Holm (CH) equation with unbounded perturbations on the circle at a small amplitude quasi-periodic function.

There have been a number of literatures to study reducibility of ordinary differential equations (ODEs) and partial differential equations (PDEs). Indeed, the concept of reducibility originates from ODEs (see \cite{bms, eli, hy, js} and the references therein). The problem of reducibility of quasi-periodic linear systems in the  PDE's context has received much attention, mostly in a perturbative regime in these years \cite{bg, bgmr, ek-1, gp, mon}.

A quasi-periodic linear system
 \begin{equation}\label{ht=Lh}
 \partial_t h = L(\omega t) h
 \end{equation}
 is said to be reducible, if there exists a bounded  invertible change of coordinates depending on time quasi-periodically: $h=\Upsilon(\omega t)g$ such that the transformed system is a linear one with constant coefficients, namely,
 \begin{equation*}
 \partial_t g=D_{\omega}g,  \quad  D_{\omega}:={\rm diag}_{j \in \mathbb{Z}}{d_j},  \quad d_j \in \mathbb{C}.
 \end{equation*}
 Notice that, if  $d_j$ $(\forall j \in \mathbb{Z})$ is purely imaginary, then the  system is linear stable.

In the periodic case, i.e. $\omega \in \mathbb{R}$, the classical Floquet theory shows that any time periodic linear system \eqref{ht=Lh} is reducible. In the quasi-periodic case, this is  not  always true, see e.g., \cite{bhs}.
However, if $L(\omega t)$ is a so-called almost-reducible quasi-periodic vector field in the sense that it can be reduced to the one with constant coefficients up to a small remainder, viz., the linear differential equation  of the form
\begin{equation*}
\partial_t \tilde{g}=D_{\omega}\tilde{g}+\varepsilon P(\omega t)\tilde{g},
\end{equation*}
 where $P(\omega t)$ is a linear quasi-periodically forced operator with non-constant coefficients and $\varepsilon$ is the size of the perturbation, then the quasi-periodic  system is reducible by perturbative reducibility algorithm. In general, it is assumed that $\varepsilon$ is small enough,  $\omega$ and $d_j$ satisfy the second-order Melnikov non-resonance conditions involving the differences of the eigenvalues of the operator $D_{\omega}$.


Among the literatures that related to reducibility, a strong motivation of a vast of them is the reducible KAM theory. As is well known, the KAM theory for PDEs is to find a family of approximately invariant tori of perturbed autonomous integrable equation. It is a natural extension of the classical KAM theory for finite dimensional phase spaces. For PDEs in one spacial dimension with bounded perturbations and Dirichlet boundary conditions, we refer to  \cite{kuk1, ku-p, pos2, way},  and for those with periodic boundary conditions,  we quote \cite{cy, cw} for instance. In higher spacial dimension,  we mention \cite{bb, bou, ek-2, gy, pp} among others in which the authors have to overcome the difficulties produced by the multiple eigenvalues. In all these aforementioned results, the perturbations are bounded. In the case of  unbounded perturbations, we mention   \cite{bbp-1, bbp-2, ka-p, kuk2, ly, zgy} among others for semi-linear PDEs, and \cite{bbhm, bbm-1, bbm-2, bm, fp, giu} for quasi-linear or fully nonlinear PDEs. In particular, these results with regard to the KAM theory involve the quasi-linear perturbations of the Airy, KdV  equations, etc.

 The CH equation
 \begin{equation}\label{ndch}
 u_t-u_{xxt}=6uu_x-4u_xu_{xx}-2uu_{xxx}
\end{equation}
describes the unidirectional propagation of shallow water waves over a flat bottom \cite{ch, cl, ff, joh}, where $u(t,x)$  is a function of time $t$ and a single spatial variable $x$. Equation \eqref{ndch} is proved to be completely integrable since it admits the  Lax-pair and  bi-Hamiltonian structure \cite{ch, clop, fs, ff, len}.  We refer to \cite{ce-3, lo} and references therein for more literature for the study of  the well-posedness  of the CH-equation. A tremendous amount of work has been done on strong nonlinear effects  of the CH-equation,  such as  peakon, multi-peakons \cite{achm, ch} and wave-breaking phenomena \cite{con1, con2, ce-1, ce-2, ce-3, lo}.

Note that  $\int_{\Bbb T}u(t,x){\rm{d}}x$ is a constant for soluitions of the CH-equation \eqref{ndch}, the set
\begin{equation*}
{\mathcal{G}}_c:=\{ u:\int_{\Bbb T}u(t,x){\rm{d}}x=c\}
 \end{equation*}
is invariant under  the  flow governed by \eqref{ndch}. Consequently, the dynamics of equation \eqref{ndch} on the invariant subsets ${\mathcal{G}}_c$ with $c \neq 0$ are equivalent to the ones of the equation
\begin{equation*}
u_t-u_{xxt}-6cu_x+2cu_{xxx}=6uu_x-4u_xu_{xx}-2uu_{xxx}
\end{equation*}
on the invariant subset ${\mathcal{G}_0}$. We assume $c=1$ without loss of generality and impose the unbounded perturbations of the form
\begin{equation*}
N(u, u_x, u_{xx}, u_{xxx})=\partial_x[(\partial_u f)(u, u_x)-\partial_x((\partial_{u_x}f)(u, u_x))],  
\end{equation*}
where $f(u, u_x)=C^{\infty}(\mathbb{R} \times \mathbb{R}, \mathbb{R})$, and $f=f_{\geq 3}(u, u_x)$ denotes a function with a zero of order at least three at the origin.
Then the equation  we are concerned with is  the perturbed dispersive CH-equation
\begin{equation}\label{dch}
u_t-u_{xxt}-6u_x+2u_{xxx}=6uu_x-4u_xu_{xx}-2uu_{xxx}+N(u, u_x, u_{xx}, u_{xxx}),
\end{equation}
under periodic boundary condition: $x\in \mathbb{T}=\mathbb{R} / 2\pi \mathbb{Z}$.

The Degasperis-Procesi equation \cite{dkk} exhibits several similar properties as the CH equation, for example, wave breaking phenomena, peakon and soliton solutions \cite{cl, dp}. However,  the Hamiltonians and   symplectic structures are intrinsically different.  In \cite{fgp-2}, the authors developed the KAM theory  for   Hamiltonian perturbations of the  Degasperis-Procesi equation.

At the best of our knowledge, up to now there is no work to study the KAM theory and the reducibility of the  CH equation with a small perturbation, which was also emphasized in \cite{fgp-2}.  In fact,   the reducibility of the linearized equations at a small quasi-periodic approximate solutions  is the fundamental step for the existence and  linear stability of quasi-periodic solutions (KAM tori) for nonlinear PDEs via the Nash-Moser iterative algorithm. This motivates us to study the reducibility of the CH-equation.

Let us  first introduce the Hamiltonian setting for the CH-equation and its linearized equation at a small quasi-periodic function.

Equation \eqref{dch} can be formulated as a Hamiltonian PDE of the form $u_t=J\nabla H(u)$, where $ J :=(1-\partial_{xx})^{-1}\partial_{x}$ and $\nabla H(u)$ is the $L^2( {\Bbb T},{\Bbb R})$ gradient of the Hamiltonian
\begin{equation*}
H(u)=\int_{\Bbb T} \big(u^3+uu^2_x+3u^2+u^2_x+f(u, u_x)\big)  {\rm{d}}x,
\end{equation*}
defined on the real phase space $H^1_0(\mathbb{T}):=H^1\cap \mathcal{G}_0$.
The symplectic structure is provided by a non-degenerate 2-form $\varrho$ defined by
\begin{equation*}
\varrho(u, v):=\int_{\mathbb{T}}(J^{-1}u)v {\rm d}x, \quad u,v \in H^1_0.
\end{equation*}
 Note that $J$ can be written as
\begin{equation*}
J = \Lambda \partial_x, \quad \Lambda := (1-\partial_{xx})^{-1}.
\end{equation*}

The functional space be concerned is the Sobolev space $H^s({\Bbb T}^{\nu+1};{\Bbb R}),$ $\nu \geq 0, s \in \mathbb{R},$
equipped with the norm
\begin{equation*}
{\Vert u(\varphi, x)\Vert}^2_{s} := \sum\limits_{j \in \mathbb{Z}, l \in \mathbb{Z}^{\nu} }
{\vert u_{lj}\vert}^2{\langle  l, j \rangle}^{2s}< \infty,
\end{equation*}
where $\langle l, j \rangle := \max \{ 1,|j|, |l| \}$,
$|l|:= \max\limits_{i=1, \cdots, \nu} |l_i|.$
If $u(\varphi, x)$ depends on a parameter $\omega \in \mathcal{O}$, where $\mathcal{O}$ is a compact subset of $\mathbb{R}^{\nu}$, we define the sup-norm  and  Lipschitz semi-norm  of $u$ respectively as follows:
\begin{equation*}
\|u\|_{s}^{sup}:=\|u\|_{s}^{sup, \mathcal{O}}:=\sup \limits_{\omega \in \mathcal{O}}\|u\|_{s},
\end{equation*}
\begin{equation*}
\|u\|_{s}^{lip}:=\|u\|_{s}^{lip, \mathcal{O}}:=\sup \limits_{\substack{\omega, \omega' \in \mathcal{O},\\  \omega \neq \omega'}}
\|\Delta_{\omega, \omega'}u\|_{s},
\end{equation*}
where
\begin{equation*}
\Delta_{\omega, \omega'}u:=\frac{u(\omega)-u(\omega')}{\omega-\omega'}.
\end{equation*}
For $\gamma >0$, the weighted Lipchitz norm of $u$ is defined as
\begin{equation*}
\|u\|_{s}^{\gamma, \mathcal{O}}:=\|u\|_{s}^{sup, \mathcal{O}}+ \gamma \|u\|_{ s-1}^{lip, \mathcal{O}}.
\end{equation*}
Let $m : \mathcal{O} \rightarrow \mathbb{R}$, the sup-norm, Lipschitz semi-norm  and weighted Lipchitz norm of $m$  are defined respectively as
\begin{equation*}
|m|^{sup}:=|m|^{sup, \mathcal{O}}:=\sup \limits_{\omega \in \mathcal{O}}|m|,
\end{equation*}
\begin{equation*}
|m|^{lip}:=|m|^{lip, \mathcal{O}}:=\sup \limits_{\substack{\omega, \omega' \in \mathcal{O},\\  \omega \neq \omega'}}
|\frac{m(\omega)-m(\omega')}{\omega-\omega'}|,
\end{equation*}
\begin{equation*}
|m|^{\gamma, \mathcal{O}}:=|m|^{sup, \mathcal{O}}+ \gamma |m|^{lip, \mathcal{O}}.
\end{equation*}

In the sequel, we fix $s_0:=[(\nu+1)/2]+2$. For all $s \geq s_0$, the Sobolev space $H^s$ is a Banach algebra.

\noindent {\it Notation}. Throughout the paper, the notation $a\leq_{s, \alpha}b$ indicates that $a\leq C(s, \alpha) b$ for some positive constant $C(s, \alpha)$ depending on the variables $s, \alpha$. $s$ will be omitted in the notation  $\leq_s$ while $s$ is a fixed constant. As usual, the positive constants $C$'s may be different from line to line.

Fix $\nu \in \mathbb{N} \setminus \{ 0 \}$, $L>0$, let $\gamma \in (0,1)$, the frequency vector of oscillations $\omega=(\omega_1, \dots, \omega_{\nu} )$
satisfies the diophantine condition, i.e., $\omega \in \mathcal{O}_0 \subset \Omega$, where
 \begin{equation}\label{O_0}
 \mathcal{O}_0:= \big\{ \omega \in \Omega : |\omega \cdot l| \geq \frac{2\gamma}{\langle l\rangle^{\nu}}, l \in \mathbb{Z}^{\nu}\setminus \{ 0 \} \big\},   \quad \langle l\rangle:=\max\{|l|,1\},
   \end{equation}
 \begin{equation}\label{Omega}
 \Omega:= \{ \omega \in \mathbb{R}^{\nu}:  \omega \in [L, 2L]^{\nu} \}.
 \end{equation}

A quasi-periodic function with $\nu$ frequencies is defined by an embedding
\begin{equation*}
\mathbb{T}^{\nu} \ni \varphi \mapsto  \mathfrak{J}(\varphi, x), \quad \varphi=\omega t
\end{equation*}
 with the frequency vector $\omega \in \mathcal{O}_0$.
Then a small-amplitude quasi-periodic solution $u(t,x)$ of equation \eqref{dch} can be represented as
\begin{equation*}
u(t,x)=\varepsilon \mathfrak{J}(\varphi, x),  \quad \varepsilon \ll 1,    \quad   \mathfrak{J}(\varphi, x)\in C^{\infty}(\mathbb{T}^{\nu+1}, \mathbb{R}).
\end{equation*}

The linearized equation of \eqref{dch} at the quasi-periodic solution $u(t,x)$ is
\begin{eqnarray}\label{dynam-eq}
\begin{aligned}
h_t=&\;J(\partial_u \nabla H)[h] \\
=&J \circ \big((a_0(\omega t, x)+6)
+a_{2,x}(\omega t, x)\partial_x+(a_2(\omega t, x)-2)\partial_{xx}\big)h,
\end{aligned}
\end{eqnarray}
 where $a_i(\varphi, x)\in C^{\infty}(\mathbb{T}^{\nu+1}, \mathbb{R})$, $i=0, 2$, depend on the  the parameter $\omega \in \mathcal{O}_0$ in a Lipschitz way, as well as on the quasi-periodic function $ \mathfrak{J}$.
The operator associated with the above equation is
\begin{equation}\label{L^*}
\begin{aligned}
 \mathcal{L}^{*} =& \omega \cdot \partial_{\varphi}-J \circ \big((a_0(\omega t, x)+6)\\
 &\quad+a_{2,x}(\omega t, x)\partial_x+(a_2(\omega t, x)-2)\partial_{xx}\big).
   \end{aligned}
  \end{equation}
It is noticed that this operator is a Hamiltonian operator, and  exhibits the reversible structure.

One can check that the operator $L^*(\omega t)$ has the form $L^*(\omega t)=-(a_2-2)\partial_x+{\rm Op}(r)$ where ${\rm Op}(r)$ is a pseudo differential operator of order $-1$ (see Definition \ref{pd-def}). Therefore, it is a new type of operator different from the ones for the KdV  and  Degasperis-Procesi equations, etc.

In the following, we  consider a class of generalized  linear operators of the form
  \begin{equation}\label{L}
  \mathcal{L}= \omega \cdot \partial_{\varphi}-J \circ \big((a_0(\varphi, x)+m_0)+a_{2,x}(\varphi, x)\partial_x+(a_2(\varphi, x)+m_2)\partial_{xx}\big).
  \end{equation}
Now we make the following assumptions for the operators  $\mathcal{L}$:

(A1). $a_i(\varphi, x) \in C^{\infty}(\mathbb{T}^{\nu+1}, \mathbb{R})$,  $i=0,2$, are even functions, $m_0, m_2 \in \mathbb{R}$.

(A2). There exists a positive constant $\mu$ sufficiently large such that
\begin{equation}\label{sc-J}
\|\mathfrak{J}\|_{s_0+\mu}^{\gamma, \mathcal{O}_0}\leq 1.
\end{equation}

(A3). Assume that $a_i$, $i=0,2$, depend on $\omega \in \mathcal{O}_0$ in a Lipschitz way, satisfying
\begin{equation}\label{sc-a_i}
\|a_i\|_s^{\gamma, \mathcal{O}_0}\leq_{s} \varepsilon \|\mathfrak{J}\|_{s+\eta_0}^{\gamma, \mathcal{O}_0}, \quad i=0, 2, \quad \forall s\geq s_0
\end{equation}
for some $\eta_0>0$.

(A4). Assume that $a_i$, $i=0,2$, depend on  the  quasi-periodic function $\mathfrak{J}$.  Let $\mathfrak{J}_1, \mathfrak{J}_2 \in C^{\infty}(\mathbb{T}^{\nu+1}, \mathbb{R})$ satisfying  \eqref{sc-J} and
\begin{equation*}
\| \Delta_{12}a_i\|_p \leq_p \varepsilon \|\mathfrak{J}_1-\mathfrak{J}_2\|_{p+\eta_0},  \quad  i=0, 2,
\end{equation*}
for any  $s_0 \leq p \leq s_0+\mu-\eta_0(\mu > \eta_0)$, where  $\Delta_{12}v:=v(\mathfrak{J}_1, \varphi, x)-v(\mathfrak{J}_2, \varphi, x)$ for any $v(\varphi, x)=v(\mathfrak{J}, \varphi, x)\in C^{\infty}(\mathbb{T}^{\nu+1}, \mathbb{R})$.

The main goal in this paper is to prove that the linear operators \eqref{L} arising from the linearized CH-equation with unbounded
perturbations at a small and sufficiently smooth quasi-periodic function on $\mathbb{T}$ are reducible.

It is well known that the most common algebraic structures that  ensure  the existence of quasi-periodic motions are the Hamiltonian or reversible structures. The system considered here exhibits both of them. From the perspective of  the Hamiltonian structure, we have to find  symplectic transformations induced by the flow of the vector field generated by a Hamiltonian function. The flow is given by $\partial_{\tau}A^{\tau}u=B^{\tau}A^{\tau}u$, where $B^{\tau}$ is a pseudo differential operator of order $\leq -1$  (see Definition \ref{pd-def}) so as to guarantee the transformations are bounded. The solution $A^{\tau}u$ of the flow is of the form: $C+OPS^{-1}$ for some $C \in \mathbb{R}$, while $OPS^{-1}$ is a pseudo differential operator of order $-1$. Unfortunately, $A^{\tau}u$ can not change the leading order terms of the operator \eqref{L}.
  Thus in order to prove the reducibility of the system, we utilize the reversible structure and choose the transformations preserving the reversible structure.

 It is remarked that for the Degasperis-Procesi equation, the operator $J$ associated with the symplectic structure is a pseudo differential operator of order 1, which is also true for the KdV equation. Nevertheless, for the CH-equation, $J = (1-\partial_{xx})^{-1}\partial_x$ is a pseudo differential operator of order $-1$. Such difference needs us to develop a different strategy to verify the corresponding reducibility.

 The existence and  stability of the  solution for the Cauchy problem of the linearized equation \eqref{dynam-eq} are also the direct results of this paper (see Corollary \ref{cor}). The stability  entails the purely-imaginary eigenvalues of the  diagonal operator. Indeed, the reversible structure ensures that the eigenvalues of the diagonal matrices are all purely-imaginary.

The reducibility result for the linearized CH-equation \eqref{dch} at a small and sufficiently
smooth quasi-periodic function is as follows.

\begin{thm}\label{thm1}
Suppose $\gamma \in (0, 1)$ small enough,  $\| \mathfrak{J} \|_{s_0+\mu} \leq 1$ with $\mu>0$ sufficiently large, and the linear operator $\mathcal{L}^{*}(\omega t)$ in \eqref{L^*} defined on $\mathcal{O}_0$. Then there exist $\varepsilon_0>0$ and a Cantor-like set $\mathcal{O}_{\infty} \subset \mathcal{O}_0$, for all $\varepsilon \in (0, \varepsilon_0)$ and $\omega \in \mathcal{O}_{\infty}$, and a real, invertible, bounded and reversibility-preserving transformation $\Upsilon(\omega t)$ depending quasi-periodically on time which reduces $\mathcal{L}^{*}(\omega t)$  to a diagonal operator with  constant coefficients and purely imaginary spectrum. Moreover, as $\gamma\rightarrow 0$, the Lebesgue measure $|\mathcal{O}_0 \setminus \mathcal{O}_{\infty}|$ tends to 0.

 \end{thm}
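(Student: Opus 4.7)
I would follow the two-step plan announced in the abstract: a symbolic regularization conjugating $\mathcal{L}^{*}$ to constant-coefficient form modulo a regularizing remainder, then a perturbative KAM reducibility scheme diagonalizing that remainder. Because $J=\Lambda\partial_x$ has order $-1$, any flow $\partial_\tau A^\tau = B^\tau A^\tau$ generated by a Hamiltonian vector field has $B^\tau$ of order at most $-1$, so symplectic conjugations can never move the top-order transport coefficient $a_2-2$; the entire regularization must therefore be carried out in the larger class of reversibility-preserving transformations, exploiting the evenness of $a_0,a_2$ postulated in (A1).

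\emph{Step (a): pseudo-differential reduction.} Expanding the composition with $J$ by the Leibniz/symbolic calculus I would first rewrite
\begin{equation*}
\mathcal{L}^{*} = \omega\cdot\partial_\varphi - (a_2-2)\partial_x + \mathrm{Op}(r_{-1}),
\end{equation*}
with $r_{-1}$ a symbol of order $-1$. A torus diffeomorphism $u(\varphi,x)\mapsto u(\varphi,x+\beta(\varphi,x))$, with $\beta$ solving a transport equation $\omega\cdot\partial_\varphi\beta=g(\varphi,x)$ whose solvability rests on the diophantine condition \eqref{O_0}, normalizes the coefficient of $\partial_x$ to a real constant $m_2$. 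Iterated conjugations by flows of pseudo-differential generators of orders $-1,-2,\dots$, each designed to annihilate the $x$-dependent part of the symbol at the current order, interlaced with $\varphi$-only averaging steps (again using \eqref{O_0}), produce after finitely many steps
\begin{equation*}
\mathcal{L}_0 = \omega\cdot\partial_\varphi - \mathcal{D}_0 + \mathcal{R}_0,
\end{equation*}
where $\mathcal{D}_0=\mathrm{diag}_{j\in\mathbb{Z}}\mu_j^{(0)}$ is constant-coefficient with $\mu_j^{(0)}\in i\mathbb{R}$ (forced by reversibility) and $\mathcal{R}_0$ is of arbitrarily negative order with tame size controlled by $\varepsilon\|\mathfrak{J}\|_{s+\mu}^{\gamma,\mathcal{O}_0}$ via (A3)--(A4). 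Each transformation is built to preserve reversibility and to satisfy tame Lipschitz estimates in $\|\cdot\|_s^{\gamma,\mathcal{O}_0}$.

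\emph{Step (b): KAM iteration.} Starting from $\mathcal{L}_0$ I would run the standard super-exponential diagonalization $\mathcal{L}_{n+1}=\Phi_n^{-1}\mathcal{L}_n\Phi_n$, $\Phi_n=\exp(\Psi_n)$, where $\Psi_n$ solves the homological equation
\begin{equation*}
\omega\cdot\partial_\varphi\Psi_n + [\mathcal{D}_n,\Psi_n] = -\Pi_{N_n}\mathcal{R}_n^{\mathrm{od}},
\end{equation*}
with $\Pi_{N_n}$ an ultraviolet cutoff at scale $N_n=N_0^{(3/2)^n}$ and $\mathcal{R}_n^{\mathrm{od}}$ the off-diagonal part. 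Solvability entry-by-entry is ensured on a shrinking parameter set $\mathcal{O}_{n+1}\subset\mathcal{O}_n$ by second-order Melnikov conditions of the form $|\omega\cdot\ell+\mu_j^{(n)}-\mu_k^{(n)}|\geq\gamma\langle\ell\rangle^{-\tau}|j-k|$; the smoothing nature of $\mathcal{R}_0$ delivered by Step (a) makes the resonant sets summable and yields $|\mathcal{O}_0\setminus\mathcal{O}_\infty|=O(\gamma)\to 0$ as $\gamma\to 0$. Quadratic convergence of $\mathcal{R}_n$ in $\|\cdot\|_s^{\gamma,\mathcal{O}_n}$ then produces $\Upsilon(\omega t):=\lim_{n\to\infty}\Phi_0\cdots\Phi_n$ and a diagonal limit operator $\mathcal{D}_\infty$ with purely imaginary spectrum.

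\emph{Main obstacle.} I expect the decisive difficulty to lie in Step (a). In the KdV or Degasperis--Procesi settings the operator $J$ is of order $+1$, so a single symplectic conjugation already straightens the top-order symbol; here the order $-1$ of $J=(1-\partial_{xx})^{-1}\partial_x$ forbids any use of symplectic flows at the leading level and forces a careful accounting of how non-symplectic reversibility-preserving changes of variable act on each symbol via the composition with $\Lambda$. The subtle point is to verify that reversibility (and hence the purely imaginary character of every intermediate constant-coefficient part) is preserved through the whole symbolic cascade, and that the Lipschitz dependence on $\omega$ needed for the measure estimate in Step (b) is propagated through all compositions; once this is achieved, the KAM iteration in Step (b) should proceed along now-standard lines.
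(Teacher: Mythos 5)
Your broad two–step plan (a pseudo-differential regularization followed by a KAM diagonalization) and your identification of the main structural obstacle (the order $-1$ of $J=\Lambda\partial_x$, which blocks symplectic flows at top order and forces the use of reversibility-preserving transformations) are both correct and match the paper. Step (b) is also essentially the paper's argument, modulo the cosmetic replacement of $\Phi_n=\mathrm{Id}+\Psi_n$ with $\exp(\Psi_n)$ and the use of Lip-$-1$-modulo-tame norms. However, Step (a) as you describe it has a genuine gap.

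First, to normalize the coefficient of $\partial_x$ you cannot solve the purely $\varphi$-wise transport equation $\omega\cdot\partial_\varphi\beta=g(\varphi,x)$: under the diffeomorphism $x\mapsto x+\beta(\varphi,x)$ the new coefficient of $\partial_x$ is
\begin{equation*}
T_1=\bigl(\omega\cdot\partial_\varphi\beta+(m_2+a_2)(1+\beta_x)\bigr)\circ(\mathrm{id}+\beta)^{-1},
\end{equation*}
so $\beta$ must solve a genuine quasi-periodic transport equation in both variables,
\begin{equation*}
\omega\cdot\partial_\varphi\beta+(m_2+a_2)(1+\beta_x)=m_\infty,
\end{equation*}
and this requires first-order Melnikov conditions $\lvert\omega\cdot l+m_\infty j\rvert\geq 2\gamma\langle l,j\rangle^{-\tau}$ (the set $\mathcal{O}_1$ in \eqref{O-1}), not the purely $\varphi$-diophantine condition \eqref{O_0}. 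This distinction propagates into the final measure estimate: $\mathcal{O}_\infty=\mathcal{O}_1\cap\mathcal{O}_2$, and the loss from $\mathcal{O}_1$ must be accounted for.

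Second, the cascade of further conjugations by pseudo-differential flows of orders $-1,-2,\dots$ that you envisage is neither necessary nor what the paper does, and the claim that it yields a remainder of ``arbitrarily negative order'' overshoots the actual target. After the single change of variable the coefficient of $\partial_{xx}$ combines with $\Lambda$ to give an order $-1$ operator, and the potential order-$0$ contribution $T_0(\varphi,y)$ vanishes identically by a cancellation specific to the CH structure (this is verified in the proof of Proposition \ref{Conj}). What remains is $m_\infty\partial_x-(m_0+m_2)\Lambda\partial_x$ plus a remainder $\hat R=\mathrm{Op}(\hat r)+\hat{\mathfrak{R}}$ with $\hat r\in S^{-1}$ and $\hat{\mathfrak{R}}\in\mathfrak{L}_{\rho,p}$, which is exactly the Lip-$-1$-modulo-tame regularity the KAM step needs (the summability of the resonant sets is then driven by the asymptotics $d_j^{(0)}=m_\infty j-(m_0+m_2)j/(1+j^2)$, see Lemmas \ref{dj0-dj'0}--\ref{PQ-re}, not by extra smoothing of the remainder). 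Unless you notice the cancellation of $T_0$, your iterated scheme would either waste effort at order $0$ or, worse, fail to recognize that the diagonal ``not small'' term $-(m_0+m_2)\Lambda\partial_x$ must be extracted and kept rather than annihilated.
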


The following corollary indicates the dynamical consequence of Theorem \ref{thm1}.
\begin{cor}\label{cor}
Assume that the hypotheses in Theorem \ref{thm1} are satisfied. Let $ h(0, x)=h_0(x) \in H^s(\mathbb{T})$, $s \geq s_0$. Then the  Cauchy problem of equation \eqref{dynam-eq} with the initial value $h_0(x)$ has a unique solution $h(t,x)$, which satisfies
 \begin{equation*}
 \sup_{t\in \mathbb{R}} \| h(t, \cdot) \|_s \leq_s  \| h(0, \cdot) \|_s.
 \end{equation*}
\end{cor}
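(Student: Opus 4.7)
The plan is to deduce Corollary \ref{cor} directly from Theorem \ref{thm1} by conjugating the Cauchy problem to one for a diagonal operator with constant, purely imaginary coefficients, and then transporting the estimate back through the bounded change of variables.

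First, for $\omega \in \mathcal{O}_{\infty}$, I would apply the change of unknowns $h(t,x) = \Upsilon(\omega t) g(t,x)$ furnished by Theorem \ref{thm1}. Because $\Upsilon(\omega t)$ is real, invertible, bounded and reversibility-preserving, the equation $\mathcal{L}^{*} h = 0$ is conjugated to
$$g_t = D_{\omega} g, \qquad D_{\omega} = \mathrm{diag}_{j \in \mathbb{Z}} d_j, \qquad d_j \in i\mathbb{R}.$$
This is a decoupled family of scalar linear ODEs on Fourier modes with explicit solution $\widehat{g}_j(t) = e^{t d_j} \widehat{g}_j(0)$. Since $d_j \in i\mathbb{R}$, each factor $e^{t d_j}$ has unit modulus, so $\|g(t,\cdot)\|_s = \|g(0,\cdot)\|_s$ for every $s \in \mathbb{R}$ and every $t \in \mathbb{R}$.

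Next, I would transfer this conservation back to $h$. Theorem \ref{thm1} guarantees that $\Upsilon(\omega t)$ and $\Upsilon(\omega t)^{-1}$ are bounded operators on $H^s$ uniformly in $\varphi = \omega t \in \mathbb{T}^{\nu}$ for $s \geq s_0$. Setting
$$C_s := \sup_{\varphi \in \mathbb{T}^{\nu}} \|\Upsilon(\varphi)\|_{\mathcal{B}(H^s)} \cdot \sup_{\varphi \in \mathbb{T}^{\nu}} \|\Upsilon(\varphi)^{-1}\|_{\mathcal{B}(H^s)} < \infty,$$
I obtain $\|h(t,\cdot)\|_s \leq \|\Upsilon(\omega t)\|_{\mathcal{B}(H^s)} \|g(t,\cdot)\|_s = \|\Upsilon(\omega t)\|_{\mathcal{B}(H^s)} \|g(0,\cdot)\|_s \leq C_s \|h_0\|_s$, which is the desired inequality.

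For existence, the formula $h(t,x) := \Upsilon(\omega t)\, e^{t D_{\omega}} \Upsilon(0)^{-1} h_0(x)$ provides a solution by construction; uniqueness follows by running the argument in reverse, since any two solutions $h_1, h_2$ produce $g_i = \Upsilon(\omega t)^{-1} h_i$ solving $g_t = D_{\omega} g$ with the same initial data, and the diagonal ODE admits a unique Fourier-mode-wise solution. I do not anticipate a substantial technical obstacle here: the only point requiring mild care is to confirm that the operator bounds on $\Upsilon^{\pm 1}$ delivered by the reducibility scheme of Theorem \ref{thm1} cover the full range $s \geq s_0$ in which the Cauchy problem is posed, which should be built into the statement of the reducibility theorem itself.
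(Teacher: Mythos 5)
Your proposal is correct and follows essentially the same route as the paper: conjugate the linearized equation via $\Upsilon(\omega t)$ to the constant-coefficient diagonal system with purely imaginary spectrum, solve it explicitly on Fourier modes so that $\|g(t,\cdot)\|_s = \|g(0,\cdot)\|_s$, and then transfer the estimate back through the tame bounds on $\Upsilon^{\pm 1}$ from Theorem \ref{thm2}, with the resulting constant depending on $\|\mathfrak{J}\|_{s+\mu}^{\gamma,\mathcal{O}_0}$. The only cosmetic difference is that you package the transfer step via a sup-over-$\varphi$ operator-norm constant $C_s$ rather than writing out the tame inequality $\|\Upsilon^{\pm 1} u\|_s \leq_s \|u\|_s + \varepsilon\gamma^{-1-\kappa}\|\mathfrak{J}\|_{s+\mu}\|u\|_{s_0}$ as the paper does, but since $\|u\|_{s_0}\leq \|u\|_s$ for $s\geq s_0$ these are equivalent formulations and yield the same dependence of the constant on the profile $\mathfrak{J}$.
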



In the case of $m_2=-2, m_0=6$, Theorem \ref{thm1} is a conclusion of the following result.

\begin{thm}\label{thm2}(Reducibility)
Let $\omega \in \mathcal{O}_0$ (see \eqref{O_0}). Suppose that the operator in \eqref{L} satisfies the  assumptions (A1)-(A4) for some constant $\mu>0$ sufficiently large. Assume that $\gamma, \varepsilon \gamma^{-1-\kappa}(\kappa>1)$ are small enough, $\tau \geq 2\nu+3$, $m_2<0$, $m_0+m_2\geq 0$ and  $m_0 < -5 m_2- 4\delta_0\ $ for some positive constant $\delta_0$.
Then there exists a sequence
\begin{equation*}
d_j^{\infty}:=m_{\infty}j-\frac{(m_0+m_2)j}{1+j^2}+r_j^{\infty}, \quad j \in \mathbb{Z}\setminus \{0\}, \quad r_{-j}^{\infty}=-r_{j}^{\infty}
\end{equation*}
where $m_{\infty}, r_j^{\infty} \in \mathbb{R}$ depend on $\omega$ in a Lipschitz way satisfying
$$|m_{\infty}-m_2|^{\gamma, \mathcal{O}_0} \leq C\varepsilon,$$
\begin{equation*}
\sup_{j} \langle j \rangle |r_j^{\infty}|^{\gamma^{\kappa}, \mathcal{O}_0}\leq C\varepsilon \gamma^{-1}.
  \end{equation*}

For all $\omega \in \mathcal{O}_{\infty}:=\mathcal{O}_{1} \cap \mathcal{O}_{2}$, where
 \begin{equation*}
  \mathcal{O}_1:= \big\{\omega \in \mathcal{O}_0:  |\omega \cdot l+m_{\infty}(\omega)j|\geq \frac{2\gamma}{\langle l, j \rangle^{\tau}}, \quad l \in \mathbb{Z}^{\nu}, j \in \mathbb{Z}\setminus \{0\} \big\},
\end{equation*}
\begin{equation*}
\begin{split}
\mathcal{O}_{2} := \big\{ \omega \in \mathcal{O}_0: & |\omega \cdot l + d_j^{\infty}- d_{j'}^{\infty}| \geq \frac{2\gamma^{\kappa}|j-j'|}{\langle l \rangle^{\tau}}, \\
& \forall l \in \mathbb{Z}^{\nu}, j,j' \in \mathbb{Z}\setminus \{0\}, (j, j', l)\neq (j, j, 0)\big\},\\
\end{split}
\end{equation*}
there exists a real linear, bounded, T\"{o}plitz-in-time  transformation $\Upsilon: \mathcal{O}_{\infty} \times H^s\rightarrow H^s$ with bounded inverse $\Upsilon^{-1}$ such that for all $\omega \in \mathcal{O}_{\infty}$,
  \begin{equation*}
\mathcal{L}_{\infty}:=\Upsilon^{-1} \mathcal{L} \Upsilon= \omega \cdot \partial_{\varphi}+D_{\infty}, \quad D_{\infty}:= {\rm diag}_{j\in \mathbb{Z}\setminus \{0\}} ({\rm i}d_j^{\infty}).
 \end{equation*}
$\mathcal{L}_{\infty}$ is real and reversible. The transformations $\Upsilon^{\pm 1}$ are reversibility-preserving and satisfy the following tame estimates:
 \begin{equation*}
\| \Upsilon^{\pm 1} u\|_s^{\gamma^{\kappa}, \mathcal{O}_{\infty}} \leq_s \| u \|_{s} + \varepsilon \gamma^{-1-\kappa}\| \mathfrak{J} \|_{s+\mu}^{\gamma, \mathcal{O}_0}\| u \|_{s_0}.
   \end{equation*}
In addition, for $\omega \in \mathcal{O}_{\infty}(\mathfrak{J}_1)\cap \mathcal{O}_{\infty}(\mathfrak{J}_2)$,
 \begin{equation*}
  |\Delta_{12}m_{\infty}| \leq \varepsilon \| \mathfrak{J}_1 -\mathfrak{J}_2 \|_{s_0+\mu}, \quad
\sup_{j} \langle j\rangle|\Delta_{12}r_j^{\infty}| \leq C \varepsilon \gamma^{-1}\| \mathfrak{J}_1 -\mathfrak{J}_2 \|_{s_0+\mu}.
\end{equation*}
Finally,  the following estimate holds for $\mathcal{O}_{\infty}:$
\begin{equation*}
|\mathcal{O}_{0} - \mathcal{O}_{\infty}|\leq C \gamma^{\min\{1, \kappa-1\}} L^{\nu-1}.
\end{equation*}

\end{thm}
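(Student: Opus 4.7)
\emph{Proof strategy.} The plan is to follow the two-step strategy announced in the introduction: first perform a symbolic reduction that conjugates $\mathcal{L}$ to a constant-coefficient normal form modulo a small smoothing remainder, then apply a KAM-type perturbative diagonalization to remove the remainder. Throughout, every transformation is taken real and reversibility-preserving so that the final spectrum is purely imaginary.

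For the symbolic reduction, the starting observation is the identity $\Lambda\partial_{xx}=\Lambda-\mathrm{Id}$, which gives $J\partial_{xx}=J-\partial_x$. Together with standard commutator expansions in the pseudo-differential calculus, this shows that the principal part of $\mathcal{L}$ is $\omega\cdot\partial_{\varphi}+(a_2+m_2)\partial_x$ plus pseudo-differential terms of non-positive order, and that the constant-coefficient part of the symbol of order $-1$ produced by the $m_0,m_2$ pieces is exactly $-(m_0+m_2)J$. The first transformation is a quasi-periodic diffeomorphism $x\mapsto x+\beta(\omega t,x)$ that constantifies the leading coefficient to some $m_{\infty}$ close to $m_2$; solving for $\beta$ uses the Diophantine control of $\omega\in\mathcal{O}_0$ to invert $\omega\cdot\partial_{\varphi}$ on zero-average functions. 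Next, successive conjugations by flows $\Phi^{\tau}=e^{\tau B}$, with $B$ a reversibility-preserving pseudo-differential operator of order at most $-1$ (so that $\Phi^{\tau}$ is bounded on $H^s$), eliminate the $\varphi$- and $x$-dependence of successively lower-order symbols by solving homological equations of the form $\omega\cdot\partial_{\varphi}b+m_{\infty}\partial_x b=f-\langle f\rangle$, whose solvability requires precisely the first Melnikov set $\mathcal{O}_1$. After finitely many such reductions one reaches $\mathcal{L}_0=\omega\cdot\partial_{\varphi}+m_{\infty}\partial_x-(m_0+m_2)J+R_0$, where $R_0$ is a tame, reversible pseudo-differential operator of arbitrarily negative order, of weighted Lipschitz size $O(\varepsilon\gamma^{-1})$.

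For the KAM diagonalization, the unperturbed diagonal operator is $D_0=m_{\infty}\partial_x-(m_0+m_2)J$, whose Fourier eigenvalues $id_j^{(0)}=i\bigl(m_{\infty}j-(m_0+m_2)j/(1+j^2)\bigr)$ reproduce the leading part of the $d_j^{\infty}$ in the statement. At the $n$-th step one solves a homological equation of the form $\omega\cdot\partial_{\varphi}\Psi_n+[D_n,\Psi_n]=\Pi_{N_n}R_n-R_n^{\mathrm{diag}}$, with $R_n^{\mathrm{diag}}$ the time-independent diagonal part of $R_n$ and $\Pi_{N_n}$ a smoothing cutoff. Solvability holds on $\mathcal{O}_2$, and the imposed inequalities $m_2<0$, $m_0+m_2\ge 0$ and $m_0<-5m_2-4\delta_0$ guarantee that $d_j^{(0)}-d_{j'}^{(0)}$ grows linearly in $|j-j'|$, which is precisely the origin of the linear factor $|j-j'|$ in the second Melnikov condition defining $\mathcal{O}_2$. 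Reversibility forces each correction $r_j^{(n)}$ to be real and odd in $j$; quadratic convergence then yields the limits $m_{\infty}$ and $r_j^{\infty}$ with the stated Lipschitz and decay bounds, and the infinite product $\Upsilon=\Phi_0\Phi_1\Phi_2\cdots$ is bounded and reversibility-preserving and satisfies the tame estimate.

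The principal obstacle, and what genuinely distinguishes this reduction from the KdV or Degasperis--Procesi setting, is that $J=(1-\partial_{xx})^{-1}\partial_x$ has order $-1$. A Hamiltonian flow generated by a pseudo-differential operator of order at most $-1$ (the largest order compatible with boundedness of the flow on $H^s$) can only produce a time-one map of the form $C+\mathrm{OPS}^{-1}$, so it cannot change the leading-order coefficient $(a_2+m_2)$. This forces us to abandon purely symplectic conjugations in favour of reversibility-preserving ones and to use the circle diffeomorphism above as the substitute for the missing Hamiltonian step; it is also the reason why the monotone conditions on $m_0,m_2$ are needed to secure linear-in-$|j-j'|$ small-divisor lower bounds. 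The measure estimate $|\mathcal{O}_0\setminus\mathcal{O}_{\infty}|\le C\gamma^{\min\{1,\kappa-1\}}L^{\nu-1}$ is obtained by summing the measures of the resonant sublevel sets: the first Melnikov conditions in $\mathcal{O}_1$ contribute $O(\gamma)$, while those in $\mathcal{O}_2$, once the transversality of $\omega\mapsto m_{\infty}(\omega)$ is established via $|m_{\infty}-m_2|^{\gamma,\mathcal{O}_0}\le C\varepsilon$, contribute $O(\gamma^{\kappa-1})$.
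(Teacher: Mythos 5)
Your two-step blueprint is indeed the route the paper takes: constantify the leading coefficient by a torus diffeomorphism, then run a KAM scheme, preserving reversibility because $J$ of order $-1$ rules out Hamiltonian conjugations. You correctly single out this order issue as the central difficulty. There is, however, a concrete gap in your first step. You claim that ``solving for $\beta$ uses the Diophantine control of $\omega\in\mathcal{O}_0$ to invert $\omega\cdot\partial_\varphi$ on zero-average functions.'' That is the KdV picture, where the order-$3$ principal coefficient is constantified by the algebraic relation $a_3(1+\beta_x)^3={\rm const}$ followed by a $\varphi$-reparametrization, so only the Diophantine condition on $\omega$ enters. Here the principal part has order $1$, and the space-variable change produces
\begin{equation*}
T_1(\varphi,y)=(\omega\cdot\partial_\varphi\beta)(\cdot)+\bigl(m_2+a_2\bigr)\bigl(1+\beta_x\bigr)(\cdot),
\end{equation*}
so constantifying $T_1$ is a genuine quasi-periodic transport problem coupling $\varphi$ and $x$. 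Its solution requires the \emph{first Melnikov} small-divisor conditions $|\omega\cdot l+m_\infty j|\ge 2\gamma\langle l,j\rangle^{-\tau}$, i.e.\ $\omega\in\mathcal{O}_1$, already at the diffeomorphism step; this is precisely the content of the straightening theorem (Proposition~\ref{Str-thm}). You cannot defer $\mathcal{O}_1$ to the later pseudo-differential flows: those act at order $\le 0$ and cannot touch the leading coefficient. Missing this is not a formal slip; it marks exactly where CH departs from KdV and Degasperis--Procesi.

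Two lesser deviations are worth noting. The chain of intermediate conjugations $e^{\tau B}$ pushing the remainder to ``arbitrarily negative order'' does not appear in the paper: after the single diffeomorphism the remainder is already ${\rm Op}(\hat r)+\hat{\mathfrak R}$ with $\hat r\in S^{-1}$, and the KAM scheme of Section 4 is built, via the Lip-$-1$-modulo-tame classes of Definition~\ref{mod-def}, to absorb an order $-1$ remainder directly, so your extra flows are admissible but unnecessary. And the measure estimate for $\mathcal{O}_2$ is not simply ``transversality gives $O(\gamma^{\kappa-1})$'': summing $|P_{ljj'}|\le C L^{\nu-1}\gamma^\kappa\langle l\rangle^{-\tau}$ over all $(l,j,j')$ would diverge; the argument closes only because of the inclusion $P_{ljj'}\subseteq Q_{l,j-j'}$ for $|j|,|j'|$ large (Lemma~\ref{PQ-re}), which in turn requires $\langle j\rangle r_j^\infty$ to be small and is the reason the paper iterates with Lip-$-1$- rather than Lip-$0$-modulo-tame constants. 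Your proposal leaves this mechanism out.
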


We explain the key ideas and the  outline of this paper  as follows.

In Section 2, we introduce some fundamental definitions, notations and lemmas. Particularly, we introduce the classes of Lip-$-1$-modulo-tame linear operators (see Definition \ref{mod-def}) and study their properties which will be used in Section 4.
The reduction procedure is split into two sections.

In Section 3, we perform  the regularization procedure. This step  consists in  conjugating $\mathcal{L}(\omega t)$ in \eqref{L} to the linear operator
which is a small bounded regularizing perturbation of a constant coefficient. This is achieved by applying a suitable quasi-periodically change of variables depending on time  so that the highest order (order 1) term has a constant coefficient. Meanwhile, the term of order 0 is eliminated. Then we extract the terms that is not "small"  from the ones of order $-1$, to constitute the diagonal part. This is the content of Theorem 3.1.

In Section 4, the KAM reducibility scheme is proposed. After the preliminary reduction of the order of derivatives,  we can perform a KAM reducibility scheme to complete the diagonalization, see Theorem 4.1. We use the Lip-$-1$-modulo-tame constants to estimate
the size of the remainders along the iteration. This is convenient since the class of Lip-$-1$-modulo-tame operators are closed under the composition (Lemma \ref{mod-sum-com}), the solution map of the homological equation (Lemma
\ref{Hom-eq}) and the projections (Lemma \ref{mod-smoo}). As a matter of fact, the properties mentioned above also hold for Lip-$0$-modulo-tame operators (see Definition \ref{mod0-def}). The  reason to adopt Lip-$-1$-modulo-tame operators instead of Lip-$0$-modulo-tame operators is that we require that $\langle j\rangle r_j^{\infty}$ is small enough in order  to prove the relations between the bad sets $P_{ljj'}$ and $Q_{l, j}$ in \eqref{PQ-def}.

In Section 5, we give the measure estimate for  the set $\mathcal{O}_0-\mathcal{O}_{\infty}$. Therefore,  the operator in \eqref{L} can be reduced
to the one with  constant coefficients for almost all $\omega$ in the sense of Lebesgue
measure. At the same time, Corollary \ref{cor} is verified.

Finally, in the appendices, we introduce some classes of linear operators which are mentioned in the paper, such as  pseudo-differential operators, the classes of operators $\mathfrak{L}_{\rho, p}$. The classical results for  the change of variable are provided as well.

\renewcommand{\theequation}{\thesection.\arabic{equation}}
\setcounter{equation}{0}
\section{Preliminaries}

In this section, we elaborate some conceptions, notations and lemmas, which will be used in the subsequent sections.

\begin{defi}(Majorant function)  
Given a function $u(\varphi,x)$ of the form
\begin{equation*}
u(\varphi, x)=\sum_{l \in \mathbb{Z}^{\nu}, j \in \mathbb{Z}}u_{l, j}{\rm e}^{{\rm i}(l\cdot \varphi+jx)} \in L^2(\mathbb{T}^{\nu}\times \mathbb{T}, \mathbb{C}),
\end{equation*}
  the majorant function $\underline{u}(\varphi, x)$ of $u(\varphi, x)$ is defined as
\begin{equation*}
\underline{u}(\varphi, x):=\sum_{l \in \mathbb{Z}^{\nu}, j \in \mathbb{Z}}|u_{l, j}|{\rm e}^{{\rm i}(l\cdot \varphi+jx)}.
\end{equation*}
\end{defi}

It is obvious that the Sobolev norms of $u$ and $\underline{u}$  are equal, namely,
\begin{equation*}
\| u \|_s=\| \underline{u} \|_s.
\end{equation*}

The following lemma is an  important property of $\| \cdot\|_{s}$ and $\| \cdot \|_{s}^{\gamma, \mathcal{O}}$.

\begin{lemma}\label{Ip}(Interpolation)\cite{bbm-1, fgmp}
For all $s\geq s_0$, there are $C(s)\geq C(s_0)\geq 1$ such that
\begin{equation*}
\| uv \|_s \leq C(s) \| u \|_s  \| v \|_{s_0} + C(s_0)\| u \|_{s_0}  \| v \|_s.
\end{equation*}
The above inequality also holds for weighted Lipchitz norm $\| \cdot \|_{s}^{\gamma, \mathcal{O}}$.
\end{lemma}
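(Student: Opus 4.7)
The plan is to prove this tame product estimate in two stages, first establishing it for the sup-norm $\|\cdot\|_s$ on $H^s(\mathbb{T}^{\nu+1};\mathbb{R})$ and then bootstrapping to the weighted Lipschitz norm. The result is classical (see the cited references); I outline the standard argument.

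For the first stage, I would work on the Fourier side. Writing $(uv)_{l,j}=\sum_{l_1,j_1}u_{l_1,j_1}v_{l-l_1,j-j_1}$ and using the elementary inequality
\[
\langle l,j\rangle^s\leq C(s)\bigl(\langle l_1,j_1\rangle^s+\langle l-l_1,j-j_1\rangle^s\bigr), \qquad s\geq 0,
\]
the weighted coefficients $\langle l,j\rangle^s|(uv)_{l,j}|$ split as a sum of two convolutions. By Cauchy--Schwarz applied with the low-regularity factor placed in $\ell^2(\langle\cdot\rangle^{2s_0})$, and exploiting the summability $\sum_{l,j}\langle l,j\rangle^{-2s_0}<\infty$, which holds precisely because $s_0=[(\nu+1)/2]+2>(\nu+1)/2$, each convolution is controlled by either $\|u\|_s\|v\|_{s_0}$ or $\|u\|_{s_0}\|v\|_s$. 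Summing in $(l,j)$ and taking square roots yields
\[
\|uv\|_s\leq C(s)\|u\|_s\|v\|_{s_0}+C(s_0)\|u\|_{s_0}\|v\|_s,
\]
with structural constants depending only on $s$ and $s_0$. Equivalently, one can invoke the classical Moser inequality together with the Sobolev embedding $H^{s_0}\hookrightarrow L^\infty$.

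For the weighted Lipschitz norm, I would use the difference-quotient product rule
\[
\Delta_{\omega,\omega'}(uv)=(\Delta_{\omega,\omega'}u)\,v(\omega')+u(\omega)\,(\Delta_{\omega,\omega'}v)
\]
and apply the tame estimate from the first stage, at Sobolev index $s-1$, to each of the two pieces. The trivial comparisons $\|u\|_{s-1}^{lip}\leq\gamma^{-1}\|u\|_s^{\gamma,\mathcal{O}}$ and $\|v\|_{s_0}^{sup}\leq\|v\|_{s_0}^{\gamma,\mathcal{O}}$, together with their symmetric counterparts for $v$, allow me to repackage the estimate purely in terms of weighted norms. Taking the sup over $\omega\neq\omega'$ in $\mathcal{O}$, and combining with the first-stage inequality applied uniformly in $\omega$ to control $\|uv\|_s^{sup}$, yields the claimed Lipschitz-weighted version.

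The argument presents no serious obstacle; the only point requiring minor care is tracking the Sobolev exponent shift by one in the Lipschitz component of $\|\cdot\|_s^{\gamma,\mathcal{O}}$, which is handled by applying the base inequality at index $s-1$ and absorbing the resulting $\gamma^{-1}$ against the explicit factor $\gamma$ in the definition of the weighted norm.
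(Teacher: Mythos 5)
The paper does not actually prove Lemma~\ref{Ip}; it is stated and attributed to \cite{bbm-1, fgmp}, so there is no in-paper proof to compare against. Your two-stage strategy (Fourier-side tame estimate, then difference-quotient product rule for the Lipschitz weight) is precisely the standard route taken in those references, and the second stage is handled correctly: the product rule $\Delta_{\omega,\omega'}(uv)=(\Delta_{\omega,\omega'}u)v(\omega')+u(\omega)\Delta_{\omega,\omega'}v$, application of the base estimate at index $s-1$, and absorption of $\gamma^{-1}$ all go through as you describe.

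There is, however, a genuine (if subtle) gap in the first stage. The binomial-type inequality $\langle l,j\rangle^s\le C(s)\bigl(\langle l_1,j_1\rangle^s+\langle l-l_1,j-j_1\rangle^s\bigr)$ distributes the factor $C(s)$ to \emph{both} convolutional pieces: after Young's inequality in $\ell^2*\ell^1$ together with $\sum\langle\cdot\rangle^{-2s_0}<\infty$, you obtain $\|uv\|_s\le C(s)\|u\|_s\|v\|_{s_0}+C(s)\|u\|_{s_0}\|v\|_s$, with the same $s$-dependent constant on each term. The lemma as stated is stronger: the coefficient of $\|u\|_{s_0}\|v\|_s$ is $C(s_0)$, fixed once $s_0$ is chosen and independent of $s$. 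That asymmetry is not delivered by the dyadic region split either (on the region where $v$ carries the high frequency one still incurs a factor on the order of $2^s$); it typically requires a commutator-type argument in the spirit of Kato--Ponce, writing $\|uv\|_s\le\|u\|_{L^\infty}\|v\|_s+\bigl\|(\text{comm.})\bigr\|$, where the leading term has constant $\|u\|_{L^\infty}\le C_0\|u\|_{s_0}$ with $C_0$ depending only on $s_0$, and only the commutator remainder picks up a constant growing with $s$. You should either invoke such a commutator estimate, or weaken your claim to $C(s)$ on both terms and note that this suffices for all uses of the lemma in the present paper, since those are formulated with the $\leq_s$ convention that absorbs $s$-dependent constants.
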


In the following, we introduce a class of operators, whose matrix representations are T\"{o}plitz in time, and the relevant properties.

\begin{defi}
(Linear operators)
Let $A(\varphi)$ be a $\varphi$-dependent family of linear operators acting on $L^2(\mathbb{T}_x, \mathbb{R})$.
We regard $A$ as a linear operator acting on $H^s(\mathbb{T}^{\nu+1},\mathbb{R})$ defined by
\begin{equation*}
A: u(\varphi, x) \mapsto \big(A(\varphi)u(\varphi, \cdot)\big)(x).
\end{equation*}
In  Fourier coordinates, the action is represented as
 \begin{equation}\label{Au-act}
 (Au)(\varphi, x)=\sum_{j,{j'} \in \mathbb{Z}} A_j^{j'}(\varphi)u_{j'}(\varphi){\rm e}^{{\rm i}jx}=\sum_{j \in \mathbb{Z}, l\in \mathbb{Z}^{\nu}}
\sum_{j' \in \mathbb{Z}, l'\in \mathbb{Z}^{\nu}} A_j^{j'}(l-l')u_{j'l'} {\rm e}^{{\rm i}(jx+l\cdot \varphi)}.
 \end{equation}
\end{defi}

\begin{defi}\label{Lo-var}
Given a linear  operator $A$ as in \eqref{Au-act}, we define the following operators:

 (1). $\underline{A}$\ , called majorant operator, whose matrix elements are
\begin{equation*}
(\underline{A})_j^{j'}(l-l'):=|A_j^{j'}(l-l')|,  \quad \forall j, j' \in \mathbb{Z}, l, l' \in \mathbb{Z}^{\nu}.
\end{equation*}

 (2). $\Pi_N A$, called smoothed operator, whose matrix elements are
\begin{equation*}
(\Pi_N A)_j^{j'}(l-l'):=
 \left \{
\begin{array}{ll}
A_j^{j'}(l-l'), &  {\rm if}    \quad |l-l'| \leq N,\\
0, & {\rm otherwise}.\\
\end{array}
\right.
 \end{equation*}
We also denote
\begin{equation}\label{smo-c}
\Pi_N^{\perp} := {\rm{Id}} - \Pi_N.
\end{equation}

(3). $\langle \partial_{\varphi} \rangle^b A$, whose matrix elements are $\langle l-l' \rangle^b A_j^{j'}(l-l'), \quad b\geq 0.$

(4). $\partial_{\varphi_m} A$, whose matrix elements are $i(l_m-l'_m)A_j^{j'}(l-l'), \quad m= 1, \ldots, \nu.$

(5). $[\partial_{x}, A] :=  \partial_{x}\circ A - A \circ \partial_{x}$, whose matrix elements are $i(j-j')A_j^{j'}(l-l').$
\end{defi}

\begin{lemma}\label{M-N}
Let $M, N$ be  two  linear operators as in \eqref{Au-act}.

(1). For any $s\geq 0$,
we have $\| Mu \|_s \leq   \| \underline{M}\ \underline{u} \|_s$.

(2). If for any $j,j'\in \mathbb{Z}$ and $l\in \mathbb{Z}^{\nu}$,  $ |M_j^{j'}(l)|\leq |N_j^{j'}(l)|$, then
\begin{equation*}
 \| \underline{M} \|_{\mathfrak{L}(H^s)} \leq \| \underline{N} \|_{\mathfrak{L}(H^s)},
\quad  \| \underline{M}\ \underline{u} \|_s \leq \| \underline{N}\ \underline{u} \|_s, \quad s \geq 0.
\end{equation*}
\end{lemma}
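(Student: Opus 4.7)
The plan is to establish both parts by comparing Fourier coefficients mode-by-mode on $\mathbb{Z}^{\nu}\times \mathbb{Z}$ and then reading off the Sobolev bound, using the elementary identity $\|u\|_s = \|\underline{u}\|_s$ recorded just after the definition of the majorant function. For part (1), I would begin from the explicit matrix representation \eqref{Au-act}, which identifies the $(l,j)$-Fourier coefficient of $Mu$ as $(Mu)_{lj}=\sum_{j' \in \mathbb{Z},\, l' \in \mathbb{Z}^{\nu}} M_j^{j'}(l-l')\, u_{j'l'}$. A single application of the triangle inequality gives $|(Mu)_{lj}| \leq \sum_{j', l'} |M_j^{j'}(l-l')|\, |u_{j'l'}| = (\underline{M}\,\underline{u})_{lj}$, where the right-hand side is, by definition, the corresponding (non-negative) Fourier coefficient of $\underline{M}\,\underline{u}$. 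Squaring, weighting by $\langle l,j\rangle^{2s}$ and summing over $(l,j)$ then yields $\|Mu\|_s \leq \|\underline{M}\,\underline{u}\|_s$, which is exactly (1).

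For part (2), the pointwise hypothesis $|M_j^{j'}(l)| \leq |N_j^{j'}(l)|$ passes directly into a domination of the Fourier coefficients $(\underline{M}\,\underline{u})_{lj} \leq (\underline{N}\,\underline{u})_{lj}$, and since both sides are non-negative, weighting by $\langle l, j\rangle^{2s}$ and summing gives $\|\underline{M}\,\underline{u}\|_s \leq \|\underline{N}\,\underline{u}\|_s$. For the operator-norm inequality I would chain three estimates: given $u \in H^s$, part (1) applied to $\underline{M}$ (whose majorant is itself, since its entries are already non-negative) gives $\|\underline{M} u\|_s \leq \|\underline{M}\,\underline{u}\|_s$; the previous display bounds this by $\|\underline{N}\,\underline{u}\|_s \leq \|\underline{N}\|_{\mathfrak{L}(H^s)} \|\underline{u}\|_s$; and finally $\|\underline{u}\|_s = \|u\|_s$ closes the chain. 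Taking the supremum over the unit ball of $H^s$ yields $\|\underline{M}\|_{\mathfrak{L}(H^s)} \leq \|\underline{N}\|_{\mathfrak{L}(H^s)}$.

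There is no substantive obstacle in this lemma: the content is entirely soft, resting on the triangle inequality coefficient-by-coefficient and on monotonicity of the weighted Sobolev norm under pointwise domination of non-negative sequences. The only subtlety worth flagging is the use of $\|u\|_s = \|\underline{u}\|_s$, which is what permits the transfer between bounds on $\underline{M}\,\underline{u}$ and a genuine operator-norm bound on the majorant $\underline{M}$ itself; this identity will also be invoked repeatedly later when the Lip-$(-1)$-modulo-tame machinery of Definition~\ref{mod-def} is set up in Section~2.
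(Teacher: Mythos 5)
Your proof is correct, and it is exactly the elementary coefficient-by-coefficient argument one would write: the paper states this lemma without proof, treating it as a routine consequence of the triangle inequality and of $\|u\|_s=\|\underline{u}\|_s$, which is precisely what you have supplied. The only remark worth making is that the reduction you use for the operator-norm bound in (2) — applying part (1) to $\underline{M}$ itself via $\underline{\underline{M}}=\underline{M}$, then dominating by $\underline{N}$, then invoking $\|\underline{u}\|_s=\|u\|_s$ — is the cleanest way to close that chain, and is how the lemma is implicitly used later (for example in the proof of Lemma~\ref{L(Hs0)-mod}).
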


\begin{defi}\label{rev-def}
Let $Z$, $X, Y$ denote the set of real functions, even functions and odd functions, respectively. An operator $A$ is called

(1) real, if $A:Z \rightarrow Z$.

(2) reversible, if $A:X \rightarrow Y$.

(3) reversibility-preserving, if $A:X \rightarrow X$ and $ A:Y \rightarrow Y$.
\end{defi}

In terms of   matrix coefficients, the above definitions  are equivalent to the following:

(1). $A$ is real $\Leftrightarrow A_{-j}^{-j'}(-l)=\overline{A_{j}^{j'}(l)}$.

(2). $A$ is reversible $ \Leftrightarrow A_{-j}^{-j'}(-l)=-A_{j}^{j'}(l)$.

(3). $A$ is reversibility-preserving $\Leftrightarrow A_{-j}^{-j'}(-l)=A_{j}^{j'}(l)$.

\noindent {\it Notation}. If a linear operator $A$ depends on the variable $i$, we denote
\begin{equation*}
\Delta_{12}A := A(i_1)-A(i_2).
\end{equation*}
If $A$ depends on the parameter $\omega \in \mathcal{O} \subset \mathbb{R}^{\nu}$, we denote
\begin{equation*}
\Delta_{\omega, \omega'}A:=\frac{A(\omega)-A(\omega')}{|\omega-\omega'|}.
\end{equation*}

\begin{defi}\label{mod0-def}(Lip-$\sigma$-modulo-tame operators) \cite{bbhm, bm, fgp-1, fgp-2}
Let the linear operator $A(\omega)$ be defined for $\omega \in \mathcal{O} \subset \mathbb{R}^{\nu}$.  A is Lip-$\sigma$-modulo-tame
if there exist $\sigma \geq 0$ and  a non-decreasing sequence $\{ \mathfrak{M}_A^{\sharp, \gamma}(\sigma, s) \}_{s=s_0}^\mathcal{S}$ (with possibly $\mathcal{S}=+\infty$)
such that for all $u \in H^{s+\sigma},$
\begin{equation*}
\sup \limits_{\omega \in \mathcal{O}} \|\underline{A}u\|_s, \sup \limits_{\substack{\omega, \omega' \in \mathcal{O},\\  \omega \neq \omega'}}\gamma
\| \underline{\Delta_{\omega, \omega'}A}u \|_{s}
\leq \mathfrak{M}_A^{\sharp, \gamma}(\sigma, s)\|u\|_{s_0+\sigma} + \mathfrak{M}_A^{\sharp, \gamma}(\sigma, s_0)\|u\|_{s+\sigma}.
\end{equation*}

\end{defi}

\begin{defi}\label{mod-def}(Lip-$-1$-modulo-tame operators) \cite{fgp-1,fgp-2}
If $\langle D_x\rangle^{1/2}A \langle D_x\rangle^{1/2}$ is Lip-0-modulo-tame, we say $A$ is Lip-$-1$-modulo-tame, where
\begin{equation*}
\mathfrak{M}_A^{\sharp, \gamma}(-1, s):= \mathfrak{M}_{\langle D_x\rangle^{1/2}A \langle D_x\rangle^{1/2}}^{\sharp, \gamma}(0, s)
\end{equation*}
is the Lip-$-1$-modulo-tame constant for the operator $A$ or modulo-tame constant for short.

If $\langle \partial_{\varphi}\rangle^{b} A$ is Lip-$-1$-modulo-tame, viz. $\langle D_x\rangle^{1/2} \langle \partial_{\varphi}\rangle^{b} A \langle D_x\rangle^{1/2} $ is Lip-0-modulo-tame, we set
\begin{equation*}
\mathfrak{M}_A^{\sharp, \gamma}(-1, s, b):=
\mathfrak{M}_{ \langle \partial_{\varphi}\rangle^{b} A }^{\sharp, \gamma}(-1, s)
:=\mathfrak{M}_{\langle D_x\rangle^{1/2} \langle \partial_{\varphi}\rangle^{b} A \langle D_x\rangle^{1/2}}^{\sharp, \gamma}(0, s).
\end{equation*}

\end{defi}

\begin{remark}
In the sequel, we  rescale  $\gamma$ in order to adapt to the non-resonance condition. Precisely,  we
use $\mathfrak{M}_A^{\sharp, \gamma^{\kappa}}(\sigma, s)(\kappa>1)$ instead of $\mathfrak{M}_A^{\sharp, \gamma}(\sigma, s)$ in Section 4.
This is a typical trick of solving problems with (asymptotically) linear dispersion, for instance, the Klein-Gordon equation \cite{bbp-1, pos1} and  the Degasperis-Procesi equation \cite{fgp-2}.
\end{remark}

In the following, we  collect  several  properties of the Lip-$-1$-modulo-tame operators.

\begin{lemma}\label{L(Hs0)-mod}
Let $A$ be a  Lip-$-1$-modulo-tame operator with modulo-tame constant:  
$$\quad \mathfrak{M}_A^{\sharp, \gamma}(-1, s).$$ 
Then
\begin{equation*}
\| A  \|_{\mathfrak{L}(H^s)}
 \leq \| \underline{A}  \|_{\mathfrak{L}(H^s)}
 \leq \| \langle D_x \rangle^{1/2}\underline{A} \langle D_x \rangle^{1/2} \|_{\mathfrak{L}(H^s)} \leq 2 \mathfrak{M}_A^{\sharp, \gamma}(-1, s).
 \end{equation*}
Moreover, if $u$ depends on  $\omega \in \mathcal{O} \subset \mathbb{R}^{\nu}$ in a Lipschitz way, then
\begin{equation*}
\|  A u \|_s^{\gamma, \mathcal{O}}
 \leq \mathfrak{M}_A^{\sharp, \gamma}(-1, s)\|  u \|_{s_0}^{\gamma, \mathcal{O}}+\mathfrak{M}_A^{\sharp, \gamma}(-1, s_0)\|  u \|_{s}^{\gamma, \mathcal{O}}.
 \end{equation*}
 \end{lemma}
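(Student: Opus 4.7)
\smallskip

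\noindent\textbf{Proof proposal for Lemma \ref{L(Hs0)-mod}.}

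The plan is to handle the chain of three inequalities separately, and then derive the weighted Lipschitz estimate from the definition combined with a Leibniz-type identity.

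For the first inequality $\|A\|_{\mathfrak{L}(H^s)}\leq \|\underline{A}\|_{\mathfrak{L}(H^s)}$, I would invoke Lemma \ref{M-N}(1): given $u\in H^s$, we have $\|Au\|_s \leq \|\underline{A}\,\underline{u}\|_s \leq \|\underline{A}\|_{\mathfrak{L}(H^s)} \|\underline{u}\|_s = \|\underline{A}\|_{\mathfrak{L}(H^s)}\|u\|_s$, since taking the majorant preserves the Sobolev norm.

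For the second inequality, I would use the factorization
\begin{equation*}
\underline{A} \;=\; \langle D_x\rangle^{-1/2}\bigl(\langle D_x\rangle^{1/2}\underline{A}\,\langle D_x\rangle^{1/2}\bigr)\langle D_x\rangle^{-1/2},
\end{equation*}
together with the obvious bound $\|\langle D_x\rangle^{-1/2}\|_{\mathfrak{L}(H^s)}\leq 1$ (the symbol $\langle j\rangle^{-1/2}\leq 1$). Composing the operator norms then gives the claim.

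For the third inequality, the key observation is that if $B:=\langle D_x\rangle^{1/2} A\langle D_x\rangle^{1/2}$, then because $\langle D_x\rangle^{1/2}$ is diagonal with nonnegative entries $\langle j\rangle^{1/2}$, we have $B_j^{j'}(l)=\langle j\rangle^{1/2}A_j^{j'}(l)\langle j'\rangle^{1/2}$ and therefore $\underline{B}=\langle D_x\rangle^{1/2}\underline{A}\,\langle D_x\rangle^{1/2}$. Applying the Lip-$0$-modulo-tame definition (Definition \ref{mod0-def} with $\sigma=0$) to $B$ gives
\begin{equation*}
\|\underline{B} u\|_s \leq \mathfrak{M}_B^{\sharp,\gamma}(0,s)\|u\|_{s_0}+\mathfrak{M}_B^{\sharp,\gamma}(0,s_0)\|u\|_s \leq 2\mathfrak{M}_A^{\sharp,\gamma}(-1,s)\|u\|_s,
\end{equation*}
where the last step uses $\|u\|_{s_0}\leq\|u\|_s$ and the monotonicity of the tame constants in $s$, together with $\mathfrak{M}_B^{\sharp,\gamma}(0,\cdot)=\mathfrak{M}_A^{\sharp,\gamma}(-1,\cdot)$.

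For the weighted Lipschitz estimate, I would first note that the sup-in-$\omega$ part follows from the chain above applied pointwise in $\omega$: at each fixed $\omega$,
\begin{equation*}
\|A(\omega)u(\omega)\|_s \leq \mathfrak{M}_A^{\sharp,\gamma}(-1,s)\|u(\omega)\|_{s_0}+\mathfrak{M}_A^{\sharp,\gamma}(-1,s_0)\|u(\omega)\|_s,
\end{equation*}
which is obtained by running the factorization argument applied to $v:=\langle D_x\rangle^{-1/2}u(\omega)$ (since $\|v\|_\sigma\leq\|u(\omega)\|_\sigma$ for any $\sigma$). Taking the sup over $\omega$ yields the sup part of $\|Au\|_s^{\gamma,\mathcal{O}}$. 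For the Lipschitz part, I would use the identity
\begin{equation*}
\Delta_{\omega,\omega'}(Au) \;=\; (\Delta_{\omega,\omega'}A)\,u(\omega)+A(\omega')\,\Delta_{\omega,\omega'}u,
\end{equation*}
and estimate the two summands at Sobolev level $s-1$. The first is controlled using the Lipschitz half of Definition \ref{mod0-def} applied to $B$ (after multiplying by $\gamma$), which provides $\gamma\|\underline{\Delta_{\omega,\omega'}A}\,\underline{u(\omega)}\|_{s-1}$ in terms of $\mathfrak{M}_A^{\sharp,\gamma}(-1,s-1)\|u(\omega)\|_{s_0-1}$ plus $\mathfrak{M}_A^{\sharp,\gamma}(-1,s_0-1)\|u(\omega)\|_{s-1}$; the second uses the sup part of the estimate applied to $\Delta_{\omega,\omega'}u$ at level $s-1$. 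Assembling and using the monotonicity of $\mathfrak{M}_A^{\sharp,\gamma}(-1,\cdot)$ together with the definitions of $\|u\|_{s_0}^{\gamma,\mathcal{O}}$ and $\|u\|_s^{\gamma,\mathcal{O}}$ delivers the stated bound.

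No single step looks like a real obstacle: the whole proof is a bookkeeping exercise built on Lemma \ref{M-N} and the definition of modulo-tameness. The only mild subtlety is matching the Sobolev indices correctly in the Lipschitz part because $\|\cdot\|_s^{\gamma,\mathcal{O}}$ measures $\Delta_{\omega,\omega'}$ at regularity $s-1$ (not $s$), so one has to apply the Lip-$-1$-modulo-tame definition at $s-1$ rather than at $s$.
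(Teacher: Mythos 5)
Your argument for the chain of three inequalities is essentially the paper's: the first from Lemma~\ref{M-N}(1) plus $\|u\|_s=\|\underline{u}\|_s$, the second from the fact that $|A_j^{j'}(l)|\leq\langle j\rangle^{1/2}|A_j^{j'}(l)|\langle j'\rangle^{1/2}$ (you phrase it as the factorization $\underline{A}=\langle D_x\rangle^{-1/2}\underline{B}\langle D_x\rangle^{-1/2}$, which amounts to the same comparison since $\langle D_x\rangle^{\pm 1/2}$ is diagonal), and the third directly from Definition~\ref{mod0-def}--\ref{mod-def} plus monotonicity of the constants (the paper cites \cite{fgp-1} here, but your unwinding of it is right). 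The weighted Lipschitz part also uses the same Leibniz split $\Delta_{\omega,\omega'}(Au)=(\Delta_{\omega,\omega'}A)u(\omega)+A(\omega')\Delta_{\omega,\omega'}u$.

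The one place I would push back is the ``mild subtlety'' you flag at the end. You propose applying the Lip-$-1$-modulo-tame definition at level $s-1$, which forces you to quote constants $\mathfrak{M}_A^{\sharp,\gamma}(-1,s-1)$ and $\mathfrak{M}_A^{\sharp,\gamma}(-1,s_0-1)$. But the modulo-tame constants form a sequence indexed by $s\in\{s_0,\dots,\mathcal{S}\}$, so $\mathfrak{M}_A^{\sharp,\gamma}(-1,s_0-1)$ is simply not defined, and for $s_0\leq s<s_0+1$ even $\mathfrak{M}_A^{\sharp,\gamma}(-1,s-1)$ is out of range. The correct resolution goes the other way: both halves of Definition~\ref{mod0-def} (the sup and the $\gamma$-weighted Lipschitz supremum) are stated at level $s$, so one estimates $\gamma\|\Delta_{\omega,\omega'}(Au)\|_s$ directly at level $s$, exactly as the paper does, and then observes that the quantity actually appearing in $\|Au\|_s^{\gamma,\mathcal{O}}$, namely $\gamma\|\Delta_{\omega,\omega'}(Au)\|_{s-1}$, is dominated by $\gamma\|\Delta_{\omega,\omega'}(Au)\|_s$. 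No downshift of the index inside the tame constant is needed, and attempting one runs into an undefined object. With that adjustment the proof is complete and matches the paper's.
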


\begin{proof}
Since $\|Au\|_s \leq \|\underline{A}\ \underline{u}\|_s$ and $\| u \|_s= \| \underline{u} \|_s$, we have $\| A  \|_{\mathfrak{L}(H^s)}
 \leq \| \underline{A}  \|_{\mathfrak{L}(H^s)}$.
In view of
  \begin{equation*}
  |\underline{A}_j^{j'}(l)| = |A_j^{j'}(l)| \leq  \langle j \rangle^{1/2} |A_j^{j'}(l)| \langle j' \rangle^{1/2},
  \end{equation*}
by Lemma \ref{M-N}, $$\| \underline{A}  \|_{\mathfrak{L}(H^s)}
 \leq \| \langle D_x \rangle^{1/2}\underline{A} \langle D_x \rangle^{1/2} \|_{\mathfrak{L}(H^s)}.$$
For the inequality
$$\| \langle D_x \rangle^{1/2}\underline{A} \langle D_x \rangle^{1/2} \|_{\mathfrak{L}(H^s)} \leq 2 \mathfrak{M}_A^{\sharp, \gamma}(-1, s),$$
its proof  can be found in \cite{fgp-1}. According to Definition \ref{mod-def}, the last inequality follows by
  \begin{equation*}
  \begin{split}
\|\Delta_{\omega, \omega'}(Au) \|_s
\leq &\| (\Delta_{\omega, \omega'}A)u(\omega)\|_s+ \| A(\omega')\Delta_{\omega, \omega'}u \|_s\\
 \leq &\| (\underline{\Delta_{\omega, \omega'}A})\underline{u(\omega)}\|_s+ \| \underline{A(\omega')}\ \underline{\Delta_{\omega, \omega'}u} \|_s\\
 \leq & \| \langle D_x \rangle^{1/2} (\underline{\Delta_{\omega, \omega'}A}) \langle D_x \rangle^{1/2} \underline{u(\omega)}\|_s
+ \| \langle D_x \rangle^{1/2} \underline{A(\omega')}\langle D_x \rangle^{1/2}\underline{\Delta_{\omega, \omega'}u} \|_s\\
\leq & \gamma^{-1}(\mathfrak{M}_A^{\sharp, \gamma}(-1, s_0)\|  \underline{u} \|_{s}+\mathfrak{M}_A^{\sharp, \gamma}(-1, s)\|  \underline{u} \|_{s_0}
+\mathfrak{M}_A^{\sharp, \gamma}(-1, s_0)\|  u\|^{\gamma, \mathcal{O}}_{s}\\
&+\mathfrak{M}_A^{\sharp, \gamma}(-1, s)\|  u \|^{\gamma, \mathcal{O}}_{s_0})\\
\leq & \gamma^{-1}( \mathfrak{M}_A^{\sharp, \gamma}(-1, s)\|  u \|_{s_0}^{\gamma, \mathcal{O}}+\mathfrak{M}_A^{\sharp, \gamma}(-1, s_0)\|  u \|_{s}^{\gamma, \mathcal{O}})\\
 \end{split}
  \end{equation*}
and
 \begin{equation*}
  \begin{split}
 \sup_{\omega \in \mathcal{O}}\| Au \|_s \leq & \sup_{\omega \in \mathcal{O}}\| \underline{A}\ \underline{u} \|_s
   \leq \| \langle D_x \rangle^{1/2} \underline{A} \langle D_x \rangle^{1/2} \underline{u}\|_s\\
   \leq & \mathfrak{M}_A^{\sharp, \gamma}(-1, s)\| \underline{u} \|_{s_0}+\mathfrak{M}_A^{\sharp, \gamma}(-1, s_0)\|  \underline{u} \|_{s}\\
   \leq & \mathfrak{M}_A^{\sharp, \gamma}(-1, s)\|u\|_{s_0}^{\gamma, \mathcal{O}}+\mathfrak{M}_A^{\sharp, \gamma}(-1, s_0)\|  u \|_{s}^{\gamma, \mathcal{O}}.\\
 \end{split}
  \end{equation*}
\end{proof}

\begin{lemma}\label{mod-compa}
Assume  $|A_j^{j'}(l)| \leq  |B_j^{j'}(l)|$ for any $j, j' \in \mathbb{Z}, l \in \mathbb{Z}^{\nu}$.

(1). If $B$ is  a Lip-$-1$-modulo-tame operator with modulo-tame constant $\mathfrak{M}_B^{\sharp, \gamma}(-1, s)$
and there exists $C \geq 0$ such that
 $|\Delta_{\omega, \omega'}A_j^{j'}(l)| \leq |\Delta_{\omega, \omega'}B_j^{j'}(l)| + C  |B_j^{j'}(l)|$ for all $\omega \neq \omega' \in \mathcal{O}$, then $A$ is a Lip-$-1$-modulo-tame operator as well and we can choose the modulo-tame constant of $A$ such that
\begin{equation*}
\mathfrak{M}_A^{\sharp, \gamma}(-1, s) \leq C' \mathfrak{M}_B^{\sharp, \gamma}(-1, s)
\end{equation*}
for some constant $C'>0$.

(2). If $B$ is an operator satisfying $\| \langle D_x \rangle^{1/2} \underline{B} \langle D_x \rangle^{1/2} \|_{\mathfrak{L}(H^{s})} < +\infty$,
 then
 \begin{equation*}
 \| \langle D_x \rangle^{1/2} \underline{A} \langle D_x \rangle^{1/2} \|_{\mathfrak{L}(H^{s})}
 \leq \| \langle D_x \rangle^{1/2} \underline{B} \langle D_x \rangle^{1/2} \|_{\mathfrak{L}(H^{s})}.
\end{equation*}
\end{lemma}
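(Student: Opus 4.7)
The plan is to derive Part (2) directly from Lemma \ref{M-N}(2) and then use that, together with the hypothesis on the Lipschitz increments, to obtain Part (1). The key observation is that, because all relevant weights $\langle j\rangle^{1/2},\langle j'\rangle^{1/2}$ are positive, the majorant commutes with the multipliers in the sense that
\begin{equation*}
\bigl(\langle D_x\rangle^{1/2}\underline{A}\langle D_x\rangle^{1/2}\bigr)_j^{j'}(l) = \langle j\rangle^{1/2}|A_j^{j'}(l)|\langle j'\rangle^{1/2},
\end{equation*}
and identically for $B$. Hence the hypothesis $|A_j^{j'}(l)|\leq |B_j^{j'}(l)|$ transfers verbatim to these weighted matrices, and Lemma \ref{M-N}(2) yields Part (2) immediately.

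For Part (1), the task (according to Definition \ref{mod-def}) is to exhibit a constant that simultaneously dominates $\sup_\omega \|\langle D_x\rangle^{1/2}\underline{A}\langle D_x\rangle^{1/2}u\|_s$ and $\sup_{\omega\neq\omega'}\gamma\|\langle D_x\rangle^{1/2}\underline{\Delta_{\omega,\omega'}A}\langle D_x\rangle^{1/2}u\|_s$ through the template $\mathfrak{M}\|u\|_{s_0}+\mathfrak{M}'\|u\|_s$. The sup-norm side follows from Part (2) applied $\omega$-pointwise, combined with the Lip-$-1$-modulo-tame bound for $B$. For the Lipschitz side, the hypothesis gives entry-wise
\begin{equation*}
\bigl(\underline{\Delta_{\omega,\omega'}A}\bigr)_j^{j'}(l) \leq \bigl(\underline{\Delta_{\omega,\omega'}B}\bigr)_j^{j'}(l) + C\,\bigl(\underline{B}\bigr)_j^{j'}(l),
\end{equation*}
so after inserting the weights $\langle j\rangle^{1/2}$, $\langle j'\rangle^{1/2}$ and invoking Lemma \ref{M-N}(1),
\begin{equation*}
\|\langle D_x\rangle^{1/2}\underline{\Delta_{\omega,\omega'}A}\langle D_x\rangle^{1/2}u\|_s \leq \|\langle D_x\rangle^{1/2}\underline{\Delta_{\omega,\omega'}B}\langle D_x\rangle^{1/2}u\|_s + C\,\|\langle D_x\rangle^{1/2}\underline{B}\langle D_x\rangle^{1/2}u\|_s.
\end{equation*}
Multiplying by $\gamma$, the first summand is directly bounded via the Lip-$-1$-modulo-tame property of $B$; the second, after using $\gamma\leq 1$, is controlled by the sup-norm portion of the same definition applied to $B$. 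Collecting terms yields $C':=1+C$ (say) as an admissible multiplicative constant.

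The only minor obstacle is the appearance of the $\gamma$-free extra term $C|B_j^{j'}(l)|$ in the Lipschitz estimate, which does \emph{not} arise from any difference quotient. It is absorbed by trading the factor $\gamma\in(0,1)$ for the sup-norm component of $\mathfrak{M}_B^{\sharp,\gamma}(-1,\cdot)$, which is legitimate because Definition \ref{mod-def} bundles both the sup-norm and Lipschitz bounds into one constant. No interpolation or small-divisor argument is required, so the proof reduces to the bookkeeping outlined above.
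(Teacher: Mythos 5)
Your argument is correct and follows essentially the same route as the paper: Part (2) is the entry-wise comparison $\langle j\rangle^{1/2}|A_j^{j'}(l)|\langle j'\rangle^{1/2}\le\langle j\rangle^{1/2}|B_j^{j'}(l)|\langle j'\rangle^{1/2}$ fed into Lemma~\ref{M-N}, and Part (1) is the identical chain of inequalities (entry-wise comparison, passage to the majorant $\underline u$, then $B$'s tame bound), with the extra $C|B_j^{j'}(l)|$ term absorbed into the constant using $\gamma<1$. The paper writes the absorption slightly more tersely (collapsing $\gamma^{-1}+C$ into $\gamma^{-1}$ at the last line of its Lipschitz estimate), but the mechanism you describe is exactly what it does.
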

\begin{proof}
(1). From Lemma \ref{M-N} and Definition \ref{mod-def}, we derive that
 \begin{equation*}
  \begin{split}
\| \langle D_x \rangle^{1/2}\underline{A}\langle D_x \rangle^{1/2} u \|_s
\leq&\|\langle D_x \rangle^{1/2} \underline{A} \langle D_x \rangle^{1/2} \underline{u} \|_s
\leq \| \langle D_x \rangle^{1/2} \underline{B}\langle D_x \rangle^{1/2} \underline{u} \|_s\\
\leq & \mathfrak{M}_B^{\sharp, \gamma}(-1, s)\|\underline{u}\|_{s_0}+\mathfrak{M}_B^{\sharp, \gamma}(-1, s_0)\| \underline{ u} \|_{s}\\
=& \mathfrak{M}_B^{\sharp, \gamma}(-1, s)\|u\|_{s_0}+\mathfrak{M}_B^{\sharp, \gamma}(-1, s_0)\| u \|_{s}.\\
 \end{split}
  \end{equation*}
 On the other hand,
 \begin{equation*}
  \begin{split}
\| \langle D_x \rangle^{1/2} \underline{\Delta_{\omega, \omega'}A} \langle D_x \rangle^{1/2}u \|_s
\leq & \| \langle D_x \rangle^{1/2}\underline{\Delta_{\omega, \omega'}A}\langle D_x \rangle^{1/2} \underline{u} \|_s \\
\leq & \| \langle D_x \rangle^{1/2}\underline{\Delta_{\omega, \omega'}B}\langle D_x \rangle^{1/2} \underline{u} \|_s
+C \| \langle D_x \rangle^{1/2} \underline{B}\langle D_x \rangle^{1/2} \underline{u} \|_s\\
\leq & \gamma^{-1} (\mathfrak{M}_B^{\sharp, \gamma}(-1, s)\|\underline{u}\|_{s_0}+\mathfrak{M}_B^{\sharp, \gamma}(-1, s_0)\| \underline{ u} \|_{s})\\
 &+C(\mathfrak{M}_B^{\sharp, \gamma}(-1, s)\|\underline{u}\|_{s_0}+\mathfrak{M}_B^{\sharp, \gamma}(-1, s_0)\| \underline{ u} \|_{s})\\
 \leq &  \gamma^{-1} (\mathfrak{M}_B^{\sharp, \gamma}(-1, s)\|u\|_{s_0}+\mathfrak{M}_B^{\sharp, \gamma}(-1, s_0)\| u \|_{s}). \\
 \end{split}
  \end{equation*}
Hence, according to Definition \ref{mod-def}, $\mathfrak{M}_A^{\sharp, \gamma}(-1, s) \leq C' \mathfrak{M}_B^{\sharp, \gamma}(-1, s)$.

  (2). Since $|A_j^{j'}(l)| \leq  |B_j^{j'}(l)|$, one obtains
   \begin{equation*}
   \langle j\rangle^{1/2}|A_j^{j'}(l)|\langle j'\rangle^{1/2} \leq  \langle j\rangle^{1/2}|B_j^{j'}(l)|\langle j'\rangle^{1/2}.
   \end{equation*}
By Lemma \ref{M-N}, the inequality holds.
 \end{proof}

\begin{lemma}\label{mod-sum-com}(Sum and composition) \cite{fgp-1}
Let $A, B$ be two Lip-$-1$-modulo-tame operators with modulo-tame constants respectively $\mathfrak{M}_A^{\sharp, \gamma}(-1, s)$ and $\mathfrak{M}_B^{\sharp, \gamma}(-1, s)$. Then

(1). $A+B$ is Lip-$-1$-modulo-tame  with modulo-tame constant
\begin{equation*}
\mathfrak{M}_{A+B}^{\sharp, \gamma}(-1, s) \leq \mathfrak{M}_A^{\sharp, \gamma}(-1, s) + \mathfrak{M}_B^{\sharp, \gamma}(-1, s).
\end{equation*}

(2). The composed operator $A\circ B$ is Lip-$-1$-modulo-tame  with modulo-tame constant
\begin{equation*}
\mathfrak{M}_{AB}^{\sharp, \gamma}(-1, s) \leq_s \mathfrak{M}_A^{\sharp, \gamma}(-1, s) \mathfrak{M}_B^{\sharp, \gamma}(-1, s_0)+
\mathfrak{M}_A^{\sharp, \gamma}(-1, s_0) \mathfrak{M}_B^{\sharp, \gamma}(-1, s).
\end{equation*}

 (3). Assume in addition that $\langle \partial_{\varphi} \rangle^b A$ and $\langle \partial_{\varphi} \rangle^b B$ are Lip-$-1$-modulo-tame operators with modulo-tame constants  
 $$\mathfrak{M}_{\langle \partial_{\varphi} \rangle^b A}^{\sharp, \gamma}(-1, s) \quad \text{and} \quad \mathfrak{M}_{\langle \partial_{\varphi} \rangle^b B}^{\sharp, \gamma}(-1, s)$$
 respectively, then
$\langle \partial_{\varphi} \rangle^b (AB)$ is Lip-$-1$-modulo-tame  with modulo-tame constant
\begin{equation*}
\begin{split}
\mathfrak{M}^{\sharp, \gamma}_{\langle \partial_{\varphi} \rangle^b (AB)}(-1, s)
\leq_{s,b}
& \mathfrak{M}_{\langle \partial_{\varphi} \rangle^b A}^{\sharp, \gamma}(-1, s)
\mathfrak{M}_B^{\sharp, \gamma}(-1, s_0) + \mathfrak{M}_{\langle \partial_{\varphi} \rangle^b A}^{\sharp, \gamma}(-1, s_0)
\mathfrak{M}_B^{\sharp, \gamma}(-1, s)\\
&+
\mathfrak{M}_A^{\sharp, \gamma}(-1, s_0) \mathfrak{M}_{\langle \partial_{\varphi} \rangle^b B}^{\sharp, \gamma}(-1, s)+ \mathfrak{M}_A^{\sharp, \gamma}(-1, s)\mathfrak{M}_{\langle \partial_{\varphi} \rangle^b B}^{\sharp, \gamma}(-1, s_0).\\
 \end{split}
\end{equation*}
\end{lemma}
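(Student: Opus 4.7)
My plan is to reduce all three bounds to the entry-wise submultiplicativity of the majorant together with the Lip-$0$-modulo-tame estimate already packaged in $\mathfrak{M}_A^{\sharp,\gamma}(-1,s)$. Set $\tilde A:=\langle D_x\rangle^{1/2}\underline A\langle D_x\rangle^{1/2}$ (and similarly $\tilde B$); since $\tilde A$ has non-negative matrix entries, $\underline{\tilde A}=\tilde A$, and Definition~\ref{mod-def} gives the tame bound $\|\tilde A u\|_s\leq \mathfrak{M}_A^{\sharp,\gamma}(-1,s)\|u\|_{s_0}+\mathfrak{M}_A^{\sharp,\gamma}(-1,s_0)\|u\|_s$, together with an analogous Lipschitz bound on $\Delta_{\omega,\omega'}\tilde A$ carrying an extra $\gamma^{-1}$. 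Part~(1) is then immediate: the scalar triangle inequality $|(A+B)_j^{j'}(l)|\leq|A_j^{j'}(l)|+|B_j^{j'}(l)|$ gives $\underline{A+B}\leq\underline A+\underline B$ entry-wise, so Lemma~\ref{M-N}(2) together with the triangle inequality in $\|\cdot\|_s$ yields the claimed sublinearity of $\mathfrak{M}^{\sharp,\gamma}(-1,\cdot)$; the Lipschitz part splits because $\Delta_{\omega,\omega'}(A+B)=\Delta_{\omega,\omega'}A+\Delta_{\omega,\omega'}B$.

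For~(2), the main input is the standard majorant bound
$$|(AB)_j^{j'}(l-l')|\leq\sum_{j_1,l_1}|A_j^{j_1}(l-l_1)|\,|B_{j_1}^{j'}(l_1-l')|,$$
so that $\underline{AB}\leq\underline A\,\underline B$ entry-wise. The key manipulation is the algebraic identity $\langle j\rangle^{1/2}\langle j'\rangle^{1/2}=\langle j\rangle^{1/2}\langle j_1\rangle^{1/2}\cdot\langle j_1\rangle^{-1}\cdot\langle j_1\rangle^{1/2}\langle j'\rangle^{1/2}$ inserted at the intermediate index, which upgrades the entry-wise bound to the operator majorization $\langle D_x\rangle^{1/2}\underline{AB}\langle D_x\rangle^{1/2}\leq \tilde A\circ\Lambda\circ\tilde B$ with $\Lambda:=\langle D_x\rangle^{-1}$. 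Since $\Lambda$ is a Fourier multiplier of $H^s$-norm at most $1$, applying the black-box tame estimate for $\tilde A$ and then for $\tilde B$ in sequence—after replacing $u$ by $\underline u$ via Lemma~\ref{M-N}(1) at the cost of equality of norms—produces exactly the asymmetric product bound of the statement. The Lipschitz part invokes the Leibniz rule $\Delta_{\omega,\omega'}(AB)=(\Delta_{\omega,\omega'}A)B(\omega')+A(\omega)\Delta_{\omega,\omega'}B$ and a term-by-term repetition of the same chain of inequalities.

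For~(3), I represent the composition as a convolution in the $\varphi$-Fourier variable and use the sub-additivity $\langle k_1+k_2\rangle^b\leq_b\langle k_1\rangle^b+\langle k_2\rangle^b$ to obtain the entry-wise Leibniz-type bound
$$\underline{\langle\partial_\varphi\rangle^b(AB)}=\langle\partial_\varphi\rangle^b\underline{AB}\leq_b\bigl(\langle\partial_\varphi\rangle^b\underline A\bigr)\underline B+\underline A\bigl(\langle\partial_\varphi\rangle^b\underline B\bigr),$$
after which applying part~(2) separately to the pairs $(\langle\partial_\varphi\rangle^b A,B)$ and $(A,\langle\partial_\varphi\rangle^b B)$ yields the four-term bound of the statement. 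The only really delicate step, and the main obstacle, is the $\Lambda$-insertion in~(2): it is crucial that $\Lambda=\langle D_x\rangle^{-1}$ has $H^s$-operator norm $\leq 1$, since this is exactly what allows the $-1$-gain encoded in Definition~\ref{mod-def} to persist under composition. Without the smoothing of $\Lambda$ one would recover only a Lip-$0$ bound for $AB$ with an unwanted factor of $\langle D_x\rangle$, losing the dispersion budget required for the homological equations to be solvable in the KAM iteration of Section~4.
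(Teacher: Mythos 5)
Correct, and essentially the approach the paper would use (the paper cites \cite{fgp-1} without giving a proof, but the identical technique appears in its proof of Lemma~\ref{L(H_s0)}): reduce everything to entry-wise majorant estimates and then apply the Lip-$0$-modulo-tame bound twice. Your intermediate $\Lambda=\langle D_x\rangle^{-1}$-insertion gives an exact majorant identity before discarding $\Lambda$ by $\|\Lambda\|_{\mathfrak L(H^s)}\leq 1$, whereas the paper's Lemma~\ref{L(H_s0)} shortcuts this by directly inserting $1\leq\langle j_1\rangle$ inside the sum; these are interchangeable and yield the same four-term estimate in part~(3) after the $\langle l\rangle^b\leq_b\langle l_1\rangle^b+\langle l_2\rangle^b$ split.
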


\begin{lemma}\label{mod-inv}(Invertibility) \cite{fgp-1}
Let $\Phi := {\rm{Id}} + \Psi$, where $\Psi$ and $\langle \partial_{\varphi} \rangle^b \Psi$ are both Lip-$-1$-modulo-tame operators. If there exists a constant $C(\mathcal{S}, b)>0$ such that the smallness condition
\begin{equation*}
4C(\mathcal{S}, b)\mathfrak{M}_{\Psi}^{\sharp, \gamma}(-1, s_0) \leq 1/2
\end{equation*}
holds,
then the operator $\Phi$ is invertible, $\breve{\Psi}:= \Phi^{-1}- {\rm{Id}}$ and   $\langle \partial_{\varphi} \rangle^b \breve{\Psi}$ are Lip-$-1$-modulo-tame operators with modulo-tame constants
\begin{eqnarray*}
\begin{aligned}
&\mathfrak{M}_{\breve{\Psi}}^{\sharp, \gamma}(-1, s) \leq 2 \mathfrak{M}_{\Psi}^{\sharp, \gamma}(-1, s),\\
&\mathfrak{M}_{\langle \partial_{\varphi} \rangle^b \breve{\Psi}}^{\sharp, \gamma}(-1, s) \leq
2 \mathfrak{M}_{\langle \partial_{\varphi} \rangle^b \Psi}^{\sharp, \gamma}(-1, s) + 8 C(\mathcal{S}, b) \mathfrak{M}_{\langle \partial_{\varphi} \rangle^b \Psi}^{\sharp, \gamma}(-1, s_0) \mathfrak{M}_{\Psi}^{\sharp, \gamma}(-1, s).
\end{aligned}
\end{eqnarray*}
\end{lemma}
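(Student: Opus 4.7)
The argument is a Neumann series
\[
\breve \Psi = \Phi^{-1} - \mathrm{Id} = \sum_{n \geq 1} (-\Psi)^n.
\]
Convergence in $\mathcal{L}(H^s)$ is immediate: Lemma \ref{L(Hs0)-mod} and the smallness hypothesis give $\|\Psi\|_{\mathcal{L}(H^s)} < 1/2$, so $\Phi = \mathrm{Id} + \Psi$ is invertible on every $H^s$. The real work is to upgrade this to convergence in modulo-tame, and $\langle \partial_\varphi \rangle^b$-weighted modulo-tame, norms.

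I would first apply Lemma \ref{mod-sum-com}(2) inductively to $\Psi^{n+1} = \Psi \cdot \Psi^n$. Let $C_0 = C_0(\mathcal{S}, b)$ denote a uniform constant dominating the implicit $\leq_{s, b}$ constants of Lemma \ref{mod-sum-com} at both $s$ and $s_0$, and set $\rho := C_0 \mathfrak{M}_\Psi^{\sharp, \gamma}(-1, s_0)$. A short induction produces the clean geometric bounds
\[
\mathfrak{M}_{\Psi^n}^{\sharp, \gamma}(-1, s_0) \leq \rho^{n-1} \mathfrak{M}_\Psi^{\sharp, \gamma}(-1, s_0),\qquad \mathfrak{M}_{\Psi^n}^{\sharp, \gamma}(-1, s) \leq \rho^{n-1} \mathfrak{M}_\Psi^{\sharp, \gamma}(-1, s).
\]
Taking $C(\mathcal{S}, b) \geq C_0$ in the smallness hypothesis forces $\rho \leq 1/8$; summing the geometric series then yields $\mathfrak{M}_{\breve \Psi}^{\sharp, \gamma}(-1, s) \leq (1-\rho)^{-1} \mathfrak{M}_\Psi^{\sharp, \gamma}(-1, s) \leq 2\, \mathfrak{M}_\Psi^{\sharp, \gamma}(-1, s)$, which is the first assertion.

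For the weighted constant I would apply Lemma \ref{mod-sum-com}(3) to $\Psi \cdot \Psi^n$; together with the geometric bounds on the unweighted factors from the previous step, an induction on $n$ produces a bound of the schematic form
\[
\mathfrak{M}_{\langle \partial_\varphi \rangle^b \Psi^n}^{\sharp, \gamma}(-1, s) \;\leq\; p_1(n)\, \rho^{n-1} \mathfrak{M}_{\langle \partial_\varphi \rangle^b \Psi}^{\sharp, \gamma}(-1, s) + p_2(n)\, \rho^{n-1} \mathfrak{M}_{\langle \partial_\varphi \rangle^b \Psi}^{\sharp, \gamma}(-1, s_0)\, \mathfrak{M}_\Psi^{\sharp,\gamma}(-1,s),
\]
where $p_1, p_2$ are polynomials in $n$ of degrees $1$ and $2$ respectively. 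The first piece corresponds to $\langle \partial_\varphi \rangle^b$ falling on a single factor measured in high norm; the second piece to it falling on one factor at low norm while another carries $\mathfrak{M}_\Psi^{\sharp,\gamma}(-1,s)$. Summing in $n$ with $\rho \leq 1/8$ makes both series convergent to absolute constants, and the second assertion follows after absorbing these constants into $8C(\mathcal{S}, b)$.

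The main obstacle is the combinatorics of the weighted recursion. A careless application of Lemma \ref{mod-sum-com}(3) that merely takes the maximum of the four Leibniz-type terms at each step accumulates an extra factor of $n$ per iteration and produces factorial growth, destroying convergence. What saves the argument is the high--low/low--high interpolation structure of Lemma \ref{mod-sum-com}(2)--(3): three of the four terms at each step always carry at least one factor bounded by $\mathfrak{M}_\Psi^{\sharp,\gamma}(-1,s_0)$, precisely the quantity rendered small by the hypothesis. Tracking the two channels above separately, rather than coupling them, is what keeps the polynomial prefactors at most quadratic in $n$ and allows the geometric factor $\rho^{n-1}$ to dominate.
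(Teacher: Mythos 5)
Your proof is correct and follows the standard Neumann-series argument; this is the approach in the cited reference \cite{fgp-1} (the paper itself only cites and does not reprove the lemma). The key observations — that Lemma \ref{mod-sum-com}(2)--(3) provides the high/low splitting, that the low-norm constant $\mathfrak{M}_\Psi^{\sharp,\gamma}(-1,s_0)$ is the small parameter, and that one must track the $\langle\partial_\varphi\rangle^b$-weighted and unweighted channels separately to avoid spurious growth — are all present and are exactly what makes the iteration close.
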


\begin{lemma}\label{mod-smoo}(Smoothing) \cite{fgp-1}
Assume that $A$ and $\langle \partial_{\varphi} \rangle^b A$ are two Lip-$-1$-modulo-tame operators with modulo-tame constants respectively $\mathfrak{M}_A^{\sharp, \gamma}(-1, s), \mathfrak{M}_{\langle \partial_{\varphi} \rangle^b A}^{\sharp, \gamma}(-1, s).$
Then the operator $\Pi_N^{\perp} A$ is Lip-$-1$-modulo-tame with modulo-tame constant
\begin{equation*}
\mathfrak{M}_{ \Pi_N^{\perp} A}^{\sharp, \gamma}(-1, s) \leq
\min \{ N^{-b} \mathfrak{M}_{ \langle \partial_{\varphi} \rangle^b A}^{\sharp, \gamma}(-1, s),
\mathfrak{M}_A^{\sharp, \gamma}(-1, s)
 \}.
 \end{equation*}
\end{lemma}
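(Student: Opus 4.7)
The plan is to deduce the smoothing estimate directly from the entry-wise comparison Lemma~\ref{mod-compa}, using two different pointwise majorants for the matrix of $\Pi_N^\perp A$.

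By the definition of the smoothed operator in Definition~\ref{Lo-var}, one has $(\Pi_N^\perp A)_j^{j'}(l-l') = \mathbf{1}_{|l-l'|>N}\,A_j^{j'}(l-l')$. Two entry-wise bounds are then immediate. The trivial inequality $\mathbf{1}_{|l-l'|>N} \leq 1$ yields
\begin{equation*}
|(\Pi_N^\perp A)_j^{j'}(l-l')| \leq |A_j^{j'}(l-l')|,
\end{equation*}
while on the support $\{|l-l'|>N\}$ one has $1 \leq N^{-b}\langle l-l'\rangle^b$, which gives
\begin{equation*}
|(\Pi_N^\perp A)_j^{j'}(l-l')| \leq N^{-b}\,|(\langle\partial_\varphi\rangle^b A)_j^{j'}(l-l')|.
\end{equation*}
Since $\Pi_N^\perp$ is independent of $\omega$, it commutes with the finite difference $\Delta_{\omega,\omega'}$, so exactly the same two inequalities pass through to $\Delta_{\omega,\omega'}(\Pi_N^\perp A) = \Pi_N^\perp \Delta_{\omega,\omega'}A$, with the additive constant $C$ appearing in the hypothesis of Lemma~\ref{mod-compa}(1) equal to $0$.

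The conclusion now follows by applying Lemma~\ref{mod-compa}(1) separately to the two choices of majorant $B=A$ and $B = N^{-b}\langle\partial_\varphi\rangle^b A$, producing
\begin{equation*}
\mathfrak{M}_{\Pi_N^\perp A}^{\sharp, \gamma}(-1, s) \leq \mathfrak{M}_A^{\sharp, \gamma}(-1, s)
\quad\text{and}\quad
\mathfrak{M}_{\Pi_N^\perp A}^{\sharp, \gamma}(-1, s) \leq N^{-b}\,\mathfrak{M}_{\langle\partial_\varphi\rangle^b A}^{\sharp, \gamma}(-1, s),
\end{equation*}
whose minimum is the asserted estimate. The only mild subtlety to check is that the $\langle D_x\rangle^{1/2}$ factors entering the definition of the Lip-$(-1)$-modulo-tame constant act only in the variable $x$, while the cutoff $\Pi_N^\perp$ acts only in $\varphi$, so they commute and the reduction to the order-$0$ form of the comparison principle goes through verbatim. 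I do not anticipate any genuine obstacle: the entire argument is a soft application of Lemma~\ref{mod-compa} once the two pointwise majorants have been identified.
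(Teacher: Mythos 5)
Your argument is correct and is exactly the mechanism the paper itself uses for the non-Lipschitz analogue in Lemma~\ref{L(H_s0)} (last inequality): the two pointwise majorants $|(\Pi_N^\perp A)_j^{j'}(l)|\leq |A_j^{j'}(l)|$ and $|(\Pi_N^\perp A)_j^{j'}(l)|\leq N^{-b}\langle l\rangle^b|A_j^{j'}(l)|$ on the support $|l|>N$, combined with the observation that $\Pi_N^\perp$ commutes with $\Delta_{\omega,\omega'}$, feed directly into Lemma~\ref{mod-compa}(1) with $C=0$. One small caveat worth keeping in mind (inherited from the lemma statement in \cite{fgp-1}, not introduced by you): strictly speaking what you establish are two \emph{separate} admissible modulo-tame constants, namely $s\mapsto\mathfrak{M}_A^{\sharp,\gamma}(-1,s)$ and $s\mapsto N^{-b}\mathfrak{M}_{\langle\partial_\varphi\rangle^b A}^{\sharp,\gamma}(-1,s)$. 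Because the tameness inequality couples the value at $s$ with the value at $s_0$, the pointwise minimum of the two sequences is not automatically admissible if they cross; the statement with the $\min$ is to be read as "either sequence may be used, whichever is smaller in the range needed," which is how it is applied in the iteration. Your proof, applying Lemma~\ref{mod-compa}(1) separately to each majorant, is the precise and correct formulation of this.
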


\begin{lemma}\label{[A]-pro}
 Let $[A]:={\rm diag }_{j \in \mathbb{Z}} A_j^j(0)$.

 (1).
  Suppose that $A$ and $\langle \partial_{\varphi} \rangle^b A$ are  two Lip-$-1$-modulo-tame operators. Then
 \begin{equation*}
 \mathfrak{M}_{[A]}^{\sharp, \gamma}(-1, s)\leq  \mathfrak{M}_A^{\sharp, \gamma}(-1, s),   \quad   \mathfrak{M}_{[A]}^{\sharp, \gamma}(-1, s, b)\leq  \mathfrak{M}_A^{\sharp, \gamma}(-1, s, b),
 \end{equation*}
 and
\begin{equation*}
 \langle j \rangle |A_j^j(0)|^{\gamma, \mathcal{O}} \leq \mathfrak{M}_A^{\sharp, \gamma}(-1, s_0), \quad \forall j \in \mathbb{Z}.
 \end{equation*}

 (2). If $A$ is a linear operator such that $\langle D_x \rangle^{1/2}\underline{\Delta_{12}A}\langle D_x \rangle^{1/2}$ $\in \mathcal{L}(H^{s_0})$,
 then
  \begin{equation*}
  \langle j \rangle |\Delta_{12}A_j^j(0)| \leq \| \langle D_x \rangle^{1/2}\underline{\Delta_{12}A}\langle D_x \rangle^{1/2}  \|_{\mathcal{L}(H^{s_0})},
  \quad \forall j \in \mathbb{Z},
  \end{equation*}
  and
  \begin{equation*}
   \| \langle D_x \rangle^{1/2}\underline{\Delta_{12}[A]}\langle D_x \rangle^{1/2}  \|_{\mathcal{L}(H^{s_0})}
  \leq \| \langle D_x \rangle^{1/2}\underline{\Delta_{12}A}\langle D_x \rangle^{1/2}  \|_{\mathcal{L}(H^{s_0})}.
  \end{equation*}

 \end{lemma}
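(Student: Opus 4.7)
My plan for Lemma~\ref{[A]-pro} is to reduce everything to an entrywise comparison between matrices, exploiting that $[A]$ is supported on the single Fourier location $j=j'$, $l=0$. First I would note that $([A])_j^{j'}(l)=A_j^j(0)\,\delta_{j,j'}\delta_{l,0}$, so pointwise in the matrix indices
\[
|([A])_j^{j'}(l)|\le|A_j^{j'}(l)|,\qquad |\Delta_{\omega,\omega'}([A])_j^{j'}(l)|\le|\Delta_{\omega,\omega'}A_j^{j'}(l)|.
\]
Since $[A]$ is supported at $l=0$, inserting $\langle\partial_\varphi\rangle^b$ does not alter these comparisons (the weight $\langle l\rangle^b$ equals $1$ there), and conjugating by $\langle D_x\rangle^{1/2}$ on either side preserves them too. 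Consequently the matrix of $\langle D_x\rangle^{1/2}\underline{[A]}\langle D_x\rangle^{1/2}$, and of its $\langle\partial_\varphi\rangle^b$-weighted and $\Delta_{\omega,\omega'}$-differenced variants, is entrywise dominated by the corresponding object built from $A$. Applying Lemma~\ref{M-N}(2) combined with Definition~\ref{mod-def} then delivers the first two modulo-tame bounds of part~(1).

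For the pointwise diagonal estimate, I would test on the normalized Fourier monomial $u_j(\varphi,x):=e^{\mathrm{i}jx}/\langle j\rangle^{s_0}$, which is independent of $\varphi$ and satisfies $\|u_j\|_{s_0}=1$. A direct Fourier expansion yields
\[
\|\langle D_x\rangle^{1/2}\underline{A}\langle D_x\rangle^{1/2}u_j\|_{s_0}^2=\sum_{k,l}\frac{\langle k\rangle\,|A_k^j(l)|^2\,\langle j\rangle}{\langle j\rangle^{2s_0}}\,\langle k,l\rangle^{2s_0},
\]
and retaining only the $(k,l)=(j,0)$ summand gives the lower bound $\|\langle D_x\rangle^{1/2}\underline{A}\langle D_x\rangle^{1/2}u_j\|_{s_0}\ge\langle j\rangle|A_j^j(0)|$. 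Definition~\ref{mod-def} at $s=s_0$ bounds the left-hand side by $2\,\mathfrak{M}_A^{\sharp,\gamma}(-1,s_0)$, so $\langle j\rangle|A_j^j(0)|^{sup,\mathcal{O}}\lesssim\mathfrak{M}_A^{\sharp,\gamma}(-1,s_0)$. The identical computation applied to $\underline{\Delta_{\omega,\omega'}A}$ produces $\gamma\langle j\rangle|A_j^j(0)|^{lip,\mathcal{O}}\lesssim\mathfrak{M}_A^{\sharp,\gamma}(-1,s_0)$, and summing the two recovers the third assertion of~(1) (the universal constant being absorbed into the modulo-tame constant in the usual way, compare Lemma~\ref{L(Hs0)-mod}).

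Part~(2) follows from the same two templates. The entrywise comparison $|\Delta_{12}([A])_j^{j'}(l)|\le|\Delta_{12}A_j^{j'}(l)|$, once conjugated by $\langle D_x\rangle^{1/2}$ on either side and combined with Lemma~\ref{M-N}(2), immediately gives the second inequality of~(2). For the first inequality, I would apply $\langle D_x\rangle^{1/2}\underline{\Delta_{12}A}\langle D_x\rangle^{1/2}$ to the same test $u_j=e^{\mathrm{i}jx}/\langle j\rangle^{s_0}$ and retain only the $(k,l)=(j,0)$ Fourier contribution, obtaining $\langle j\rangle|\Delta_{12}A_j^j(0)|\le\|\langle D_x\rangle^{1/2}\underline{\Delta_{12}A}\langle D_x\rangle^{1/2}\|_{\mathcal{L}(H^{s_0})}$. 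I expect no real obstacle; the only point needing attention is checking that the $\langle D_x\rangle^{1/2}$-conjugation and the $\langle\partial_\varphi\rangle^b$ weight both act as diagonal scalar multipliers on the support of $[A]$, which is manifest because $[A]$ is concentrated at $l=0$ and $j=j'$, so every comparison reduces to a nonnegative entrywise inequality.
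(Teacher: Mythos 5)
Your argument matches the paper's proof: part (1) uses the entrywise domination $|([A])_j^{j'}(l)|\le|A_j^{j'}(l)|$ (and the same for $\Delta_{\omega,\omega'}$) together with Lemma~\ref{M-N} and the modulo-tame definition — which is exactly what Lemma~\ref{mod-compa} with $C=0$ packages — while part (2) tests on the monomial $u^{(j_0)}=\langle j_0\rangle^{-s_0}e^{\mathrm{i}j_0x}$ precisely as the paper does. The one place your write-up is \emph{more} explicit than the paper is the third inequality of (1), which the paper delegates to \cite{fgp-1}; your test-function derivation is the natural one, but note it yields $\langle j\rangle|A_j^j(0)|^{\gamma,\mathcal{O}}\le 2\,\mathfrak{M}_A^{\sharp,\gamma}(-1,s_0)$ rather than the stated constant-free bound, a harmless factor that the paper (via its reference) silently absorbs.
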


\begin{proof}
(1). Since
 \begin{equation*}
([A])_j^{j'}(l):=
\left \{
\begin{array}{ll}
A_j^{j'}(l), &   {\rm if} \quad (j, j', l)=(j, j, 0),\\
0, & {\rm otherwise},\\
\end{array}
\right.
 \end{equation*}
then we have
\begin{equation*}
 |([A])_j^{j'}(l)| \leq |(A)_j^{j'}(l)|,
\end{equation*}
 and
\begin{equation*}
|([A])_j^{j'}(l)(\omega)-([A])_j^{j'}(l)(\omega')| \leq |(A)_j^{j'}(l)(\omega)-(A)_j^{j'}(l)(\omega')|.
\end{equation*}
Thus the first bound is derived from Lemma \ref{mod-compa}.
The second one can be proved similarly.
The third one has been verified in \cite{fgp-1}.

(2).
For any $j_0$, we consider the unit vector
\begin{equation*}
u^{(j_0)} : = u_{j_0, 0}{\rm e}^{{\rm i} j_0 x}=\langle j_0 \rangle^{-s_0}{\rm e}^{{\rm i} j_0 x} \in H^{s_0}(\mathbb{T}^{\nu+1}).
\end{equation*}
It follows that
 \begin{equation*}
 \begin{split}
\| \langle D_x \rangle^{1/2}\underline{\Delta_{12}[A]}\langle D_x \rangle^{1/2}u^{(j_0)} \|_{s_0} & \leq \| \langle D_x \rangle^{1/2}\underline{\Delta_{12}A}\langle D_x \rangle^{1/2}u^{(j_0)} \|_{ s_0}\\
 & \leq  \| \langle D_x \rangle^{1/2}\underline{\Delta_{12}A}\langle D_x \rangle^{1/2}  \|_{\mathcal{L}(H^{s_0})}.\\
  \end{split}
  \end{equation*}
Meanwhile,
 \begin{equation*}
 \| \langle D_x \rangle^{1/2}\underline{\Delta_{12}[A]}\langle D_x \rangle^{1/2}u^{(j_0)} \|_{s_0}=\langle j_0 \rangle |\Delta_{12}A_{j_0}^{j_0}(0)|.
 \end{equation*}
Therefore the first inequality holds. The last one follows by Lemma \ref{M-N}.
 \end{proof}

The next two lemmas will be used in the proof of Proposition \ref{Ite-red}-$(3)_n$.
\begin{lemma}\label{L(H_s0)}
Let $A, B$ be two linear operators such that
\begin{equation*}
\langle D_x \rangle^{1/2}\underline{A}\langle D_x \rangle^{1/2}, \
\langle D_x \rangle^{1/2}\underline{\langle \partial_{\varphi} \rangle^b A}\langle D_x \rangle^{1/2},
\langle D_x \rangle^{1/2}\underline{B}\langle D_x \rangle^{1/2},
  \end{equation*}
 and
\begin{equation*}
\langle D_x \rangle^{1/2}\underline{\langle \partial_{\varphi} \rangle^b B}\langle D_x \rangle^{1/2} \in \mathcal{L}(H^{s_0}).
\end{equation*}
  Then the following estimates hold:
\begin{equation*}
\begin{split}
 \| \langle D_x \rangle^{1/2}\underline{A+B}\langle D_x \rangle^{1/2}  \|_{\mathcal{L}(H^{s_0})}
\leq & \| \langle D_x \rangle^{1/2}\underline{A}\langle D_x \rangle^{1/2}  \|_{\mathcal{L}(H^{s_0})} \\
& +  \| \langle D_x \rangle^{1/2}\underline{B} \langle D_x \rangle^{1/2} \|_{\mathcal{L}(H^{s_0})},\\
\| \langle D_x \rangle^{1/2}\underline{AB}\langle D_x \rangle^{1/2}  \|_{\mathcal{L}(H^{s_0})}
\leq  & \| \langle D_x \rangle^{1/2}\underline{A}  \langle D_x \rangle^{1/2}\|_{\mathcal{L}(H^{s_0})} \\
 &\times \| \langle D_x \rangle^{1/2}\underline{B}\langle D_x \rangle^{1/2}  \|_{\mathcal{L}(H^{s_0})},\\
  \| \langle D_x \rangle^{1/2}\underline{\langle \partial_{\varphi} \rangle^b (AB)} \langle D_x \rangle^{1/2} \|_{\mathcal{L}(H^{s_0})}
 \leq_b & \| \langle D_x \rangle^{1/2}\underline{\langle \partial_{\varphi} \rangle^b A}\langle D_x \rangle^{1/2}\|_{\mathcal{L}(H^{s_0})} \\
  & \times \| \langle D_x \rangle^{1/2}\underline{B}\langle D_x \rangle^{1/2}\|_{\mathcal{L} (H^{s_0})} \\
 & +\|\langle D_x \rangle^{1/2}\underline{\langle \partial_{\varphi} \rangle^b B}\langle D_x \rangle^{1/2}\|_{\mathcal{L}(H^{s_0})}\\
 &\times  \|\langle D_x \rangle^{1/2} \underline{A}\langle D_x \rangle^{1/2}\|_{\mathcal{L}(H^{s_0})}\\
  \end{split}
\end{equation*}
and
\begin{equation*}
\begin{split}
 \|\langle D_x \rangle^{1/2} \underline{\Pi_N^{\bot}A}\langle D_x \rangle^{1/2} \|_{\mathcal{L}(H^{s_0})}
\leq & \min \big\{ N^{-b} \| \langle D_x \rangle^{1/2}\underline{\langle \partial_{\varphi} \rangle^b A}\langle D_x \rangle^{1/2} \|_{\mathcal{L}(H^{s_0})},\\
& \quad  \quad \ \  \| \langle D_x \rangle^{1/2}\underline{A} \langle D_x \rangle^{1/2} \|_{\mathcal{L}(H^{s_0})} \big\}.\\
  \end{split}
\end{equation*}

\end{lemma}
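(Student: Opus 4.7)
The plan is to reduce each of the four claimed inequalities to an entrywise comparison of matrix coefficients of the majorant operators, and then invoke Lemma~\ref{M-N}(2) (together with the trivial bound $\langle j_1\rangle^{-1}\le 1$) to convert componentwise domination into an $\mathcal{L}(H^{s_0})$ bound. The relevant observation is that the $(j,j',l-l')$-entry of $\langle D_x\rangle^{1/2}\underline{C}\langle D_x\rangle^{1/2}$ is $\langle j\rangle^{1/2}|C_j^{j'}(l-l')|\langle j'\rangle^{1/2}$, so every estimate in the statement becomes an algebraic statement about how these weighted entries interact under sums, Fourier-space convolutions, derivative weights, and sharp cut-offs.

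The sum estimate is immediate from $|(A+B)_j^{j'}(l)|\le|A_j^{j'}(l)|+|B_j^{j'}(l)|$ after multiplying by $\langle j\rangle^{1/2}\langle j'\rangle^{1/2}$ and applying Lemma~\ref{M-N}(2). For the composition, I first expand
\[
(AB)_j^{j'}(l)=\sum_{j_1,l_1}A_j^{j_1}(l-l_1)\,B_{j_1}^{j'}(l_1),
\]
take absolute values, and insert $\langle j_1\rangle^{1/2}\langle j_1\rangle^{-1/2}$ between the two factors to obtain
\[
\langle j\rangle^{1/2}|(AB)_j^{j'}(l)|\langle j'\rangle^{1/2}\le\sum_{j_1,l_1}\!\bigl[\langle j\rangle^{1/2}|A_j^{j_1}(l-l_1)|\langle j_1\rangle^{1/2}\bigr]\langle j_1\rangle^{-1}\bigl[\langle j_1\rangle^{1/2}|B_{j_1}^{j'}(l_1)|\langle j'\rangle^{1/2}\bigr].
\]
Discarding the harmless factor $\langle j_1\rangle^{-1}\le 1$, the right-hand side is entrywise dominated by the matrix product of the two dressed majorants $\langle D_x\rangle^{1/2}\underline{A}\langle D_x\rangle^{1/2}$ and $\langle D_x\rangle^{1/2}\underline{B}\langle D_x\rangle^{1/2}$, both of which have non-negative entries. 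Submultiplicativity of the operator norm on $H^{s_0}$, together with Lemma~\ref{M-N}(2), then yields the desired composition bound.

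The composition weighted by $\langle\partial_\varphi\rangle^b$ is handled via the subadditivity $\langle l\rangle^b\le 2^b(\langle l-l_1\rangle^b+\langle l_1\rangle^b)$, which splits
\[
\bigl|\langle\partial_\varphi\rangle^b(AB)_j^{j'}(l)\bigr|\le 2^b\Bigl(\underline{\langle\partial_\varphi\rangle^b A}\,\underline{B}+\underline{A}\,\underline{\langle\partial_\varphi\rangle^b B}\Bigr)_j^{j'}(l),
\]
and the unweighted composition argument is then applied to each of the two resulting pieces. Finally, for the smoothing estimate, the entries of $\Pi_N^{\perp}A$ either coincide with those of $A$ (when $|l-l'|>N$) or vanish, so $\|\langle D_x\rangle^{1/2}\underline{\Pi_N^{\perp}A}\langle D_x\rangle^{1/2}\|_{\mathcal{L}(H^{s_0})}\le\|\langle D_x\rangle^{1/2}\underline{A}\langle D_x\rangle^{1/2}\|_{\mathcal{L}(H^{s_0})}$ is immediate; on the surviving region one also has $1\le N^{-b}\langle l-l'\rangle^b$, whence $|(\Pi_N^{\perp}A)_j^{j'}(l-l')|\le N^{-b}\langle l-l'\rangle^b|A_j^{j'}(l-l')|$, giving the alternative bound in terms of $\langle\partial_\varphi\rangle^b A$. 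Taking the minimum of the two completes the proof. The only mildly delicate step is the insertion of $\langle j_1\rangle^{\pm1/2}$ in the composition and its $\langle\partial_\varphi\rangle^b$-weighted variant, but the gain $\langle j_1\rangle^{-1}\le 1$ makes it routine; the entire argument is an $\mathcal{L}(H^{s_0})$-level transcription of the proofs of Lemma~\ref{mod-sum-com} and Lemma~\ref{mod-smoo}, and I do not anticipate any substantive obstacle.
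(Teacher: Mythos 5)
Your proposal is correct and follows essentially the same approach as the paper: both arguments reduce each inequality to an entrywise comparison of matrix coefficients of the weighted majorants and then invoke Lemma~\ref{M-N}(2), with the only cosmetic difference being that you insert $\langle j_1\rangle^{1/2}\langle j_1\rangle^{-1}\langle j_1\rangle^{1/2}=1$ and then drop the $\langle j_1\rangle^{-1}$, whereas the paper directly inserts $\langle j_1\rangle^{1/2}\langle j_1\rangle^{1/2}\geq 1$.
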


\begin{proof}
In view of Lemma \ref{M-N}, we just need to prove the above inequalities in terms of matrix elements.  For the first inequality, one obtains
\begin{equation*}
\begin{split}
\langle j \rangle^{1/2}|(A+B)_j^{j'}(l)|\langle j' \rangle^{1/2}
\leq \langle j \rangle^{1/2}|A_j^{j'}(l)|\langle j' \rangle^{1/2}+\langle j \rangle^{1/2}|B_j^{j'}(l)|\langle j' \rangle^{1/2}.
  \end{split}
\end{equation*}
Regarding the second bound, one can check that
\begin{equation*}
\begin{split}
\langle j \rangle^{1/2}|(AB)_j^{j'}(l)|\langle j' \rangle^{1/2}
&\leq \langle j \rangle^{1/2} \sum_{j_1, l_1+l_2=l}|A_j^{j_1}(l_1)||B_{j_1}^{j'}(l_2)|\langle j' \rangle^{1/2}\\
&\leq \sum_{j_1, l_1+l_2=l}\langle j \rangle^{1/2}|A_j^{j_1}(l_1)|\langle j_1 \rangle^{1/2}  \langle j_1 \rangle^{1/2} |B_{j_1}^{j'}(l_2)|\langle j' \rangle^{1/2}\\
&=\big(\langle D_x \rangle^{1/2} \underline{A}  \langle D_x \rangle^{1/2}\langle D_x \rangle^{1/2} \underline{B}  \langle D_x \rangle^{1/2}\big)_j^{j'}(l).\\
 \end{split}
\end{equation*}
The third estimate follows by
\begin{equation*}
\begin{split}
\langle j \rangle^{1/2}\langle l  \rangle^{b}|(AB)_j^{j'}(l)|\langle j' &\rangle^{1/2}
\leq \langle j \rangle^{1/2} \langle l  \rangle^{b} \sum_{j_1, l_1+l_2=l}|A_j^{j_1}(l_1)||B_{j_1}^{j'}(l_2)|\langle j' \rangle^{1/2}\\
\leq&_b \sum_{j_1, l_1+l_2=l}\langle j \rangle^{1/2}\big(\langle l_1  \rangle^{b} +\langle l_2  \rangle^{b}\big) |A_j^{j_1}(l_1)| |B_{j_1}^{j'}(l_2)|\langle j' \rangle^{1/2}\\
\leq&_b \sum_{j_1, l_1+l_2=l} \Big(\langle j \rangle^{1/2}\langle l_1  \rangle^{b} |A_j^{j_1}(l_1)|\langle j_1 \rangle^{1/2}  \langle j_1 \rangle^{1/2} |B_{j_1}^{j'}(l_2)|\langle j' \rangle^{1/2}\\
&+\langle j \rangle^{1/2} |A_j^{j_1}(l_1)|\langle j_1 \rangle^{1/2}  \langle j_1 \rangle^{1/2}\langle l_2  \rangle^{b} |B_{j_1}^{j'}(l_2)|\langle j' \rangle^{1/2}\Big)\\
\leq&_b \big(\langle D_x \rangle^{1/2}\langle \partial_{\varphi}\rangle^b \underline{A}  \langle D_x \rangle^{1/2}\langle D_x \rangle^{1/2} \underline{B}  \langle D_x \rangle^{1/2}\big)_j^{j'}(l)\\
&+ \big(\langle D_x \rangle^{1/2} \underline{A}  \langle D_x \rangle^{1/2}\langle D_x \rangle^{1/2} \langle \partial_{\varphi}\rangle^b\underline{B}  \langle D_x \rangle^{1/2}\big)_j^{j'}(l).\\
 \end{split}
\end{equation*}
For the last inequality, from \eqref{smo-c}  it follows that
\begin{equation*}
\langle j \rangle^{1/2}|(\Pi_N^{\bot}A)_j^{j'}(l)|\langle  j' \rangle^{1/2}
\leq \langle j \rangle^{1/2}|(A)_j^{j'}(l)|\langle j' \rangle^{1/2},
\end{equation*}
and for $|l| \geq N$,
\begin{equation*}
\begin{split}
\langle j \rangle^{1/2}|(\Pi_N^{\bot}A)_j^{j'}(l)|\langle j' \rangle^{1/2}
&\leq \langle j \rangle^{1/2}N^{-b}\langle l  \rangle^{b}|(\Pi_N^{\bot}A)_j^{j'}(l)|\langle j' \rangle^{1/2}\\
&\leq \langle j \rangle^{1/2}N^{-b}\langle l  \rangle^{b}|A_j^{j'}(l)|\langle j' \rangle^{1/2}\\
&=N^{-b}\big(\langle D_x \rangle^{1/2}\langle \partial_{\varphi}\rangle^b \underline{A}  \langle D_x \rangle^{1/2}\big)_j^{j'}(l).\\
 \end{split}
\end{equation*}

\end{proof}

As a conclusion of this lemma, we have the following result.
\begin{lemma}\label{L(Hs0)-inv}
Let $\Phi_i:={\rm{Id}}+\Psi_i$, $i=1,2$, such that
\begin{equation*}
 \| \langle D_x \rangle^{1/2} \underline{\Psi_i}  \langle D_x \rangle^{1/2} \|_{\mathcal{L}(H^{s_0})} \leq 1/2, \quad i=1,2.
 \end{equation*}
Then the operators $\Phi_i (i=1,2)$ are invertible, and $\breve{\Psi}_i=\Phi_i^{-1}-{\rm Id} (i=1,2)$ satisfy
\begin{equation*}
\| \langle D_x \rangle^{1/2}(\underline{\breve{\Psi}_1-\breve{\Psi}_2})\langle D_x \rangle^{1/2}\|_{\mathcal{L}(H^{s_0})} \leq 4 \| \langle D_x \rangle^{1/2}(\underline{\Psi_1-\Psi_2})\langle D_x \rangle^{1/2} \|_{\mathcal{L}(H^{s_0})},
\end{equation*}
and
\begin{equation*}
\begin{split}
 \|\langle D_x \rangle^{1/2}& \underline{\langle \partial_{\varphi} \rangle^b(\breve{\Psi}_1-\breve{\Psi}_2)}\langle D_x \rangle^{1/2}\|_{\mathcal{L}(H^{s_0})}
\leq_b   \| \langle D_x \rangle^{1/2}\underline{\langle \partial_{\varphi} \rangle^b(\Psi_1-\Psi_2)}\langle D_x \rangle^{1/2} \|_{\mathcal{L}(H^{s_0})}\\
& + \big(1+\| \langle D_x \rangle^{1/2} \underline{\langle \partial_{\varphi} \rangle^b\breve{\Psi}_1}\langle D_x \rangle^{1/2}\|_{\mathcal{L}(H^{s_0})}
+ \| \langle D_x \rangle^{1/2}\underline{ \langle \partial_{\varphi} \rangle^b \breve{\Psi}_2}\langle D_x \rangle^{1/2} \|_{\mathcal{L}(H^{s_0})}\big)\\
  &\times \| \langle D_x \rangle^{1/2} ( \underline{\Psi_1-\Psi_2})\langle D_x \rangle^{1/2} \|_{\mathcal{L}(H^{s_0})}.\\
 \end{split}
\end{equation*}
\end{lemma}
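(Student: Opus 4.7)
The plan is to combine Neumann series inversion with the product and Leibniz inequalities of Lemma~\ref{L(H_s0)}. First I would observe that the smallness condition $\|\langle D_x\rangle^{1/2}\underline{\Psi_i}\langle D_x\rangle^{1/2}\|_{\mathcal{L}(H^{s_0})}\leq 1/2$ allows inversion of $\Phi_i=\mathrm{Id}+\Psi_i$ via the Neumann series $\Phi_i^{-1}=\sum_{k\geq 0}(-\Psi_i)^k$: iterating the product inequality of Lemma~\ref{L(H_s0)} together with the entry-wise monotonicity of Lemma~\ref{mod-compa}(2) gives
$$\|\langle D_x\rangle^{1/2}\underline{\breve{\Psi}_i}\langle D_x\rangle^{1/2}\|_{\mathcal{L}(H^{s_0})} \leq \sum_{k\geq 1}\|\langle D_x\rangle^{1/2}\underline{\Psi_i}\langle D_x\rangle^{1/2}\|_{\mathcal{L}(H^{s_0})}^k \leq \sum_{k\geq 1}2^{-k}=1,$$
which supplies the uniform a~priori bound I will need on the factors $\mathrm{Id}+\underline{\breve{\Psi}_i}$.

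For the first displayed inequality I would use the resolvent-type identity
$$\breve{\Psi}_1-\breve{\Psi}_2 = -(\mathrm{Id}+\breve{\Psi}_2)(\Psi_1-\Psi_2)(\mathrm{Id}+\breve{\Psi}_1),$$
which follows from $\Phi_1^{-1}-\Phi_2^{-1}=\Phi_2^{-1}(\Phi_2-\Phi_1)\Phi_1^{-1}$ together with $\Phi_i^{-1}=\mathrm{Id}+\breve{\Psi}_i$. Passing to entry-wise moduli yields the domination $\underline{\breve{\Psi}_1-\breve{\Psi}_2}\leq (\mathrm{Id}+\underline{\breve{\Psi}_2})(\underline{\Psi_1-\Psi_2})(\mathrm{Id}+\underline{\breve{\Psi}_1})$, and Lemma~\ref{mod-compa}(2) combined with the triple-product form of the multiplicative estimate in Lemma~\ref{L(H_s0)} reduces the weighted norm to a product of three factors. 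Since each outer factor is bounded by $1+1=2$ from the previous paragraph, the overall constant is $4$.

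For the second inequality I would expand the same identity as
$$\breve{\Psi}_1-\breve{\Psi}_2 = -(\Psi_1-\Psi_2) - \breve{\Psi}_2(\Psi_1-\Psi_2) - (\Psi_1-\Psi_2)\breve{\Psi}_1 - \breve{\Psi}_2(\Psi_1-\Psi_2)\breve{\Psi}_1,$$
apply $\langle\partial_\varphi\rangle^b$, and use the Leibniz-type estimate from Lemma~\ref{L(H_s0)} monomial by monomial. In each monomial the derivative $\langle\partial_\varphi\rangle^b$ lands on exactly one factor: when it hits $\Psi_1-\Psi_2$ it produces the first term on the right-hand side of the lemma (up to a bounded multiplier composed of at most two $\underline{\breve{\Psi}_i}$-factors, absorbable in the implicit constant $\leq_b$); when it hits some $\breve{\Psi}_i$ it produces the norm $\|\langle D_x\rangle^{1/2}\underline{\langle\partial_\varphi\rangle^b\breve{\Psi}_i}\langle D_x\rangle^{1/2}\|$ multiplied by $\|\langle D_x\rangle^{1/2}(\underline{\Psi_1-\Psi_2})\langle D_x\rangle^{1/2}\|$ and at most one further bounded companion factor. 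Collecting terms delivers exactly the displayed bound.

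The main point requiring care is combinatorial bookkeeping in the Leibniz step: each $\langle\partial_\varphi\rangle^b$ must land on one factor only, so that the linearity in $\Psi_1-\Psi_2$ on the right-hand side is preserved and no monomial involving two $\langle\partial_\varphi\rangle^b$-derivatives of the $\breve{\Psi}_i$ slips in. I do not expect a genuine obstacle, since the entire argument is a transcription into the $\mathcal{L}(H^{s_0})$-norm setting of the scheme used to prove Lemma~\ref{mod-inv}, and all the required machinery --- Neumann invertibility, the product and Leibniz estimates of Lemma~\ref{L(H_s0)}, and the entry-wise monotonicity of Lemma~\ref{mod-compa}(2) --- is already at hand.
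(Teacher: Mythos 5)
Your strategy---the resolvent identity $\breve{\Psi}_1-\breve{\Psi}_2 = -(\mathrm{Id}+\breve{\Psi}_2)(\Psi_1-\Psi_2)(\mathrm{Id}+\breve{\Psi}_1)$, entry-wise domination, and the product/Leibniz estimates of Lemma~\ref{L(H_s0)}---is exactly what the paper intends; the lemma is stated ``as a conclusion of'' Lemma~\ref{L(H_s0)} and no proof is given there, so there is nothing to compare against beyond this.

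There is one genuine gap in your treatment of the first display, though it is easily closed. You say that ``the triple-product form of the multiplicative estimate in Lemma~\ref{L(H_s0)} reduces the weighted norm to a product of three factors'' and that ``each outer factor is bounded by $1+1=2$.'' Read literally, the outer factors would be $\|\langle D_x\rangle^{1/2}(\mathrm{Id}+\underline{\breve{\Psi}_i})\langle D_x\rangle^{1/2}\|_{\mathcal{L}(H^{s_0})}$, but $\langle D_x\rangle^{1/2}\,\mathrm{Id}\,\langle D_x\rangle^{1/2}=\langle D_x\rangle$ is unbounded on $H^{s_0}$, so this quantity is $+\infty$, not $\le 2$. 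The multiplicative estimate of Lemma~\ref{L(H_s0)} can only be applied to genuine compositions of non-identity operators. The fix is precisely the expansion you already carry out for the second display: write
$\breve{\Psi}_1-\breve{\Psi}_2 = -(\Psi_1-\Psi_2) - \breve{\Psi}_2(\Psi_1-\Psi_2) - (\Psi_1-\Psi_2)\breve{\Psi}_1 - \breve{\Psi}_2(\Psi_1-\Psi_2)\breve{\Psi}_1$,
apply the product estimate term by term, and factor the result as $(1+\|\langle D_x\rangle^{1/2}\underline{\breve{\Psi}_1}\langle D_x\rangle^{1/2}\|)(1+\|\langle D_x\rangle^{1/2}\underline{\breve{\Psi}_2}\langle D_x\rangle^{1/2}\|)\,\|\langle D_x\rangle^{1/2}\underline{\Psi_1-\Psi_2}\langle D_x\rangle^{1/2}\|$. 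With the a~priori Neumann bound $\|\langle D_x\rangle^{1/2}\underline{\breve{\Psi}_i}\langle D_x\rangle^{1/2}\|\le 1$, this gives the constant $4$. Your proof of the second display is fine as written; the rest of the proposal is sound.
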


\renewcommand{\theequation}{\thesection.\arabic{equation}}
\setcounter{equation}{0}
\section{ Regularization}

In this section, our goal is to conjugate the linear operator $\mathcal{L}$ in \eqref{L} to the operator $\hat{\mathcal{L}}$ in \eqref{hat-L}
which is  a time-dependent diagonal operator up to  a remainder whose norm is controlled by $\varepsilon {\gamma}^{-1}\|\mathfrak{J}\|_{s+\sigma}$ for some $\sigma$.  Notice that we will discuss this conjugation in the generalized $x$-variable phase space $H^1(\mathbb{T})$ instead of $H^1_0(\mathbb{T})$ temporarily in Section 3 and 4, see Remark \ref{h0s}.

 The change of variable that we construct in this section is of the type: 
 $$ h=A(\varphi, x)g.$$ 
 It can be viewed as acting on the $x$-variable phase spaces $H^s_x$ that depend quasi-periodically on time.

 The operator associated with the system  \eqref{ht=Lh} acting on quasi-periodic functions is
  \begin{equation*}
 \tilde{\mathcal{L}}:= \omega \cdot \partial_{\varphi} - L(\varphi).
 \end{equation*}
Under the action of transformation $h=A(\varphi, x)g$, system \eqref{ht=Lh} is transformed into the new one
\begin{equation}\label{gt=L+g}
 \partial_t g = L^{+}(\omega t) g, \quad L^{+}(\omega t)=A(\omega t)^{-1} L(\omega t)A(\omega t)-A(\omega t)^{-1} \{\partial_t A(\omega t)\}.
  \end{equation}
Meanwhile, the  operator  associated with system \eqref{gt=L+g} is
\begin{equation*}
A(\omega t)^{-1} \tilde{\mathcal{L}} A(\omega t)= \omega \cdot \partial_{\varphi} - L^{+}(\varphi).
\end{equation*}

In particular, we consider a $\varphi$-dependent family of diffeomorphisms of the 1-dimensional torus $\mathbb{T}$ of the form
$y=x+\beta(\varphi, x)$ where $\beta(\varphi, x)$ is a sufficiently smooth function to be determined.
The change of the space variable induces the linear operator $\mathcal{A}=\mathcal{A}_{\beta}$:
\begin{equation}\label{tr-A}
(\mathcal{A}g)(\varphi, x):=g(\varphi, x+\beta(\varphi, x)).
\end{equation}
Note that the operator $\mathcal{A}$ is invertible, with the inverse
 \begin{equation*}
 (\mathcal{A}^{-1}h)(\varphi, y):=h(\varphi, y+\widetilde{\beta}(\varphi, y)),
 \end{equation*}
where $\widetilde{\beta}$ is such that $y=x+\beta(\varphi, x)$ is the inverse of the diffeomorphism of the 1-dimensional torus $x=y+\widetilde{\beta}(\varphi, y)$.

To determine the conjugated operator $\mathcal{A}^{-1} \mathcal{L} \mathcal{A}$, we need the following result.

\begin{prop}\label{Conj} (Conjugation)
Let $\mathcal{O} \subset \mathbb{R}^{\nu}$ be  compact. Given $\rho \geq 3, \alpha \in \mathbb{N}, \mathcal{S}>s_0, s_0\leq s \leq \mathcal{S}.$ For the linear operators in \eqref{L},
 $m_0, m_2$ are real constants,
 \begin{equation*}
 a_i(\cdot, \cdot, \omega, \mathfrak{J}(\omega)) \in C^{\infty}(\mathbb{T}^{\nu+1}), \quad i=0,2
 \end{equation*}
  are even and real
valued functions depending on $\omega \in \mathcal{O}$ in a Lipschitz way and  also on the variable $\mathfrak{J}$.
There exist positive constants $\mu_1 \geq \tilde{\mu}_1>0, \delta \in (0, 1)$ such that if
 \begin{equation}\label{beta-ai-sc}
\| \beta \|_{s_0+\mu_1}^{\gamma, \mathcal{O}},\| a_i \|_{s_0+\mu_1}^{\gamma, \mathcal{O}}  \leq \delta, \quad i=0,2,
 \end{equation}
then under the invertible map $\mathcal{A}$ in \eqref{tr-A},
$\mathcal{L}$ is conjugated to the real   operator
 \begin{equation}\label{L+}
 \mathcal{L}^{+}:=\mathcal{A}^{-1}\mathcal{L}\mathcal{A}=\omega \cdot \partial_{\varphi} +  T_1(\varphi, y) \partial_{y} -(m_0+m_2)\Lambda \partial_{y}+ R,
 \end{equation}
 where
\begin{equation*}
\begin{aligned}
T_1(\varphi, y)=&(\omega \cdot \partial_{\varphi} \beta)(\varphi, y+\tilde{\beta}(\varphi, y))\\
&\qquad +\big(m_2+a_2(\varphi, y+\tilde{\beta}(\varphi, y))\big)\big(1+\beta_x(\varphi, y+\tilde{\beta}(\varphi, y))\big).
\end{aligned}
\end{equation*}
 For all $s_0\leq s \leq \mathcal{S}$, the remainder $R := {\rm Op}(r) + \mathfrak{R} $ where $r \in S^{-1}$ (see Definition \ref{pd-def}) and $\mathfrak{R} \in \mathfrak{L}_{\rho, p}$ (see Definition \ref{L-rhop}) satisfies the estimates:
 \begin{eqnarray}\label{r-R-bd}
 \begin{aligned}
 &|r|_{-1, s, \alpha}^{\gamma, \mathcal{O}} \leq_{s, \alpha, \rho}   \| \beta \|_{s+\mu_1}^{\gamma, \mathcal{O}}
     + \sum_{i=0,2}  \| a_i \|_{s+\mu_1}^{\gamma, \mathcal{O}},\\
 &\mathbb{M}_{\mathfrak{R}}^{\gamma}(s, b) \leq_{s, \rho}  \| \beta \|_{s+\mu_1}^{\gamma, \mathcal{O}}
     + \sum_{i=0,2}  \| a_i \|_{s+\mu_1}^{\gamma, \mathcal{O}}, \quad 0 \leq b \leq \rho-2.
 \end{aligned}
 \end{eqnarray}
 Moreover, for $s_0 \leq p \leq s_0+\mu_1- \tilde{\mu}_1$, there hold
 \begin{eqnarray}\label{delta-r-R-bd}
 \begin{aligned}
&|\Delta_{12}r|_{-1, p, \alpha} \leq_{p, \alpha, \rho} \| \Delta_{12} \beta \|_{p+\mu_1}+\sum_{i=0,2} \| \Delta_{12} a_i \|_{p+\mu_1},\\
&\mathbb{M}_{\Delta_{12}\mathfrak{R}}(p, b) \leq_{p,  \rho}  \| \Delta_{12}\beta \|_{p+\mu_1}
     + \sum_{i=0,2}  \| \Delta_{12}a_i \|_{p+\mu_1}, \quad 0 \leq b \leq \rho-3.
 \end{aligned}
 \end{eqnarray}
   Finally, if $\beta$ is an odd function, then $\mathcal{A}^{\pm 1}$ are reversibility-preserving, $R, \mathcal{L}^{+}$ are  reversible.
\end{prop}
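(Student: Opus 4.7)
The plan is to compute $\mathcal{A}^{-1}\mathcal{L}\mathcal{A}$ piece by piece, exploiting the algebraic identity $J\partial_{xx}=-\partial_x+J$ (a consequence of $(1-\partial_{xx})^{-1}\partial_{xx}=-1+\Lambda$) to rewrite \eqref{L} in the reduced form
\begin{equation*}
\mathcal{L}=\omega\cdot\partial_\varphi+\partial_x\circ(a_2+m_2)-J(a_0+a_2+m_0+m_2)+Ja_{2,x}\partial_x+Ja_{2,xx},
\end{equation*}
in which the only term of positive differential order is the transport $\partial_x\circ(a_2+m_2)$, while every other variable-coefficient piece is pre-composed by the order-$-1$ operator $J$. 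I would then apply the chain-rule identities $\mathcal{A}^{-1}\partial_x\mathcal{A}=\bigl[(1+\beta_x)\circ\mathcal{A}^{-1}\bigr]\partial_y$, $\mathcal{A}^{-1}(\omega\cdot\partial_\varphi)\mathcal{A}=\omega\cdot\partial_\varphi+\bigl[(\omega\cdot\partial_\varphi\beta)\circ\mathcal{A}^{-1}\bigr]\partial_y$ and $\mathcal{A}^{-1}\,a(\varphi,x)\,\mathcal{A}=a(\varphi,y+\tilde\beta(\varphi,y))$ to each piece. Summing the two order-$1$ contributions reproduces exactly the coefficient $T_1(\varphi,y)$ of the proposition.

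The crucial observation is that the residual order-$0$ multiplications cancel. A short symbol computation shows $Ja_{2,x}\partial_x=-a_{2,x}+R_0$ with $R_0\in\mathrm{Op}(S^{-1})$, so after conjugation this piece contributes $-a_{2,x}\circ\mathcal{A}^{-1}$ at order $0$. On the other hand, expanding $\mathcal{A}^{-1}\partial_x(a_2+m_2)\mathcal{A}=\bigl[(1+\beta_x)\circ\mathcal{A}^{-1}\bigr]\partial_y\bigl[(a_2+m_2)\circ\mathcal{A}^{-1}\bigr]$ and letting $\partial_y$ fall on the coefficient produces a multiplication operator equal to $(1+\beta_x)(y+\tilde\beta)\,a_{2,x}(y+\tilde\beta)\,(1+\tilde\beta_y)$; the inverse-diffeomorphism identity $(1+\beta_x(x))(1+\tilde\beta_y(y))=1$ at corresponding points $x=y+\tilde\beta(\varphi,y)$ collapses this to $a_{2,x}\circ\mathcal{A}^{-1}$, exactly cancelling the previous contribution and leaving only order-$\leq-1$ remainders. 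For the non-local operator $J=\Lambda\partial_x$, I would invoke the Egorov-type change-of-variable theorem from the pseudo-differential calculus of the appendix: $\mathcal{A}^{-1}J\mathcal{A}=\Lambda\partial_y+Q$ where $Q\in\mathrm{Op}(S^{-1})+\mathfrak{L}_{\rho,p}$ has coefficients linearly controlled by $\|\beta\|_{s+\mu_1}^{\gamma,\mathcal{O}}$. This produces the constant-coefficient piece $-(m_0+m_2)\Lambda\partial_y$ of $\mathcal{L}^+$ and puts $-(m_0+m_2)Q$ into $R$. The remaining $J$-pieces $-J(a_0+a_2)$ and $Ja_{2,xx}$ are already of order $\leq-1$ with small coefficients, and after conjugation they likewise fall into $\mathrm{Op}(S^{-1})+\mathfrak{L}_{\rho,p}$, giving the decomposition $R=\mathrm{Op}(r)+\mathfrak{R}$.

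The tame estimates \eqref{r-R-bd} and the Lipschitz-in-$\mathfrak{J}$ bounds \eqref{delta-r-R-bd} then follow from the standard symbol-calculus tame estimates in the appendix applied to this explicit decomposition, together with the inversion bound $\|\tilde\beta\|_s^{\gamma,\mathcal{O}}\leq_s\|\beta\|_s^{\gamma,\mathcal{O}}$ provided by the implicit function theorem under the smallness assumption \eqref{beta-ai-sc}. Reversibility follows from parity bookkeeping: if $\beta$ is odd, so is $\tilde\beta$, and the involution $(\varphi,x)\mapsto(-\varphi,-x)$ intertwines $\mathcal{A}^{\pm 1}$ with themselves; since $\partial_x$, $J$, $\Lambda$ are reversible and $a_0,a_2$ are even, $\mathcal{L}$ is reversible and the conjugation transfers this property to $\mathcal{L}^+$ and $R$. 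I expect the principal technical hurdle to lie in the symbolic control of $\mathcal{A}^{-1}J\mathcal{A}$ to order $\rho$ with the correct $\gamma$-weighted Lipschitz tame estimates, because the Egorov expansion of a non-local operator is not polynomial in $\beta$ and requires a careful inductive use of the full pseudo-differential machinery of the appendix.
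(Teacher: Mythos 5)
Your reformulation of $\mathcal{L}$ via the identity $\Lambda\partial_{xx}=-\mathrm{Id}+\Lambda$ is a genuine simplification that the paper does not use: it isolates the single positive-order term $\partial_x\circ(a_2+m_2)$, and your observation that the two residual order-$0$ multiplications cancel (one from the divergence-form transport, one from $Ja_{2,x}\partial_x$) is correct, reaching by a shorter route what the paper establishes by computing the full zero-order coefficient $T_0$ and verifying $T_0\equiv 0$ through the identities $(1+\tilde\beta_y)\cdot(\mathcal{A}^{-1}(1+\beta_x))\equiv 1$ and $\tilde\beta_{yy}=-(\mathcal{A}^{-1}\beta_{xx})(1+\tilde\beta_y)^2/(\mathcal{A}^{-1}(1+\beta_x))$.

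However, there is a genuine gap at the load-bearing step. You assert that $\mathcal{A}^{-1}J\mathcal{A}=\Lambda\partial_y+Q$ with $Q\in\mathrm{Op}(S^{-1})+\mathfrak{L}_{\rho,p}$ and tame bounds linear in $\|\beta\|^{\gamma,\mathcal{O}}_{s+\mu_1}$, and you attribute this to ``the Egorov-type change-of-variable theorem from the pseudo-differential calculus of the appendix.'' No such theorem appears in the appendix. The appendix contains only the composition and inversion machinery for the classes $S^m$ and $\mathfrak{L}_{\rho,p}$ (Lemmas~\ref{pd-com}, \ref{pd-rhop-inv}, \ref{pd-rhop-com}, \ref{S-mod}, \ref{Lrho-mod}) and a Sobolev change-of-variable estimate for functions (Lemma~\ref{Chan-var}), none of which is an Egorov theorem and none of which, on its own, gives you the expansion you need. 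That expansion is in fact the main technical content of the proposition, and the paper proves it explicitly: starting from $\mathcal{A}^{-1}\Lambda^{-1}\mathcal{A}=1-(\mathcal{A}^{-1}(1+\beta_x)^2)\partial_{yy}-(\mathcal{A}^{-1}\beta_{xx})\partial_y$, it derives the closed-form identity
\begin{equation*}
\mathcal{A}^{-1}\Lambda\mathcal{A}=(\mathrm{Id}-\Lambda K)^{-1}\Lambda(1+\tilde\beta_y)^2,
\qquad K=f_0(\varphi,y)+f_1(\varphi,y)\partial_y,
\end{equation*}
with explicit $f_0,f_1$ of size $O(\|\beta\|)$, and then invokes Lemma~\ref{pd-rhop-inv} (Neumann inversion for $\mathrm{Id}-(\mathrm{Op}(a)+T)$) to peel off the $S^{-1}$ symbol and the $\mathfrak{L}_{\rho,p}$ tail, followed by Lemmas~\ref{pd-com} and~\ref{pd-rhop-com} for all the compositions. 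Without this derivation your decomposition $R=\mathrm{Op}(r)+\mathfrak{R}$ and the estimates \eqref{r-R-bd}--\eqref{delta-r-R-bd} are not justified. You correctly flag that controlling $\mathcal{A}^{-1}J\mathcal{A}$ to order $\rho$ is the principal hurdle, but calling it a hurdle is not the same as clearing it; you should supply the explicit identity for $\mathcal{A}^{-1}\Lambda\mathcal{A}$ and carry the Neumann inversion through before claiming the tame and Lipschitz bounds.
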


\begin{proof}

  For the multiplication operator $f(\varphi, x): h(\varphi, x)\mapsto f(\varphi, x)h(\varphi, x)$, the conjugate
 \begin{equation*}
 \mathcal{A}^{-1} f(\varphi, x)\mathcal{A}=(\mathcal{A}^{-1}f)(\varphi, y)=  f(\varphi, y+\tilde{\beta}(\varphi, y))
 \end{equation*}
  is still a multiplication operator.

For the differential operators, one can  directly verify the following results:
\begin{equation*}
\begin{split}
 & \mathcal{A}^{-1} \omega \cdot \partial_{\varphi} \mathcal{A}= \omega \cdot \partial_{\varphi} + (\mathcal{A}^{-1}(\omega \cdot \partial_{\varphi} \beta)) \partial_{y},\\
&\mathcal{A}^{-1} \partial_x \mathcal{A}= (\mathcal{A}^{-1}(1+\beta_x))\partial_{y},\\
&\mathcal{A}^{-1} \partial_{xx} \mathcal{A}= (\mathcal{A}^{-1}(1+\beta_x)^2)\partial_{yy} + (\mathcal{A}^{-1}\beta_{xx})\partial_{y},\\
&\mathcal{A}^{-1} \Lambda \mathcal{A}= ({\rm{Id}}-\Lambda K)^{-1} \Lambda (1+\tilde{\beta}_y)^2,\\
 \end{split}
\end{equation*}
where
\begin{equation*}
K := f_0(\varphi, y)+f_1(\varphi, y)\partial_y,
\end{equation*}
\begin{equation*}
f_0:=(\mathcal{A}^{-1}\frac{\beta_x^2+2\beta_x}{(1+\beta_x)^2})=(\mathcal{A}^{-1}(\beta_x^2+2\beta_x))(1+\tilde{\beta}_y)^2,
\end{equation*}
\begin{equation*}
f_1:=(\mathcal{A}^{-1}\frac{\beta_{xx}}{(1+\beta_x)^2})=(\mathcal{A}^{-1}\beta_{xx}) (1+\tilde{\beta}_y)^2.
\end{equation*}
Note that all the coefficients $(\mathcal{A}^{-1}(\cdots))$ are functions of $(\varphi, y)$.
A direct computation implies
\begin{equation*}
\mathcal{L}^{+}:=\mathcal{A}^{-1}\mathcal{L}\mathcal{A}=\omega \cdot \partial_{\varphi}+T_1(\varphi, y) \partial_{y}+T_0 (\varphi, y)+\mathcal{R},
\end{equation*}
where
\begin{equation*}
T_1(\varphi, y):=(\mathcal{A}^{-1}(\omega \cdot \partial_{\varphi} \beta))+(\mathcal{A}^{-1}((m_2+a_2)(1+\beta_x))),
 \end{equation*}
\begin{equation*}
T_0(\varphi, y):=g_0 g_1- f_1 g_0 g_2- g_0 (g_2)_y -2(g_0)_y g_2 + g_0 g_3.
\end{equation*}
Here we use the notations for succinct writing:
\begin{equation*}
\begin{split}
 & g_0:=1+\tilde{\beta}_y, \quad g_1:=(\mathcal{A}^{-1}((m_2+a_2)_x(1+\beta_x))),\\
& g_2:=(\mathcal{A}^{-1}((m_2+a_2)(1+\beta_x)^2)), \quad g_3:=(\mathcal{A}^{-1}((m_2+a_2)\beta_{xx})).\\
 \end{split}
 \end{equation*}
In fact, $T_0(\varphi, y)$ can be reduced to $0$ by exploiting the following formulas:
 \begin{equation*}
 (1+\tilde{\beta}_y)\cdot (\mathcal{A}^{-1}(1+\beta_x))\equiv 1,
 \quad \tilde{\beta}_{yy}=\frac{-(\mathcal{A}^{-1}\beta_{xx})(1+\tilde{\beta}_y)^2}{(\mathcal{A}^{-1}(1+\beta_x))}.
 \end{equation*}

Note that $\mathcal{R}$ consists of all the terms of pseudo differentials of order $\leq -1$. Extracting from $\mathcal{R}$ the terms which are not "small", precisely, the terms which are irrelevant to $\beta, \tilde{\beta}$ or $a_i$ $(i=0,2)$, we obtain
\begin{equation*}
\mathcal{R}:=-m_0 \Lambda \partial_{y}   - m_2 \Lambda \partial_{y}+R,
\end{equation*}
where the remainder is given by
\begin{eqnarray*}
\begin{aligned}
R:=&r_1-[({\rm{Id}}-\Lambda K)^{-1}-{\rm{Id}}] \Lambda g_0 \partial_{y} \circ (\mathcal{A}^{-1}(m_0+a_0))\\
&\quad -[({\rm{Id}}-\Lambda K)^{-1}-{\rm{Id}}-\Lambda K]\Lambda g_0 \partial_{y} g_2\partial_{yy}
\\&\quad -[({\rm{Id}}-\Lambda K)^{-1}-{\rm{Id}}] \Lambda g_0 \partial_{y}(g_1+g_3)\partial_{y},\\
r_1:=&-m_0\Lambda \tilde{\beta}_y \partial_{y}-\Lambda g_0 \partial_{y}(\mathcal{A}^{-1}a_0)
-\Lambda(\mathcal{A}^{-1}(a_2(1+\beta_x)))\partial_{y}\\
&-m_2\Lambda(\mathcal{A}^{-1}\beta_x)\partial_{y}+\Lambda (g_0)_{yy} g_2 \partial_{y}
+\Lambda g_0 \partial_{y} (g_1)_{y} +2 \Lambda (g_0)_{y}\partial_{y}g_1 \\&+\Lambda (g_0)_{yy}g_1 -\Lambda g_0 g_1-\Lambda f_0 \Lambda g_0 \partial_{y} g_2 \partial_{yy}
+ \Lambda f_1 \partial_{y} \Lambda g_0 \partial_{y} (g_2)_{y}\partial_{y}
\\&+2\Lambda f_1 \partial_{y} \Lambda (g_0)_{y} \partial_{y}g_2\partial_{y} + \Lambda f_1 \partial_{y} \Lambda (g_0)_{yy}g_2\partial_{y}
-\Lambda f_1 \partial_{y} (g_0g_2)_y\\
&-\Lambda f_1 \partial_{y}\Lambda g_0 g_2 \partial_{y} -2\Lambda (f_1)_y \partial_{y} g_0 g_2 - \Lambda (f_1)_{yy} g_0 g_2 +\Lambda f_1 g_0 g_2
\\
&-\Lambda g_0 \partial_{y} (g_2)_{yy} - 2\Lambda(g_0)_{y} \partial_{y} (g_2)_{y}+\Lambda g_0 (g_2)_{y}
-2\Lambda(g_0)_{y} \partial_{y} (g_2)_{y}\\
& -4\Lambda (g_0)_{yy}\partial_{y}g_2 -2\Lambda(g_0)_{yyy}g_2 - \Lambda(g_0)_{yy}(g_2)_{y}\\
&+2\Lambda(g_0)_{y}g_2+\Lambda g_0 \partial_{y}(g_3)_{y} +2\Lambda(g_0)_{y}\partial_{y}g_3 + \Lambda (g_0)_{yy}g_3
-\Lambda g_0g_3.\\
\end{aligned}
\end{eqnarray*}

In order to get a sharp estimate for $R$, its structure should be explored. According to the assumption, there exist  $\mu_1 \geq \tilde{\mu}_1>0, \delta>0$ such that  if \eqref{beta-ai-sc} is satisfied,
 then for  $s_0 \geq s \geq \mathcal{S}$, $s_0 \leq p \leq s_0+\mu_1- \tilde{\mu}_1,$ the following estimates hold.

From Lemma \ref{Chan-var}, we have $\| \tilde{\beta} \|_{s}^{\gamma, \mathcal{O}} \leq  \| \beta \|_{s}^{\gamma, \mathcal{O}}$ by \eqref{gam-qp} and
$\| \Delta_{12}\tilde{\beta} \|_{p} \leq \| \Delta_{12}\beta \|_{p}$ by \eqref{del-qp}, and
\begin{equation*}
\begin{split}
&\| g_0 \|_{s}^{\gamma, \mathcal{O}} \leq 1+ \| \beta \|_{s+\mu_1}^{\gamma, \mathcal{O}}, \quad
\| (g_0)_y \|_{s}^{\gamma, \mathcal{O}}= \| \tilde{\beta}_{yy} \|_{s}^{\gamma, \mathcal{O}}\leq  \| \beta \|_{s+\mu_1}^{\gamma, \mathcal{O}}.\\
&\| \Delta_{12}g_0 \|_{p}= \| \Delta_{12}\tilde{\beta}_y \|_{p}  \leq \| \Delta_{12}\beta \|_{p+\mu_1}.
\end{split}
\end{equation*}
It follows from \eqref{gam-uf} that
\begin{equation*}
\begin{split}
&\| \mathcal{A}^{-1}\beta_x \|_{s}^{\gamma, \mathcal{O}}  \leq_s \| \beta \|_{s+\mu_1}^{\gamma, \mathcal{O}},   \quad
 \| \mathcal{A}^{-1}a_0 \|_{s}^{\gamma, \mathcal{O}}  \leq_s \| a_0 \|_{s+\mu_1}^{\gamma, \mathcal{O}}
+\| \beta \|_{s+\sigma_1}^{\gamma, \mathcal{O}},\\
&\| \mathcal{A}^{-1}a_2 \|_{s}^{\gamma, \mathcal{O}}  \leq_s \| a_2 \|_{s+\mu_1}^{\gamma, \mathcal{O}}
+\| \beta \|_{s+\mu_1}^{\gamma, \mathcal{O}},\\
\end{split}
\end{equation*}
and from \eqref{del-uf} we derive
\begin{equation*}
\begin{split}
&\| \Delta_{12}(\mathcal{A}^{-1}\beta_x) \|_{p} = \| \Delta_{12}(\mathcal{A}^{-1}(1+\beta_x)) \|_{p}   \leq_p \| \Delta_{12}\beta \|_{p+\mu_1},\\
&\| \Delta_{12}(\mathcal{A}^{-1}a_0) \|_{p}  \leq_p \| \Delta_{12}\beta \|_{p+\mu_1}+\| \Delta_{12}a_0 \|_{p+\mu_1},\\
&\| \Delta_{12}(\mathcal{A}^{-1}a_2) \|_{p}  \leq_p \| \Delta_{12}\beta \|_{p+\mu_1}+\| \Delta_{12}a_2 \|_{p+\mu_1}.\\
\end{split}
\end{equation*}
By Lemma \ref{Ip}, \eqref{gam-uf} and $(g_2)_y=g_1+2g_3$, we obtain
\begin{equation*}
\begin{split}
 &\| g_1 \|_{s}^{\gamma, \mathcal{O}}, \| g_3 \|_{s}^{\gamma, \mathcal{O}}, \| (g_2)_y \|_{s}^{\gamma, \mathcal{O}}, \| \mathcal{A}^{-1}(a_2(1+\beta_x)) \|_{s}^{\gamma, \mathcal{O}}
 \leq_s \| a_2 \|_{s+\mu_1}^{\gamma, \mathcal{O}} + \| \beta \|_{s+\mu_1}^{\gamma, \mathcal{O}},\\
 & \| g_2 \|_{s}^{\gamma, \mathcal{O}} \leq_s 1+ \| a_2 \|_{s+\mu_1}^{\gamma, \mathcal{O}} + \| \beta \|_{s+\mu_1}^{\gamma, \mathcal{O}},\quad
  \| f_0 \|_{s}^{\gamma, \mathcal{O}}, \| f_1 \|_{s}^{\gamma, \mathcal{O}} \leq_s  \| \beta \|_{s+\mu_1}^{\gamma, \mathcal{O}}.\\
\end{split}
\end{equation*}
From $\Delta_{12}(uv)=u(\Delta_{12}v)+(\Delta_{12}u)v$, we deduce that
\begin{equation*}
\begin{split}
&\| \Delta_{12}g_i \|_{p}(i=1,2,3),  \| \Delta_{12}(\mathcal{A}^{-1}(a_2(1+\beta_x))) \|_{p} \leq_p \| \Delta_{12}\beta \|_{p+\mu_1}+ \| \Delta_{12}a_2 \|_{p+\mu_1},\\
&\| \Delta_{12}f_j \|_{p}(j=0,1) \leq_p \| \Delta_{12}\beta \|_{p+\mu_1}.\\
\end{split}
\end{equation*}
In view of \eqref{pd-c0}-\eqref{pd-c1} and Lemma \ref{pd-com}, the following estimates hold:
\begin{equation*}
\begin{split}
 &|K|_{1,s,\alpha}^{\gamma, \mathcal{O}}, |\Lambda K|_{-1,s,\alpha}^{\gamma, \mathcal{O}} \leq_{s, \alpha} \| \beta \|_{s+\mu_1}^{\gamma, \mathcal{O}},\\
 &|r_1|_{-1,s,\alpha}^{\gamma, \mathcal{O}} \leq_{s, \alpha} \| \beta \|_{s+\mu_1}^{\gamma, \mathcal{O}}+ \| a_0 \|_{s+\mu_1}^{\gamma, \mathcal{O}} +\| a_2 \|_{s+\mu_1}^{\gamma, \mathcal{O}},\\
&| \Delta_{12}r_1 |_{-1,p,\alpha}   \leq_{p, \alpha} \| \Delta_{12}\beta \|_{p+\mu_1}+\| \Delta_{12}a_0 \|_{p+\mu_1}+  \| \Delta_{12}a_2 \|_{p+\mu_1},\\
& |\Delta_{12}K |_{1,p,\alpha},  |\Delta_{12}(\Lambda K) |_{-1,p,\alpha} \leq_{p, \alpha} \| \Delta_{12}\beta \|_{p+\mu_1}.\\
\end{split}
\end{equation*}

Similarly, by \eqref{pd-c0}, \eqref{pd-c1}, \eqref{gam-uf} and Lemma \ref{pd-com}, we have
\begin{equation*}
\begin{split}
&|\Lambda g_0 \partial_{y} \circ (\mathcal{A}^{-1}(m_0+a_0))|_{-1, s, \alpha}^{\gamma, \mathcal{O}}\leq_{s, \alpha} 1+\| \beta \|_{s+\mu_1}^{\gamma, \mathcal{O}}+\| a_0 \|_{s+\mu_1}^{\gamma, \mathcal{O}},\\
&|\Lambda g_0 \partial_{y} g_2\partial_{yy}|_{1, s, \alpha}^{\gamma, \mathcal{O}}\leq_{s, \alpha} 1+\| \beta \|_{s+\mu_1}^{\gamma, \mathcal{O}}+\| a_2 \|_{s+\mu_1}^{\gamma, \mathcal{O}},\\
&|\Lambda g_0 \partial_{y}(g_1+g_3)\partial_{y}|_{0, s, \alpha}^{\gamma, \mathcal{O}}\leq_{s, \alpha} \| \beta \|_{s+\mu_1}^{\gamma, \mathcal{O}}+\| a_2 \|_{s+\mu_1}^{\gamma, \mathcal{O}},\\
&|\Delta_{12}\big(\Lambda g_0 \partial_{y} \circ (\mathcal{A}^{-1}(m_0+a_0))\big)|_{-1, p, \alpha} \leq_{p, \alpha} \| \Delta_{12}\beta \|_{p+\mu_1}+\| \Delta_{12}a_0 \|_{p+\mu_1},\\
&|\Delta_{12}(\Lambda g_0 \partial_{y} g_2\partial_{yy})|_{1, p, \alpha}, |\Delta_{12}\big(\Lambda g_0 \partial_{y}(g_1+g_3)\partial_{y}\big)|_{0, p, \alpha} \\&\quad \leq_{p, \alpha} \| \Delta_{12}\beta \|_{p+\mu_1}+\| \Delta_{12}a_2 \|_{p+\mu_1}.\\
\end{split}
\end{equation*}

 According to Lemma \ref{pd-rhop-inv}, the small condition of the operator $\Lambda K$ is fulfilled. Hence we have
  \begin{equation*}
  \begin{aligned}
 & ({\rm{Id}}-\Lambda K)^{-1}={\rm{Id}}+{\rm Op}(c_1)+\mathfrak{R}_{\rho+1},\\
  &  ({\rm{Id}}-\Lambda K)^{-1}-{\rm{Id}}-\Lambda K={\rm Op}(c_2)+\mathfrak{R}_{\rho+1},
 \end{aligned}
 \end{equation*}
   where $c_1 \in S^{-1}, c_2 \in S^{-2}, \mathfrak{R}_{\rho+1} \in \mathfrak{L}_{\rho+1, p}$. They satisfy the following inequalities:
   \begin{equation*}
\begin{split}
   &|c_1|_{-1, s, \alpha}^{\gamma, \mathcal{O}}, |c_2|_{-2, s, \alpha}^{\gamma, \mathcal{O}} \leq_{s, \alpha, \rho} \| \beta \|_{s+\mu_1}^{\gamma, \mathcal{O}},\\
&|\Delta_{12}c_1|_{-1, p, \alpha}, |\Delta_{12}c_2|_{-2, p, \alpha} \leq_{p, \alpha, \rho}   \| \Delta_{12}\beta \|_{p+\mu_1},\\
&\mathbb{M}_{\mathfrak{R}_{\rho+1}}^{\gamma}(s, b)\leq_{s,  \rho} \| \beta \|_{s+\mu_1}^{\gamma, \mathcal{O}}, \quad  0 \leq b \leq \rho-1,\\
&\mathbb{M}_{\Delta_{12}\mathfrak{R}_{\rho+1}}^{\gamma}(p, b)\leq_{p, \rho} \| \Delta_{12}\beta \|_{p+\mu_1}, \quad  0 \leq b \leq \rho-2.\\
\end{split}
\end{equation*}
Hence $R$ has the decomposition: $R:=r_1+r_2+r_3+r_4+\mathfrak{R}_1+\mathfrak{R}_2+\mathfrak{R}_3$,
 where
    \begin{equation*}
\begin{split}
&r_2:= - c_1 \Lambda g_0 \partial_{y} \circ (\mathcal{A}^{-1}(m_0+a_0)),  \quad
r_3:=-c_2 \Lambda g_0 \partial_{y} g_2\partial_{yy},\\
&r_4:=-c_1 \Lambda g_0 \partial_{y}(g_1+g_3)\partial_{y},   \quad
\mathfrak{R}_1:=- \mathfrak{R}_{\rho+1} \Lambda g_0 \partial_{y} \circ (\mathcal{A}^{-1}(m_0+a_0)),\\
&\mathfrak{R}_2:=-\mathfrak{R}_{\rho+1} \Lambda g_0 \partial_{y} g_2\partial_{yy},\quad
\mathfrak{R}_3:=-\mathfrak{R}_{\rho+1}\Lambda g_0 \partial_{y}(g_1+g_3)\partial_{y}.\\
\end{split}
\end{equation*}
Then from Lemma \ref{pd-com},  it follows that
    \begin{equation*}
\begin{split}
&|r_2|_{-2, s, \alpha}^{\gamma, \mathcal{O}}\leq_{s, \alpha, \rho} \| \beta \|_{s+\mu_1}^{\gamma, \mathcal{O}}+\| a_0 \|_{s+\mu_1}^{\gamma, \mathcal{O}},\\
&|r_3|_{-1, s, \alpha}^{\gamma, \mathcal{O}}, |r_4|_{-1, s, \alpha}^{\gamma, \mathcal{O}}\leq_{s, \alpha, \rho} \| \beta \|_{s+\mu_1}^{\gamma, \mathcal{O}}+\| a_2 \|_{s+\mu_1}^{\gamma, \mathcal{O}},\\
&|\Delta_{12}r_2|_{-2, p, \alpha} \leq_{p, \alpha} \| \Delta_{12}\beta \|_{p+\mu_1} +\| \Delta_{12}a_0 \|_{p+\mu_1},\\
&|\Delta_{12}r_3|_{-1, p, \alpha}, |\Delta_{12}r_4|_{-1, p, \alpha} \leq_{p, \alpha} \| \Delta_{12}\beta \|_{p+\mu_1} +\| \Delta_{12}a_2 \|_{p+\mu_1}.\\
\end{split}
\end{equation*}
From Lemma \ref{pd-rhop-com}, we also have $\mathfrak{R}_{i} \in \mathfrak{L}_{\rho, p}, i=1,2,3$ and
   \begin{equation*}
\begin{split}
&\mathbb{M}_{\mathfrak{R}_1}^{\gamma}(s, b)\leq_{s, \rho} \| \beta \|_{s+\mu_1}^{\gamma, \mathcal{O}}+\| a_0 \|_{s+\mu_1}^{\gamma, \mathcal{O}}, \quad  0 \leq b \leq \rho-2,\\
&\mathbb{M}_{\mathfrak{R}_2}^{\gamma}(s, b), \mathbb{M}_{\mathfrak{R}_3}^{\gamma}(s, b) \leq_{s, \rho}  \| \beta \|_{s+\mu_1}^{\gamma, \mathcal{O}}+\| a_2 \|_{s+\mu_1}^{\gamma, \mathcal{O}}, \quad  0 \leq b \leq \rho-2,\\
&\mathbb{M}_{\Delta_{12}\mathfrak{R}_1}(p, b)\leq_{p, \rho} \| \Delta_{12}\beta \|_{p+\mu_1}+\| \Delta_{12}a_0 \|_{p+\mu_1}, \quad  0 \leq b \leq \rho-3,\\
&\mathbb{M}_{\Delta_{12}\mathfrak{R}_2}(p, b), \mathbb{M}_{\Delta_{12}\mathfrak{R}_3}(p, b) \leq_{p, \rho} \| \Delta_{12}\beta \|_{p+\mu_1}+\| \Delta_{12}a_2 \|_{p+\mu_1}, \quad  0 \leq b \leq \rho-3.\\
\end{split}
\end{equation*}
Let $r=\sum_{i=1}^{4}r_i$, $\mathfrak{R}=\sum_{i=1}^{3}\mathfrak{R}_i$,   then \eqref{r-R-bd} and \eqref{delta-r-R-bd} hold.


Recall Definition \ref{rev-def}, let $u(\varphi, y) \in X$, then $\mathcal{A}u=u(\varphi, x+\beta(\varphi, x)) \in X$ if $\beta$ is odd.
Indeed,
\begin{equation*}
u\big(-\varphi, -x+\beta(-\varphi, -x)\big)=u\big(-\varphi, -x-\beta(\varphi, x)\big)=u\big(\varphi, x+\beta(\varphi, x)\big).
 \end{equation*}
The proof of the case for $u(\varphi, y) \in Y$ is similar. Therefore, $\mathcal{A}$ is reversibility-preserving. Due to the reality of $\beta$, $\mathcal{A}$ is real.  By replacing $\beta$ with $\tilde{\beta}$, $\mathcal{A}^{-1}$ is also real and reversibility-preserving.

Finally we prove $R, \mathcal{L}^{+}$ is real and reversible. In fact,
 $\mathcal{L}^{+}$ is real due to the reality of   $\mathcal{L}$ and $\mathcal{A}^{\pm 1}$. Meanwhile, it is reversible since $\mathcal{L}$ is reversible and $\mathcal{A}^{\pm 1}$ are reversibility-preserving. In view of the reality and reversibility of the opearators
\begin{equation*}
 \omega \cdot \partial_{\varphi} + m_{\infty}\partial_{x} -(m_0+m_2)\Lambda \partial_{x}
\end{equation*}
and $\mathcal{L}^{+}$, $R$ is real and reversible.
Thus we complete the proof.
\end{proof}

In \cite{fgmp}, the reducibility of a class of linear first-order operators on tori has been proved as shown in the next proposition, i.e. the so-called straightening theorem.  Thanks to this theorem, we   choose an appropriate function $\beta$ such that the coefficient of the highest order spatial derivative operator $\partial_y$ of $\mathcal{L}^{+}$ in \eqref{L+} is a constant.

\begin{prop}\label{Str-thm} (Straightening theorem)\cite{fgmp}
Let $\mathcal{O}$ be a compact subset  of $\mathbb{R}^{\nu}$, and $\mathcal{X}$ be a  family of vector fields on $\mathbb{T}^{\nu+1}$
\begin{equation*}
\mathcal{X}:= \omega \cdot \frac{\partial}{\partial{\varphi}}+(m+a(\varphi, x, \omega))\cdot  \frac{\partial}{\partial{x}}, \quad \omega \in \mathcal{O}, m \in \mathbb{R},
\end{equation*}
where $a(\cdot, \cdot, \omega, \mathfrak{J}(\omega)) \in H^s(\mathbb{T}^{\nu+1}, \mathbb{R})$ $(\forall s\geq s_0)$  is  Lipschitz in  $\omega$ and also depends on the variable $\mathfrak{J}$.
There exists $\tilde{\delta}>0, s_1\geq s_0+2 \tau +4$ $(\tau\geq \nu+3),$ such that if
 \begin{equation}\label{sc-strai}
 \gamma^{-1} \| a \|_{s_1}^{\gamma, \mathcal{O}} \leq \tilde{\delta},
     \end{equation}
then  there exists  $m_{+}(\omega, \mathfrak{J}(\omega)) \in \mathbb{R}$ with
 \begin{equation*}
 |m_{+}-m|^{\gamma, \mathcal{O}} \leq \| a \|_{s_1}^{\gamma, \mathcal{O}},
     \end{equation*}
such that for all $\omega \in \mathcal{O}_{+}^{2\gamma}$, where
 \begin{equation}\label{O+-defi}
 \mathcal{O}_{+}^{2\gamma}:= \left\{\omega \in \mathcal{O}:  |\omega \cdot l+m_{+}(\omega)j|\geq \frac{2\gamma}{\langle l, j \rangle^{\tau}},  \quad \forall (l,j) \in \mathbb{Z}^{\nu+1}\setminus \{0\}\right\},
  \end{equation}
  one has
  \begin{equation*}
  |\Delta_{12}m_{+}|\leq 2|\Delta_{12}\langle a \rangle|, \quad \langle a \rangle:=\frac{1}{(2\pi)^{\nu+1}}\int_{\mathbb{T}^{\nu+1}}a {\rm d}\varphi {\rm d}x
  \end{equation*}
 and
there exists a real and smooth map $\hat{\beta}(\varphi, x, \mathfrak{J})$ and a constant $\sigma_{*}>0$ satisfying
 \begin{equation*}
 \begin{split}
 &\| \hat{\beta} \|_s^{\gamma, \mathcal{O}_{+}^{2\gamma}} \leq_s \gamma^{-1}\| a \|_{s+2\tau+4}^{\gamma, \mathcal{O}},  \quad \forall s\geq s_0,\\
 &\| \Delta_{12}\hat{\beta} \|_p\leq C\gamma^{-1}\| \Delta_{12}a \|_{p+\sigma_{*}},  \quad  s_0 \leq p+\sigma_{*}<s_1,\\
 \end{split}
\end{equation*}
so that $W: (\varphi, x)\mapsto (\varphi, x+\hat{\beta}(\varphi, x))$ is a diffeomorphism of $\mathbb{T}^{\nu+1}$ and
for all $\omega \in \mathcal{O}_{+}^{2\gamma}$,
\begin{equation*}
W_{\ast}\mathcal{X}:=\omega \cdot \frac{\partial}{\partial{\varphi}}+W^{-1}\left(\omega \cdot \partial_{\varphi}\hat{\beta}+ (m+a)(1+\hat{\beta}_x)\right)  \frac{\partial}{\partial{x}}
=\omega \cdot \frac{\partial}{\partial{\varphi}}+m_{+}\frac{\partial}{\partial{x}}.
\end{equation*}
Furthermore, if $a(\varphi, x)$ is  even, then $\hat{\beta}$ is an odd function.
\end{prop}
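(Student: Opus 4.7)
The condition that $W_{*}\mathcal{X} = \omega \cdot \partial_{\varphi} + m_{+}\partial_{x}$ is, by the chain rule, equivalent to the nonlinear transport equation
\begin{equation*}
\omega \cdot \partial_{\varphi}\hat{\beta} + (m + a)(1 + \hat{\beta}_{x}) = m_{+},
\end{equation*}
so the task is to find simultaneously a constant $m_{+}$ and a function $\hat{\beta}$ solving it. My plan is to run a Newton / Nash-Moser iteration in the style of a KAM reducibility scheme. At step zero I would take $m_{+}^{(0)} = m + \langle a \rangle$ (the only choice compatible with the mean-zero solvability condition for the linearized equation), then solve the linear transport equation $\omega \cdot \partial_{\varphi}\hat{\beta}_{0} + m(\hat{\beta}_{0})_{x} = m_{+}^{(0)} - m - a$ by dividing its Fourier coefficients by $\mathrm{i}(\omega \cdot l + m j)$. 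At step $n$, given $\hat{\beta}_{n}$ and residual error $a_{n}$, I would invert $\omega \cdot \partial_{\varphi} + (m + a_{n})\partial_{x}$ on mean-zero functions, after fixing the new constant $m_{+}^{(n+1)}$ by requiring the mean of the $n$-th error to vanish. The Newton scheme gives quadratic convergence, and Fourier truncation absorbs the derivative loss $\tau$ coming from the small divisors; summing losses over the iteration produces the tame bound with loss $2\tau + 4$.

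The main obstacle is small-divisor management: at the $n$-th step the denominators are $\omega \cdot l + m_{+}^{(n)}(\omega)j$, not $\omega \cdot l + m j$, so the set of admissible $\omega$ changes from step to step. Introducing nested frequency sets $\mathcal{O}_{+}^{(n)} := \{\omega \in \mathcal{O} : |\omega \cdot l + m_{+}^{(n)}(\omega)j| \geq 2\gamma \langle l, j\rangle^{-\tau}\}$, one shows inductively that $|m_{+}^{(n)} - m_{+}^{(n-1)}|^{\gamma,\mathcal{O}}$ decays geometrically, producing a limit $m_{+}$ with $|m_{+} - m|^{\gamma,\mathcal{O}} \leq \|a\|_{s_{1}}^{\gamma,\mathcal{O}}$. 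A standard perturbative comparison, relying on the smallness hypothesis $\gamma^{-1}\|a\|_{s_{1}}^{\gamma,\mathcal{O}} \leq \tilde{\delta}$, then gives $\mathcal{O}_{+}^{2\gamma} \subset \bigcap_{n}\mathcal{O}_{+}^{(n)}$, so the iteration is well-defined on $\mathcal{O}_{+}^{2\gamma}$. The Lipschitz-in-$\omega$ tracking is essential here: one must control $\Delta_{\omega,\omega'}m_{+}^{(n)}$ at every step in order to compare the non-resonance sets for $m_{+}^{(n)}$ and for the limit $m_{+}$, and the weighted norm $|\cdot|^{\gamma,\mathcal{O}}$ is designed to propagate smoothly through each Newton step.

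For the $\mathfrak{J}$-dependence I would differentiate the Newton step in $\mathfrak{J}$ and propagate the bound $\|\Delta_{12}\hat{\beta}_{n}\|_{p} \leq_{p} \gamma^{-1}\|\Delta_{12}a\|_{p+\sigma_{*}}$ inductively, using the tame estimates already proved for $\hat{\beta}_{n}$ itself; the cumulative loss of derivatives across the iteration is absorbed into the single constant $\sigma_{*}$. The leading $\mathfrak{J}$-dependence of $m_{+}$ is simply $\langle a \rangle$, so $\Delta_{12}m_{+} = \Delta_{12}\langle a\rangle +$ quadratic corrections, and the factor $2$ in $|\Delta_{12}m_{+}| \leq 2|\Delta_{12}\langle a\rangle|$ absorbs these lower-order Newton contributions. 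Finally, for the parity claim, if $a$ is even then at each step the mean-zero right-hand side of the linearized equation is an odd function of $(\varphi, x)$, and division by the odd Fourier symbol $\mathrm{i}(\omega \cdot l + m_{+}^{(n)} j)$ preserves oddness; hence every $\hat{\beta}_{n}$ is odd and so is the limit $\hat{\beta}$.
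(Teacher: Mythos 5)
The paper gives no proof of this proposition: it simply cites \cite{fgmp} (whose method is announced in its title, ``\ldots via Moser's theorem'') and appends only a remark adjusting the inequality sign in \eqref{O+-defi}. Your sketch correctly identifies the overall strategy of \cite{fgmp} — a quadratically convergent iteration in which a constant-coefficient homological equation with small divisors $\omega\cdot l + m_{+}^{(n)}j$ is solved at each step, $m_{+}^{(n)}$ is fixed by the mean-zero compatibility condition, and nested non-resonance sets $\mathcal{O}_{+}^{(n)}$ are controlled via Lipschitz bounds on $m_{+}^{(n)}$. The derivative loss $2\tau+4$, the inclusion $\mathcal{O}_{+}^{2\gamma}\subset\bigcap_n\mathcal{O}_+^{(n)}$, and the telescoping of the $\Delta_{12}$-estimates match the structure of the cited argument. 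Two specific points in your write-up are off, however.

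First, the iteration step as you describe it is circular. You propose at step $n$ to ``invert $\omega\cdot\partial_\varphi + (m+a_n)\partial_x$ on mean-zero functions.'' But this is a \emph{variable-coefficient} first-order operator of exactly the type one set out to straighten; being able to invert it with tame estimates on all admissible frequencies is essentially the content of the proposition itself, not a tool you can appeal to inside the proof. In a genuine Moser step one only ever inverts the \emph{constant-coefficient} operator $\omega\cdot\partial_\varphi + m_{+}^{(n)}\partial_x$ (which is diagonalized by Fourier series and has small-divisor estimates on $\mathcal{O}_+^{(n)}$), uses it to produce a near-identity diffeomorphism $V_n$, and composes $W_{n+1}=W_n\circ V_n$ so that the conjugated residual drops quadratically. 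Your sketch needs to be corrected at this point: the quadratic convergence and the tame bounds cannot be extracted from an inversion you have no way to perform.

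Second, the parity argument is internally inconsistent, though the conclusion happens to be right. If $a$ is even then the right-hand side $\langle a\rangle - a$ of the homological equation is \emph{even} (its Fourier coefficients satisfy $f_{-l,-j}=f_{l,j}$), not odd. The Fourier symbol $\mathrm{i}(\omega\cdot l + m_{+}^{(n)}j)$ reverses sign under $(l,j)\mapsto(-l,-j)$, so dividing an \emph{even} datum by this symbol produces an \emph{odd} $\hat\beta_n$; division does not ``preserve oddness,'' it swaps parities. You have two mistakes (``RHS is odd'' and ``division preserves oddness'') which cancel; a graded proof should state: even datum, odd symbol, hence odd solution, and this parity is preserved under composition of odd diffeomorphisms across the iteration.
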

\begin{rmk}
Actually, the sign of inequality appearing   in \eqref{O+-defi} is "$>$" in  \cite{fgmp}. The thesis also holds if we replace "$>$" with
"$\geq$".
\end{rmk}
\begin{thm}\label{Reg-thm}(Regularization)
Let $\rho\geq 3$ and fix $\mathcal{S}>s_0, s_0\leq p < \mathcal{S}$. Suppose for some  $\mu_2\geq \tilde{\mu}_2>0$, the conditions (A1)-(A4) are satisfied with $\mu=\mu_2$. Then there  exists  a constant $m_{\infty}(\omega, \mathfrak{J}(\omega))$ which depends on $\omega \in \mathcal{O}_0$  in a Lipschitz way and the variable $\mathfrak{J}$ with
 \begin{equation}\label{m_inf-bd}
 |m_{\infty}-m_2|^{\gamma, \mathcal{O}_0}\leq C \varepsilon,
  \end{equation}
such that for all $\omega$ in the set
 \begin{equation}\label{O-1}
  \mathcal{O}_1:= \left\{\omega \in \mathcal{O}_0:  |\omega \cdot l+m_{\infty}(\omega)j|\geq \frac{2\gamma}{\langle l, j \rangle^{\tau}}, \quad \forall (l,j) \in \mathbb{Z}^{\nu+1}\setminus \{0 \}\right\},
      \end{equation}
there exists a real bounded linear operator $\Upsilon_1$ such that
  \begin{equation}\label{hat-L}
 \hat{ \mathcal{L}} = \Upsilon_1^{-1} \mathcal{L} \Upsilon_1=\omega \cdot \partial_{\varphi} + m_{\infty}\partial_{x} -(m_0+m_2)\Lambda \partial_{x}+ \hat{R},
  \end{equation}
where the remainder $\hat{R}={\rm Op}(\hat{r})+\hat{\mathfrak{R}}$ with $\hat{r} \in S^{-1}, \hat{\mathfrak{R}} \in \mathcal{L}_{\rho, p}$ satisfies
  \begin{equation}\label{hat-r-R-bd}
  |\hat{r}|_{-1, s, \alpha}^{\gamma, \mathcal{O}_1}, \mathbb{M}_{\hat{\mathfrak{R}}}^{\gamma}(s, b) \leq_{s,\alpha,\rho} \varepsilon \gamma^{-1}
\| \mathfrak{J} \|_{s+\mu_2}^{\gamma, \mathcal{O}_0},        \quad s_0\leq s \leq \mathcal{S}, 0 \leq b \leq \rho-2.
 \end{equation}
For $\omega \in \mathcal{O}_1(\mathfrak{J}_1)\cap \mathcal{O}_1(\mathfrak{J}_2)$,  $s_0 \leq p \leq s_0+\mu_2- \tilde{\mu}_2$ and $0 \leq b \leq \rho-3$, the following estimates hold:
  \begin{equation*}
  |\Delta_{12}m_{\infty}| \leq \varepsilon \| \mathfrak{J}_1 -\mathfrak{J}_2 \|_{s_0+\mu_2},
  \end{equation*}
  and
  \begin{equation}\label{det-r-R-bd}
  |\Delta_{12}\hat{r}|_{-1, p, \alpha}, \mathbb{M}_{\Delta_{12}\hat{\mathfrak{R}}}(p, b) \leq_{p,\alpha,\rho} \varepsilon \gamma^{-1}
\| \mathfrak{J}_1 -\mathfrak{J}_2 \|_{p+\mu_2}.
\end{equation}
 In addition, if $u$ depends in a Lipschitz way on $\omega \in \mathcal{O}_1$, then
 \begin{equation}\label{Ups1u}
\| \Upsilon_1^{\pm 1}u \|_s^{\gamma, \mathcal{O}_1} \leq_s  \| u \|_s^{\gamma, \mathcal{O}_1} + \varepsilon \gamma^{-1} \| \mathfrak{J} \|_{s+\mu_2}^{\gamma, \mathcal{O}_0}\| u \|_{s_0}^{\gamma, \mathcal{O}_1}.
\end{equation}
Finally, the maps $\Upsilon_1, \Upsilon_1^{-1}$ are real and reversibility-preserving, while $\hat{R},  \hat{ \mathcal{L}}$ are real and reversible.
\end{thm}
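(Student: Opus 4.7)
The plan is to combine Proposition \ref{Conj} with the Straightening Theorem (Proposition \ref{Str-thm}). The key observation is that if $\beta$ in Proposition \ref{Conj} is chosen to be the function $\hat\beta$ produced by the Straightening Theorem applied to the leading vector field of $\mathcal{L}$, then the variable coefficient $T_1$ in \eqref{L+} collapses to the constant $m_\infty$, and \eqref{L+} reduces to the form \eqref{hat-L}.

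First I would apply Proposition \ref{Str-thm} to the family of vector fields
\begin{equation*}
\mathcal{X} := \omega\cdot\partial_\varphi + \bigl(m_2 + a_2(\varphi,x)\bigr)\partial_x, \quad \omega \in \mathcal{O}_0.
\end{equation*}
By (A3) combined with (A2), $\|a_2\|_{s_1}^{\gamma,\mathcal{O}_0}\leq C\varepsilon$, so the smallness condition \eqref{sc-strai} holds provided $\varepsilon\gamma^{-1}$ is small and $\mu_2$ is chosen large enough. This yields $m_\infty$ with $|m_\infty - m_2|^{\gamma,\mathcal{O}_0}\leq C\varepsilon$ (giving \eqref{m_inf-bd}), the Cantor set $\mathcal{O}_1$ in \eqref{O-1}, and a smooth function $\hat\beta$ that is odd in $(\varphi,x)$ (since $a_2$ is even by (A1)) and satisfies
\begin{equation*}
\omega\cdot\partial_\varphi\hat\beta + (m_2 + a_2)(1+\hat\beta_x) = m_\infty \quad \text{on } \mathcal{O}_1,
\end{equation*}
together with $\|\hat\beta\|_s^{\gamma,\mathcal{O}_1}\leq_s \gamma^{-1}\|a_2\|_{s+2\tau+4}^{\gamma,\mathcal{O}_0}\leq_s \varepsilon\gamma^{-1}\|\mathfrak{J}\|_{s+2\tau+4+\eta_0}^{\gamma,\mathcal{O}_0}$ and $\|\Delta_{12}\hat\beta\|_p\leq C\gamma^{-1}\|\Delta_{12}a_2\|_{p+\sigma_*}$. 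Combined with (A4) and the Straightening Theorem bound $|\Delta_{12}m_\infty|\leq 2|\Delta_{12}\langle a_2\rangle|$, this gives the Lipschitz-in-$\mathfrak{J}$ estimate on $m_\infty$.

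Next I would set $\Upsilon_1 := \mathcal{A}_{\hat\beta}$ (see \eqref{tr-A}) and invoke Proposition \ref{Conj}. Choosing $\mu_2 \geq \mu_1 + 2\tau + 4 + \eta_0$ (and $\tilde\mu_2$ analogously relative to $\tilde\mu_1$ and $\sigma_*$), the smallness \eqref{beta-ai-sc} is guaranteed by the above bound on $\hat\beta$ and by (A3). The identity defining $\hat\beta$ forces $T_1 \equiv m_\infty$ on $\mathcal{O}_1$, so \eqref{L+} becomes precisely \eqref{hat-L} with $\hat R := R = {\rm Op}(\hat r) + \hat{\mathfrak{R}}$. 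The tame bounds \eqref{hat-r-R-bd} and \eqref{det-r-R-bd} then follow directly from \eqref{r-R-bd}-\eqref{delta-r-R-bd} since
\begin{equation*}
\|\hat\beta\|_{s+\mu_1}^{\gamma,\mathcal{O}_1} + \sum_{i=0,2}\|a_i\|_{s+\mu_1}^{\gamma,\mathcal{O}_0} \leq_s \varepsilon\gamma^{-1}\|\mathfrak{J}\|_{s+\mu_2}^{\gamma,\mathcal{O}_0}.
\end{equation*}
The tame estimate \eqref{Ups1u} on $\Upsilon_1^{\pm 1}$ follows from the standard composition-operator estimates (Lemma \ref{Chan-var}) applied to $\hat\beta$. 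Finally, reality and reversibility-preserving of $\Upsilon_1^{\pm 1}$ are guaranteed by Proposition \ref{Conj} since $\hat\beta$ is real and odd, whence $\hat{\mathcal{L}}$ and $\hat R$ inherit the reality and reversibility of $\mathcal{L}$.

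The principal difficulty lies in the careful bookkeeping of the several losses of regularity: one must absorb $\mu_1$ (from Proposition \ref{Conj}), $2\tau+4$ and $\sigma_*$ (from Proposition \ref{Str-thm}), and $\eta_0$ (from (A3)-(A4)) into a single constant $\mu_2 \gg \tilde\mu_2$. A secondary subtlety is that the bounds on the $a_i$ are naturally controlled on $\mathcal{O}_0$, whereas the bounds on $\hat\beta$ only hold on the Cantor subset $\mathcal{O}_1 \subset \mathcal{O}_0$, which is why the conclusion of the theorem must be restricted to $\omega \in \mathcal{O}_1$.
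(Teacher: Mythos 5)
Your proposal matches the paper's proof essentially step by step: apply the Straightening Theorem (Proposition \ref{Str-thm}) with $a=a_2$, $m=m_2$ to produce $m_\infty$ and the odd function $\beta^\infty$ solving the transport equation, then set $\Upsilon_1=\mathcal{A}_{\beta^\infty}$ and feed this into Proposition \ref{Conj}, so that $T_1\equiv m_\infty$ and \eqref{L+} collapses to \eqref{hat-L}. The regularity bookkeeping ($\mu_2\geq\mu_1+2\tau+4+\eta_0$, $\tilde\mu_2$ from $\tilde\mu_1$ and $\sigma_*$), the tame estimate for $\Upsilon_1^{\pm1}$ via Lemma \ref{Chan-var}/\eqref{gam-uf}, and the transfer of reality and reversibility through an odd $\beta^\infty$ are all done exactly as in the paper.
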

\begin{proof}
Set $a=a_2$, $m=m_2$, $\mathcal{O}=\mathcal{O}_0$ in Proposition \ref{Str-thm}, $\mu=\mu_2$ in \eqref{sc-J} large enough and $\varepsilon$ in \eqref{sc-a_i} small enough, then the condition \eqref{sc-strai} is satisfied. From Proposition \ref{Str-thm}, we derive  that there exists a constant $m_{\infty}(\omega, \mathfrak{J})$ defined on $\mathcal{O}_0$ and a function $\beta^{\infty}(\omega, \mathfrak{J})$  on $\mathcal{O}_1$ such that
\begin{equation*}
\omega \cdot \partial_{\varphi} \beta^{\infty}+(m_2+a_2)(1+\beta^{\infty}_x)= m_{\infty}.
\end{equation*}
In addition, they satisfy the following estimates:
\begin{equation*}
 |m_{\infty}-m_2|^{\gamma, \mathcal{O}_0}
 \leq \| a_2 \|_{s_1}^{\gamma, \mathcal{O}_0}
 \leq \varepsilon \| \mathfrak{J} \|_{s_1+\eta_0}^{\gamma, \mathcal{O}_0},  \quad \omega \in \mathcal{O}_0,
 \end{equation*}
  \begin{equation*}
  |\Delta_{12}m_{\infty}|
  \leq 2|\Delta_{12}\langle a \rangle|
  \leq 2 \|\Delta_{12}a_2 \|_{s_0}
  \leq 2 \varepsilon \| \mathfrak{J}_1-\mathfrak{J}_2 \|_{s_0+\eta_0}, \quad \omega \in \mathcal{O}_1(\mathfrak{J}_1)\cap \mathcal{O}_1(\mathfrak{J}_2),
  \end{equation*}
\begin{equation*}
\| \beta^{\infty} \|_s^{\gamma, \mathcal{O}_1}
\leq_s \gamma^{-1}\| a_2 \|_{s+2\tau+4}^{\gamma, \mathcal{O}_0}
\leq_s \varepsilon\gamma^{-1}\| \mathfrak{J} \|_{s+2\tau+4+\eta_0}^{\gamma, \mathcal{O}_0},  \quad \forall s \geq s_0,  \omega \in \mathcal{O}_1,
\end{equation*}
and for  $s_0 \leq p+\sigma_{*}<s_1, \omega \in \mathcal{O}_1(\mathfrak{J}_1)\cap \mathcal{O}_1(\mathfrak{J}_2)$,
\begin{equation*}
 \| \Delta_{12}\beta^{\infty} \|_p \leq C\gamma^{-1}\| \Delta_{12}a_2 \|_{p+\sigma_{*}}
\leq_p \varepsilon \gamma^{-1}\| \mathfrak{J}_1-\mathfrak{J}_2  \|_{p+\sigma_{*}+\eta_0}.
\end{equation*}

 From Proposition \ref{Conj},  setting $\beta=\beta^{\infty}$, $\Upsilon_1=\mathcal{A}_{\beta^{\infty}}$, we deduce that
\begin{equation*}
 \Upsilon_1^{-1} \mathcal{L}\Upsilon_1=\omega \cdot \partial_{\varphi} +m_{\infty}\partial_{y}  -(m_0+m_2)\Lambda \partial_{y}+ \hat{R}.
 \end{equation*}
After renaming the space variable $y=x$, we obtain \eqref{hat-L}.

According to Proposition \ref{Conj}, the remainder $\hat{R}={\rm Op}(\hat{r})+\hat{\mathfrak{R}}$, where $\hat{r} \in S^{-1}$ and $\hat{\mathfrak{R}} \in \mathfrak{L}_{\rho, p}\ $, satisfies the following inequalities:
\begin{equation*}
 |\hat{r}|_{-1, s, \alpha}^{\gamma, \mathcal{O}_1}
 \leq_{s, \alpha, \rho}   \| \beta^{\infty} \|_{s+\mu_1}^{\gamma, \mathcal{O}_1}     + \sum_{i=0,2}  \| a_i \|_{s+\mu_1}^{\gamma, \mathcal{O}_0}
\leq_{s, \alpha, \rho} \gamma^{-1}\varepsilon \| \mathfrak{J} \|_{s+\mu_1+2\tau+4+\eta_0}^{\gamma, \mathcal{O}_0}
\end{equation*}
for $s_0\leq s \leq \mathcal{S}$,
\begin{equation*}
\mathbb{M}_{\hat{\mathfrak{R}}}^{\gamma}(s, b)
\leq_{s, \rho}  \| \beta^{\infty} \|_{s+\mu_1}^{\gamma, \mathcal{O}_1}
     + \sum_{i=0,2}  \| a_i \|_{s+\mu_1}^{\gamma, \mathcal{O}_0}
\leq_{s,  \rho} \gamma^{-1}\varepsilon \| \mathfrak{J} \|_{s+\mu_1+2\tau+4+\eta_0}^{\gamma, \mathcal{O}_0}
\end{equation*}
   for $0 \leq b \leq \rho-2,  s_0\leq s \leq \mathcal{S},$
\begin{equation*}
|\Delta_{12}\hat{r}|_{-1, p, \alpha}
\leq_{p, \alpha, \rho} \| \Delta_{12} \beta^{\infty} \|_{p+\mu_1}+\sum_{i=0,2} \| \Delta_{12} a_i \|_{p+\mu_1}
\leq_{p, \alpha, \rho} \gamma^{-1}\varepsilon \| \mathfrak{J}_1-\mathfrak{J}_2 \|_{p+\mu_1+\sigma_{*}+\eta_0}
\end{equation*}
for $ s_0 \leq p \leq \min\{s_0+\mu_1- \tilde{\mu}_1, s_1-\sigma_{*}-\mu_1\}$, and
\begin{equation*}
\mathbb{M}_{\Delta_{12}\hat{\mathfrak{R}}}(p, b)
\leq_{p,  \rho}  \| \Delta_{12}\beta^{\infty} \|_{p+\mu_1}  + \sum_{i=0,2}  \| \Delta_{12}a_i \|_{p+\mu_1}
\leq_{p,  \rho} \gamma^{-1}\varepsilon \| \mathfrak{J}_1-\mathfrak{J}_2 \|_{p+\mu_1+\sigma_{*}+\eta_0}
\end{equation*}
for $ s_0 \leq p \leq \min\{s_0+\mu_1- \tilde{\mu}_1, s_1-\sigma_{*}-\mu_1\}, 0 \leq b \leq \rho-3$.
Hence there exist $\mu_2, \tilde{\mu}_2>0, \tilde{\mu}_2 \leq \mu_2$ such that
the inequalities \eqref{hat-r-R-bd}-\eqref{det-r-R-bd} hold.

Furthermore, if $u$ depends on $\omega \in \mathcal{O}_1$ in a Lipschitz way,  it follows from \eqref{gam-uf} for $ \omega \in \mathcal{O}_1\subset \mathbb{R}^{\nu}$ that
\begin{equation*}
\begin{split}
\| \Upsilon_1u \|_s^{\gamma, \mathcal{O}_1}+\| \Upsilon_1^{-1}u \|_s^{\gamma, \mathcal{O}_1}
&\leq_s\| u \|_s^{\gamma, \mathcal{O}_1}
+ \| \beta^{\infty} \|_{s_0+1}^{\gamma, \mathcal{O}_1} \| u \|_{s}^{\gamma, \mathcal{O}_1}
+\| \beta^{\infty} \|_{s+s_0}^{\gamma, \mathcal{O}_1} \| u \|_{s_0}^{\gamma, \mathcal{O}_1}\\
&\leq_s  \| u \|_s^{\gamma, \mathcal{O}_1} + \varepsilon \gamma^{-1} \| \mathfrak{J} \|_{s+\mu_2}^{\gamma, \mathcal{O}_0}\| u \|_{s_0}^{\gamma, \mathcal{O}_1}.\\
\end{split}
\end{equation*}

Finally, since $m_2 \in \mathbb{R}$ and $a_2$ is a real, even function, $\beta^{\infty}$ is real and odd due to Proposition \ref{Str-thm}. By Proposition \ref{Conj},
 the maps $\Upsilon_1, \Upsilon_1^{-1}$ are real and reversibility-preserving, while $\hat{\mathcal{L}}$, $\hat{R}$ are real and reversible.
\end{proof}
\renewcommand{\theequation}{\thesection.\arabic{equation}}
\setcounter{equation}{0}
\section{Diagonalization}

The purpose of this section is to completely diagonalize the linear operator $\hat{\mathcal{L}}$ in \eqref{hat-L}. The proof is carried out by exploiting an iterative KAM scheme.

First we assume that
\begin{equation}\label{k1-k2}
0 < k_1 \leq k_2,   \quad  k_2\gamma^{-\kappa}\leq 1, \quad \kappa>1.
\end{equation}
 We now consider a linear operator $\mathcal{L}_0:=\omega \cdot \partial_{\varphi} + D_0 + R_0$, where
\begin{equation}\label{D0}
D_0= {\rm diag}_{j \in \mathbb{Z}}{\rm i} d_j^{(0)}, \quad d_j^{(0)}:=m_{\infty}j-\frac{(m_0+m_2)j}{1+j^2}, \quad m_0, m_2 \in \mathbb{R},
\end{equation}
and $m_{\infty}: \mathcal{O}_0 \rightarrow \mathbb{R}$ depends on $\omega$ in a Lipschitz way satisfying
\begin{equation}\label{m-bd}
|m_{\infty}-m_2|^{\gamma, \mathcal{O}_0}\leq C k_1.
\end{equation}

In the sequel, we fix
\begin{equation}\label{a0b0-def}
a_0=6\tau+3, \quad \tau \geq \nu+3,  \quad b_0=a_0+1.
\end{equation}

We assume that $R_0, \langle \partial_{\varphi} \rangle^{b_0}R_0$ are defined on $\mathcal{O}_1$ in \eqref{O-1} and they are Lip-$-1$-modulo-tame with modulo-tame constants
\begin{equation}\label{R0-bds}
\mathfrak{M}_{R_0}^{\sharp, \gamma^{\kappa}}(-1, s_0), \mathfrak{M}_{R_0}^{\sharp, \gamma^{\kappa}}(-1, s_0, b_0) \leq k_2.
\end{equation}

Additionally, $m_{\infty}$, $R_0$ and $\langle \partial_{\varphi} \rangle^{b_0}R_0$ also depend on the variable $\mathfrak{J}(\omega)$.
For $\omega \in \mathcal{O}_1(\mathfrak{J}_1)\cap \mathcal{O}_1(\mathfrak{J}_2)$ and  some constant $\sigma>0$, there hold
  \begin{equation}\label{del-m-bd}
  |\Delta_{12}m_{\infty}| \leq k_1 \| \mathfrak{J}_1 -\mathfrak{J}_2 \|_{s_0+\sigma},
    \end{equation}
and
\begin{equation}\label{del-R0-bds}
\begin{split}
\| \langle D_x \rangle^{1/2}\underline{\Delta_{12}R_0}\langle D_x \rangle^{1/2} \|_{\mathfrak{L}(H^{s_0})}&\leq k_2  \|\mathfrak{J}_1- \mathfrak{J}_2\|_{s_0+\sigma},\\
 \| \langle D_x \rangle^{1/2}\underline{\langle \partial_{\varphi}\rangle^{b_0}\Delta_{12}R_0}\langle D_x \rangle^{1/2} \|_{\mathfrak{L}(H^{s_0})}
&\leq k_2  \|\mathfrak{J}_1- \mathfrak{J}_2\|_{s_0+\sigma}.
\end{split}
\end{equation}


Now we introduce some notations: $N_{-1}=1$, $N_0 \in \mathbb{N}$ is large enough, $N_n=N_0^{(\frac{3}{2})^n}$ for $n \in \mathbb{N}$. Clearly, $N_{n+1}=N_n^{\frac{3}{2}}$.

\subsection{Iterative reduction}

\begin{prop}\label{Ite-red} (Iterative reduction)
Assume $\mathcal{S}>s_0$,  $s_0\leq s \leq \mathcal{S}$. If there exist $\tau_0>0, N_0 \in \mathbb{N}$ large enough such that
\begin{equation}\label{sc-R0}
N_0^{\tau_0} \mathfrak{M}_{R_0}^{\sharp, \gamma^{\kappa}}(-1, s_0, b_0) \gamma^{-\kappa} \leq 1,
\end{equation}
then the following results hold:

(1)$_{n}$. For all $n \geq 0$, there exists a sequence of  operators
\begin{equation}\label{djn-def}
\mathcal{L}_{n} = \omega \cdot \partial_{\varphi} + D_n + R_n, \quad D_n= {\rm diag}_{j \in \mathbb{Z}}{\rm i} d_j^{(n)},\quad d_j^{(n)}=d_j^{(0)}+r_j^{(n)},
 \end{equation}
where $d_j^{(n)}, r_j^{(n)}$ are defined on $\mathcal{O}_0$ satisfying
\begin{equation*}
d_j^{(n)}=-d_{-j}^{(n)}\in \mathbb{R}, \quad r_j^{(n)}=-r_{-j}^{(n)}\in \mathbb{R}, \quad r_j^{(0)}=0, \quad \forall j \in \mathbb{Z}
\end{equation*}
and
\begin{equation}\label{rjn-fbds}
 \sup_{j} \langle j\rangle |r_j^{(n)}|^{\gamma^{\kappa}, \mathcal{O}_0}\leq Ck_2.
\end{equation}
The operators $R_n$ are defined on $\mathcal{O}_1 \cap \Omega_n^{\gamma^{\kappa}}$,
where $\Omega_{0}^{\gamma^{\kappa}}:=\mathcal{O}_0$,
\begin{equation}\label{Omega-n}
\begin{split}
\Omega_n^{\gamma^{\kappa}}(\mathfrak{J}):= \Big\{ \omega \in \Omega_{n-1}^{\gamma^{\kappa}}(\mathfrak{J}):
 & |\omega \cdot l + d_j^{(n-1)}- d_{j'}^{(n-1)}| \geq \frac{\gamma^{\kappa}|j-j'|}{\langle l \rangle^{\tau}},  \\
&\forall |l|\leq N_{n-1}, j,j' \in \mathbb{Z}, (j, j', l)\neq (j, j, 0)\Big\}, \quad n \geq 1 \\
 \end{split}
\end{equation}
 and $R_n$  and $\langle \partial_{\varphi}\rangle^{b_0}R_n$ are Lip-$-1$-modulo-tame satisfying
 \begin{equation}\label{Rn-fbd}
 \mathfrak{M}_{R_n}^{\sharp, \gamma^{\kappa}}(-1, s)\leq \mathfrak{M}_{R_0}^{\sharp, \gamma^{\kappa}}(-1, s, b_0)N_{n-1}^{-a_0},
 \quad \mathfrak{M}_{R_n}^{\sharp, \gamma^{\kappa}}(-1, s, b_0)\leq \mathfrak{M}_{R_0}^{\sharp, \gamma^{\kappa}}(-1, s,  b_0)N_{n-1}.
 \end{equation}
For  $n \geq 1$ and $j \in \mathbb{Z}$, there holds
\begin{equation}\label{djn-df-fb}
\langle j\rangle|r_j^{(n)}-r_j^{(n-1)}|^{\gamma^{\kappa}, \mathcal{O}_0} = \langle j\rangle|d_j^{(n)}-d_j^{(n-1)}|^{\gamma^{\kappa}, \mathcal{O}_0}\leq \mathfrak{M}_{R_0}^{\sharp, \gamma^{\kappa}}(-1, s_0,  b_0)N_{n-2}^{-a_0}.
 \end{equation}
Meanwhile, $R_n, \mathcal{L}_n$ are real and reversible operators.

(2)$_{n}$.
For $n \geq 1$, there exists a real linear bounded invertible T\"{o}plitz-in-time map $\Phi_{n-1}:= {\rm{Id}}+ \Psi_{n-1}$ defined on $\mathcal{O}_1\cap \Omega_n^{\gamma^{\kappa}}$ such that
\begin{equation*}
\mathcal{L}_n = \Phi_{n-1}^{-1} \mathcal{L}_{n-1}\Phi_{n-1}.
\end{equation*}
$\Psi_{n-1}$  and $\langle \partial_{\varphi}\rangle^{b_0}\Psi_{n-1}$ are Lip-$-1$-modulo-tame satisfying
\begin{equation*}
\mathfrak{M}_{\Psi_{n-1}}^{\sharp, \gamma^{\kappa}}(-1, s) \leq \gamma^{-\kappa} N_{n-1}^{2\tau+1} N_{n-2}^{-a_0} \mathfrak{M}_{R_0}^{\sharp, \gamma^{\kappa}}(-1, s,  b_0),
  \end{equation*}
\begin{equation*}
\mathfrak{M}_{\Psi_{n-1}}^{\sharp, \gamma^{\kappa}}(-1, s, b_0) \leq \gamma^{-\kappa} N_{n-1}^{2\tau+1} N_{n-2} \mathfrak{M}_{R_0}^{\sharp, \gamma^{\kappa}}(-1, s,  b_0).
  \end{equation*}
In addition, for any $\omega \in \Omega_n^{\gamma_1}(\mathfrak{J}_1)\cap \mathcal{O}_1(\mathfrak{J}_1) \cap \Omega_n^{\gamma_2}(\mathfrak{J}_2) \cap \mathcal{O}_1(\mathfrak{J}_2)$ with $\gamma_1, \gamma_2 \in [\gamma^{\kappa}/2, 2 \gamma^{\kappa}]$, it follows that
 \begin{equation*}
 \| \langle D_x \rangle^{1/2}\underline{\Delta_{12}\Psi_{n-1}} \langle D_x \rangle^{1/2}\|_{\mathfrak{L}(H^{s_0})} \leq  \gamma^{-\kappa} k_2 N_{n-1}^{2\tau} N_{n-2}^{-a_0} \|\mathfrak{J}_1- \mathfrak{J}_2\|_{s_0+\sigma},
   \end{equation*}
 \begin{equation*}
 \| \langle D_x \rangle^{1/2}\underline{\langle \partial_{\varphi}\rangle^{b_0}\Delta_{12}\Psi_{n-1}} \langle D_x \rangle^{1/2} \|_{\mathfrak{L}(H^{s_0})} \leq \gamma^{-\kappa} k_2 N_{n-1}^{2\tau} N_{n-2}  \|\mathfrak{J}_1- \mathfrak{J}_2\|_{s_0+\sigma}.
   \end{equation*}
Furthermore, $\Phi_{n-1}^{\pm 1}$ and $\Psi_{n-1}$ are real and reversibility-preserving.

(3)$_{n}$.
For $n \geq 0$, for any $\omega \in \Omega_n^{\gamma_1}(\mathfrak{J}_1)\cap \mathcal{O}_1(\mathfrak{J}_1) \cap \Omega_n^{\gamma_2}(\mathfrak{J}_2) \cap \mathcal{O}_1(\mathfrak{J}_2)$ with $\gamma_1, \gamma_2 \in [\gamma^{\kappa}/2, 2 \gamma^{\kappa}]$, we have
 \begin{eqnarray}\label{del-Rn-rjn-fbd}
 \begin{aligned}
 &\| \langle D_x \rangle^{1/2}\underline{\Delta_{12}R_n}\langle D_x \rangle^{1/2} \|_{\mathfrak{L}(H^{s_0})} \leq k_2 N_{n-1}^{-a_0} \|\mathfrak{J}_1- \mathfrak{J}_2\|_{s_0+\sigma},\\
& \| \langle D_x \rangle^{1/2}\underline{\langle \partial_{\varphi}\rangle^{b_0}\Delta_{12}R_n}\langle D_x \rangle^{1/2} \|_{\mathfrak{L}(H^{s_0})} \leq k_2 N_{n-1} \|\mathfrak{J}_1- \mathfrak{J}_2\|_{s_0+\sigma},\\
&\langle j\rangle|\Delta_{12}r_j^{(n)}|\leq Ck_2 \|\mathfrak{J}_1- \mathfrak{J}_2\|_{s_0+\sigma}, \quad \forall j \in \mathbb{Z}.
\end{aligned}
\end{eqnarray}

For all $n \geq 1$, $ j \in \mathbb{Z}$, there holds
\begin{equation*}
\langle j\rangle|\Delta_{12}r_j^{(n)}- \Delta_{12}r_j^{(n-1)}| \leq k_2 N_{n-2}^{-a_0} \|\mathfrak{J}_1- \mathfrak{J}_2\|_{s_0+\sigma}.
  \end{equation*}

(4)$_{n}$.
Let  $0 < \rho < \gamma^{\kappa}/2$. For  all $n \geq 0$, if
$k_2 N_{n-1}^{\tau} \|\mathfrak{J}_1- \mathfrak{J}_2\|_{s_0+\sigma} \leq \rho$, then
\begin{equation*}
\Omega_n^{\gamma^{\kappa}}(\mathfrak{J}_1) \subseteq \Omega_n^{\gamma^{\kappa}-\rho}(\mathfrak{J}_2).
\end{equation*}
\end{prop}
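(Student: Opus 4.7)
I prove $(1)_n$--$(4)_n$ simultaneously by induction on $n$, the base case $n=0$ being immediate from \eqref{D0}, \eqref{m-bd}, \eqref{R0-bds}, \eqref{del-R0-bds} (with $r_j^{(0)}=0$, $\mathcal{L}_0$ real and reversible from Theorem~\ref{Reg-thm}, and $\Omega_0^{\gamma^\kappa}=\mathcal{O}_0$ making $(4)_0$ trivial). For the inductive step at level $n+1$, I solve the homological equation
\begin{equation*}
\omega\cdot\partial_\varphi \Psi_n+[D_n,\Psi_n]+\Pi_{N_n}(R_n-[R_n]) = 0,
\end{equation*}
which in Fourier coordinates is explicit: for $|l|\leq N_n$ with $(j,j',l)\neq(j,j,0)$,
\begin{equation*}
(\Psi_n)_j^{j'}(l) = -\frac{(R_n)_j^{j'}(l)}{\mathrm{i}\bigl(\omega\cdot l + d_j^{(n)}-d_{j'}^{(n)}\bigr)},
\end{equation*}
with all other entries vanishing. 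On $\Omega_{n+1}^{\gamma^\kappa}$ the divisor is bounded below by $\gamma^\kappa|j-j'|/\langle l\rangle^\tau$, and the factor $|j-j'|$ precisely balances the $\langle j\rangle^{1/2}\langle j'\rangle^{1/2}$ loss built into the Lip-$-1$-modulo-tame norm (the key reason for working in this class rather than Lip-$0$). The standard small-divisor estimate in the modulo-tame framework then yields the bounds in $(2)_{n+1}$ with divisor loss $\gamma^{-\kappa}N_n^{2\tau+1}$; the smallness \eqref{sc-R0} combined with the inductive factor $N_{n-1}^{-a_0}$ in \eqref{Rn-fbd} makes Lemma~\ref{mod-inv} applicable, so $\Phi_n={\rm Id}+\Psi_n$ is invertible.

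\textbf{New diagonal, new remainder, and Lipschitz dependence.} A direct computation using the homological equation gives
\begin{equation*}
\Phi_n^{-1}\mathcal{L}_n\Phi_n = \omega\cdot\partial_\varphi + D_n + [R_n] + \Phi_n^{-1}\bigl(\Pi_{N_n}^\perp R_n + R_n\Psi_n - \Psi_n[R_n]\bigr),
\end{equation*}
so I set $D_{n+1}:=D_n+[R_n]$ and $R_{n+1}:=\Phi_n^{-1}(\Pi_{N_n}^\perp R_n + R_n\Psi_n-\Psi_n[R_n])$. Hence $d_j^{(n+1)}=d_j^{(n)}+(R_n)_j^j(0)$; Lemma~\ref{[A]-pro}(1) gives $\langle j\rangle|(R_n)_j^j(0)|^{\gamma^\kappa,\mathcal{O}_0}\leq \mathfrak{M}_{R_n}^{\sharp,\gamma^\kappa}(-1,s_0)$, producing \eqref{djn-df-fb} and, by telescoping, \eqref{rjn-fbds}. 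The bound on $R_{n+1}$ combines Lemma~\ref{mod-smoo} (gain $N_n^{-b_0}$ on the smoothing piece, using the $\langle\partial_\varphi\rangle^{b_0}$-weighted norm), Lemma~\ref{mod-sum-com} (for the quadratic terms, producing $\mathfrak{M}_{\Psi_n}^\sharp\,\mathfrak{M}_{R_n}^\sharp$), and Lemma~\ref{mod-inv} (for $\Phi_n^{-1}$); the choices $a_0=6\tau+3$, $b_0=a_0+1$ and $N_{n+1}=N_n^{3/2}$ are calibrated so that these estimates close \eqref{Rn-fbd} at level $n+1$. Reality, reversibility and the oddness $r_j^{(n+1)}=-r_{-j}^{(n+1)}$ propagate because the homological equation has reversibility-preserving coefficients and $[D_n,\cdot]$ commutes with the parity involutions. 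For $(3)_{n+1}$ I apply $\Delta_{12}$ throughout, relying on Lemma~\ref{L(H_s0)} for the composition algebra of $\langle D_x\rangle^{1/2}\underline{(\cdot)}\langle D_x\rangle^{1/2}$-norms, on Lemma~\ref{L(Hs0)-inv} for $\Phi_n^{-1}$, and on Lemma~\ref{[A]-pro}(2) to transfer the estimate to $\Delta_{12}r_j^{(n+1)}$. Finally, $(4)_{n+1}$ follows from $(3)_n$: since $\langle j\rangle|\Delta_{12}(d_j^{(n)}-d_{j'}^{(n)})|\leq Ck_2|j-j'|\|\mathfrak{J}_1-\mathfrak{J}_2\|_{s_0+\sigma}$, the smallness $k_2N_n^\tau\|\mathfrak{J}_1-\mathfrak{J}_2\|_{s_0+\sigma}\leq\rho$ transfers the Melnikov condition at level $n+1$ from $\mathfrak{J}_1$ to $\mathfrak{J}_2$ with loss exactly $\rho$.

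\textbf{Main obstacle.} The principal difficulty is closing the induction on the \emph{two} modulo-tame quantities $\mathfrak{M}^{\sharp,\gamma^\kappa}(-1,s)$ and $\mathfrak{M}^{\sharp,\gamma^\kappa}(-1,s,b_0)$ simultaneously: the $\langle\partial_\varphi\rangle^{b_0}$-weighted norm of a product generates four cross terms via Lemma~\ref{mod-sum-com}(3), and the parameters $(a_0,b_0,N_n)$ must absorb all of them while still producing super-exponential decay of the unweighted norm. A second, more intrinsic subtlety, specific to the Camassa--Holm setting emphasized in the introduction, is that the $|j-j'|$-gain coming from the second Melnikov denominators must be balanced against the order $-1$ of the symplectic operator $J=(1-\partial_{xx})^{-1}\partial_x$; this is precisely why the scheme is run with Lip-$-1$- rather than Lip-$0$-modulo-tame constants, a choice that also underlies the eventual comparison between the bad sets $P_{ljj'}$ and $Q_{l,j}$ used in the measure estimate.
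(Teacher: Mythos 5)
Your proposal follows essentially the same route as the paper: solve the same homological equation, define $D_{n+1}=D_n+[R_n]$ and $R_{n+1}=\Phi_n^{-1}(\Pi_{N_n}^\perp R_n+R_n\Psi_n-\Psi_n[R_n])$, run the quadratic scheme in the two weighted Lip-$-1$-modulo-tame norms, and use the smoothing operator to absorb the $N_n^{b_0}$ loss. The conjugation identity, the Melnikov small-divisor estimate with loss $\gamma^{-\kappa}N_n^{2\tau+1}$, and the use of Lemmas~\ref{mod-sum-com}, \ref{mod-inv}, \ref{mod-smoo}, \ref{[A]-pro}, \ref{L(H_s0)}, \ref{L(Hs0)-inv} all match the paper.

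One genuine gap: the statement requires $d_j^{(n)},r_j^{(n)}$ to be defined on \emph{all} of $\mathcal{O}_0$, but your update $d_j^{(n+1)}=d_j^{(n)}+\tfrac{1}{\mathrm{i}}(R_n)_j^j(0)$ (you dropped the $\tfrac{1}{\mathrm{i}}$, a typo, since $D_n=\mathrm{diag}\,\mathrm{i}d_j^{(n)}$) only defines $d_j^{(n+1)}$ on $\mathcal{O}_1\cap\Omega_{n+1}^{\gamma^\kappa}$, where $R_n$ lives. Correspondingly, your invocation
\begin{equation*}
\langle j\rangle|(R_n)_j^j(0)|^{\gamma^\kappa,\mathcal{O}_0}\leq \mathfrak{M}_{R_n}^{\sharp,\gamma^\kappa}(-1,s_0)
\end{equation*}
is unjustified as written: Lemma~\ref{[A]-pro}(1) gives this only over $\mathcal{O}_1\cap\Omega_{n+1}^{\gamma^\kappa}$. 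The paper bridges this with a Kirszbraun-type Lipschitz extension of $(R_n)_j^j(0)$ from $\Omega_{n+1}^{\gamma^\kappa}\cap\mathcal{O}_1$ to $\mathcal{O}_0$ preserving the weighted norm $|\cdot|^{\gamma^\kappa,\,\cdot}$, and the extension must also preserve pure-imaginarity and oddness in $j$ so that the corrected $r_j^{(n+1)}$ remains real and odd. This step is not optional: the measure estimate in Section~5 (and the definition of $\mathcal{O}_2$) relies on $d_j^\infty$ being a Lipschitz function on the full set $\mathcal{O}_0$. You should insert the extension before telescoping \eqref{djn-df-fb}. There is also a stray factor of $\langle j\rangle$ in your $(4)_{n+1}$ estimate $\langle j\rangle|\Delta_{12}(d_j^{(n)}-d_{j'}^{(n)})|\leq Ck_2|j-j'|\|\mathfrak{J}_1-\mathfrak{J}_2\|_{s_0+\sigma}$; the correct inequality has no $\langle j\rangle$ on the left, and follows from $|\Delta_{12}m_\infty|\,|j-j'|+|\Delta_{12}r_j^{(n)}|+|\Delta_{12}r_{j'}^{(n)}|$ together with $k_1\leq k_2$. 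Otherwise the argument is sound.
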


\subsection{Proof of Proposition \ref{Ite-red}}

We argue by induction on $n$.

Recalling $r_j^{(0)}=0$, $N_{-1}:=1$ and $\Omega_0^{\gamma^{\kappa}}=\mathcal{O}_0$, the estimates in (1)$_{0}$ and (3)$_{0}$ hold. Since $m_{\infty}, m_0, m_2$ are real,  it follows from \eqref{D0} that $d_j^{(0)}$ is real and odd in $j$. In  (3)$_{0}$, we  note that $\Omega_0^{\gamma_1}(\mathfrak{J}_1)=\Omega_0^{\gamma_2}(\mathfrak{J}_2)=\mathcal{O}_0.$
(4)$_{0}$ is trivial, because $\Omega_0^{\gamma^{\kappa}}(\mathfrak{J}_1) = \Omega_0^{\gamma^{\kappa}-\rho}(\mathfrak{J}_2)=\mathcal{O}_0$.

\subsubsection{The reducibility step}

Now assuming the thesis holds for $0 \leq k \leq n$, we prove that it holds also for $k=n+1$.
First we show how to define $\Psi_n, \Phi_n$ and  $\mathcal{L}_{n+1}$. By the assumptions of the induction, we have
\begin{equation}\label{Phin-Ln}
\begin{aligned}
\mathcal{L}_n \Phi_n=
\Phi_n(\omega \cdot \partial_{\varphi}+ D_n)&+\big((\omega \cdot \partial_{\varphi}\Psi_n) +[D_n, \Psi_n]\\
&+ \Pi_{N_n}R_n\big)+\Pi_{N_n}^{\bot}R_n+R_n\Psi_n,
\end{aligned}
\end{equation}
where $[D_n, \Psi_n]=D_n\Psi_n-\Psi_nD_n$ and $\Pi_{N_n}R_n, \Pi_{N_n}^{\bot}R_n$ are defined in Definition \ref{Lo-var}.

Solving the following so-called  homological equation
\begin{equation}\label{hom-eq}
(\omega \cdot \partial_{\varphi}\Psi_n) +[D_n, \Psi_n]+ \Pi_{N_n}R_n=[R_n], \quad [R_n]:={\rm diag }_{j \in \mathbb{Z}} (R_n)_j^j(0),
\end{equation}
we have

\begin{lemma}\label{Hom-eq}(Homological equation)
For all $\omega \in \Omega_{n+1}^{\gamma^{\kappa}}\cap \mathcal{O}_1$, there exists a unique solution $\Psi_n$ of the homological equation. $\Psi_n$ and $\langle \partial_{\varphi}\rangle^{b_0}\Psi_n$ are Lip-$-1$-modulo-tame operators and the modulo-tame constants satisfy
  \begin{equation}\label{psin-Rn}
  \mathfrak{M}_{\Psi_n}^{\sharp, \gamma^{\kappa}}(-1, s) \leq C \gamma^{-\kappa}N_n^{2\tau+1} \mathfrak{M}_{R_n}^{\sharp, \gamma^{\kappa}}(-1, s),
 \end{equation}
 \begin{equation}\label{b-psin-Rn}
 \mathfrak{M}_{\langle \partial_{\varphi}\rangle^{b_0} \Psi_n}^{\sharp, \gamma^{\kappa}}(-1, s) \leq C \gamma^{-\kappa}N_n^{2\tau+1} \mathfrak{M}_{R_n}^{\sharp, \gamma^{\kappa}}(-1, s, b_0).
 \end{equation}

For any $\omega \in \Omega_{n+1}^{\gamma_1}(\mathfrak{J}_1)\cap \mathcal{O}_1(\mathfrak{J}_1) \cap \Omega_{n+1}^{\gamma_2}(\mathfrak{J}_2)\cap \mathcal{O}_1(\mathfrak{J}_2)$ with $\gamma_1, \gamma_2 \in [\gamma^{\kappa}/2, 2 \gamma^{\kappa}]$, there hold
  \begin{equation}\label{del-psin-Rn}
   \begin{split}
 \| \langle D_x \rangle^{1/2}&\underline{\Delta_{12}\Psi_{n}}\langle D_x \rangle^{1/2} \|_{\mathfrak{L}(H^{s_0})}\\
\leq & \gamma^{-\kappa}N_{n}^{2\tau}\big(\| \langle D_x \rangle^{1/2}\underline{\Delta_{12}R_{n}} \langle D_x \rangle^{1/2}\|_{\mathfrak{L}(H^{s_0})}\\
&+\| \langle D_x \rangle^{1/2}\underline{R_n} \langle D_x \rangle^{1/2}\|_{\mathfrak{L}(H^{s_0})}\|\mathfrak{J}_1- \mathfrak{J}_2\|_{s_0+\sigma}\big),\\
 \end{split}
 \end{equation}
  \begin{equation}\label{b-del-psin-Rn}
     \begin{split}
 \|\langle D_x \rangle^{1/2}&\underline{\langle \partial_{\varphi}\rangle^{b_0}\Delta_{12}\Psi_{n}}\langle D_x \rangle^{1/2}\|_{\mathfrak{L}(H^{s_0})}\\
\leq  & \gamma^{-\kappa} N_{n}^{2\tau}    \big(
  \| \langle D_x \rangle^{1/2}\underline{\langle \partial_{\varphi}\rangle^{b_0}\Delta_{12}R_{n}}\langle D_x \rangle^{1/2} \|_{\mathfrak{L}(H^{s_0})} \\
&+
\| \langle D_x \rangle^{1/2}\underline{\langle \partial_{\varphi}\rangle^{b_0}R_n} \langle D_x \rangle^{1/2}\|_{\mathfrak{L}(H^{s_0})}\|\mathfrak{J}_1- \mathfrak{J}_2\|_{s_0+\sigma}
\big).\\
 \end{split}
 \end{equation}

 Moreover, $\Phi_{n}, \Psi_{n}, \Phi_{n}^{-1}$ are real and reversibility-preserving.
\end{lemma}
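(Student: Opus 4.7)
I will solve the homological equation \eqref{hom-eq} by matching Fourier coefficients and then turn the resulting pointwise matrix bounds into Lip-$-1$-modulo-tame estimates through a direct computation (a black-box application of Lemma \ref{mod-compa} would produce a constant proportional to $\gamma^{-\kappa}$, which is not admissible there). Writing \eqref{hom-eq} in the basis $\{e^{{\rm i}(jx+l\cdot\varphi)}\}$ yields, for $(j,j',l)\neq (j,j,0)$,
$$ {\rm i}\,\delta_{ljj'}(\Psi_n)_j^{j'}(l)=-\chi_{|l|\le N_n}\,(R_n)_j^{j'}(l),\qquad \delta_{ljj'}:=\omega\cdot l+d_j^{(n)}-d_{j'}^{(n)}, $$
so I define $(\Psi_n)_j^{j'}(l):={\rm i}\,(R_n)_j^{j'}(l)/\delta_{ljj'}$ on the admissible set $\{|l|\le N_n,\ (j,j',l)\neq (j,j,0)\}$ and zero elsewhere.

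\textbf{Small divisor bounds.} Two cases have to be treated separately. For $j\neq j'$ and $|l|\le N_n$, the definition \eqref{Omega-n} of $\Omega_{n+1}^{\gamma^\kappa}$ gives $|\delta_{ljj'}|\ge \gamma^\kappa|j-j'|/\langle l\rangle^\tau$. For $j=j'$ and $l\neq 0$, the divisor collapses to $|\omega\cdot l|$; the Diophantine condition in \eqref{O_0} combined with $\gamma^\kappa\le\gamma$ and $\nu\le\tau$ yields $|\omega\cdot l|\ge 2\gamma/\langle l\rangle^\nu\ge \gamma^\kappa/\langle l\rangle^\tau$. Both cases are uniformly summarised by $|\delta_{ljj'}|\ge \gamma^\kappa\max(|j-j'|,1)/\langle l\rangle^\tau$ on $|l|\le N_n$.

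\textbf{Sup-norm and Lipschitz bounds on matrix entries.} The inequality $|(\Psi_n)_j^{j'}(l)|\le (N_n^\tau/\gamma^\kappa)|(R_n)_j^{j'}(l)|$ is immediate. For the Lipschitz part, the quotient rule gives
$$ \Delta_{\omega,\omega'}(\Psi_n)_j^{j'}(l) =\frac{{\rm i}\,\Delta_{\omega,\omega'}(R_n)_j^{j'}(l)}{\delta_{ljj'}(\omega')}-\frac{{\rm i}\,(R_n)_j^{j'}(l)(\omega)\,\Delta_{\omega,\omega'}\delta_{ljj'}}{\delta_{ljj'}(\omega)\,\delta_{ljj'}(\omega')}. $$
I then estimate $|\Delta_{\omega,\omega'}\delta_{ljj'}|\le|l|+|j-j'|\,|m_\infty|^{lip}+|r_j^{(n)}-r_{j'}^{(n)}|^{lip}\le C(|l|+|j-j'|+1)$ using \eqref{m-bd}, \eqref{rjn-fbds} and $k_2\gamma^{-\kappa}\le 1$; the key step is to absorb the $|j-j'|$ factor by the squared divisor lower bound $|\delta_{ljj'}(\omega)\delta_{ljj'}(\omega')|\ge\gamma^{2\kappa}\max(|j-j'|,1)^2/\langle l\rangle^{2\tau}$, producing the pointwise control $|\Delta_{\omega,\omega'}(\Psi_n)_j^{j'}(l)|\le (N_n^\tau/\gamma^\kappa)|\Delta_{\omega,\omega'}(R_n)_j^{j'}(l)|+(CN_n^{2\tau+1}/\gamma^{2\kappa})|(R_n)_j^{j'}(l)|$ on $|l|\le N_n$. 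Multiplying by the weight $\langle j\rangle^{1/2}\langle j'\rangle^{1/2}$, inserting into $\|\underline{\langle D_x\rangle^{1/2}\Psi_n\langle D_x\rangle^{1/2}}\,u\|_s$ and its Lipschitz difference, then unpacking the definition of $\mathfrak{M}_{R_n}^{\sharp,\gamma^\kappa}(-1,s)$ and weighting by $\gamma^\kappa$, yields \eqref{psin-Rn}. The bound \eqref{b-psin-Rn} is the same argument with every matrix entry further multiplied by $\langle l\rangle^{b_0}$, which passes harmlessly through both sides because $R_n$ and $\langle\partial_\varphi\rangle^{b_0}R_n$ share the same modulo-tame status.

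\textbf{$\Delta_{12}$ estimates and symmetries.} The dependence on $\mathfrak{J}$ is treated by applying the same quotient rule to $\Delta_{12}(\Psi_n)_j^{j'}(l)$, writing $\Delta_{12}\delta_{ljj'}=(j-j')\Delta_{12}m_\infty+\Delta_{12}(r_j^{(n)}-r_{j'}^{(n)})$ and using \eqref{del-m-bd} together with the inductive bound \eqref{del-Rn-rjn-fbd}, with the same $\max(|j-j'|,1)^2$ absorption; this delivers \eqref{del-psin-Rn}--\eqref{b-del-psin-Rn}. Reality and reversibility-preserving of $\Psi_n$, and hence of $\Phi_n^{\pm 1}$, follow from the explicit formula: the reversibility $(R_n)_{-j}^{-j'}(-l)=-(R_n)_j^{j'}(l)$ and the odd-real character $d_{-j}^{(n)}=-d_j^{(n)}\in\mathbb{R}$ give $\delta_{-l,-j,-j'}=-\delta_{ljj'}$ and thus $(\Psi_n)_{-j}^{-j'}(-l)=(\Psi_n)_j^{j'}(l)$; combined with $(R_n)_j^{j'}(l)\in{\rm i}\mathbb{R}$ (a consequence of reality plus reversibility of $R_n$), this also yields $\overline{(\Psi_n)_j^{j'}(l)}=(\Psi_n)_{-j}^{-j'}(-l)$. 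The main technical obstacle is the Lipschitz step: without exploiting the improved small divisor $|\delta_{ljj'}|\gtrsim \gamma^\kappa|j-j'|/\langle l\rangle^\tau$ for $j\neq j'$, the $|j-j'|$ factor arising from $\Delta_{\omega,\omega'}(d_j^{(n)}-d_{j'}^{(n)})$ would fail to cancel and the modulo-tame constant of $\Psi_n$ could not be closed in terms of that of $R_n$, breaking the whole KAM iteration.
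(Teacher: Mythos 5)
Your proof is correct and follows the same route as the paper: solve the homological equation explicitly in Fourier, bound the small divisors via $\Omega_{n+1}^{\gamma^\kappa}$ and $\mathcal{O}_0$, use the quotient rule for the Lipschitz and $\mathfrak{J}$-variations with the squared divisor absorbing $|j-j'|$, and read off reality and reversibility-preservation from the explicit matrix formula; your separate treatment of the $j=j'$, $l\neq 0$ case makes explicit something the paper leaves implicit. One small correction: the paper does in fact close the estimate by invoking Lemma \ref{mod-compa} (applied with $B=\gamma^{-\kappa}N_n^{2\tau+1}R_n$, so $C=\gamma^{-\kappa}$, which is admissible because the modulo-tame constants here carry the weight $\gamma^\kappa$), so the parenthetical warning that this route is blocked is not accurate, though your direct unpacking achieves the same thing.
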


\begin{proof}
 For $\omega \in \Omega_{n+1}^{\gamma^{\kappa}}\cap \mathcal{O}_1$, \eqref{hom-eq} is tantamount to the following equation:
\begin{equation*}
{\rm i} \omega \cdot l (\Psi_n)_j^{j'}(l)+{\rm i}d_j^{(n)}(\Psi_n)_j^{j'}(l) - {\rm i}d_{j'}^{(n)}(\Psi_n)_j^{j'}(l)=([R_n])_j^{j'}(l)-(\Pi_{N_n}R_n)_j^{j'}(l)
\end{equation*}
  whose unique solution is
 \begin{equation}\label{psin-def}
 (\Psi_n)_j^{j'}(l):=
 \left \{
\begin{array}{ll}
\frac{-(R_n)_j^{j'}(l)}{{\rm i}(\omega \cdot l+d_j^{(n)}-d_{j'}^{(n)})}, &  {\rm if} \ |l| \leq N_n \ {\rm and} \ (j, j', l)\neq (j, j, 0),\\
0, & {\rm otherwise}.\\
\end{array}
\right.
 \end{equation}
 Note that for all $\omega \in \Omega_{n+1}^{\gamma^{\kappa}}$, the divisors are nontrivial, so the above formula is well defined.

 In the following, we only prove the case of $|l| \leq N_n \ {\rm and} \ (j, j', l)\neq (j, j, 0)$,  the result holds for the other cases obviously.

 It follows from \eqref{psin-def} that
\begin{equation*}
|(\Psi_n)_j^{j'}(l)|  \leq  \gamma^{-\kappa}N_n^{\tau} |(R_n)_j^{j'}(l)|.
\end{equation*}
For any $ \omega, \omega' \in \Omega_{n+1}^{\gamma^{\kappa}}\cap \mathcal{O}_1$,
\begin{equation*}
\begin{split}
(\Psi_n)_j^{j'}(l)(\omega)- &(\Psi_n)_j^{j'}(l)(\omega')
=-\frac{(R_n)_j^{j'}(l)(\omega)- (R_n)_j^{j'}(l)(\omega')}{{\rm i}(\omega \cdot l+d_j^{(n)}(\omega)-d_{j'}^{(n)}(\omega))}\\
&+{\rm i}(R_n)_j^{j'}(l)(\omega')\frac{(\omega'-\omega)\cdot l- (d_j^{(n)}-d_{j'}^{(n)})(\omega)+(d_j^{(n)}-d_{j'}^{(n)})(\omega')}{(\omega \cdot l+d_j^{(n)}(\omega)-d_{j'}^{(n)}(\omega))(\omega' \cdot l+d_j^{(n)}(\omega')-d_{j'}^{(n)}(\omega'))}.\\
 \end{split}
\end{equation*}
 Using $|\cdot|^{lip, \mathcal{O}}\leq \gamma^{-1}|\cdot|^{\gamma, \mathcal{O}}$, \eqref{m-bd} and \eqref{rjn-fbds}, we obtain
\begin{equation*}
\begin{split}
|(\omega'-\omega)\cdot l&- (d_j^{(n)}-d_{j'}^{(n)})(\omega)+(d_j^{(n)}-d_{j'}^{(n)})(\omega')|\\
 \leq & |\omega-\omega'||l|+|m_{\infty}(\omega)-m_{\infty}(\omega')||j-j'|+|r_j^{(n)}(\omega)-r_j^{(n)}(\omega')|\\
 &\qquad +|r_{j'}^{(n)}(\omega)-r_{j'}^{(n)}(\omega')|\\
 \leq &|\omega-\omega'|(|l|+k_1\gamma^{-1}|j-j'|+k_2\gamma^{-\kappa}).\\
 \end{split}
\end{equation*}
From \eqref{k1-k2}, we derive that
 \begin{equation*}
\begin{split}
|\Delta_{\omega, \omega'}(\Psi_n)_j^{j'}(l)|
 \leq &  \gamma^{-\kappa}N_n^{\tau}|\Delta_{\omega, \omega'}(R_n)_j^{j'}(l)|
   +(|l|+|j-j'|)\frac{\langle l\rangle^{2\tau}}{\gamma^{2\kappa}|j-j'|^2} |(R_n)_j^{j'}(l)(\omega')|\\
   \leq &  \gamma^{-\kappa}N_n^{\tau} |\Delta_{\omega, \omega'}(R_n)_j^{j'}(l)|
   + \gamma^{-2\kappa}N_n^{2\tau+1} |(R_n)_j^{j'}(l)(\omega')|.\\
    \end{split}
\end{equation*}
Then we obtain \eqref{psin-Rn} after using Lemma \ref{mod-compa}.

Similarly, it follows that
  \begin{equation*}
\begin{split}
|(\langle \partial_{\varphi} \rangle^{b_0}\Psi_n)_j^{j'}(l)|&= |\langle l \rangle^{b_0}(\Psi_n)_j^{j'}(l)| \leq
 \frac{N_n^{\tau}\gamma^{-\kappa}}{|j-j'|} |\langle l \rangle^{b_0}(R_n)_j^{j'}(l)|\\
 &\leq N_n^{\tau}\gamma^{-\kappa} |(\langle \partial_{\varphi} \rangle^{b_0}R_n)_j^{j'}(l)|\\
  \end{split}
\end{equation*}
and
 \begin{equation*}
\begin{split}
\langle l \rangle^{b_0}|\Delta_{\omega, \omega'}(\Psi_n)_j^{j'}(l)|
   \leq  \gamma^{-\kappa}N_n^{\tau} \langle l \rangle^{b_0} |\Delta_{\omega, \omega'}(R_n)_j^{j'}(l)|
   + \gamma^{-2\kappa}N_n^{2\tau+1} \langle l \rangle^{b_0}|(R_n)_j^{j'}(l)(\omega')|.
      \end{split}
\end{equation*}
   Thus, we derive \eqref{b-psin-Rn} from Lemma \ref{mod-compa}.

 In addition, for any $\omega \in \Omega_{n+1}^{\gamma_1}(\mathfrak{J}_1)\cap \mathcal{O}_1(\mathfrak{J}_1) \cap \Omega_{n+1}^{\gamma_2}(\mathfrak{J}_2)\cap \mathcal{O}_1(\mathfrak{J}_2)$ with $\gamma_1, \gamma_2 \in [\gamma^{\kappa}/2, 2 \gamma^{\kappa}]$, we have
\begin{equation*}
\begin{split}
&\Delta_{12}(\Psi_n)_j^{j'}(l) \\
&= -\frac{\Delta_{12}(R_n)_j^{j'}(l)}{{\rm i}(\omega \cdot l+d_j^{(n)}(\mathfrak{J}_1)-d_{j'}^{(n)}(\mathfrak{J}_1))}\\
 &\quad +{\rm i}(R_n)_j^{j'}(l)(\mathfrak{J}_2)\frac{-\left((d_j^{(n)}-d_{j'}^{(n)})(\mathfrak{J}_1)\right)+ \left((d_j^{(n)}-d_{j'}^{(n)})(\mathfrak{J}_2)\right)}
 {(\omega \cdot l+d_j^{(n)}(\mathfrak{J}_1)-d_{j'}^{(n)}(\mathfrak{J}_1))(\omega \cdot l+d_j^{(n)}(\mathfrak{J}_2)-d_{j'}^{(n)}(\mathfrak{J}_2))}.\\
 \end{split}
\end{equation*}

In view of \eqref{k1-k2}, \eqref{del-m-bd} and \eqref{del-Rn-rjn-fbd}, we obtain
\begin{equation}\label{del-djj'n}
\begin{split}
|\Delta_{12}(d_j^{(n)}-d_{j'}^{(n)})|
\leq & |\Delta_{12}(d_j^{(0)}-d_{j'}^{(0)})|
+|\Delta_{12}r_j^{(n)}|
+ |\Delta_{12}r_{j'}^{(n)}|\\
\leq &|\Delta_{12}m_{\infty}||j-j'|+|\Delta_{12}r_j^{(n)}|+|\Delta_{12}r_{j'}^{(n)}|\\
 \leq &(k_1|j-j'|+k_2)  \|\mathfrak{J}_1- \mathfrak{J}_2\|_{s_0+\sigma}
\leq k_2|j-j'|  \|\mathfrak{J}_1- \mathfrak{J}_2\|_{s_0+\sigma}.\\
 \end{split}
\end{equation}
Therefore
\begin{equation*}
|\Delta_{12}(\Psi_n)_j^{j'}(l)|\leq \gamma^{-\kappa}N_n^{\tau}|\Delta_{12}(R_n)_j^{j'}(l)|+ k_2|(R_n)_j^{j'}(l)|\gamma^{-2\kappa}N_n^{2\tau}\|\mathfrak{J}_1- \mathfrak{J}_2\|_{s_0+\sigma}
\end{equation*}
by noticing that $\gamma_1, \gamma_2 \in [\gamma^{\kappa}/2, 2\gamma^{\kappa}]$.
As a consequence of Lemma \ref{mod-compa} and \eqref{k1-k2}, \eqref{del-psin-Rn} is satisfied.

In the same manner, we obtain
\begin{equation*}
\begin{split}
|\langle l \rangle^{b_0}\Delta_{12}(\Psi_n)_j^{j'}(l)| \leq& \gamma^{-\kappa}N_n^{\tau}|\langle l \rangle^{b_0}\Delta_{12}(R_n)_j^{j'}(l)|\\
&+ k_2|\langle l \rangle^{b_0}(R_n)_j^{j'}(l)|\gamma^{-2\kappa}N_n^{2\tau}\|\mathfrak{J}_1- \mathfrak{J}_2\|_{s_0+\sigma}.\\
\end{split}
\end{equation*}
Thus   \eqref{b-del-psin-Rn} holds.

Finally, since $d_j^{(n)}, d_{j'}^{(n)}$ are real, $d_j^{(n)}=-d_{-j}^{(n)}$, $d_{j'}^{(n)}=-d_{-j'}^{(n)}$ and $R_n$ is real, we have
\begin{equation*}
(\Psi_n)_{-j}^{-j'}(-l)=\frac{-(R_n)_{-j}^{-j'}(-l)}{{\rm i}(\omega \cdot (-l)+d_{-j}^{(n)}-d_{-j'}^{(n)})}
=\overline{\frac{-(R_n)_{j}^{j'}(l)}{{\rm i}(\omega \cdot l+d_{j}^{(n)}-d_{j'}^{(n)})}}
=\overline{(\Psi_n)_{j}^{j'}(l)},
\end{equation*}
which implies that $\Psi_n$ is real.

Since $R_n$ is reversible,
\begin{equation*}
(\Psi_n)_{-j}^{-j'}(-l)=\frac{-(R_n)_{-j}^{-j'}(-l)}{{\rm i}(\omega \cdot (-l)+d_{-j}^{(n)}-d_{-j'}^{(n)})}
=\frac{(R_n)_{j}^{j'}(l)}{-{\rm i}(\omega \cdot l+d_{j}^{(n)}-d_{j'}^{(n)})}
=(\Psi_n)_{j}^{j'}(l),
\end{equation*}
 $\Psi_n$ is reversibility-preserving. Whence $\Phi_n, \Phi_n^{-1}$ are real and reversibility-preserving as well.
 \end{proof}

\subsubsection{The iteration}

$(2)_{n+1}$.
From \eqref{psin-Rn} and \eqref{Rn-fbd}, we derive that
 \begin{eqnarray}\label{psin-fbd}
 \begin{aligned}
 &\mathfrak{M}_{\Psi_{n}}^{\sharp, \gamma^{\kappa}}(-1, s) \leq \gamma^{-\kappa} N_{n}^{2\tau+1} N_{n-1}^{-a_0} \mathfrak{M}_{R_0}^{\sharp, \gamma^{\kappa}}(-1, s,  b_0),\\
 &\mathfrak{M}_{\Psi_{n}}^{\sharp, \gamma^{\kappa}}(-1, s, b_0) \leq \gamma^{-\kappa} N_{n}^{2\tau+1} N_{n-1} \mathfrak{M}_{R_0}^{\sharp, \gamma^{\kappa}}(-1, s,  b_0).
 \end{aligned}
 \end{eqnarray}

By Lemma \ref{L(Hs0)-mod}, \eqref{R0-bds}, \eqref{Rn-fbd}, \eqref{del-Rn-rjn-fbd} and \eqref{del-psin-Rn}-\eqref{b-del-psin-Rn}, we deduce that
\begin{eqnarray} \label{del-psin-fbd}
  \begin{aligned}
&\| \langle D_x \rangle^{1/2}\underline{\Delta_{12}\Psi_{n}}  \langle D_x \rangle^{1/2}\|_{\mathfrak{L}(H^{s_0})} \leq  \gamma^{-\kappa} k_2 N_{n}^{2\tau} N_{n-1}^{-a_0} \|\mathfrak{J}_1- \mathfrak{J}_2\|_{s_0+\sigma},\\
&\| \langle D_x \rangle^{1/2} \underline{\langle \partial_{\varphi}\rangle^{b_0}\Delta_{12}\Psi_{n}}  \langle D_x \rangle^{1/2}\|_{\mathfrak{L}(H^{s_0})} \leq \gamma^{-\kappa} k_2 N_{n}^{2\tau} N_{n-1}  \|\mathfrak{J}_1- \mathfrak{J}_2\|_{s_0+\sigma}.
\end{aligned}
\end{eqnarray}
 Due to \eqref{a0b0-def}, \eqref{sc-R0} and \eqref{psin-fbd}, it follows that
\begin{equation}\label{psins0-bd}
4C(\mathcal{S}, b_0)\mathfrak{M}_{\Psi_{n}}^{\sharp, \gamma^{\kappa}}(-1, s_0) \leq 1/2.
\end{equation}
  Appling Lemma \ref{mod-inv} to $\Psi_{n}$, the map $\Phi_n=1+\Psi_n$ is invertible and
  \begin{equation}\label{Phin-inv}
  \mathfrak{M}_{\Phi_n^{-1}}^{\sharp, \gamma^{\kappa}}(-1, s_0)\leq 2, \quad \mathfrak{M}_{\Phi_n^{-1}}^{\sharp, \gamma^{\kappa}}(-1, s)\leq 1+2\mathfrak{M}_{\Psi_n}^{\sharp, \gamma^{\kappa}}(-1, s).
  \end{equation}
In the following, we denote $\breve{\Psi}_n=\Phi_n^{-1}-{\rm Id}$.
According to Lemma \ref{Hom-eq}, $\Psi_n$ is real and reversibility-preserving.
 Therefore, $\Phi_n^{\pm 1}$ are real and reversibility-preserving as well.

$(1)_{n+1}$.
 Recalling \eqref{Phin-Ln}, for $\omega \in \Omega_{n+1}^{\gamma^{\kappa}}\cap \mathcal{O}_1$, we define that
 \begin{equation*}
 \mathcal{L}_{n+1} :=\Phi_{n}^{-1} \mathcal{L}_{n}\Phi_{n}=\omega \cdot \partial_{\varphi}+D_{n+1}+R_{n+1},
 \end{equation*}
where
 \begin{equation}\label{Rn+1-def}
 D_{n+1}:=D_{n}+[R_n], \quad R_{n+1}:=\Phi_n^{-1}(\Pi_{N_n}^{\bot}R_n+R_n\Psi_n-\Psi_n[R_n]).
  \end{equation}

 The fact that $R_n$ is real and reversible implies that
 \begin{equation*}
 \overline{(R_n)_j^j(0)}=(R_n)_{-j}^{-j}(0)=-(R_n)_j^j(0),
 \end{equation*}
  from which we derive  that $(R_n)_j^j(0)$ is purely imaginary and odd in $j$.
  By the Kirszbraun theorem, $(R_n)_j^j(0)$  can be extended to the whole $\mathcal{O}_0$  preserving Lipschitz weighted norm $|\cdot|^{\gamma^{\kappa}, \Omega_{n+1}^{\gamma^{\kappa}}\cap \mathcal{O}_1}$. The extended function will be denoted by $(\tilde{R}_n)_j^j(0)$. $(\tilde{R}_n)_j^j(0)$ is purely imaginary and odd in $j$ as well. We define
\begin{equation}\label{rjn+1-def}
\begin{split}
D_{n+1}:= &\;{\rm diag}_{j \in \mathbb{Z}} {\rm i} d_j^{(n+1)},\\
  d_j^{(n+1)}:=&\; d_j^{(n)}+\frac{1}{{\rm i}}(\tilde{R}_n)_j^j(0)=d_j^{(0)}+r_j^{(n)}+\frac{1}{{\rm i}}(\tilde{R}_n)_j^j(0)=d_j^{(0)}+r_j^{(n+1)},\\
 r_j^{(n+1)}:=&\;r_j^{(n)}+\frac{1}{{\rm i}}(\tilde{R}_n)_j^j(0).\\
 \end{split}
 \end{equation}
 By inductive hypothesis, $r_j^{(n+1)}, d_j^{(n+1)}$ are real and odd in $j$.

In view of Lemma \ref{[A]-pro}, \eqref{Rn-fbd} and \eqref{rjn+1-def}, it follows that
 \begin{equation*}
  \begin{split}
 \langle j\rangle|r_j^{(n+1)}-r_j^{(n)}|^{\gamma^{\kappa}, \mathcal{O}_0} &=\langle j\rangle|d_j^{(n+1)}-d_j^{(n)}|^{\gamma^{\kappa}, \mathcal{O}_0}
= \langle j\rangle|(\tilde{R}_n)_j^j(0)|^{\gamma^{\kappa},  \mathcal{O}_0}\\
&= \langle j\rangle|(R_n)_j^j(0)|^{\gamma^{\kappa}, \Omega_{n+1}^{\gamma^{\kappa}}\cap \mathcal{O}_1}
\leq \mathfrak{M}_{R_n}^{\sharp, \gamma^{\kappa}}(-1, s_0)\\
&\leq \mathfrak{M}_{R_0}^{\sharp, \gamma^{\kappa}}(-1, s_0,  b_0)N_{n-1}^{-a_0}, \quad \forall j \in \mathbb{Z}.\\
 \end{split}
   \end{equation*}
Note that $r_j^{(0)}=0$. Summing the telescopic series,  \eqref{rjn-fbds} follows by \eqref{R0-bds} for $r_j^{(n+1)}$ instead of $r_j^{(n)}$.
 Moreover, $\mathcal{L}_{n+1}$ is real and reversible since $\mathcal{L}_{n}$ is real and reversible and $\Phi_n^{\pm 1}$ are reversibility-preserving.
 $R_{n+1}$ is real  because all the components in \eqref{Rn+1-def} are real.
Also it is  reversible because $R_n, [R_n], \Pi_{N_n}^{\bot}R_n$ are reversible and $\Phi_n^{-1}, \Psi_n$ are reversibility-preserving.

Let us establish the estimates \eqref{Rn-fbd} for $R_{n+1}$.
Owing to \eqref{psin-Rn},  \eqref{psins0-bd}-\eqref{Rn+1-def}, Lemma \ref{mod-sum-com} and  Lemmas \ref{mod-smoo}-\ref{[A]-pro}, we have
\begin{equation*}
 \begin{aligned}
\mathfrak{M}_{R_{n+1}}^{\sharp, \gamma^{\kappa}}(-1, s)
\leq \;& \mathfrak{M}_{\Pi_{N_n}^{\bot}R_n}^{\sharp, \gamma^{\kappa}}(-1, s)+ \mathfrak{M}_{R_n}^{\sharp, \gamma^{\kappa}}(-1, s)  \mathfrak{M}_{\Psi_n}^{\sharp, \gamma^{\kappa}}(-1, s_0)\\
& + \mathfrak{M}_{R_n}^{\sharp, \gamma^{\kappa}}(-1, s_0)  \mathfrak{M}_{\Psi_n}^{\sharp, \gamma^{\kappa}}(-1, s)
+\left(1+\mathfrak{M}_{\Psi_n}^{\sharp, \gamma^{\kappa}}(-1, s)\right)\\
&\times \left(\mathfrak{M}_{\Pi_{N_n}^{\bot}R_n}^{\sharp, \gamma^{\kappa}}(-1, s_0)+ \mathfrak{M}_{R_n}^{\sharp, \gamma^{\kappa}}(-1, s_0)  \mathfrak{M}_{\Psi_n}^{\sharp, \gamma^{\kappa}}(-1, s_0)\right)\\
\leq \;&\mathfrak{M}_{\Pi_{N_n}^{\bot}R_n}^{\sharp, \gamma^{\kappa}}(-1, s)+ \mathfrak{M}_{R_n}^{\sharp, \gamma^{\kappa}}(-1, s)  \mathfrak{M}_{\Psi_n}^{\sharp, \gamma^{\kappa}}(-1, s_0)\\
&+ \mathfrak{M}_{R_n}^{\sharp, \gamma^{\kappa}}(-1, s_0)  \mathfrak{M}_{\Psi_n}^{\sharp, \gamma^{\kappa}}(-1, s) \\
\leq \; &N_{n}^{-b_0}\mathfrak{M}_{R_{n}}^{\sharp, \gamma^{\kappa}}(-1, s, b_0)+ N_n^{2\tau+1}\gamma^{-\kappa} \mathfrak{M}_{R_n}^{\sharp, \gamma^{\kappa}}(-1, s)  \mathfrak{M}_{R_n}^{\sharp, \gamma^{\kappa}}(-1, s_0).\\
 \end{aligned}
    \end{equation*}
Note that the iterative terms  are quadratic  plus a super-exponentially small term.
By  \eqref{a0b0-def}, \eqref{sc-R0} and \eqref{Rn-fbd}, one gets
 \begin{equation*}
 \mathfrak{M}_{R_{n+1}}^{\sharp, \gamma^{\kappa}}(-1, s)
\leq \mathfrak{M}_{R_0}^{\sharp, \gamma^{\kappa}}(-1, s, b_0)N_{n}^{-a_0}.
    \end{equation*}

In order to obtain the estimates for $\mathfrak{M}_{\langle \partial_{\varphi} \rangle^{b_0}R_{n+1}}^{\sharp, \gamma^{\kappa}}(-1, s)$, we make some preparations at first.
 Exploiting \eqref{sc-R0}, \eqref{Rn-fbd} and \eqref{psin-Rn}-\eqref{b-psin-Rn}, we obtain
 \begin{equation}\label{fmls}
 \begin{split}
& \mathfrak{M}_{R_{n}}^{\sharp, \gamma^{\kappa}}(-1, s_0) \mathfrak{M}_{\Psi_{n}}^{\sharp, \gamma^{\kappa}}(-1, s)
\leq \mathfrak{M}_{R_{n}}^{\sharp, \gamma^{\kappa}}(-1, s),\\
&\mathfrak{M}_{R_{n}}^{\sharp, \gamma^{\kappa}}(-1, s_0) \mathfrak{M}_{\Psi_{n}}^{\sharp, \gamma^{\kappa}}(-1, s, b_0)
\leq \mathfrak{M}_{R_{n}}^{\sharp, \gamma^{\kappa}}(-1, s, b_0),\\
&\mathfrak{M}_{R_{n}}^{\sharp, \gamma^{\kappa}}(-1, s_0, b_0) \mathfrak{M}_{\Psi_{n}}^{\sharp, \gamma^{\kappa}}(-1, s)
 \leq N_n^{2\tau+1}\gamma^{-\kappa}\mathfrak{M}_{R_{n}}^{\sharp, \gamma^{\kappa}}(-1, s_0, b_0) \mathfrak{M}_{R_{n}}^{\sharp, \gamma^{\kappa}}(-1, s),\\
  &\mathfrak{M}_{R_{n}}^{\sharp, \gamma^{\kappa}}(-1, s) \mathfrak{M}_{\Psi_{n}}^{\sharp, \gamma^{\kappa}}(-1, s_0, b_0)
\leq N_n^{2\tau+1}\gamma^{-\kappa}\mathfrak{M}_{R_{n}}^{\sharp, \gamma^{\kappa}}(-1, s_0, b_0) \mathfrak{M}_{R_{n}}^{\sharp, \gamma^{\kappa}}(-1, s).\\
 \end{split}
    \end{equation}
Notice that $\langle \partial_{\varphi} \rangle^{b_0}\Phi_n^{-1}=1+ \langle \partial_{\varphi} \rangle^{b_0}\check{\Psi}_n,$
$\langle \partial_{\varphi} \rangle^{b_0} \Pi_{N_n}^{\bot} R_n=\Pi_{N_n}^{\bot}\langle \partial_{\varphi} \rangle^{b_0}R_n$.
Using
Lemmas \ref{mod-sum-com}-\ref{mod-smoo}, \eqref{sc-R0},
\eqref{Rn-fbd}, \eqref{psins0-bd}-\eqref{Phin-inv} and \eqref{fmls},
we have
\begin{equation*}
 \begin{split}
& \mathfrak{M}_{\langle \partial_{\varphi} \rangle^{b_0}R_{n+1}}^{\sharp, \gamma^{\kappa}}(-1, s)\\
&\leq \mathfrak{M}_{\langle \partial_{\varphi} \rangle^{b_0}\Phi_{n}^{-1}}^{\sharp, \gamma^{\kappa}}(-1, s) \mathfrak{M}_{R_{n}}^{\sharp, \gamma^{\kappa}}(-1, s_0)+\mathfrak{M}_{\langle \partial_{\varphi} \rangle^{b_0}\Phi_{n}^{-1}}^{\sharp, \gamma^{\kappa}}(-1, s_0) \mathfrak{M}_{R_{n}}^{\sharp, \gamma^{\kappa}}(-1, s)
\\
& +\mathfrak{M}_{\Phi_{n}^{-1}}^{\sharp, \gamma^{\kappa}}(-1, s_0) \left(\mathfrak{M}_{R_{n}}^{\sharp, \gamma^{\kappa}}(-1, s, b_0)
 +N_n^{2\tau+1}\gamma^{-\kappa}\mathfrak{M}_{R_{n}}^{\sharp, \gamma^{\kappa}}(-1, s_0, b_0) \mathfrak{M}_{R_{n}}^{\sharp, \gamma^{\kappa}}(-1, s)\right)\\
&+\mathfrak{M}_{\Phi_{n}^{-1}}^{\sharp, \gamma^{\kappa}}(-1, s) \mathfrak{M}_{R_{n}}^{\sharp, \gamma^{\kappa}}(-1, s_0, b_0)\\
&\leq \mathfrak{M}_{R_{0}}^{\sharp, \gamma^{\kappa}}(-1, s, b_0)N_{n-1}+\mathfrak{M}_{R_{0}}^{\sharp, \gamma^{\kappa}}(-1, s, b_0)
\leq \mathfrak{M}_{R_{0}}^{\sharp, \gamma^{\kappa}}(-1, s, b_0)N_{n}.\\
 \end{split}
    \end{equation*}

$(3)_{n+1}$. For any $\omega \in \Omega_{n+1}^{\gamma_1}(\mathfrak{J}_1)\cap \mathcal{O}_1(\mathfrak{J}_1) \cap \Omega_{n+1}^{\gamma_2}(\mathfrak{J}_2) \cap \mathcal{O}_1(\mathfrak{J}_2)$ with $\gamma_1, \gamma_2 \in [\gamma^{\kappa}/2, 2 \gamma^{\kappa}]$,
in view of Lemma \ref{[A]-pro}, \eqref{del-Rn-rjn-fbd} and \eqref{rjn+1-def}, it follows that
\begin{equation*}
\begin{split}
  \langle j\rangle |\Delta_{12}r_j^{(n+1)}- \Delta_{12}r_j^{(n)}|
& =\langle j\rangle|\Delta_{12}(R_{n})_j^j(0)|
\leq \| \langle D_x\rangle^{1/2}\underline{\Delta_{12}R_n }\langle D_x\rangle^{1/2}\|_{\mathfrak{L}(H^{s_0})}\\
& \leq  k_2N_{n-1}^{-a_0}\|\mathfrak{J}_1-\mathfrak{J}_2 \|_{s_0+\sigma}, \quad \forall j \in \mathbb{Z}.\\
 \end{split}
 \end{equation*}
Recall that $r_j^{(0)}=0$.  Summing all the terms above, one obtains
 \begin{equation*}
 \langle j\rangle|\Delta_{12}r_j^{(n+1)}|\leq \sum_{k=0}^{n} \langle j\rangle|\Delta_{12}r_j^{(k+1)}- \Delta_{12}r_j^{(k)}|
\leq k_2 \|\mathfrak{J}_1-\mathfrak{J}_2 \|_{s_0+\sigma}, \quad \forall j \in \mathbb{Z}.
 \end{equation*}

By Lemma \ref{L(Hs0)-mod} and \eqref{psin-fbd}, provided that $N_0$ is large enough, the smallness condition in Lemma \ref{L(Hs0)-inv} is satisfied. Therefore by
 Lemmas \ref{L(Hs0)-mod}, \ref{mod-inv}, \ref{L(Hs0)-inv}, \eqref{sc-R0} and \eqref{psin-fbd}-\eqref{psins0-bd}, we have
 \begin{eqnarray}\label{del-br-psin}
 \begin{aligned}
 &\| \langle D_x\rangle^{1/2}\underline{\Delta_{12}\breve{\Psi}_n} \langle D_x\rangle^{1/2}\|_{\mathcal{L}(H^{s_0})} \leq N_n^{2\tau}N_{n-1}^{-a_0} k_2 \gamma^{-\kappa} \|\mathfrak{J}_1- \mathfrak{J}_2\|_{s_0+\sigma},\\
 &\|\langle D_x\rangle^{1/2} \langle \partial_{\varphi} \rangle^{b_0} \underline{\Delta_{12}\breve{\Psi}_n}\langle D_x\rangle^{1/2} \|_{\mathcal{L}(H^{s_0})} \leq N_n^{2\tau}N_{n-1} k_2 \gamma^{-\kappa} \|\mathfrak{J}_1- \mathfrak{J}_2\|_{s_0+\sigma}.
 \end{aligned}
 \end{eqnarray}
From \eqref{Rn+1-def}, we derive that
 \begin{equation*}
 \begin{split}
 \Delta_{12}R_{n+1}=&\Delta_{12}\breve{\Psi}_{n}(\Pi_{N_n}^{\bot}R_n+R_n\Psi_n-\Psi_n[R_n])+{\Phi}_{n}^{-1}
\cdot \bigg(\Pi_{N_n}^{\bot}\Delta_{12}R_n\\
 &+\Delta_{12}R_n\Psi_n+R_n \Delta_{12}\Psi_n-\Delta_{12}\Psi_n[R_n]- \Psi_n\Delta_{12}[R_n]\bigg).\\
 \end{split}
    \end{equation*}
By Lemmas \ref{L(Hs0)-mod}, \ref{mod-inv}-\ref{L(H_s0)},  \eqref{a0b0-def}, \eqref{sc-R0},  \eqref{Rn-fbd}, \eqref{del-Rn-rjn-fbd}, \eqref{psin-fbd}-\eqref{del-psin-fbd}, \eqref{Phin-inv} and \eqref{del-br-psin}, one gets
 \begin{equation*}
 \begin{split}
\| \langle D_x\rangle^{1/2}& \underline{\Delta_{12}R_{n+1}} \langle D_x\rangle^{1/2}\|_{\mathfrak{L}(H^{s_0})}\\
\leq & \| \langle D_x\rangle^{1/2}\underline{\Delta_{12}\breve{\Psi}_{n}} \langle D_x\rangle^{1/2}\|_{\mathfrak{L}(H^{s_0})}
\cdot \Big( N_n^{-b_0}\mathfrak{M}_{R_n}^{\sharp, \gamma^{\kappa}}(-1, s_0, b_0)\\
&+\mathfrak{M}_{R_n}^{\sharp, \gamma^{\kappa}}(-1,s_0) \mathfrak{M}_{\Psi_n}^{\sharp, \gamma^{\kappa}}(-1, s_0)\Big)\\
&+\mathfrak{M}_{\Phi_n^{-1}}^{\sharp, \gamma^{\kappa}}(-1,s_0)
\cdot \Big( N_n^{-b_0}\|\langle D_x\rangle^{1/2} \underline{\langle \partial_{\varphi} \rangle^{b_0}\Delta_{12}R_n}\langle D_x\rangle^{1/2} \|_{\mathcal{L}(H^{s_0})}\\
&+\|\langle D_x\rangle^{1/2} \underline{\Delta_{12}R_n}\langle D_x\rangle^{1/2} \|_{\mathcal{L}(H^{s_0})}
\cdot \mathfrak{M}_{\Psi_n}^{\sharp, \gamma^{\kappa}}(-1, s_0)\\
&+\|\langle D_x\rangle^{1/2} \underline{\Delta_{12}\Psi_n}\langle D_x\rangle^{1/2} \|_{\mathcal{L}(H^{s_0})}
\cdot \mathfrak{M}_{R_n}^{\sharp, \gamma^{\kappa}}(-1, s_0)\Big)\\
\leq &\big(k_2N_{n-1}N_n^{-b_0}+k_2\gamma^{-\kappa}N_n^{2\tau+1}\mathfrak{M}_{R_0}^{\sharp, \gamma^{\kappa}}(-1, s_0, b_0)N_{n-1}^{-2a_0}\big)\|\mathfrak{J}_1- \mathfrak{J}_2\|_{s_0+\sigma}\\
\leq & k_2N_n^{-a_0}\|\mathfrak{J}_1- \mathfrak{J}_2\|_{s_0+\sigma}\\
 \end{split}
 \end{equation*}
and
 \begin{equation*}
 \begin{split}
\|\langle  D_x \rangle^{1/2}&  \underline{\langle \partial_{\varphi} \rangle^{b_0}\Delta_{12}R_{n+1}}\langle D_x\rangle^{1/2}\|_{\mathfrak{L}(H^{s_0})}\\
\leq &\|\langle D_x\rangle^{1/2} \langle \partial_{\varphi} \rangle^{b_0} \underline{\Delta_{12}\breve{\Psi}_n}\langle D_x\rangle^{1/2} \|_{\mathcal{L}(H^{s_0})}
\cdot \Big(N_n^{-b_0}\mathfrak{M}_{R_n}^{\sharp, \gamma^{\kappa}}(-1, s_0, b_0)\\
&+\mathfrak{M}_{R_n}^{\sharp, \gamma^{\kappa}}(-1,s_0) \mathfrak{M}_{\Psi_n}^{\sharp, \gamma^{\kappa}}(-1, s_0)\Big)
+\| \langle D_x\rangle^{1/2}\underline{\Delta_{12}\breve{\Psi}_n} \langle D_x\rangle^{1/2}\|_{\mathcal{L}(H^{s_0})}\\
&\times \big( \mathfrak{M}_{R_n}^{\sharp, \gamma^{\kappa}}(-1, s_0, b_0)
+\|\langle  D_x \rangle^{1/2}  \underline{\langle \partial_{\varphi} \rangle^{b_0}(R_{n}\Psi_n)}\langle D_x\rangle^{1/2}\|_{\mathfrak{L}(H^{s_0})}
\big)\\
&+\mathfrak{M}_{\Phi_n^{-1}}^{\sharp, \gamma^{\kappa}}(-1, s_0, b_0)
\cdot \big( N_n^{-b_0}\|\langle D_x\rangle^{1/2} \underline{\langle \partial_{\varphi} \rangle^{b_0}\Delta_{12}R_n}\langle D_x\rangle^{1/2} \|_{\mathcal{L}(H^{s_0})}\\
&+ \| \langle D_x\rangle^{1/2}(\underline{\Delta_{12}R_{n}\Psi_n+\Delta_{12}\Psi_{n}R_n})\langle D_x\rangle^{1/2} \|_{\mathfrak{L}(H^{s_0})}
\big)\\
&+\mathfrak{M}_{\Phi_n^{-1}}^{\sharp, \gamma^{\kappa}}(-1, s_0)
\cdot \big(  \| \langle D_x\rangle^{1/2}\underline{\langle \partial_{\varphi} \rangle^{b_0}\Delta_{12}R_{n}}\langle D_x\rangle^{1/2}\|_{\mathfrak{L}(H^{s_0})}\\
& +\| \langle D_x\rangle^{1/2}\underline{\langle \partial_{\varphi} \rangle^{b_0}(\Delta_{12}R_{n}\Psi_n+\Delta_{12}\Psi_{n}R_n)}\langle D_x\rangle^{1/2}\|_{\mathfrak{L}(H^{s_0})} \big)\\
 \leq& _{b_0}k_2N_{n-1}\big(1+\mathfrak{M}_{R_0}^{\sharp, \gamma^{\kappa}}(-1, s_0, b_0)\gamma^{-\kappa}N_n^{2\tau+1}N_{n-1}^{-a_0}\big) \|\mathfrak{J}_1- \mathfrak{J}_2\|_{s_0+\sigma}\\
 \leq & k_2N_{n} \|\mathfrak{J}_1- \mathfrak{J}_2\|_{s_0+\sigma}.\\
 \end{split}
   \end{equation*}

$(4)_{n+1}$.
Assume that $\omega \in \Omega_{n+1}^{\gamma^{\kappa}}(\mathfrak{J}_1)$ and
 \begin{equation}\label{rho-n}
k_2 N_{n}^{\tau } \|\mathfrak{J}_1- \mathfrak{J}_2\|_{s_0+\sigma} \leq \rho, \quad 0<\rho<\gamma^{\kappa}/2.
 \end{equation}
We claim that $\omega \in \Omega_{n+1}^{\gamma^{\kappa}-\rho}(\mathfrak{J}_2).$
From \eqref{Omega-n} it follows  that
 \begin{equation*}
\Omega_{n+1}^{\gamma^{\kappa}}(\mathfrak{J}_1)
\subseteq  \Omega_{n}^{\gamma^{\kappa}}(\mathfrak{J}_1).
 \end{equation*}
By the above inductive hypothesis, we also have
 \begin{equation*}
 \Omega_{n}^{\gamma^{\kappa}}(\mathfrak{J}_1) \subseteq  \Omega_{n}^{\gamma^{\kappa}-\rho}(\mathfrak{J}_2).
\end{equation*}
Since $\rho < \gamma^{\kappa}/2$ implies $\gamma^{\kappa}-\rho> \gamma^{\kappa}/2$, we have
 \begin{equation*}
 \omega \in \Omega_{n}^{\gamma^{\kappa}-\rho}(\mathfrak{J}_2) \subseteq \Omega_{n}^{\gamma^{\kappa}/2}(\mathfrak{J}_2).
  \end{equation*}
Altogether,
 \begin{equation*}
\omega \in \Omega_{n}^{\gamma^{\kappa}}(\mathfrak{J}_1) \cap \Omega_{n}^{\gamma^{\kappa}/2}(\mathfrak{J}_2).
 \end{equation*}
For all $|l|\leq N_n, j\neq j'$,  from \eqref{del-djj'n} and \eqref{rho-n},
it follows that
\begin{equation*}
 \begin{split}
 |\omega \cdot l + d_j^{(n)}(\mathfrak{J}_2)- d_{j'}^{(n)}(\mathfrak{J}_2)|
\geq &|\omega \cdot l + d_j^{(n)}(\mathfrak{J}_1)- d_{j'}^{(n)}(\mathfrak{J}_1)|\\
&- |(d_j^{(n)}-d_{j'}^{(n)})(\mathfrak{J}_2)-(d_j^{(n)}-d_{j'}^{(n)})(\mathfrak{J}_1)|\\
\geq & \gamma^{\kappa} |j-j'| \langle l \rangle^{-\tau} - k_2 |j-j'|   \|\mathfrak{J}_1 -\mathfrak{J}_2 \|_{s_0+\sigma}\\
\geq &(\gamma^{\kappa}-\rho) |j-j'| \langle l \rangle^{-\tau}.\\
 \end{split}
    \end{equation*}
Hence we conclude that $\omega \in \Omega_{n+1}^{\gamma^{\kappa}-\rho}(\mathfrak{J}_2)$.

\subsection{Diagonalization theorem}

\begin{thm}\label{Dia-thm}(Diagonalization)
Assume  $\mathcal{S}>s_0$. There exist $\mu_3 \geq \mu_2$ where $\mu_2$  is given in Theorem \ref{Reg-thm}, $N_0 \in \mathbb{N}, \tau_0>0$ such that if $\mathfrak{J}$ satisfies the small condition in \eqref{sc-J} with $\mu=\mu_3$ and
 \begin{equation}\label{sc-eps}
N_0^{\tau_0}\varepsilon \gamma^{-1-\kappa} \leq 1,  \quad \kappa>1.
\end{equation}
Then the following results hold:

(1). For $j \in \mathbb{Z}$, there exists a sequence
\begin{equation}\label{dj-inf-def}
d_j^{\infty}=d_j^{(0)}+r_j^{\infty},  \quad d_j^{(0)}:=m_{\infty}j-\frac{(m_0+m_2)j}{1+j^2}, \quad r_j^{\infty}=-r_{-j}^{\infty} \in \mathbb{R}
\end{equation}
where $r_j^{\infty}=r_j^{\infty}(\omega, \mathfrak{J})$ depends on $\omega$ in a  Lipschitz way satisfying
 \begin{equation}\label{rj-inf-bd}
 \sup_{j} \langle j \rangle |r_j^{\infty}|^{\gamma^{\kappa}, \mathcal{O}_0}\leq C\varepsilon \gamma^{-1}.
  \end{equation}

(2). For all $\omega \in \mathcal{O}_{\infty}:=\mathcal{O}_{1} \cap \mathcal{O}_{2}$, where $\mathcal{O}_{1}$ is  defined in \eqref{O-1} and
\begin{equation*}
\begin{split}
\mathcal{O}_{2} := \Big\{ \omega \in \mathcal{O}_0: &|\omega \cdot l + d_j^{\infty}- d_{j'}^{\infty}| \geq \frac{2\gamma^{\kappa}|j-j'|}{\langle l \rangle^{\tau}}, \\
& \quad \forall l \in \mathbb{Z}^{\nu}, j,j' \in \mathbb{Z}, (j, j', l)\neq (j, j, 0)\Big\},\\
\end{split}
\end{equation*}
there exists a linear bounded T\"{o}plitz-in-time transformation $\Upsilon_2: \mathcal{O}_{\infty} \times H^s\rightarrow H^s$ with bounded inverse $\Upsilon_2^{-1}$
such that  $\hat{\mathcal{L}}$ in \eqref{hat-L} is conjugated to a constant-coefficient operator, viz.
  \begin{equation*}
\mathcal{L}_{\infty}:=\Upsilon_2^{-1} \hat{\mathcal{L}} \Upsilon_2= \omega \cdot \partial_{\varphi}+D_{\infty}, \quad D_{\infty}:= {\rm diag}_{j} ({\rm i}d_j^{\infty}).
 \end{equation*}
The transformations satisfy the following tame estimates for $s_0 \leq s\leq \mathcal{S}$:
 \begin{equation}\label{Ups2-Id}
\| (\Upsilon_2^{\pm 1}- {\rm Id})u \|_s^{\gamma^{\kappa}, \mathcal{O}_{\infty}} \leq_s  \varepsilon \gamma^{-1-\kappa}\| u \|_s^{\gamma^{\kappa}, \mathcal{O}_{\infty}} + \varepsilon \gamma^{-1-\kappa}\| \mathfrak{J} \|_{s+\mu_3}^{\gamma, \mathcal{O}_0}\| u \|_{s_0}^{\gamma^{\kappa}, \mathcal{O}_{\infty}}.
   \end{equation}
Moreover, for $\omega \in \mathcal{O}_{\infty}(\mathfrak{J}_1)\cap \mathcal{O}_{\infty}(\mathfrak{J}_2)$,
 \begin{equation}\label{del-rj-inf-bd}
\sup_{j} \langle j\rangle|\Delta_{12}r_j^{\infty}| \leq C \varepsilon \gamma^{-1}\| \mathfrak{J}_1 -\mathfrak{J}_2 \|_{s_0+\mu_3}.
\end{equation}
Finally, $\Upsilon_2$ and $\Upsilon_2^{-1}$ are real and reversibility-preserving, while $\mathcal{L}_{\infty}$ is real and reversible.
\end{thm}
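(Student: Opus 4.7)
The plan is to apply the iterative reduction Proposition \ref{Ite-red} to the operator $\hat{\mathcal{L}}$ produced by Theorem \ref{Reg-thm}, and then to pass to the limit in both the conjugating maps and the diagonal coefficients.

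First I would identify $\mathcal{L}_0 := \hat{\mathcal{L}}$ with the initial operator of Proposition \ref{Ite-red}: the action of $m_{\infty}\partial_x - (m_0+m_2)\Lambda\partial_x$ on $e^{{\rm i}jx}$ is multiplication by ${\rm i}d_j^{(0)}$ with $d_j^{(0)}$ as in \eqref{D0}, so that $R_0 = \hat R$. The bound \eqref{m_inf-bd} gives \eqref{m-bd} with $k_1 = C\varepsilon$, while the tame estimates \eqref{hat-r-R-bd}--\eqref{det-r-R-bd} translate, via the standard embedding of $S^{-1}$ and $\mathfrak{L}_{\rho,p}$-operators into the class of Lip-$(-1)$-modulo-tame operators (recalled in the appendices), into \eqref{R0-bds} and \eqref{del-R0-bds} with $k_2 = C\varepsilon\gamma^{-1}$. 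Enlarging $\mu_3 \geq \mu_2$ as needed, the smallness \eqref{sc-eps} implies \eqref{sc-R0}, so that Proposition \ref{Ite-red} is applicable.

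Next I extract from Proposition \ref{Ite-red} the sequences $\Phi_n = {\rm Id}+\Psi_n$ and $r_j^{(n)}$. By the telescopic bound \eqref{djn-df-fb} and the super-exponential smallness of $N_{n-2}^{-a_0}$, the sequence $\{r_j^{(n)}\}$ is Cauchy in $|\cdot|^{\gamma^{\kappa},\mathcal{O}_0}$; its limit $r_j^{\infty}$ is real and odd in $j$ by parity preservation, and summing the geometric series gives \eqref{rj-inf-bd}. The variational estimate \eqref{del-rj-inf-bd} follows analogously from the corresponding bound in $(3)_n$. Setting $\Upsilon_2^{(n)} := \Phi_0\Phi_1\cdots\Phi_{n-1}$, the super-exponential decay $\mathfrak{M}_{\Psi_n}^{\sharp,\gamma^{\kappa}}(-1,s_0) \lesssim \gamma^{-\kappa}N_n^{2\tau+1}N_{n-1}^{-a_0}$ (which is summable because $a_0 > 2\tau+1$ by \eqref{a0b0-def}) combined with the composition bound of Lemma \ref{mod-sum-com} makes $\Upsilon_2^{(n)}$ Cauchy in the Lip-$(-1)$-modulo-tame topology; let $\Upsilon_2$ be its limit. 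Telescoping the composition estimates yields \eqref{Ups2-Id}; reality and reversibility-preservation are inherited from each $\Phi_n$; and the identity $\Upsilon_2^{-1}\hat{\mathcal L}\Upsilon_2 = \omega\cdot\partial_{\varphi} + D_{\infty}$ follows by letting $n\to\infty$ in $\mathcal L_n = (\Upsilon_2^{(n)})^{-1}\hat{\mathcal L}\,\Upsilon_2^{(n)}$, since $\mathfrak{M}_{R_n}^{\sharp,\gamma^{\kappa}}(-1,s)\to 0$ by \eqref{Rn-fbd}.

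The main obstacle I expect is reconciling two different Cantor sets: each $\Phi_n$ is only defined on $\Omega_n^{\gamma^{\kappa}}\cap\mathcal O_1$, whose definition \eqref{Omega-n} involves the intermediate eigenvalues $d_j^{(n-1)}$, while $\mathcal O_{\infty}$ is defined through the final eigenvalues $d_j^{\infty}$. To handle this, I would prove by induction on $n$ the inclusion $\mathcal O_{\infty} = \mathcal O_1\cap\mathcal O_2\subseteq \Omega_n^{\gamma^{\kappa}}\cap\mathcal O_1$: for $\omega\in\mathcal O_2$, $|l|\leq N_{n-1}$ and $j\neq j'$, write
\begin{equation*}
|\omega\cdot l + d_j^{(n-1)}- d_{j'}^{(n-1)}|\geq \frac{2\gamma^{\kappa}|j-j'|}{\langle l\rangle^{\tau}} - |r_j^{(n-1)}-r_j^{\infty}| - |r_{j'}^{(n-1)}-r_{j'}^{\infty}|,
\end{equation*}
and estimate the tail using \eqref{djn-df-fb} by $|r_j^{(n-1)}-r_j^{\infty}|\leq C\varepsilon\gamma^{-1}\langle j\rangle^{-1}N_{n-2}^{-a_0}$. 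Since $|j-j'|\geq 1$ and $\langle l\rangle^{\tau} \leq N_{n-1}^{\tau} = N_{n-2}^{3\tau/2}$, and $a_0 = 6\tau+3 > 3\tau/2$ by \eqref{a0b0-def}, this tail is absorbed into $\gamma^{\kappa}|j-j'|/\langle l\rangle^{\tau}$ provided $N_0^{\tau_0}\varepsilon\gamma^{-1-\kappa} \leq 1$ with $\tau_0$ large enough, which is precisely \eqref{sc-eps}. Thus the intermediate non-resonance \eqref{Omega-n} holds, $\Phi_n$ is well defined on $\mathcal O_{\infty}$, and the scheme converges there. Reversibility of $\mathcal L_{\infty}$ then forces $D_{\infty}$ to be purely imaginary, i.e.\ $d_j^{\infty}\in\mathbb R$.
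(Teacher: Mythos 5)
Your proof follows essentially the same route as the paper: apply Proposition~\ref{Ite-red} with $k_1 = C\varepsilon$, $k_2 = C\varepsilon\gamma^{-1}$ after verifying the initial modulo-tame bounds for $R_0=\hat R$ via the $S^{-1}\to$Lip-$(-1)$ and $\mathfrak L_{\rho,p}\to$Lip-$(-1)$ embeddings (Lemmas~\ref{S-mod} and~\ref{Lrho-mod}), pass to the limit in $r_j^{(n)}$ and in $\Gamma_n=\Phi_0\cdots\Phi_{n-1}$, and show $\mathcal O_\infty\subseteq\cap_n\Omega_n^{\gamma^\kappa}$ by absorbing the tail $|d_j^{(n)}-d_j^\infty|$ into the half-cushion of $\mathcal O_2$. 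All the substantive steps match the paper's proof; the only cosmetic difference is that you attribute the reality of $d_j^\infty$ to reversibility of the limit, whereas it is already carried along the induction.
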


 \begin{proof}
 Let $\mathcal{L}_0:=\hat{\mathcal{L}}, R_0:=\hat{R}$ in Theorem \ref{Reg-thm} and $k_1=\varepsilon$, $k_2=\varepsilon\gamma^{-1}$. It remains to prove that the initial assumptions for iteration:  \eqref{R0-bds} and \eqref{del-R0-bds} hold. Indeed, by  Lemmas \ref{S-mod} and \ref{Lrho-mod}, there exists $\mu_3>\mu_2$ such that if \eqref{sc-J} is satisfied with the choice of $\mu=\mu_3$, we have
  \begin{equation}\label{R0-s}
 \begin{split}
\mathfrak{M}_{R_0}^{\sharp, \gamma^{\kappa}}(-1, s),
\mathfrak{M}_{R_0}^{\sharp, \gamma^{\kappa}}(-1, s, b_0)
\leq& \max \{ \mathbb{M}_{\hat{\mathfrak{R}}}^{\gamma^{\kappa}}(s, \rho-2), |\hat{r}|_{-1, s+s_0+2+b_0, 0}^{\gamma^{\kappa}, \mathcal{O}_1} \}\\
\leq& \max \{  \varepsilon \gamma^{-1} \| \mathfrak{J}\|_{s+\mu_2}^{\gamma, \mathcal{O}_0}, \varepsilon \gamma^{-1}
\| \mathfrak{J}\|_{s+\mu_2+s_0+2+b_0}^{\gamma, \mathcal{O}_0} \}\\
\leq&  \varepsilon \gamma^{-1} \| \mathfrak{J}\|_{s+\mu_3}^{\gamma, \mathcal{O}_0}\\
 \end{split}
\end{equation}
and
\begin{equation*}
\begin{split}
\| \langle D_x \rangle^{1/2}\underline{\Delta_{12}R_0}\langle D_x \rangle^{1/2} \|_{\mathfrak{L}(H^{s_0})},&
\| \langle D_x \rangle^{1/2}\underline{\langle \partial_{\varphi}\rangle^{b_0}\Delta_{12}R_0}\langle D_x \rangle^{1/2} \|_{\mathfrak{L}(H^{s_0})}\\
\leq& \max \{ \mathbb{M}_{\Delta_{12}\hat{\mathfrak{R}}}(s_0, \rho-3), |\Delta_{12}\hat{r}|_{-1, s_0+b_0+3, 0} \}\\
\leq &\max \{\varepsilon \gamma^{-1} \| \mathfrak{J}_1-\mathfrak{J}_2\|_{s_0+\mu_2}^{\gamma, \mathcal{O}_0}, \varepsilon \gamma^{-1} \| \mathfrak{J}_1-\mathfrak{J}_2\|_{s_0+\mu_2+b_0+3}^{\gamma, \mathcal{O}_0}  \}\\
\leq & \varepsilon \gamma^{-1} \| \mathfrak{J}_1-\mathfrak{J}_2\|_{s_0+\mu_3}^{\gamma, \mathcal{O}_0},
 \end{split}
\end{equation*}
where $\rho\geq s_0+b_0+3$.
Therefore,
$R_0$ and $\langle\partial_{\varphi}\rangle^{b_0}R_0$ are  Lip-$-1$-modulo-tame operators satisfying
\eqref{R0-bds} and \eqref{del-R0-bds} for $\sigma=\mu_3$.

The smallness condition \eqref{sc-R0} in Proposition \ref{Ite-red} follows by the smallness condition on $\varepsilon$ in \eqref{sc-eps}.  From \eqref{djn-df-fb} in Proposition \ref{Ite-red}, we derive that for all $ j \in \mathbb{Z}$, the sequences $(d_j^{(n)})_{n \in \mathbb{N}}$, $(r_j^{(n)})_{n \in \mathbb{N}}$ in \eqref{djn-def} are Cauchy sequences.
Indeed, by \eqref{R0-bds} and \eqref{djn-df-fb},
\begin{equation*}
\begin{split}
\langle j\rangle |d_j^{(n+m)}-d_j^{(n)}|^{\gamma^{\kappa}, \mathcal{O}_{0}}
&=\langle j\rangle |r_j^{(n+m)}-r_j^{(n)}|^{\gamma^{\kappa}, \mathcal{O}_{0}}\\
  & \leq \sum_{k=n-1}^{n+m-2}N_k^{-a_0}\mathfrak{M}_{R_0}^{\sharp, \gamma^{\kappa}}(-1, s_0, b_0)
  \leq \varepsilon \gamma^{-1}N_{n-1}^{-a_0}.\\
   \end{split}
\end{equation*}
It implies that the sequences $d_j^{(n)}$ and $r_j^{(n)}$ have the limits $d_j^{\infty}$ and $r_j^{\infty}$ respectively.
Furthermore, let $m\rightarrow \infty$, one obtains
 \begin{equation}\label{dj-inf-djn}
 \langle j\rangle |d_j^{\infty}-d_j^{(n)}|^{\gamma^{\kappa}, \mathcal{O}_{0}}=\langle j\rangle |r_j^{\infty}-r_j^{(n)}|^{\gamma^{\kappa}, \mathcal{O}_{0}}   \leq \varepsilon \gamma^{-1}N_{n-1}^{-a_0}.
 \end{equation}
In particular, when $n=0$, we have
\begin{equation*}
\langle j\rangle |r_j^{\infty}-r_j^{(0)}|^{\gamma^{\kappa}, \mathcal{O}_{0}}\leq \varepsilon \gamma^{-1}N_{-1}^{-a_0}.
 \end{equation*}
 Therefore  \eqref{rj-inf-bd} holds.
 By \eqref{del-Rn-rjn-fbd} and \eqref{dj-inf-djn}, let $n \rightarrow \infty$, \eqref{del-rj-inf-bd} follows from the inequalities
 \begin{equation*}
 \begin{split}
 \langle j\rangle|r_j^{\infty}(\mathfrak{J}_1)- r_j^{\infty}(\mathfrak{J}_2)|
   \leq & \langle j\rangle|r_j^{\infty}(\mathfrak{J}_1)- r_j^{(n)}(\mathfrak{J}_1)|
   + \langle j\rangle|r_j^{(n)}(\mathfrak{J}_1)- r_j^{(n)}(\mathfrak{J}_2)| \\
    &+\langle j\rangle|r_j^{\infty}(\mathfrak{J}_2)- r_j^{(n)}(\mathfrak{J}_2)| \\
   \leq & C \varepsilon \gamma^{-1} \| \mathfrak{J}_1- \mathfrak{J}_2  \|_{s_0+\mu_3} + \varepsilon \gamma^{-1} N_{n-1}^{-a_0}.\\
    \end{split}
    \end{equation*}

 Now we need to verify that
\begin{equation*}
\mathcal{O}_{2} \subseteq \cap_{n \geq 0} \Omega_n^{\gamma^{\kappa}}.
\end{equation*}
 For $|l|\leq N_n$, $\langle l \rangle^{\tau}\leq N_n^{\tau}\leq N_{n-1}^{a_0}$, in view of \eqref{dj-inf-djn}, we obtain
  \begin{equation*}
 \begin{split}
|\omega \cdot l+d_j^{(n)}-d_{j'}^{(n)}|
\geq & |\omega \cdot l+d_j^{\infty}-d_{j'}^{\infty}|-|d_j^{(n)}-d_j^{\infty}|-|d_{j'}^{(n)}-d_{j'}^{\infty}|\\
 \geq  & \frac{2\gamma^{\kappa}|j-j'|}{\langle l \rangle^{\tau}}-\frac{2\varepsilon \gamma^{-1}} {N_{n-1}^{a_0}}
 \geq  \frac{\gamma^{\kappa}|j-j'|}{\langle l \rangle^{\tau}}.\\
  \end{split}
    \end{equation*}
Hence  for any $n \in \mathbb{N}$, $\mathcal{O}_{2} \subseteq  \Omega_n^{\gamma^{\kappa}}$.
  Thus the sequence $(\Psi_n)_{n \in \mathbb{N}}$ is well defined on $\mathcal{O}_{\infty}$.

  We define $\Gamma_n:=\Phi_0 \circ \cdots \circ \Phi_n, n\geq0$.
   Noting that   $\Gamma_{n}=\Gamma_{n-1}+\Gamma_{n-1}\Psi_n$, from Lemma \ref{mod-sum-com} and  \eqref{psin-fbd},
   it follows that
  \begin{equation*}
 \begin{split}
 \mathfrak{M}_{\Gamma_{n}}^{\sharp, \gamma^{\kappa}}(-1, s_0)
  \leq &  \mathfrak{M}_{\Gamma_{n-1}}^{\sharp, \gamma^{\kappa}}(-1, s_0)+ \mathfrak{M}_{\Gamma_{n-1}}^{\sharp, \gamma^{\kappa}}(-1, s_0)\mathfrak{M}_{\Psi_n}^{\sharp, \gamma^{\kappa}}(-1, s_0)\\
  \leq & \mathfrak{M}_{\Gamma_{n-1}}^{\sharp, \gamma^{\kappa}}(-1, s_0)\big(1+\mathfrak{M}_{R_{0}}^{\sharp, \gamma^{\kappa}}(-1, s_0, b_0) \gamma^{-\kappa} N_{n}^{2\tau+1}N_{n-1}^{-a_0}\big).\\
   \end{split}
    \end{equation*}
  Iterating the above inequality implies
   \begin{equation}\label{Gamn-s0}
 \begin{split}
 \mathfrak{M}_{\Gamma_n}^{\sharp, \gamma^{\kappa}}(-1, s_0)
  \leq & \mathfrak{M}_{\Gamma_{0}}^{\sharp, \gamma^{\kappa}}(-1, s_0)\prod_{n\geq 1}\left(1+\mathfrak{M}_{R_{0}}^{\sharp, \gamma^{\kappa}}(-1, s_0, b_0) \gamma^{-\kappa} N_{n}^{2\tau+1}N_{n-1}^{-a_0} \right)\\
   \leq &  \mathfrak{M}_{\Phi_{0}}^{\sharp, \gamma^{\kappa}}(-1, s_0)
   \leq C,   \quad  n\geq1.\\
 \end{split}
    \end{equation}
 For   the high norm, from Lemma \ref{mod-sum-com}, \eqref{psin-fbd} and \eqref{Gamn-s0}, we derive
    \begin{equation*}
 \begin{split}
 \mathfrak{M}_{\Gamma_{n}}^{\sharp, \gamma^{\kappa}}(-1, s)
  \leq_s & \mathfrak{M}_{\Gamma_{n-1}}^{\sharp, \gamma^{\kappa}}(-1, s)  \left( 1+\mathfrak{M}_{\Psi_n}^{\sharp, \gamma^{\kappa}}(-1, s_0)\right)+\mathfrak{M}_{\Psi_n}^{\sharp, \gamma}(-1, s)\\
 \leq_s  & \mathfrak{M}_{\Gamma_{n-1}}^{\sharp, \gamma^{\kappa}}(-1, s) \left(1+\mathfrak{M}_{R_{0}}^{\sharp, \gamma^{\kappa}}(-1, s_0, b_0) \gamma^{-\kappa}N_{n}^{2\tau+1} N_{n-1}^{-a_0}\right)\\
    & +\mathfrak{M}_{R_{0}}^{\sharp, \gamma^{\kappa}}(-1, s, b_0) \gamma^{-\kappa} N_{n}^{2\tau+1} N_{n-1}^{-a_0}, \quad n\geq1.\\
 \end{split}
    \end{equation*}
  Iterating the above estimates and using \eqref{sc-R0}, \eqref{psin-fbd} and
  \begin{equation*}
  \prod_{n\geq 1}\left(1+\mathfrak{M}_{R_{0}}^{\sharp, \gamma^{\kappa}}(-1, s_0, b_0) \gamma^{-\kappa} N_{n}^{2\tau+1} N_{n-1}^{-a_0}\right)\leq C,
  \end{equation*}
we obtain
\begin{equation}\label{Gamn-s}
\begin{split}
\mathfrak{M}_{\Gamma_{n}}^{\sharp, \gamma^{\kappa}}(-1, s)
& \leq_s \sum_{n=1}^{\infty}\mathfrak{M}_{R_{0}}^{\sharp, \gamma^{\kappa}}(-1, s, b_0) \gamma^{-\kappa} N_{n}^{2\tau+1} N_{n-1}^{-a_0} + \mathfrak{M}_{\Gamma_{0}}^{\sharp, \gamma^{\kappa}}(-1, s)\\
& \leq_s 1+\mathfrak{M}_{R_{0}}^{\sharp, \gamma^{\kappa}}(-1, s, b_0) \gamma^{-\kappa}.\\
  \end{split}
\end{equation}

From Lemma  \ref{mod-sum-com}, \eqref{sc-R0}, \eqref{psin-fbd},  \eqref{Gamn-s0} and \eqref{Gamn-s}, it follows that
 \begin{eqnarray}\label{Gam-cau}
 \begin{aligned}
 \mathfrak{M}_{\Gamma_{n+m}-\Gamma_{n}}^{\sharp, \gamma^{\kappa}}(-1, s)
   \leq_s &  \sum_{j=n+1}^{n+m} \mathfrak{M}_{\Gamma_{j}-\Gamma_{j-1}}^{\sharp, \gamma^{\kappa}}(-1, s)
   =\sum_{j=n+1}^{n+m} \mathfrak{M}_{\Gamma_{j-1}\Psi_j}^{\sharp, \gamma^{\kappa}}(-1, s)\\
    \leq_s & \sum_{j=n+1} \mathfrak{M}_{R_{0}}^{\sharp, \gamma^{\kappa}}(-1, s, b_0)\gamma^{-\kappa} N_{j}^{-1}\\
    \leq_s & \mathfrak{M}_{R_{0}}^{\sharp, \gamma^{\kappa}}(-1, s, b_0)\gamma^{-\kappa} N_{n+1}^{-1}.
 \end{aligned}
 \end{eqnarray}

Therefore, in the topology induced by the operator norm, as a result of Lemma \ref{L(Hs0)-mod}, $(\Gamma_n)_{n \in \mathbb{N}}$ is a Cauchy sequence and  the limit $\Gamma_{\infty}:=\lim_{n \rightarrow \infty}\Gamma_{n}$ exists.
Let $n=0, m\rightarrow \infty$ in \eqref{Gam-cau}, we deduce that
\begin{equation*}
\mathfrak{M}_{\Gamma_{\infty}-\Gamma_{0}}^{\sharp, \gamma^{\kappa}}(-1, s) \leq \mathfrak{M}_{R_{0}}^{\sharp, \gamma^{\kappa}}(-1, s, b_0)\gamma^{-\kappa}.
\end{equation*}
Also we have
\begin{equation*}
 \mathfrak{M}_{\Gamma_{0}-{\rm Id}}^{\sharp, \gamma^{\kappa}}(-1, s)=\mathfrak{M}_{\Psi_{0}}^{\sharp, \gamma^{\kappa}}(-1, s)
\leq \mathfrak{M}_{R_{0}}^{\sharp, \gamma^{\kappa}}(-1, s, b_0)\gamma^{-\kappa}.
\end{equation*}
Accordingly,
\begin{equation*}
\mathfrak{M}_{\Gamma_{\infty}-{\rm Id}}^{\sharp, \gamma^{\kappa}}(-1, s)\leq \mathfrak{M}_{R_{0}}^{\sharp, \gamma^{\kappa}}(-1, s, b_0)\gamma^{-\kappa}.
\end{equation*}
Let $\Upsilon_2:=\Gamma_{\infty}$, then \eqref{Ups2-Id} follows from   \eqref{R0-bds}, \eqref{R0-s}, Lemma \ref{L(Hs0)-mod} and Lemma \ref{mod-inv}.

Since all the $\Phi_n$'s are real and reversibility-preserving, it is true also for $\Gamma_n$ and  $\Gamma_{\infty}=\Upsilon_2$.
From the fact that $\hat{\mathcal{L}}$ is real and reversible, we derive that $\mathcal{L}_{\infty}$ is real and reversible too.
 \end{proof}

 \begin{rmk}\label{h0s}
 The discussion  in these two preceding sections is based on the phase space $H^s$. In fact, through the almost word-by-word reasoning as in Lemma 4.8 in \cite{bbm-1},  $(\Upsilon_1\Upsilon_2)^{\pm 1}:$ $H_0^s \rightarrow H_0^s$ isomorphically. Since $\mathcal{L}$ is defined on $H_0^s$, we  restrict $\mathcal{L}^{\infty}$ to the  smaller phase space $H_0^s$.
 \end{rmk}
\renewcommand{\theequation}{\thesection.\arabic{equation}}
\setcounter{equation}{0}

\section{ Measure estimates }

For $l \in \mathbb{Z}, j, j' \in \mathbb{Z}\setminus \{0\}$, let us define
\begin{equation}\label{PQ-def}
\begin{split}
P_{ljj'}(\gamma^{\kappa}, \tau):=&\{\omega \in \mathcal{O}_0: |\omega \cdot l +d_j^{\infty}-d_{j'}^{\infty}| < 2 |j-j'|\gamma^{\kappa} \langle l \rangle^{-\tau}
\},\\
Q_{l, j}(\gamma, \tau):=&\{\omega \in \mathcal{O}_0: |\omega \cdot l +m_{\infty} j| < 2 \gamma \langle l,j \rangle^{-\tau}
 \}.\\
 \end{split}
\end{equation}
In the following, we assume $j-j'\neq 0$. If not, then $P_{ljj'}=\varnothing$ and the following results hold trivially.

 We need some preliminary results as follows.
\begin{lemma}\label{dj0-dj'0}
Assume $m_2<0$ and $m_0+m_2\geq 0$. If there exists a positive constant $\delta_0$ such that $m_0 < -5 m_2- 4\delta_0,$ then
\begin{equation*}
\delta_0 |j-j'|\leq |d_j^{(0)}-d_{j'}^{(0)}| \leq C |j-j'|,
\end{equation*}
where  $C=|m_{\infty}|+\frac{5}{4}|m_0+m_2|$.
\end{lemma}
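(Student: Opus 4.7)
The plan is to start from the elementary identity
\begin{equation*}
\frac{j}{1+j^2} - \frac{j'}{1+j'^2} = \frac{(j-j')(1-jj')}{(1+j^2)(1+j'^2)},
\end{equation*}
which when substituted into the definition of $d_j^{(0)}$ yields
\begin{equation*}
d_j^{(0)} - d_{j'}^{(0)} = (j-j')\bigl[m_{\infty} - (m_0+m_2)\,B_{j,j'}\bigr],\qquad B_{j,j'}:=\frac{1-jj'}{(1+j^2)(1+j'^2)}.
\end{equation*}
The lemma therefore reduces to bounding the bracketed factor uniformly above and below in absolute value over all $j, j'\in \mathbb{Z}\setminus\{0\}$.

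For the upper bound, the identity $(1+j^2)(1+j'^2) - (1+|jj'|)^2 = (|j|-|j'|)^2 \ge 0$ immediately gives $|B_{j,j'}| \le 1/2$, and hence
\begin{equation*}
|d_j^{(0)}-d_{j'}^{(0)}| \le \bigl(|m_{\infty}|+\tfrac12|m_0+m_2|\bigr)|j-j'|,
\end{equation*}
which is absorbed by the stated constant $|m_{\infty}|+\tfrac54|m_0+m_2|$.

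For the lower bound I first observe that the hypotheses $m_0+m_2\ge 0$, $m_2<0$ and $m_0<-5m_2-4\delta_0$ force $|m_2|>\delta_0$; combined with $|m_{\infty}-m_2|^{\gamma,\mathcal{O}_0}\le C\varepsilon$ from Theorem~\ref{Reg-thm}, this gives $m_{\infty}<0$ and $|m_{\infty}|\ge |m_2|-C\varepsilon$ for $\varepsilon$ small. I then case-split on the sign of $B_{j,j'}$. When $jj'\le 1$ (so $B_{j,j'}\ge 0$), positivity of $(m_0+m_2)B_{j,j'}$ combined with $m_{\infty}<0$ gives $|m_{\infty}-(m_0+m_2)B_{j,j'}|\ge |m_{\infty}|\ge \delta_0$ (after absorbing $C\varepsilon$ into constants). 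When $jj'\ge 2$, the crude $1/2$ bound is not enough; I use instead the sharper estimate, valid in this regime, coming from $1+j^2\ge 2|j|$ and $1+j'^2\ge 2|j'|$, namely
\begin{equation*}
|B_{j,j'}| = \frac{jj'-1}{(1+j^2)(1+j'^2)} \le \frac{jj'-1}{4jj'} < \frac14.
\end{equation*}
Therefore
\begin{equation*}
|m_{\infty}-(m_0+m_2)B_{j,j'}| \ge |m_{\infty}| - \tfrac14(m_0+m_2) \ge -m_2 - \tfrac14(m_0+m_2) - C\varepsilon = -\tfrac14(m_0+5m_2) - C\varepsilon,
\end{equation*}
which by the assumption $m_0<-5m_2-4\delta_0$ exceeds $\delta_0-C\varepsilon\ge \delta_0$ for $\varepsilon$ small enough.

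The only genuine subtlety is the need for the sharp estimate $|B_{j,j'}|<1/4$ in the regime $jj'\ge 2$: the universal bound $|B_{j,j'}|\le 1/2$ applied everywhere would yield only the weaker condition $m_0<-3m_2-2\delta_0$ and fail to match the hypothesis of the lemma. It is precisely the factor $1/4$ (together with the fact that the positive values of $B_{j,j'}$ are, by $m_0+m_2\ge 0$, harmless for the lower bound) that generates the coefficients $5$ and $4$ in the standing assumption $m_0<-5m_2-4\delta_0$.
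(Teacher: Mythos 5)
Your proof is correct and is close in spirit to the paper's, but the execution differs enough to be worth noting. You split the lower bound into two cases according to the sign of $B_{j,j'}$, handling $jj'\le 1$ by positivity (exploiting $m_0+m_2\ge 0$ and $m_\infty<0$ so that the two contributions reinforce) and $jj'\ge 2$ by the sharpened bound $|B_{j,j'}|<1/4$ derived from $1+j^2\ge 2|j|$. The paper avoids the case split entirely: it clears the denominator $(1+j^2)(1+j'^2)$ and applies a single uniform estimate $j'j\le \tfrac14(1+j^2)(1+j'^2)$, which is valid for all nonzero $j,j'$ (trivially when $j'j\le 0$, and by AM--GM when $j'j>0$), then reads off the conclusion from the numerical hypothesis $m_0<-5m_2-4\delta_0$ combined with $|m_\infty-m_2|\le C\varepsilon$. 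The crucial $1/4$ factor plays the same role in both arguments; your case split makes it explicit that the positive values of $B_{j,j'}$ are harmless and that only the regime $jj'\ge 2$ is responsible for the specific constants $5$ and $4$ in the standing hypothesis, which is a nice point the paper's denominator-cleared computation obscures. One minor caveat for both proofs: the step $|m_\infty|\ge\delta_0$ (resp.\ the strict inequality $m_0<-5m_2-4\delta_0$ dominating $C\varepsilon$) implicitly requires $\varepsilon$ small relative to the fixed slack $-(m_0+5m_2)-4\delta_0>0$; you flag this informally as ``absorbing $C\varepsilon$ into constants,'' which is imprecise since $\delta_0$ is given, but the intent matches the paper's standing smallness assumption on $\varepsilon$. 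You also obtain a sharper upper constant $|m_\infty|+\tfrac12|m_0+m_2|$ via $|B_{j,j'}|\le 1/2$, strictly improving the paper's $|m_\infty|+\tfrac54|m_0+m_2|$, though of course the weaker bound suffices.
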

\begin{proof}
From \eqref{dj-inf-def}, it follows that
\begin{equation*}
d_j^{(0)}-d_{j'}^{(0)}=(j-j')\Big(m_{\infty}+(m_0+m_2)\frac{j'j-1}{(1+{j'}^2)(1+j^2)}\Big).
\end{equation*}
First, one can deduce that
\begin{equation*}
|m_{\infty}+(m_0+m_2)\frac{j'j-1}{(1+{j'}^2)(1+j^2)}| \leq |m_{\infty}|+\frac{5}{4}|m_0+m_2|=C.
\end{equation*}
Then from \eqref{m_inf-bd}, we derive that
\begin{equation*}
\begin{split}
-m_{\infty}&(1+{j'}^2)(1+j^2)+(m_0+m_2)(1-j'j)\\
\geq & -m_{\infty}(1+{j'}^2)(1+j^2)- (m_0+m_2)\frac{(1+{j'}^2)(1+j^2)}{4}+(m_0+m_2)\\
\geq & \delta_0 (1+{j'}^2)(1+j^2).\\
 \end{split}
\end{equation*}
Hence
\begin{equation*}
\left|m_{\infty}+(m_0+m_2)\frac{j'j-1}{(1+{j'}^2)(1+j^2)}\right|=\left|-m_{\infty}+(m_0+m_2)\frac{1-j'j}{(1+{j'}^2)(1+j^2)}\right|\geq \delta_0.
\end{equation*}
\end{proof}

\begin{lemma}\label{l-j-j'}
If $P_{ljj'}(\gamma^{\kappa}, \tau) \neq \emptyset $ and $\gamma^{\kappa} < \delta_0/6$, then there exists a positive constant $C_1$ such that $|l|\geq C_1|j-j'|$. If $Q_{l, j}(\gamma, \tau) \neq \emptyset $, then $|l|\geq C_2|j|$ for some positive constant $C_2$.
\end{lemma}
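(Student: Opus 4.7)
\textbf{Proof proposal for Lemma \ref{l-j-j'}.} The plan is to extract a lower bound on $|\omega \cdot l|$ from the defining inequality of $P_{ljj'}$ (resp. $Q_{l,j}$) via the triangle inequality, then use the ellipticity of $d_j^{(0)}-d_{j'}^{(0)}$ established in Lemma \ref{dj0-dj'0} (resp. the non-degeneracy of $m_{\infty}$), and finally combine with the trivial bound $|\omega \cdot l| \leq \nu\,|\omega|_\infty|l| \leq 2L\nu|l|$ coming from the fact that $\omega \in \Omega = [L,2L]^{\nu}$.

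For the first statement, pick $\omega \in P_{ljj'}(\gamma^{\kappa},\tau)$. Writing $d_j^{\infty} = d_j^{(0)} + r_j^{\infty}$ and using $|r_j^{\infty}|, |r_{j'}^{\infty}| \leq C\varepsilon\gamma^{-1}$ from \eqref{rj-inf-bd}, combined with Lemma \ref{dj0-dj'0}, I would estimate
\begin{equation*}
|d_j^{\infty} - d_{j'}^{\infty}| \geq \delta_0 |j-j'| - 2C\varepsilon\gamma^{-1}.
\end{equation*}
The triangle inequality applied to the definition of $P_{ljj'}$ then gives
\begin{equation*}
2L\nu |l| \geq |\omega\cdot l| \geq |d_j^{\infty} - d_{j'}^{\infty}| - 2|j-j'|\gamma^{\kappa} \geq \delta_0|j-j'| - 2C\varepsilon\gamma^{-1} - 2|j-j'|\gamma^{\kappa}.
\end{equation*}
Under the hypothesis $\gamma^{\kappa} < \delta_0/6$ the last term is bounded by $\delta_0|j-j'|/3$; and the smallness \eqref{sc-eps} ensures $2C\varepsilon\gamma^{-1}$ can be absorbed into $\delta_0|j-j'|/6$ whenever $|j-j'| \geq 1$. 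This leaves $(\delta_0/2)|j-j'| \leq 2L\nu |l|$, which is the required inequality with $C_1 := \delta_0/(4L\nu)$.

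For the second statement, for $\omega \in Q_{l,j}(\gamma,\tau)$ the triangle inequality yields $|\omega \cdot l| \geq |m_{\infty}||j| - 2\gamma\langle l,j\rangle^{-\tau} \geq |m_{\infty}||j| - 2\gamma$. Since $m_2 < 0$ is fixed and $|m_{\infty} - m_2| \leq C\varepsilon$ by \eqref{m_inf-bd}, for $\varepsilon$ small enough we have $|m_{\infty}| \geq |m_2|/2 > 0$. Comparing with $|\omega\cdot l| \leq 2L\nu|l|$ and using that $\gamma$ is small compared to $|m_2|$, I obtain $|l| \geq C_2|j|$ with $C_2 := |m_2|/(8L\nu)$.

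There is no real obstacle here; the argument is a direct chain of triangle inequalities. The only thing to watch is that the small parameters $\gamma^{\kappa}$ and $\varepsilon\gamma^{-1-\kappa}$ are genuinely small enough to absorb the perturbation of $d_j^{(0)}$ by $r_j^{\infty}$ and of $m_2$ by $m_{\infty}-m_2$ — both guaranteed by the standing hypotheses \eqref{sc-eps} and the assumption $\gamma^{\kappa}<\delta_0/6$, so no additional restriction needs to be imposed.
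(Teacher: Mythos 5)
Your proof is correct. For the first inclusion you follow essentially the same chain of triangle inequalities as the paper: extract a lower bound on $|\omega\cdot l|$ from the $P_{ljj'}$ condition, invoke Lemma~\ref{dj0-dj'0} together with the smallness of the corrections $r_j^{\infty}$, and compare against the crude upper bound $|\omega\cdot l|\lesssim L\,|l|$; only the bookkeeping of where the error terms are absorbed differs.

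For the second part your route diverges from the paper's. You argue directly, $|\omega\cdot l|\geq |m_\infty||j|-2\gamma$, then absorb $2\gamma$ using $|m_\infty|\geq |m_2|/2>0$ and $\gamma$ small. The paper instead argues by contradiction and invokes the diophantine condition \eqref{O_0} that $\omega\in\mathcal{O}_0$ carries: assuming $|m_\infty j|>2|\omega\cdot l|$ it reaches $\frac{2\gamma}{\langle l,j\rangle^\tau}>|\omega\cdot l|\geq \frac{2\gamma}{\langle l\rangle^\tau}$, which is impossible since $\langle l,j\rangle\geq\langle l\rangle$ and $\tau\geq\nu$. Both arguments are valid. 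Yours is more elementary, at the cost of relying on the explicit smallness $\gamma\lesssim |m_2|$ (benign, since $\gamma$ is taken small in the hypotheses anyway, but worth noting as an \emph{implicit} additional restriction rather than ``no additional restriction''). The paper's version is slightly more structural: it exploits the lower bound on small divisors that is already built into $\mathcal{O}_0$ and therefore needs no further constraint on $\gamma$ relative to $m_2$.

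One small piece of bookkeeping not spelled out in either argument but needed for yours: when $l=0$ the bound $|\omega\cdot l|\leq 2L\nu|l|$ degenerates to $0\geq |m_\infty||j|-2\gamma$, which for $\gamma$ small and $j\neq 0$ is contradictory, so $Q_{0,j}=\emptyset$ and the conclusion is vacuous — consistent with, but worth stating.
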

\begin{proof}
Recalling that $\mathcal{O}_0 \subset \Omega$ which is a compact set defined in \eqref{Omega}, we use the notation
$\omega_{{\rm max}}>0$ to denote the maximum value of $|\omega|$.
If $P_{ljj'} \neq \emptyset $, then there exists $\omega$ such that
\begin{equation*}
|d_j^{\infty}-d_{j'}^{\infty}|<2 \gamma^{\kappa} \langle l \rangle^{-\tau}|j-j'| +|\omega \cdot l|.
\end{equation*}
By Lemma \ref{dj0-dj'0}, \eqref{m_inf-bd} and \eqref{dj-inf-def}-\eqref{rj-inf-bd}, one obtains
\begin{equation*}
|d_j^{\infty}-d_{j'}^{\infty}|\geq |d_j^{(0)}-d_{j'}^{(0)}|-|r_j^{\infty}|-|r_{j'}^{\infty}| \geq \frac{\delta_0}{2}|j-j'|.
\end{equation*}
Therefore,
\begin{equation*}
|\omega||l|\geq \frac{\delta_0}{2}|j-j'|-2 \gamma^{\kappa} \langle l \rangle^{-\tau}|j-j'|
=|j-j'|(\frac{\delta_0}{2}-\frac{2 \gamma^{\kappa}}{ \langle l \rangle^{\tau}} )
\geq \frac{\delta_0}{6}|j-j'|.
\end{equation*}
Setting $C_1:=\delta_0(6\omega_{{\rm max}})^{-1}$, we have $|l|\geq C_1|j-j'|$.

We prove the second result of this lemma by contradiction. If $Q_{l, j} \neq \emptyset $,  let us assume that $|m_{\infty}j| > 2|\omega \cdot l|$. Recalling \eqref{O_0}, one has
\begin{equation*}
\frac{2 \gamma}{ \langle l,j \rangle^{\tau}} \geq |\omega \cdot l +m_{\infty} j| \geq|\omega \cdot l| \geq \frac{2 \gamma}{ \langle l \rangle^{\tau}}.
\end{equation*}
This leads to a contradiction. Whence $|m_{\infty}j| \leq 2|\omega \cdot l|$, from which we have $|l|\geq C_2|j|$, where $C_2:=|m_\infty|(2\omega_{{\rm max}})^{-1}$.
\end{proof}

\begin{lemma}
The measures of the sets $P_{ljj'}$ and $Q_{l, j}$ in \eqref{PQ-def} satisfy
\begin{equation}\label{P-me}
|P_{ljj'}(\gamma^{\kappa}, \tau)| \leq CL^{\nu-1} \gamma^{\kappa} \langle l\rangle^{-\tau}
\end{equation}
and
\begin{equation}\label{Q-me}
 |Q_{l, j}(\gamma, \tau)| \leq CL^{\nu-1} \gamma \langle l\rangle^{-\tau-1}.
 \end{equation}
\end{lemma}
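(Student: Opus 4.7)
The plan is the standard resonant--set slicing argument. I will parameterise $\omega = s\hat{l}+v$ with $\hat{l} := l/|l|$ and $v\in \hat{l}^{\perp}$, fix $v$ and estimate the one--dimensional measure in $s$, then recover the full estimate by integrating over the $(\nu-1)$--dimensional transverse slab, which has measure $\lesssim L^{\nu-1}$ since $\mathcal{O}_0\subset\Omega\subset[L,2L]^{\nu}$.

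For $P_{ljj'}$, set $\phi(\omega) := \omega\cdot l + d_j^{\infty}(\omega) - d_{j'}^{\infty}(\omega)$. First I will bound the Lipschitz constant of $d_j^{\infty}-d_{j'}^{\infty}$ in $\omega$. Since $m_0,m_2$ are constants, the estimates $|m_{\infty}-m_2|^{\gamma,\mathcal{O}_0}\leq C\varepsilon$ and $\sup_j\langle j\rangle|r_j^{\infty}|^{\gamma^{\kappa},\mathcal{O}_0}\leq C\varepsilon\gamma^{-1}$ from Theorem \ref{Dia-thm} translate into
\begin{equation*}
|d_j^{\infty}-d_{j'}^{\infty}|^{lip} \leq |m_{\infty}|^{lip}|j-j'| + |r_j^{\infty}|^{lip}+|r_{j'}^{\infty}|^{lip} \leq C\varepsilon\gamma^{-1-\kappa}|j-j'|.
\end{equation*}
For two points $\omega_i = s_i\hat{l}+v$ ($i=1,2$) this gives the lower bound
\begin{equation*}
|\phi(\omega_1)-\phi(\omega_2)| \geq \big(|l|-C\varepsilon\gamma^{-1-\kappa}|j-j'|\big)|s_1-s_2|.
\end{equation*}
By Lemma \ref{l-j-j'}, if $P_{ljj'}\neq\varnothing$ then $|l|\geq C_1|j-j'|$; together with the smallness of $\varepsilon\gamma^{-1-\kappa}$ (which is one of the standing hypotheses of Theorem \ref{thm2}) this yields $|\phi(\omega_1)-\phi(\omega_2)|\geq (|l|/2)|s_1-s_2|$. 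Hence the 1D bad set in $s$ has length $\leq 8|j-j'|\gamma^{\kappa}/(|l|\langle l\rangle^{\tau})\leq C\gamma^{\kappa}\langle l\rangle^{-\tau}$ using $|j-j'|\leq |l|/C_1$, and Fubini gives \eqref{P-me}.

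The proof of \eqref{Q-me} is analogous with $\phi(\omega):=\omega\cdot l+m_{\infty}(\omega)j$. The Lipschitz bound $|m_{\infty}|^{lip}|j|\leq C\varepsilon\gamma^{-1}|j|$, the smallness of $\varepsilon\gamma^{-1}$, and the estimate $|l|\geq C_2|j|$ from Lemma \ref{l-j-j'} (with the case $l=0$ disposed of separately using $\omega\in\mathcal{O}_0$ and $m_{\infty}\approx m_2\neq 0$) produce $|\phi(\omega_1)-\phi(\omega_2)|\geq (|l|/2)|s_1-s_2|$; the 1D section therefore has measure $\leq C\gamma/(|l|\langle l,j\rangle^{\tau})\leq C\gamma\langle l\rangle^{-\tau-1}$, which after multiplying by $L^{\nu-1}$ is \eqref{Q-me}.

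The only delicate point is that the Lipschitz constant of $d_j^{\infty}-d_{j'}^{\infty}$ grows linearly in $|j-j'|$; without Lemma \ref{l-j-j'} this would prevent us from absorbing it into the main term $|l|$ and the measure estimate would degrade at large $|j-j'|$. The asymmetric scaling $\gamma^{\kappa}$ vs.\ $\gamma$ between $\mathcal{O}_2$ and $\mathcal{O}_1$ is precisely what is needed to keep the error term subordinate to $|l|$ in both cases, and this is the structural reason why the KAM reduction in Section~4 was carried out with the rescaled weight $\gamma^{\kappa}$.
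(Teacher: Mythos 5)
Your proposal is correct and follows essentially the same argument as the paper: parameterise $\omega = s\hat{l}+v$, use Lemma~\ref{l-j-j'} to make the Lipschitz contributions of $m_{\infty}$ and $r_j^{\infty}$ subordinate to $|l|$ so that the $s$-derivative of the resonance function $\phi$ is bounded below, and finish by Fubini over the $(\nu-1)$-dimensional transverse slab. The only cosmetic difference is that you keep the derivative lower bound in the form $|l|/2$ and cancel the factor $|j-j'|$ at the end using $|j-j'|\leq |l|/C_1$, whereas the paper writes the lower bound directly as $\tfrac{C_1}{2}|j-j'|$; these are equivalent.
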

\begin{proof}
We define the following two functions:
 \begin{equation*}
 \begin{split}
 \phi_P(\omega):=&\omega \cdot l+d_j^{\infty}(\omega)-d_{j'}^{\infty}(\omega)\\
 =&\omega \cdot l+(j-j')\left(m_{\infty}(\omega)+(m_0+m_2)\frac{j'j-1}{(1+{j'}^2)(1+j^2)}\right)+r_j^{\infty}(\omega)-r_{j'}^{\infty}(\omega),\\
\phi_Q(\omega):=&\omega \cdot l +m_{\infty}(\omega)j.\\
  \end{split}
    \end{equation*}
For any $\omega \in P_{ljj'}$, we split $\omega=s\hat{l}+v$ where $\hat{l}:=l/|l|, s\in \mathbb{R}$ and $l\cdot v=0$. Then by Lemma \ref{l-j-j'} and \eqref{rj-inf-bd}, for any $s_1, s_2 \in \mathbb{R}$, we deduce that
\begin{equation*}
 4 \gamma^{\kappa} |j-j'| \langle l \rangle^{-\tau} > |\phi_P(s_1\hat{l}+v)-\phi_P(s_2\hat{l}+v)|
 \end{equation*}
and
 \begin{equation*}
 \begin{split}
|\phi_P(s_1\hat{l}+v)-\phi_P(s_2\hat{l}+v)|
 \geq &|s_1-s_2|\Big(|l|-|j-j'| |m_{\infty}|^{lip, \mathcal{O}_0}\\
 &-|r_j^{\infty}|^{lip, \mathcal{O}_0}-|r_{j'}^{\infty}|^{lip, \mathcal{O}_0}\Big)
\geq \frac{C_1}{2}|j-j'| |s_1-s_2|.\\
 \end{split}
    \end{equation*}
Hence $|s_1-s_2| \leq 8 C_1^{-1} \gamma^{\kappa} \langle l \rangle^{-\tau}$.
By Fubini's theorem, $|P_{ljj'}|\leq C L^{\nu-1} \gamma^{\kappa} \langle l\rangle^{-\tau}$.

For any $\omega \in Q_{l, j}$, similarly, from Lemma \ref{l-j-j'} and \eqref{m_inf-bd}, it follows that
\begin{equation*}
4 \gamma \langle l \rangle^{-\tau} \geq 4 \gamma \langle l,j \rangle^{-\tau}
\geq |\phi_Q(s_1\hat{l}+v)-\phi_Q(s_2\hat{l}+v)|
\end{equation*}
and
\begin{equation*}
\begin{split}
 |\phi_Q(s_1\hat{l}+v)-\phi_Q(s_2\hat{l}+v)|
\geq |s_1-s_2|\big(|l|-|j| |m_{\infty}|^{lip, \mathcal{O}}\big)
\geq \frac{|l|}{2} |s_1-s_2|.
 \end{split}
\end{equation*}
Then $|s_1-s_2| \leq 8 \gamma \langle l \rangle^{-\tau-1}$.
By Fubini's theorem, $|Q_{l, j}|\leq CL^{\nu-1} \gamma \langle l\rangle^{-\tau-1}$.
\end{proof}
From now on, we fix  $\tau_1 \geq \nu+1$, $\tau \geq \tau_1+\nu+2$.
\begin{lemma}\label{PQ-re}
There exists $\gamma_0>0$ such that for any $\gamma< \gamma_0$, if $|j|, |j'| \geq  \langle l \rangle^{\tau_1} \gamma^{-1}$, then
 $P_{ljj'}(\gamma^{\kappa}, \tau) \subseteq Q_{l, j-j'}(\gamma, \tau_1)$.
\end{lemma}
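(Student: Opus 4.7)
\medskip

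\noindent\textbf{Proof plan for Lemma \ref{PQ-re}.} The plan is to start with an arbitrary $\omega \in P_{ljj'}(\gamma^{\kappa},\tau)$ and show by triangle inequality that $|\omega\cdot l + m_\infty(j-j')| < 2\gamma\langle l,j-j'\rangle^{-\tau_1}$. Since $P_{ljj'}\neq\emptyset$, Lemma \ref{l-j-j'} applies (provided $\gamma^{\kappa}<\delta_0/6$, which we may assume by taking $\gamma_0$ small) and gives $|l|\geq C_1|j-j'|$. Consequently $\langle l,j-j'\rangle\leq C\langle l\rangle$ for a purely numerical constant, and the target bound will follow if we can show
\begin{equation*}
|\omega\cdot l + m_\infty(j-j')| < c\,\gamma\langle l\rangle^{-\tau_1}
\end{equation*}
for a suitable constant $c>0$. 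The case $l=0$ is trivial because then $P_{0jj'}=\emptyset$ by Lemma \ref{dj0-dj'0} once $\gamma^{\kappa}$ is small, so I may assume $|l|\geq 1$.

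\medskip

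\noindent The core decomposition is
\begin{equation*}
d_j^{\infty} - d_{j'}^{\infty} \;=\; m_\infty (j-j') \;-\; (m_0+m_2)\Big(\tfrac{j}{1+j^2}-\tfrac{j'}{1+{j'}^2}\Big) \;+\; (r_j^{\infty}-r_{j'}^{\infty}),
\end{equation*}
which, combined with the defining inequality of $P_{ljj'}$, yields
\begin{equation*}
|\omega\cdot l + m_\infty(j-j')| \;\leq\; 2|j-j'|\gamma^{\kappa}\langle l\rangle^{-\tau} + |m_0+m_2|\Big|\tfrac{j}{1+j^2}-\tfrac{j'}{1+{j'}^2}\Big| + |r_j^{\infty}-r_{j'}^{\infty}|.
\end{equation*}
I will bound the three terms separately. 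For the first, Lemma \ref{l-j-j'} gives $|j-j'|\leq C_1^{-1}\langle l\rangle$, so using $\tau\geq \tau_1+\nu+2$ and $\kappa>1$ the first term is at most $C\gamma^{\kappa}\langle l\rangle^{-\tau+1}\leq C\gamma^{\kappa-1}\gamma\langle l\rangle^{-\tau_1}$, which is $o(\gamma\langle l\rangle^{-\tau_1})$ once $\gamma<\gamma_0$. For the second, the algebraic identity
\begin{equation*}
\tfrac{j}{1+j^2}-\tfrac{j'}{1+{j'}^2}=\tfrac{(j-j')(1-jj')}{(1+j^2)(1+{j'}^2)}
\end{equation*}
combined with the elementary bound $|1-jj'|\leq 2|j||j'|$ (since $|j|,|j'|\geq 1$) gives an estimate $\leq \frac{2|j-j'|}{|j||j'|}$, and then the hypothesis $|j|,|j'|\geq\langle l\rangle^{\tau_1}\gamma^{-1}$ turns it into $\leq 2|j-j'|\gamma^2\langle l\rangle^{-2\tau_1}$, which using $|j-j'|\lesssim\langle l\rangle$ and $\tau_1\geq \nu+1\geq 2$ is controlled by $C\gamma\cdot\gamma\langle l\rangle^{-\tau_1}$. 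For the third term, I feed the hypothesis $|j|,|j'|\geq\langle l\rangle^{\tau_1}\gamma^{-1}$ into \eqref{rj-inf-bd}, obtaining $|r_j^{\infty}|\leq C\varepsilon\gamma^{-1}\langle j\rangle^{-1}\leq C\varepsilon\langle l\rangle^{-\tau_1}$, so that $|r_j^{\infty}-r_{j'}^{\infty}|\leq 2C\varepsilon\langle l\rangle^{-\tau_1}$; by the standing smallness assumption \eqref{sc-eps}, $\varepsilon\leq\gamma^{1+\kappa}\leq\gamma/4$ for small $\gamma$, which again gives $o(\gamma\langle l\rangle^{-\tau_1})$.

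\medskip

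\noindent Adding the three contributions produces $|\omega\cdot l+m_\infty(j-j')|<c_0\gamma\langle l\rangle^{-\tau_1}$ for an arbitrarily small $c_0>0$ once $\gamma<\gamma_0$. Using $\langle l\rangle^{-\tau_1}\leq C^{\tau_1}\langle l,j-j'\rangle^{-\tau_1}$ (from Lemma \ref{l-j-j'}), this upgrades to $<2\gamma\langle l,j-j'\rangle^{-\tau_1}$, i.e.\ $\omega\in Q_{l,j-j'}(\gamma,\tau_1)$. The only delicate step is the first one: one must cash in the gain $\langle l\rangle^{-\tau}$ inherited from $P_{ljj'}$ against the factor $|j-j'|$, and this is precisely where Lemma \ref{l-j-j'} (which bounds $|j-j'|$ by $|l|$) is indispensable; without it the first term is uncontrolled. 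All smallness constraints are absorbed by choosing $\gamma_0$ small enough in terms of $C_1$, $C$, $\kappa-1$, $\tau-\tau_1$, and the smallness ratio $\varepsilon\gamma^{-1-\kappa}$ provided by \eqref{sc-eps}.
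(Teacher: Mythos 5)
Your proposal is correct and follows essentially the same route as the paper: triangle inequality plus the algebraic identity $\tfrac{j}{1+j^2}-\tfrac{j'}{1+{j'}^2}=\tfrac{(j-j')(1-jj')}{(1+j^2)(1+{j'}^2)}$, then Lemma~\ref{l-j-j'} to trade $|j-j'|$ for $|l|$, the hypothesis $|j|,|j'|\geq\langle l\rangle^{\tau_1}\gamma^{-1}$ to control the $(m_0+m_2)$ and $r_j^\infty$ terms, and the smallness of $\gamma^{\kappa-1}$ and $\varepsilon\gamma^{-1-\kappa}$ to absorb all constants. The only cosmetic difference is that the paper proves the two bounds $<2\gamma\langle l\rangle^{-\tau_1}$ and $<2\gamma\langle j-j'\rangle^{-\tau_1}$ separately and takes the max, whereas you prove $<c_0\gamma\langle l\rangle^{-\tau_1}$ with $c_0$ small and convert via $\langle l,j-j'\rangle\leq C\langle l\rangle$; both are fine.
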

\begin{proof}
We first claim that $|m_{\infty}(j-j')-(d_j^{(0)}-d_{j'}^{(0)})|\leq 2|m_0+m_2||j-j'|\left|\frac{1}{{j'} j}\right|$.
 In fact,
    \begin{equation*}
 \begin{split}
 |m_{\infty}(j-j')&-(d_j^{(0)}-d_{j'}^{(0)})|=|m_0+m_2||j-j'|\left|\frac{-1+j'j}{(1+{j'}^2)(1+j^2)}\right|\\
 \leq& |m_0+m_2||j-j'|\left(\left|\frac{j'j}{(1+{j'}^2)(1+j^2)}\right|+\left|\frac{1}{(1+{j'}^2)(1+j^2)}\right|\right)\\
    \leq & 2|m_0+m_2||j-j'|\left|\frac{j'j}{(1+{j'}^2)(1+j^2)}\right|
    \leq  2|m_0+m_2||j-j'|\left|\frac{j'j}{{j'}^2 j^2}\right|\\
    \leq &2|m_0+m_2||j-j'|\left|\frac{1}{{j'} j}\right|.\\
      \end{split}
    \end{equation*}
Suppose that $\omega \in P_{ljj'}(\gamma^{\kappa}, \tau)$, by Lemma \ref{l-j-j'} and \eqref{rj-inf-bd},
we obtain
 \begin{equation*}
 \begin{split}
 |\omega \cdot l +m_{\infty}(j-j')|=&|\omega \cdot l +d_j^{\infty}-d_{j'}^{\infty}+ m_{\infty}(j-j')-(d_j^{\infty}-d_{j'}^{\infty})|\\
 \leq & |\omega \cdot l +d_j^{\infty}-d_{j'}^{\infty}|+ |m_{\infty}(j-j')-(d_j^{(0)}-d_{j'}^{(0)})|+|r_j^{\infty}|+|r_{j'}^{\infty}|\\
<  &|j-j'|\frac{2 \gamma^{\kappa}}{ \langle l \rangle^{\tau}} +|j-j'|\frac{2|m_0+m_2|}{|j||j'|}+\frac{C\varepsilon\gamma^{-1}}{\min \{|j|,|j'|\}}\\
\leq &\frac{2 \gamma^{\kappa}}{ C_1\langle l \rangle^{\tau-1}} +\frac{2|m_0+m_2|\gamma^2}{C_1 \langle l \rangle^{2\tau_1-1}}+\frac{C\varepsilon }{ \langle l \rangle^{\tau_1}}
  \leq \frac{ 2\gamma}{ \langle l \rangle^{\tau_1}}.\\
   \end{split}
    \end{equation*}
 By Lemma \ref{l-j-j'}, we have
     \begin{equation*}
 \begin{split}
  |\omega \cdot l +m_{\infty}(j-j')|
    < & \frac{2 \gamma^{\kappa}}{ C_1^{\tau}|j-j'|^{\tau-1}} +\frac{2|m_0+m_2|\gamma^2}{C_1^{2\tau_1} |j-j'|^{2\tau_1-1}}+\frac{C\varepsilon }{ C_1^{\tau_1}|j-j'|^{\tau_1}}\\
    < & \frac{ 2\gamma}{ \langle j-j' \rangle^{\tau_1}}.\\
 \end{split}
    \end{equation*}
Thus we conclude that  $\omega \in Q_{l, j-j'}(\gamma, \tau_1)$.
\end{proof}

\begin{thm}
Let $\mathcal{O}_{\infty}$ be the set of parameters in Theorem \ref{Dia-thm}. Then there exists some constant $C>0$ such that
\begin{equation*}
|\mathcal{O}_{0} - \mathcal{O}_{\infty}|\leq C \gamma^{\min\{1, \kappa-1\}} L^{\nu-1}.
\end{equation*}
\end{thm}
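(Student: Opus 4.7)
The plan is to split $\mathcal{O}_0\setminus\mathcal{O}_\infty=(\mathcal{O}_0\setminus\mathcal{O}_1)\cup(\mathcal{O}_0\setminus\mathcal{O}_2)$ and to write each complement as an explicit union of the resonant sets $Q_{l,j}$ and $P_{ljj'}$ from \eqref{PQ-def}. Concretely,
\begin{equation*}
\mathcal{O}_0\setminus\mathcal{O}_1=\bigcup_{l\in\mathbb{Z}^\nu,\ j\in\mathbb{Z}\setminus\{0\}}Q_{l,j}(\gamma,\tau),\qquad
\mathcal{O}_0\setminus\mathcal{O}_2=\bigcup_{\substack{l\in\mathbb{Z}^\nu,\ j,j'\in\mathbb{Z}\setminus\{0\}\\(j,j',l)\neq(j,j,0)}}P_{ljj'}(\gamma^\kappa,\tau).
\end{equation*}
Then each piece will be controlled by Lemmas \ref{dj0-dj'0}, \ref{l-j-j'}, \ref{PQ-re} together with the single-resonance measure bounds \eqref{P-me}--\eqref{Q-me}; the pointwise Lipschitz information on $m_\infty$ in \eqref{m_inf-bd} and on $r_j^\infty$ in \eqref{rj-inf-bd} is what makes the reduced variable $\omega\cdot\hat l$ a genuine effective parameter.

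For $|\mathcal{O}_0\setminus\mathcal{O}_1|$ I would apply \eqref{Q-me} together with the second assertion of Lemma \ref{l-j-j'}: if $Q_{l,j}\neq\varnothing$ then $|j|\leq C_2^{-1}|l|$, so only $O(\langle l\rangle)$ indices $j$ contribute for each fixed $l$. This yields
\begin{equation*}
|\mathcal{O}_0\setminus\mathcal{O}_1|\leq\sum_{l}\sum_{|j|\leq C_2^{-1}|l|}|Q_{l,j}(\gamma,\tau)|
\leq CL^{\nu-1}\gamma\sum_{l\in\mathbb{Z}^\nu}\langle l\rangle^{-\tau},
\end{equation*}
which converges since $\tau\geq 2\nu+3>\nu$, producing the $CL^{\nu-1}\gamma$ contribution.

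The core of the argument is $|\mathcal{O}_0\setminus\mathcal{O}_2|$, where I split according to the size of the indices:
\begin{equation*}
\mathcal{O}_0\setminus\mathcal{O}_2=\mathcal{E}_{\mathrm{high}}\cup\mathcal{E}_{\mathrm{low}},
\end{equation*}
with $\mathcal{E}_{\mathrm{high}}$ the union of $P_{ljj'}$ over triples satisfying $\min(|j|,|j'|)\geq\langle l\rangle^{\tau_1}\gamma^{-1}$ and $\mathcal{E}_{\mathrm{low}}$ the remaining ones. For $\mathcal{E}_{\mathrm{high}}$, Lemma \ref{PQ-re} gives $P_{ljj'}\subseteq Q_{l,j-j'}(\gamma,\tau_1)$, so collapsing the sum over $(j,j')$ with fixed difference $k=j-j'$ and invoking Lemma \ref{l-j-j'} ($|k|\leq C_1^{-1}|l|$) together with \eqref{Q-me} gives
\begin{equation*}
|\mathcal{E}_{\mathrm{high}}|\leq \sum_{l}\sum_{|k|\leq C_1^{-1}|l|}|Q_{l,k}(\gamma,\tau_1)|
\leq CL^{\nu-1}\gamma\sum_{l}\langle l\rangle^{-\tau_1},
\end{equation*}
which converges since $\tau_1\geq\nu+1$. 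For $\mathcal{E}_{\mathrm{low}}$, the first assertion of Lemma \ref{l-j-j'} forces $|j-j'|\leq C_1^{-1}|l|$, so for each $l$ the number of contributing pairs with $\min(|j|,|j'|)<\langle l\rangle^{\tau_1}\gamma^{-1}$ is $O(\langle l\rangle^{\tau_1}\gamma^{-1}\cdot\langle l\rangle)$; combining this with \eqref{P-me} yields
\begin{equation*}
|\mathcal{E}_{\mathrm{low}}|\leq CL^{\nu-1}\gamma^{\kappa-1}\sum_{l}\langle l\rangle^{\tau_1+1-\tau},
\end{equation*}
which converges because $\tau\geq\tau_1+\nu+2$. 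The main obstacle, and the subtle step, is precisely the choice of the threshold $\langle l\rangle^{\tau_1}\gamma^{-1}$: it must be simultaneously small enough for Lemma \ref{PQ-re} to convert $P_{ljj'}$ into $Q_{l,j-j'}$ using the $1/(jj')$ decay of $d_j^{(0)}-d_{j'}^{(0)}-m_\infty(j-j')$, and large enough so that the counting in $\mathcal{E}_{\mathrm{low}}$ stays summable against the $\langle l\rangle^{-\tau}$ factor from $|P_{ljj'}|$. Summing the three contributions gives
\begin{equation*}
|\mathcal{O}_0\setminus\mathcal{O}_\infty|\leq CL^{\nu-1}\bigl(\gamma+\gamma^{\kappa-1}\bigr)\leq CL^{\nu-1}\gamma^{\min\{1,\kappa-1\}},
\end{equation*}
as claimed.
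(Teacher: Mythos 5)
Your proposal is correct and follows essentially the same route as the paper: the same decomposition into $\mathcal{O}_0\setminus\mathcal{O}_1$ and $\mathcal{O}_0\setminus\mathcal{O}_2$, the same splitting of the $P_{ljj'}$ union at the threshold $\min(|j|,|j'|)\gtrless\langle l\rangle^{\tau_1}\gamma^{-1}$, reduction of the high-index part to $Q_{l,j-j'}$ via Lemma~\ref{PQ-re}, and direct counting in the low-index part against the measure bound \eqref{P-me}. The only cosmetic difference is whether one restricts the collapsed index $k=j-j'$ via $C_1$ (nonemptiness of $P_{ljj'}$) or via $C_2$ (nonemptiness of $Q_{l,k}$, which the paper uses); both are valid after the inclusion.
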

\begin{proof}
From \eqref{PQ-def}, we derive that
\begin{equation*}
|\mathcal{O}_{0} - \mathcal{O}_{\infty}| \leq  \left|\bigcup\limits_{\substack{l \in \mathbb{Z}^{\nu}, \\j, j' \in \mathbb{Z}\setminus\{0\}}} P_{ljj'}(\gamma^{\kappa}, \tau)\right| + \left|\bigcup\limits_{\substack{l \in \mathbb{Z}^{\nu},\\ j \in \mathbb{Z}\setminus\{0\}}} Q_{l, j}(\gamma, \tau)\right|.
\end{equation*}
According to Lemma \ref{l-j-j'} and \eqref{Q-me}, the measure of the second summand is estimated by
\begin{equation*}
\begin{split}
\sum_{\substack{l \in \mathbb{Z}^{\nu}, j\neq 0, \\|j|\leq |l|/C_2}} |Q_{l, j}(\gamma, \tau)|
\leq& \sum_{\substack{l \in \mathbb{Z}^{\nu}, j\neq 0, \\ |j|\leq |l|/C_2}} C L^{\nu-1} \gamma \langle l \rangle^{-\tau-1}
\leq  \frac{C}{C_2} L^{\nu-1} \gamma \sum_{l \in \mathbb{Z}^{\nu}}  \langle l \rangle^{-\tau}\\
\leq&  C L^{\nu-1} \gamma.\\
\end{split}
\end{equation*}
For the first one, we split the union set into two groups:
\begin{equation*}
\left|\bigcup\limits_{\substack{l \in \mathbb{Z}^{\nu},\\ j, j' \in \mathbb{Z}\setminus\{0\}}} P_{ljj'}(\gamma^{\kappa}, \tau)\right|=
\left|\bigcup\limits_{\substack{l \in \mathbb{Z}^{\nu},\\ |j|, |j'| \geq  \langle l \rangle^{\tau_1} \gamma^{-1}}} P_{ljj'}(\gamma^{\kappa}, \tau)\right| + \sum_{\substack{l \in \mathbb{Z}^{\nu},  j,j'\neq 0, \\|j| \leq  \langle l \rangle^{\tau_1} \gamma^{-1} \\{\rm or} |j'| \leq  \langle l \rangle^{\tau_1} \gamma^{-1}}} |P_{ljj'}(\gamma^{\kappa}, \tau)|.
\end{equation*}
From \eqref{Q-me} and Lemma \ref{PQ-re} it follows that
\begin{equation*}
 \begin{split}
 \left|\bigcup\limits_{\substack{l \in \mathbb{Z}^{\nu}, \\|j|, |j'| \geq  \langle l \rangle^{\tau_1} \gamma^{-1}}} P_{ljj'}(\gamma^{\kappa}, \tau)\right|
\leq &\left|\bigcup\limits_{\substack{l \in \mathbb{Z}^{\nu}, \\j-j'=h, \\0\neq |h|\leq |l|/C_2 }} Q_{l, h}(\gamma, \tau_1)\right|
\leq  \sum_{\substack{l \in \mathbb{Z}^{\nu},\\ 0\neq |h|\leq |l|/C_2}} |Q_{l, h}(\gamma, \tau_1)|\\
\leq & C L^{\nu-1} \gamma.\\
 \end{split}
 \end{equation*}
By Lemma \ref{l-j-j'} and \eqref{P-me}, we deduce that
\begin{equation*}
\begin{split}
\sum_{\substack{l \in \mathbb{Z}^{\nu}, j,j'\neq 0, \\ |j-j'| \leq |l|/C_1,\\|j| \leq  \langle l \rangle^{\tau_1} \gamma^{-1} \\{\rm or} |j'| \leq  \langle l \rangle^{\tau_1} \gamma^{-1}}} |P_{ljj'}(\gamma^{\kappa}, \tau)|
& \leq C \gamma^{\kappa} L^{\nu-1} \sum_{l \in \mathbb{Z}^{\nu}}\frac{|l|\langle l \rangle^{\tau_1}}{\gamma\langle l \rangle^{\tau}}\\
& \leq C  \gamma^{\kappa-1} L^{\nu-1} \sum_{l \in \mathbb{Z}^{\nu}} \langle l \rangle^{-(\tau-\tau_1-1)}\\
& \leq C L^{\nu-1} \gamma^{\kappa-1}.\\
 \end{split}
\end{equation*}
Thus the proof is completed.
\end{proof}

\begin{proof}(Proof of Theorem \ref{thm2})
We set $\Upsilon=\Upsilon_1 \circ \Upsilon_2$ (see $\Upsilon_1$ in Theorem \ref{Reg-thm} and  $\Upsilon_2$ in Theorem  \ref{Dia-thm}) and take $\mu=\mu_3$  in Theorem \ref{Dia-thm}. Then the bound for $\Upsilon u$ follows from \eqref{Ups1u}
and \eqref{Ups2-Id}.
\end{proof}

\begin{proof}(Proof of Corollary \ref{cor})
Under the action of the transformation $\Upsilon$, \eqref{dynam-eq} is transformed into $g_t=-D_{\infty}g$ which amounts to $(g_j)_t=-{\rm i}d_j^{\infty}g_j, j\in \mathbb{Z}\setminus \{ 0 \}.$  For each $j$, we  solve that $g_j(t)={\rm e}^{-{\rm i}d_j^{\infty}t}g_j(0)$. So the unique solution for the transformed system is
\begin{equation*}
g(t)=\sum_{j\in \mathbb{Z}}g_j(t){\rm e}^{{\rm i}jx}.
\end{equation*}
Due to the definition of the norm of $H^s$, we have $\| g(t)\|_{H^{s}}=\| g(0)\|_{H^{s}}$.
Then from $h=\Upsilon(\omega t)g$ and the bounds for  $\Upsilon^{\pm 1}$ in Theorem \ref{thm2}, we deduce that
\begin{equation*}
\begin{split}
\sup_{t\in \mathbb{R}} \| h(t, \cdot) \|_s
&= \sup_{t\in \mathbb{R}} \| \Upsilon g(t, \cdot) \|_s
\leq \sup_{t\in \mathbb{R}} \| \Upsilon g(t, \cdot) \|_s^{\gamma^{\kappa}, \mathcal{O}_{\infty}}\\
&\leq_s \| g(t, \cdot) \|_s + \varepsilon \gamma^{-1-\kappa}\| \mathfrak{J} \|_{s+\mu}^{\gamma, \mathcal{O}_{0}}
\| g(t, \cdot) \|_{s_0}\\
&\leq_s  \| g(0, \cdot) \|_s + \varepsilon \gamma^{-1-\kappa}\| \mathfrak{J} \|_{s+\mu}^{\gamma, \mathcal{O}_{0}}
\| g(0, \cdot) \|_{s_0}= \tilde{C}(s) \|g(0, \cdot) \|_s \\
&= \tilde{C}(s) \|\Upsilon^{-1}(0) h(0, \cdot) \|_s
\leq_s  \| h(0, \cdot) \|_s + \varepsilon \gamma^{-1-\kappa}\| \mathfrak{J} \|_{s+\mu}^{\gamma, \mathcal{O}_{0}}
\| h(0, \cdot) \|_{s_0}\\
& \leq C(s)  \| h(0, \cdot) \|_s,\\
 \end{split}
\end{equation*}
where the constants $\tilde{C}(s)$ and $C(s)$ depend on $\| \mathfrak{J} \|_{s+\mu}^{\gamma, \mathcal{O}_{0}}$.
\end{proof}

\renewcommand{\theequation}{\thesection.\arabic{equation}}
\setcounter{equation}{0}

\bigskip

\vskip 0.5cm

\noindent {\bf Acknowledgments.} The work of Wu is supported by the National NSF of China Grant-11631007. The work of Fu is supported by the National NSF of China Grants-11471259 and 11631007 and the National Science Basic Research Program of Shaanxi Province (Program No. 2019JM-007 and 2020JC-37). The work of Qu is supported by the National NSF of China Grants-11631007, 11971251 and 12111530003.

\begin{appendix}

\section{Lip-$\sigma$-tame operators and pseudo differential operators}

In this appendix, we review the definitions and the basic facts  and
the properties of the Lip-$\sigma$-tame operators and the pseudo differential operators.

\begin{defi}(Lip-$\sigma$-tame operators) \cite{bbhm, bm, fgp-1}
Let the  linear operator $A:=A(\omega)$ be defined on $\mathcal{O} \subset \mathbb{R}^{\nu}$. A is said to be  Lip-$\sigma$-tame
if there exist $\sigma \geq 0$ and a non-decreasing sequence\\ $\{ \mathfrak{M}_A^{\gamma}(\sigma, s) \}_{s=s_0}^\mathcal{S} $ (with possibly $\mathcal{S}=+\infty$)
such that for any $u \in H^{s+\sigma}$,
\begin{equation*}
\sup \limits_{\omega \in \mathcal{O}} \|Au\|_s, \sup \limits_{\substack{\omega, \omega' \in \mathcal{O},\\  \omega \neq \omega'}} \gamma
\| (\Delta_{\omega, \omega'}A) u \|_{s-1}
\leq_s \mathfrak{M}_A^{\gamma}(\sigma, s)\|u\|_{s_0+\sigma} + \mathfrak{M}_A^{\gamma}(\sigma, s_0)\|u\|_{s+\sigma}.
\end{equation*}
We call $\mathfrak{M}_A^{\gamma}(\sigma, s)$ a Lip-$\sigma$-tame constant of the operator $A$ or tame constant for short.
\end{defi}

The following lemma is an  important property of the Lip-$\sigma$-tame operators.

\begin{lemma}\label{Tam-com}(Composition) \cite{bbhm, bm, fgp-1}
Let $A, B$ be respectively Lip-$\sigma_A$-tame and  Lip-$\sigma_B$-tame operators with tame constants respectively $\mathfrak{M}_A^{\gamma}(\sigma_A, s)$ and $\mathfrak{M}_B^{\gamma}(\sigma_B, s)$. Then the composed operator $A\circ B$ is a Lip-$(\sigma_A+\sigma_B)$-tame operator with tame constant
\begin{equation*}
\mathfrak{M}_{AB}^{\gamma}(\sigma_A+\sigma_B, s) \leq \mathfrak{M}_A^{\gamma}(\sigma_A, s) \mathfrak{M}_B^{\gamma}(\sigma_B, s_0+\sigma_A)
+\mathfrak{M}_A^{\gamma}(\sigma_A, s_0) \mathfrak{M}_B^{\gamma}(\sigma_B, s+\sigma_A).
\end{equation*}
\end{lemma}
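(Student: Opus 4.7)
\medskip
\noindent\textbf{Proof proposal.} The plan is to estimate $\|(AB)u\|_s$ and $\gamma\|\Delta_{\omega,\omega'}(AB)u\|_{s-1}$ by iterating the tame estimates, first for $A$ and then for $B$, with care taken over the index shifts $s\mapsto s+\sigma_A$ that appear when $B u$ is plugged into the estimate for $A$. The asymmetry $s_0+\sigma_A$ vs.\ $s+\sigma_A$ in the statement comes precisely from this two-step application, together with the monotonicity $\mathfrak{M}_B^{\gamma}(\sigma_B,\cdot)$.

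\medskip
\noindent First I would treat the sup-norm piece. Fix $u\in H^{s+\sigma_A+\sigma_B}$ and apply the Lip-$\sigma_A$-tame bound for $A$ to the function $Bu\in H^{s+\sigma_A}$ to get
\begin{equation*}
\|ABu\|_s\ \lesssim_s\ \mathfrak{M}_A^{\gamma}(\sigma_A,s)\,\|Bu\|_{s_0+\sigma_A}+\mathfrak{M}_A^{\gamma}(\sigma_A,s_0)\,\|Bu\|_{s+\sigma_A}.
\end{equation*}
Next, apply the Lip-$\sigma_B$-tame bound for $B$ at the two indices $s_0+\sigma_A$ and $s+\sigma_A$ (both $\ge s_0$):
\begin{equation*}
\|Bu\|_{s_0+\sigma_A}\ \lesssim\ \mathfrak{M}_B^{\gamma}(\sigma_B,s_0+\sigma_A)\|u\|_{s_0+\sigma_B}+\mathfrak{M}_B^{\gamma}(\sigma_B,s_0)\|u\|_{s_0+\sigma_A+\sigma_B},
\end{equation*}
\begin{equation*}
\|Bu\|_{s+\sigma_A}\ \lesssim\ \mathfrak{M}_B^{\gamma}(\sigma_B,s+\sigma_A)\|u\|_{s_0+\sigma_B}+\mathfrak{M}_B^{\gamma}(\sigma_B,s_0)\|u\|_{s+\sigma_A+\sigma_B}.
\end{equation*}
Substituting, collecting terms and using monotonicity of $\mathfrak{M}_B^{\gamma}(\sigma_B,\cdot)$ to absorb the mixed factor $\mathfrak{M}_A^{\gamma}(\sigma_A,s)\mathfrak{M}_B^{\gamma}(\sigma_B,s_0)$ into $\mathfrak{M}_A^{\gamma}(\sigma_A,s)\mathfrak{M}_B^{\gamma}(\sigma_B,s_0+\sigma_A)$, one reads off a Lip-$(\sigma_A+\sigma_B)$-tame bound with precisely the two cross terms displayed in the lemma.

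\medskip
\noindent For the Lipschitz seminorm I would use the product decomposition
\begin{equation*}
\Delta_{\omega,\omega'}(AB)=(\Delta_{\omega,\omega'}A)\,B(\omega)+A(\omega')\,(\Delta_{\omega,\omega'}B),
\end{equation*}
so that $\gamma\|\Delta_{\omega,\omega'}(AB)u\|_{s-1}$ splits into two pieces. On the first piece I apply the Lipschitz part of the tame estimate for $A$ to $B(\omega)u$, and then the sup part of the tame estimate for $B$. On the second piece I apply the sup part of the tame estimate for $A$ to $(\Delta_{\omega,\omega'}B)u$ and then the Lipschitz part for $B$. Both terms are controlled by the same two cross products appearing in the conclusion, with the shift $s_0\mapsto s_0+\sigma_A$ arising in the same way as above.

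\medskip
\noindent The argument is conceptually routine; the only delicate point — and what I expect to be the main bookkeeping obstacle — is to carry the two shifted indices $s_0+\sigma_A$ and $s+\sigma_A$ through all four resulting terms (two for the sup norm, two for the Lipschitz part) and recognise, via the monotonicity of the tame constants in $s$, that they all collapse into the two-term right-hand side of the lemma without producing spurious factors like $\mathfrak{M}_A^{\gamma}(\sigma_A,s)\mathfrak{M}_B^{\gamma}(\sigma_B,s+\sigma_A)$, which would destroy the tame structure at the highest index.
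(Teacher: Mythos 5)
The paper does not prove Lemma~\ref{Tam-com}; it is quoted from the cited references. Your reconstruction is exactly the standard argument used there, and it is structurally identical to the modulo-tame composition proof the paper itself gives (e.g.\ Lemma~\ref{mod-sum-com} and Lemma~\ref{L(Hs0)-mod}): for the sup part, iterate the tame bounds for $A$ then $B$ at the shifted indices $s_0+\sigma_A$, $s+\sigma_A$; for the Lipschitz part, use the Leibniz-type splitting $\Delta_{\omega,\omega'}(AB)=(\Delta_{\omega,\omega'}A)B(\omega)+A(\omega')(\Delta_{\omega,\omega'}B)$ and handle each piece by cross-applying sup and Lipschitz tame bounds. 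Your bookkeeping for the sup part (absorb $\|u\|_{s_0+\sigma_B}$ into $\|u\|_{s_0+\sigma_A+\sigma_B}$ and $\mathfrak{M}_B^{\gamma}(\sigma_B,s_0)$ into $\mathfrak{M}_B^{\gamma}(\sigma_B,s_0+\sigma_A)$ by monotonicity) is correct.

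One wrinkle you gloss over concerns the $-1$ normalisation in the Lipschitz part of the definition: the tame estimate controls $\gamma\|(\Delta_{\omega,\omega'}A)u\|_{s-1}$, not $\gamma\|(\Delta_{\omega,\omega'}A)u\|_{s}$. In the second summand $A(\omega')(\Delta_{\omega,\omega'}B)u$, after applying the sup estimate for $A$ at level $s-1$ you must control $\gamma\|(\Delta_{\omega,\omega'}B)u\|_{s_0+\sigma_A}$, and matching this with the Lipschitz tame bound for $B$ requires evaluating that bound at level $s_0+\sigma_A+1$. This produces $\mathfrak{M}_B^{\gamma}(\sigma_B,s_0+\sigma_A+1)$ and $\|u\|_{s_0+\sigma_A+1+\sigma_B}$ rather than the unshifted quantities in the stated conclusion; at the boundary $s=s_0$ this even asks for one derivative more of $u$ than the hypothesis provides. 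This is a convention-level artefact of defining the Lipschitz seminorm at level $s-1$ (some of the cited sources use $\|\cdot\|_{s}$ for the Lipschitz part, in which case your computation closes without any shift), and it is routinely absorbed in practice; but since you single out the index bookkeeping as ``the main obstacle,'' this is precisely the place where the careful reader will want to see the shift tracked or a remark that the constants are monotone in $s$ so the unit shift is harmless.
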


\begin{defi}\label{pd-def}(Pseudo differential operators) \cite{hom}
Let $u=\sum_{j\in \mathbb{Z}} u_j {\rm e}^{{\rm i}jx}$. A linear operator $A$ is called  pseudo differential operator of order $\leq m $ if its action on
any $H^s(\mathbb{T}, \mathbb{R})$ with $s\geq m$ is given  by
\begin{equation*}
(Au)(x)=\sum_{j\in \mathbb{Z}} a(x, j)u_j {\rm e}^{{\rm i}jx},
\end{equation*}
 where $a(x, j)$, called the  symbol of $A$,  is the restriction to $\mathbb{T}\times \mathbb{Z}$ of the function $a(x, \xi) $ which is $C^{\infty}$-smooth on
$\mathbb{T}\times \mathbb{R}$ and satisfies
 \begin{equation}\label{pds}
 |\partial_x^{\alpha} \partial_{\xi}^{\beta} a(x, \xi)| \leq C_{\alpha, \beta} \langle \xi \rangle^{m-\beta}, \quad \forall \alpha, \beta \in \mathbb{N}.
   \end{equation}

We denote the pseudo differential operator with symbol $a(x, j)$ by ${\rm Op}(a)$, i.e., $A={\rm Op}(a)$. We define the class $OPS^m$ as the set of pseudo differential operators of order at most $m$ and   the class $S^m$ as the set of symbols satisfying \eqref{pds}.
\end{defi}

In the present paper,
we  consider the $\varphi$-dependent families of pseudo differential operators
\begin{equation*}
(Au)(\varphi, x):=\sum_{j\in \mathbb{Z}} a(\varphi, x, j)u_j(\varphi) {\rm e}^{{\rm i}jx},
\end{equation*}
where  $a(\varphi, x, \xi)$ is $C^{\infty}$-smooth on $\mathbb{T}^{\nu+1}\times \mathbb{R}.$

\begin{defi}\cite{hom}
Let $A={\rm Op}(a(\varphi, x, \xi)) \in OPS^m$, we define
 \begin{equation}\label{pd-norm}
|A|_{m, s, \alpha}:=|a|_{m, s, \alpha}:=\max_{0 \leq \beta \leq \alpha}\sup \limits_{\xi \in \mathbb{R}} \|\partial_{\xi}^{\beta}a(\cdot, \cdot, \xi)\|_s \langle \xi \rangle^{-m+\beta}.
\end{equation}
\end{defi}
The norm defined above controls the regularity in $(\varphi, x)$ in the Sobolev norm $\| \cdot \|_s$ and the decay in $\xi$ of the symbols $a(\varphi, x, \xi)$.

 If $A={\rm Op}(a) \in OPS^m$  with symbol $a(\omega, \varphi, x, \xi)$ depending on the parameter
 $\omega \in \mathcal{O} \subset \mathbb{R}^{\nu}$ in a Lipschitz way, we define the weighted Lipschitz norm
 \begin{equation}\label{pd-Lnorm}
 \begin{split}
|A|_{m, s, \alpha}^{\gamma, \mathcal{O}}:=|a|_{m, s, \alpha}^{\gamma, \mathcal{O}}
:=&\sup \limits_{\omega \in \mathcal{O}}|A|_{m, s, \alpha}\\
&+ \gamma \sup \limits_{\substack{\omega, \omega' \in \mathcal{O},\\    \omega \neq \omega'}}
\frac{|{\rm Op}(a(\omega, \cdot, \cdot, \cdot))-{\rm Op}(a(\omega', \cdot, \cdot, \cdot))|_{m, s-1, \alpha}}{|\omega-\omega'|}.\\
 \end{split}
\end{equation}

We point  out that the norms defined in \eqref{pd-norm} and \eqref{pd-Lnorm}  are both non-decreasing in $s, \alpha$ and non-increasing in $m$. Moreover, for a symbol independent of $\xi$,
 \begin{equation}\label{pd-c0}
 |{\rm Op}(a(\varphi, x))|^{\gamma, \mathcal{O}}_{0, s, \alpha}=\|a\|^{\gamma, \mathcal{O}}_s, \quad \forall s \geq 0.
 \end{equation}
 If the symbol depends only on $\xi$, we simply have
\begin{equation}\label{pd-c1}
|a(\xi)|^{\gamma, \mathcal{O}}_{m, s, \alpha}=|a(\xi)|_{m, s, \alpha} \leq C(m, a), \quad \forall s \geq 0.
\end{equation}

We  present some important properties of the pseudo differential operators.
\begin{lemma}\label{pd-com}(Composition) \cite{bbhm, bm, fgp-1}
Let $A={\rm Op}(a)$, $B={\rm Op}(b)$ be pseudo differential operators with symbols $a(\omega, \varphi, x, \xi) \in S^m$ and $b(\omega, \varphi, x, \xi) \in S^{m'}$ respectively. Then  $A \circ B \in OPS^{m+m'}$ with norm
\begin{equation*}
| A \circ B |^{\gamma, \mathcal{O}}_{m+m', s, \alpha} \leq_{m, \alpha} C(s)|A|^{\gamma, \mathcal{O}}_{m, s, \alpha} |B|^{\gamma, \mathcal{O}}_{m', s_0+\alpha+|m|, \alpha}
+C(s_0)|A|^{\gamma, \mathcal{O}}_{m, s_0, \alpha} |B|^{\gamma, \mathcal{O}}_{m', s+\alpha+|m|, \alpha}.
\end{equation*}
\end{lemma}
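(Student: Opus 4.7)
The plan is to write the composition $A\circ B$ as a pseudo differential operator by computing its symbol explicitly, then estimate the weighted seminorm $|\cdot|_{m+m',s,\alpha}^{\gamma,\mathcal{O}}$ using the tame product inequality (Lemma \ref{Ip}) together with the defining decay \eqref{pds} of $a\in S^m$ and $b\in S^{m'}$. Expanding $b(\varphi,x,\xi)=\sum_{\eta\in\mathbb Z}\hat b(\varphi,\eta,\xi)e^{i\eta x}$ in its $x$-Fourier series and applying $A$ termwise, a direct reindexing yields the standard Bourgain--Berti composition symbol
\begin{equation*}
c(\varphi,x,\xi)=\sum_{\eta\in\mathbb Z} a(\varphi,x,\xi+\eta)\,\hat b(\varphi,\eta,\xi)\,e^{i\eta x},
\end{equation*}
which is exactly the object whose seminorm must be estimated.

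Next, for each $0\le\beta\le\alpha$, I would apply the Leibniz rule in $\xi$ to obtain
\begin{equation*}
\partial_\xi^\beta c=\sum_{\beta_1+\beta_2=\beta}\binom{\beta}{\beta_1}\sum_{\eta\in\mathbb Z}(\partial_\xi^{\beta_1}a)(\varphi,x,\xi+\eta)\,(\partial_\xi^{\beta_2}\hat b)(\varphi,\eta,\xi)\,e^{i\eta x},
\end{equation*}
and bound $\langle\xi\rangle^{-(m+m')+\beta}\|\partial_\xi^\beta c(\cdot,\cdot,\xi)\|_s$ uniformly in $\xi$. On each summand, Lemma \ref{Ip} (in the $(\varphi,x)$ variables) produces a high-times-low plus low-times-high splitting, where $\|\partial_\xi^{\beta_1}a(\cdot,\cdot,\xi+\eta)\|_s\le |A|_{m,s,\alpha}\langle\xi+\eta\rangle^{m-\beta_1}$, and the Fourier coefficient $\partial_\xi^{\beta_2}\hat b(\cdot,\eta,\xi)$ inherits, from the regularity of $b$ in $x$, an $\eta$-decay controlled by $\langle\eta\rangle^{-N}\|\partial_\xi^{\beta_2}b(\cdot,\cdot,\xi)\|_{N}\langle\xi\rangle^{m'-\beta_2}$ for any $N$. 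Using Peetre's inequality $\langle\xi+\eta\rangle^{m-\beta_1}\le C_m\langle\xi\rangle^{m-\beta_1}\langle\eta\rangle^{|m|}$ converts the shift into an explicit $\eta$-growth, after which choosing $N=s_0+\alpha+|m|$ (the minimum needed to sum in $\eta$ after the Peetre loss $|m|$ and the $\alpha$ differentiations that land on $b$) yields the asymmetric exponents $|B|^{\gamma,\mathcal{O}}_{m',\,s_0+\alpha+|m|,\,\alpha}$ and $|B|^{\gamma,\mathcal{O}}_{m',\,s+\alpha+|m|,\,\alpha}$ appearing in the statement.

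For the Lipschitz-in-$\omega$ part of the norm \eqref{pd-Lnorm}, I would use the telescoping identity
\begin{equation*}
\Delta_{\omega,\omega'}(A\circ B)=(\Delta_{\omega,\omega'}A)\circ B(\omega)+A(\omega')\circ(\Delta_{\omega,\omega'}B),
\end{equation*}
apply the same composition estimate to each summand but at Sobolev regularity index $s-1$ as required in \eqref{pd-Lnorm}, and repackage sup-norm and Lipschitz contributions into the weighted norm, absorbing the factor $\gamma$ from the definition. Finally, combining with the sup-norm bound gives the stated inequality.

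The main obstacle is the bookkeeping that makes the estimate \emph{tame}, i.e.\ linear in the high norm of $A$ or of $B$. The delicate point is the interplay between (i) the $\xi$-shift in $a(\varphi,x,\xi+\eta)$, handled by Peetre at the cost of an $\langle\eta\rangle^{|m|}$ factor, (ii) trading $x$-regularity of $b$ for $\eta$-decay of $\hat b$, and (iii) using Lemma \ref{Ip} so that the high Sobolev index lands on only one of the two factors while the other is controlled in $H^{s_0}$. These three mechanisms together dictate exactly the exponent $s_0+\alpha+|m|$ (respectively $s+\alpha+|m|$) on the $B$-factor, and getting them to match requires carrying the $\beta_1$ and $\beta_2$ bookkeeping through all terms of the Leibniz sum without any waste.
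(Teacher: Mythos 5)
The paper does not prove this lemma itself; it cites it directly from \cite{bbhm, bm, fgp-1}. Your proposal reconstructs exactly the argument those references use: expand the symbol of $A\circ B$ via the $x$-Fourier series of $b$, apply Leibniz in $\xi$, use Peetre's inequality to convert the $\langle\xi+\eta\rangle$-decay into $\langle\xi\rangle$-decay at the cost of a polynomial factor in $\langle\eta\rangle$, absorb that $\eta$-growth into the higher Sobolev norm of $b$, and sum in $\eta$ using the tame product estimate of Lemma~\ref{Ip}; the Lipschitz-in-$\omega$ piece follows by the telescoping identity you write. This is the right proof and matches the standard argument.

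One small misattribution in the accounting: you write that the $\alpha$ in the exponent $s_0+\alpha+|m|$ (respectively $s+\alpha+|m|$) on the $B$-norm comes from ``the $\alpha$ differentiations that land on $b$.'' In fact the $\xi$-derivatives that land on $b$ are already accounted for by the subscript $\alpha$ in $|B|^{\gamma,\mathcal{O}}_{m',\cdot,\alpha}$ and do not cost any extra $(\varphi,x)$-regularity. The $\alpha$ in the Sobolev index is instead produced by Peetre applied to the $\xi$-derivatives that land on $a$: one has $\langle\xi+\eta\rangle^{m-\beta_1}\le C\langle\xi\rangle^{m-\beta_1}\langle\eta\rangle^{|m-\beta_1|}$ with $|m-\beta_1|\le|m|+\alpha$ for $\beta_1\le\alpha$, and this $\langle\eta\rangle^{|m|+\alpha}$ growth must be paid by the $\eta$-decay extracted from the $x$-regularity of $b$. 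Similarly, the scalar-looking bound on $\partial_\xi^{\beta_2}\hat b(\cdot,\eta,\xi)$ should be read as an $H^{s'}_\varphi$-norm bound (since $\hat b$ still depends on $\varphi$), and the $\eta$-summation is done by Cauchy--Schwarz against the $(\varphi,x)$-Sobolev norm of $b$ at the shifted index. These are bookkeeping refinements; the structure and the resulting exponents in your proposal are correct.
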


\begin{lemma}\label{R-tam-pd} \cite{bm, fgp-1}
Let $A ={\rm Op}(a) \in OPS^0$ be a  pseudo differential operator.

(1). If $|A|_{0, s, 0} < \infty (s\geq s_0)$, then there exist $C(s_0), C(s)>0$ such that
\begin{equation*}
\| Au \|_s \leq C(s_0)|A|_{0, s_0, 0}\| u \|_s+C(s)|A|_{0, s, 0}\| u \|_{s_0}.
\end{equation*}

(2). If $A$ is Lipschitz in the parameter $\omega \in \mathcal{O} \subset \mathbb{R}^{\nu}$ and $|A|_{0, s, 0}^{\gamma, \mathcal{O}} < \infty (s\geq s_0)$, then $A$ is a Lip-0-tame operator with tame  constant
\begin{equation*}
\mathfrak{M}_A^{\gamma}(0, s)\leq_s  |A|_{0, s, 0}^{\gamma, \mathcal{O}}.
\end{equation*}
\end{lemma}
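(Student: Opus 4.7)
\smallskip
\noindent\textbf{Proof proposal.} The plan is to treat (1) as the core tame estimate for a zero-order pseudo differential operator, and then obtain (2) as a direct corollary by applying (1) both to $A$ at a fixed $\omega$ and to the difference quotient $\Delta_{\omega,\omega'}A$, which is itself in $OPS^0$. Throughout, the crucial feature of the norm in \eqref{pd-norm} is that $|A|_{0,s,0} = \sup_{\xi} \|a(\cdot,\cdot,\xi)\|_s$ provides a uniform-in-$\xi$ Sobolev bound on the symbol.

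For part (1), I would work on the Fourier side. Expanding both the symbol and $u$ in Fourier series on $\mathbb{T}^{\nu+1}$, the coefficients of $Au$ are
\begin{equation*}
 \widehat{(Au)}(l,k)=\sum_{l_2 \in \mathbb{Z}^{\nu},\,j \in \mathbb{Z}} \hat{a}(l-l_2,\,k-j,\,j)\,u_{l_2,j}.
\end{equation*}
Using the triangle-type inequality $\langle l,k\rangle^{s}\le C_{s}\bigl(\langle l-l_2,k-j\rangle^{s}+\langle l_2,j\rangle^{s}\bigr)$ and Cauchy--Schwarz, $\|Au\|_{s}^{2}$ splits into two pieces. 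The first piece is bounded, after applying the Young/Minkowski inequality in the $(l_2,j)$ convolution-like sum, by $\sup_{\xi}\|a(\cdot,\cdot,\xi)\|_{s}^{2}\,\|u\|_{s_0}^{2}\times\bigl(\sum_{l_2,j}\langle l_2,j\rangle^{-2s_0}\bigr)$, which converges since $s_0>(\nu+1)/2$. The symmetric piece is controlled by $\sup_{\xi}\|a(\cdot,\cdot,\xi)\|_{s_0}^{2}\|u\|_{s}^{2}$. Combining these two contributions produces exactly the claimed estimate
\begin{equation*}
 \|Au\|_{s}\leq C(s_0)|A|_{0,s_0,0}\|u\|_{s}+C(s)|A|_{0,s,0}\|u\|_{s_0}.
\end{equation*}
Equivalently, one can package this argument via a Littlewood--Paley decomposition of $a$ in $(\varphi,x)$ combined with the Sobolev algebra property of Lemma \ref{Ip}; both routes give the same bilinear tame estimate.

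For part (2), I would first note that the sup-norm half
$\sup_{\omega\in\mathcal{O}}\|Au\|_{s}\leq C(s_0)|A|_{0,s_0,0}^{sup}\|u\|_{s_0}+C(s)|A|_{0,s,0}^{sup}\|u\|_{s_0}$
follows immediately by taking $\sup_{\omega}$ in part (1). For the Lipschitz part, I would observe that
$\Delta_{\omega,\omega'}A=\mathrm{Op}\bigl(\Delta_{\omega,\omega'}a\bigr)\in OPS^{0}$, and apply part (1) to this operator with $s$ replaced by $s-1$. By definition \eqref{pd-Lnorm},
$\gamma\sup_{\omega\neq\omega'}|\Delta_{\omega,\omega'}A|_{0,s-1,0}\leq|A|_{0,s,0}^{\gamma,\mathcal{O}}$, so the resulting bound
\begin{equation*}
 \gamma\sup_{\omega\neq\omega'}\|(\Delta_{\omega,\omega'}A)u\|_{s-1}\leq_{s} |A|_{0,s_0,0}^{\gamma,\mathcal{O}}\|u\|_{s}+|A|_{0,s,0}^{\gamma,\mathcal{O}}\|u\|_{s_0}
\end{equation*}
matches the Lip-$0$-tame definition with $\mathfrak{M}_{A}^{\gamma}(0,s)\leq_{s}|A|_{0,s,0}^{\gamma,\mathcal{O}}$.

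The main obstacle is purely technical and lives in part (1): one must carefully handle the dependence of $\hat{a}(\cdot,\cdot,\xi)$ on the Fourier variable $\xi=j$, since in the sum over $j$ one uses the bound $\|a(\cdot,\cdot,j)\|_{s}\leq|A|_{0,s,0}$ uniformly, and this uniform estimate is precisely what the norm $|\cdot|_{0,s,0}$ is designed to provide. Once this uniformity is invoked, the proof reduces to the classical tame bilinear estimate on the torus $\mathbb{T}^{\nu+1}$.
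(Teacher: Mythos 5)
Your Fourier-side argument for part (1) is the correct and standard one: from $\widehat{(Au)}(l,k)=\sum_{l_2,j}\hat a(l-l_2,k-j,j)\,u_{l_2,j}$, split $\langle l,k\rangle^{s}\leq C_s\bigl(\langle l-l_2,k-j\rangle^{s}+\langle l_2,j\rangle^{s}\bigr)$, apply Cauchy--Schwarz with the weight $\langle\cdot\rangle^{-2s_0}$ in each piece, and absorb the $\xi$-dependence through $\sup_j\|a(\cdot,\cdot,j)\|_{s'}=|A|_{0,s',0}$. The convergence of $\sum_{l_2,j}\langle l_2,j\rangle^{-2s_0}$ is exactly what $s_0>(\nu+1)/2$ supplies, and the two pieces give respectively the $|A|_{0,s,0}\|u\|_{s_0}$ and $|A|_{0,s_0,0}\|u\|_s$ terms. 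The sketch is sound modulo writing these steps out.

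There is, however, a genuine off-by-one gap in part (2), beyond the typo in your sup-bound display (the first term should read $C(s_0)|A|_{0,s_0,0}^{sup}\|u\|_s$, not $\|u\|_{s_0}$). Applying part (1) to $\Delta_{\omega,\omega'}A$ at level $s-1$ gives
\begin{equation*}
\gamma\|(\Delta_{\omega,\omega'}A)u\|_{s-1}\leq C(s_0)\,\gamma|\Delta_{\omega,\omega'}A|_{0,s_0,0}\,\|u\|_{s-1}+C(s-1)\,\gamma|\Delta_{\omega,\omega'}A|_{0,s-1,0}\,\|u\|_{s_0}.
\end{equation*}
The second factor is controlled as you claim, since by \eqref{pd-Lnorm} one has $\gamma\sup_{\omega\neq\omega'}|\Delta_{\omega,\omega'}A|_{0,s-1,0}\leq|A|^{\gamma,\mathcal{O}}_{0,s,0}$. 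But in the first factor, \eqref{pd-Lnorm} only yields $\gamma\sup_{\omega\neq\omega'}|\Delta_{\omega,\omega'}A|_{0,s_0,0}\leq|A|^{\gamma,\mathcal{O}}_{0,s_0+1,0}$, one Sobolev order above the asserted $\mathfrak{M}_A^{\gamma}(0,s_0)\leq_{s_0}|A|_{0,s_0,0}^{\gamma,\mathcal{O}}$. So the argument as written proves a slightly weaker tame constant than the one in the statement. The fix is to carry the $-1$ shift into the low-norm slot as well: part (1) in fact holds with $s_0$ replaced by any $s_0'\in((\nu+1)/2,s_0]$, in particular $s_0'=s_0-1$ (legitimate, since $s_0=[(\nu+1)/2]+2$ implies $s_0-1>(\nu+1)/2$), because the Cauchy--Schwarz step only needs $\sum\langle l_2,j\rangle^{-2s_0'}<\infty$. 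Applying that version at level $s-1$ produces $\gamma|\Delta_{\omega,\omega'}A|_{0,s_0-1,0}\|u\|_{s-1}+\gamma|\Delta_{\omega,\omega'}A|_{0,s-1,0}\|u\|_{s_0-1}$, and now both factors are bounded by $|A|^{\gamma,\mathcal{O}}_{0,s_0,0}$ and $|A|^{\gamma,\mathcal{O}}_{0,s,0}$ respectively, matching the stated conclusion. Without this refinement the proof does not close.
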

The above lemma implies that the norm $|\cdot|_{0, s, 0}^{\gamma, \mathcal{O}}$ controls the action of  a  pseudo differential operator on the Sobolev spaces $H^s$.

The next lemma exhibits the connection between the class $OPS^{-1}$ and the class of Lip-$-1$-modulo-tame operators. It is a direct consequence of Lemma C.8 and Lemma C.10 in \cite{fgp-2}.

\begin{lemma}\label{S-mod}
Assume  $b \in \mathbb{N}$, $A={\rm Op}(a) \in OPS^{-1}$ with its symbol $a(\omega, i(\omega))$ depending on $\omega \in \mathcal{O} \subset \mathbb{R}^{\nu}$ in a Lipschitz way and on $i$ as well. Then $A$ and $\langle \partial_{\varphi}\rangle^{b} A$ are  Lip-$-1$-modulo-tame operators with modulo-tame constants
\begin{equation*}
\mathfrak{M}_A^{\sharp, \gamma}(-1, s)\leq_s |a|_{-1, s+s_0+2, 0}^{\gamma, \mathcal{O}},  \quad \mathfrak{M}_A^{\sharp, \gamma}(-1, s, b)\leq_s |a|_{-1, s+s_0+b+2, 0}^{\gamma, \mathcal{O}}.
\end{equation*}
Moreover,
 \begin{eqnarray*}
 \begin{aligned}
 &\|\langle D_x \rangle^{1/2} \underline{\Delta_{12}A} \langle D_x \rangle^{1/2}\|_{\mathfrak{L}(H^{s_0})}\leq |\Delta_{12}a|_{-1, s_0+b+3, 0},\\
 &\| \langle D_x \rangle^{1/2}\underline{\Delta_{12}\langle \partial_{\varphi}\rangle^{b} A}\langle D_x \rangle^{1/2} \|_{\mathfrak{L}(H^{s_0})} \leq |\Delta_{12}a|_{-1, s_0+b+3, 0}.
 \end{aligned}
 \end{eqnarray*}
 \end{lemma}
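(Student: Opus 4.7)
The plan is to reduce the claimed estimates, via Definition \ref{mod-def}, to a Sobolev action bound for a pseudo differential operator of order $0$, and then invoke Lemma \ref{R-tam-pd}. Since $A=\mathrm{Op}(a)$ with $a\in S^{-1}$, the matrix coefficients read $A_{j}^{j'}(l)=\widehat{a}(l,j-j',j')$, where $\widehat{a}$ denotes the joint Fourier transform in $(\varphi,x)$. Hence the majorant $\underline{A}$ coincides with $\mathrm{Op}(\widetilde{a})$, where $\widetilde{a}(\varphi,x,\xi):=\sum_{l,k}|\widehat{a}(l,k,\xi)|\,e^{\mathrm{i}(l\cdot\varphi+kx)}$, and Parseval gives $|\widetilde{a}|_{-1,s,0}^{\gamma,\mathcal{O}}=|a|_{-1,s,0}^{\gamma,\mathcal{O}}$. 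This reduces the task to bounding the Lip-$0$-modulo-tame constant of $\langle D_x\rangle^{1/2}\underline{A}\langle D_x\rangle^{1/2}$.

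The key entrywise majorization uses $\langle j\rangle^{1/2}\leq\sqrt{2}(\langle j-j'\rangle^{1/2}+\langle j'\rangle^{1/2})$ to write
\[
\langle j\rangle^{1/2}\,\underline{A}_{j}^{j'}(l)\,\langle j'\rangle^{1/2}\leq C\bigl(\mathrm{Op}(c_{1})+\mathrm{Op}(c_{2})\bigr)_{j}^{j'}(l),
\]
where the symbols $c_{1},c_{2}\in S^{0}$ have nonnegative Fourier coefficients $\langle k\rangle^{1/2}\langle\xi\rangle^{1/2}|\widehat{a}(l,k,\xi)|$ and $\langle\xi\rangle|\widehat{a}(l,k,\xi)|$ respectively. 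A direct Parseval computation shows $|c_{1}|_{0,s,0}^{\gamma,\mathcal{O}},|c_{2}|_{0,s,0}^{\gamma,\mathcal{O}}\lesssim_{s}|a|_{-1,s+1,0}^{\gamma,\mathcal{O}}$. Since their Fourier coefficients are nonnegative, $\underline{\mathrm{Op}(c_{i})}=\mathrm{Op}(c_{i})$ as matrices, so Lemma \ref{mod-compa} transfers the pointwise domination to a bound on Lip-$0$-modulo-tame constants; Lemma \ref{R-tam-pd} then controls those by the $c_{i}$ symbol norms at a finite index loss, yielding the first estimate with loss $s+s_{0}+2$ after merging the cross term $|c_{i}|_{0,s_{0},0}$ into a single symbol index.

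For the bound on $\mathfrak{M}_{A}^{\sharp,\gamma}(-1,s,b)$, the same scheme applies to $\langle\partial_{\varphi}\rangle^{b}A$, whose symbol $\sum_{l,k}\langle l\rangle^{b}\widehat{a}(l,k,\xi)e^{\mathrm{i}(l\cdot\varphi+kx)}$ is still in $S^{-1}$ with norm $|\cdot|_{-1,s,0}^{\gamma,\mathcal{O}}\lesssim|a|_{-1,s+b,0}^{\gamma,\mathcal{O}}$, producing the extra $b$ in the index. The $\Delta_{12}$-estimates follow by applying the same majorization to the symbol $\Delta_{12}a$ and specializing to $s=s_{0}$; only the operator norm on $H^{s_{0}}$ is required, so a single Schur-type step suffices and the smaller loss $s_{0}+b+3$ is obtained.

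The main technical obstacle lies in tracking the derivative losses. The cumulative $s_{0}+2$ (and $s_{0}+b+3$) arises from (i) the half-derivative sacrificed in the dyadic splitting of $\langle j\rangle^{1/2}$, (ii) the fact that the tame bound of Lemma \ref{R-tam-pd} combines $|A|_{0,s,0}^{\gamma,\mathcal{O}}\|u\|_{s_{0}}$ and $|A|_{0,s_{0},0}^{\gamma,\mathcal{O}}\|u\|_{s}$, which we merge into a single index at the cost of $s_{0}$ derivatives, and (iii) the Schur summability in $\mathbb{Z}^{\nu+1}$ underlying that lemma. These losses are tracked in detail in Lemmas C.8 and C.10 of \cite{fgp-2}, from which the present statement follows by identifying our weighted Sobolev and symbol norms with the corresponding ones there.
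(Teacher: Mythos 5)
The paper gives no proof of this lemma at all; it labels it ``a direct consequence of Lemma C.8 and Lemma C.10 in \cite{fgp-2}'' and moves on. Your sketch fills that gap constructively, and the core mechanism you describe --- identifying $A_j^{j'}(l)=\widehat a(l,j-j',j')$, so that $\underline A=\mathrm{Op}(\widetilde a)$ with $|\widetilde a|_{-1,s,0}^{\gamma,\mathcal O}=|a|_{-1,s,0}^{\gamma,\mathcal O}$, then using $\langle j\rangle^{1/2}\le\langle j-j'\rangle^{1/2}+\langle j'\rangle^{1/2}$ to dominate $\langle D_x\rangle^{1/2}\underline A\langle D_x\rangle^{1/2}$ entrywise by two $OPS^{\le 0}$ operators with nonnegative Fourier coefficients and feed them into Lemmas \ref{mod-compa} and \ref{R-tam-pd} --- is the standard route and is exactly what Feola--Giuliani--Procesi do. So the two approaches coincide at the level of ideas; yours simply makes visible the decomposition that the paper outsources.

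The weak point is the index bookkeeping, where your account is more gesture than argument. The decomposition you describe costs one derivative ($|c_i|_{0,s,0}^{\gamma,\mathcal O}\lesssim_s |a|_{-1,s+1,0}^{\gamma,\mathcal O}$), yet the stated bound loses $s+s_0+2$; the extra $s_0+1$ is not produced by ``merging the cross term into a single symbol index,'' since Lemma \ref{R-tam-pd} already returns the tame pair $\big(\mathfrak{M}(s),\mathfrak{M}(s_0)\big)$ directly, with no merging step. Likewise, the $\Delta_{12}$ bound with loss $s_0+b+3$ is not a specialization $s=s_0$ of the tame estimate --- that would give $|\Delta_{12}a|_{-1,2s_0+b+2,0}$, a strictly larger index once $s_0\ge 2$ --- but a separate Schur-type estimate that trades the tame pair for a plain operator norm on $H^{s_0}$ at a smaller derivative cost. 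That finer accounting is precisely the content of Lemmas C.8 and C.10 of \cite{fgp-2}, which both you and the paper ultimately invoke, so the proposal is formally acceptable; but as written it does not actually derive the quoted exponents, and a reader who wants them will still need to open the reference.
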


\section{The operators $\mathfrak{L}_{\rho, p}$}

We first introduce the classes of operators $\mathfrak{L}_{\rho, p}$ which are sufficiently smooth in the $x$-variable.
\begin{defi}\label{L-rhop}\cite{fgp-1, fgp-2}
Given $\rho, p, \mathcal{S} \in \mathbb{N}$ with possibly $\mathcal{S}= +\infty$, $\rho \geq 3$, $s_0 \leq p < \mathcal{S}$.  We denote by $\mathfrak{L}_{\rho, p}=\mathfrak{L}_{\rho, p}(\mathcal{O})$ the set of the linear operators
$A=A(\omega): H^s(\mathbb{T}^{\nu+1}) \rightarrow H^s(\mathbb{T}^{\nu+1})$, $\omega \in \mathcal{O} \subseteq \mathbb{R}^{\nu}$ with the  following properties:

(1). The operator A is Lipschitz in $\omega$.

(2). For all $\vec{b}=(b_1, \ldots, b_{\nu}) \in \mathbb{N}^{\nu}$ with $0\leq |\vec{b}|\leq \rho-2$ and for any $s_0 \leq s \leq \mathcal{S}$,

\quad (i). The operator $\langle D_x \rangle^{m_1} \partial_{\varphi}^{\vec{b}} A \langle D_x \rangle^{m_2} $ is Lip-0-tame for any $m_1, m_2 \in \mathbb{R}$, $m_1, m_2 \geq 0$ and $m_1+m_2=\rho-|\vec{b}|$.  We set
\begin{equation*}
\mathfrak{M}_{\partial_{\varphi}^{\vec{b}}A}^{\gamma}(-\rho+|\vec{b}|, s):=\sup_{\substack{m_1+m_2=\rho-|\vec{b}|,\\ m_1, m_2 \geq 0}}
\mathfrak{M}_{\langle D_x \rangle^{m_1} \partial_{\varphi}^{\vec{b}} A \langle D_x \rangle^{m_2}}^{\gamma}(0, s).
\end{equation*}

\quad (ii). The operator $\langle D_x \rangle^{m_1} [\partial_{\varphi}^{\vec{b}}A, \partial_x] \langle D_x \rangle^{m_2} $ is Lip-0-tame for any $m_1, m_2 \in \mathbb{R}$, $m_1, m_2 \geq 0$ and $m_1+m_2=\rho-|\vec{b}|-1$. We set
\begin{equation*}
\mathfrak{M}_{[\partial_{\varphi}^{\vec{b}}A, \partial_x]}^{\gamma}(-\rho+|\vec{b}|+1, s):=\sup_{\substack{m_1+m_2=\rho-|\vec{b}|-1, \\m_1, m_2 \geq 0}}
\mathfrak{M}_{\langle D_x \rangle^{m_1} [\partial_{\varphi}^{\vec{b}}A, \partial_x] \langle D_x \rangle^{m_2} }^{\gamma}(0, s).
\end{equation*}

We denote for $0\leq b \leq \rho-2$
\begin{equation*}
\mathbb{M}_A^{\gamma}(s, b):=\max_{0\leq |\vec{b}| \leq b} \max \left\{\mathfrak{M}_{\partial_{\varphi}^{\vec{b}}A}^{\gamma}(-\rho+|\vec{b}|, s),   \mathfrak{M}_{[\partial_{\varphi}^{\vec{b}}A, \partial_x]}^{\gamma}(-\rho+|\vec{b}|+1, s) \right\}.
\end{equation*}

(3). Suppose that the set $\mathcal{O}$ and the operator $A$  depend on  $i=i(\omega)$, and are well defined for $\omega \in \mathcal{O} $   for all $i=i(\omega)$ satisfying \eqref{sc-J}. For $\omega \in \mathcal{O}(i_1)\cap \mathcal{O}(i_2)$,
and for all $\vec{b}=(b_1, \ldots, b_{\nu}) \in \mathbb{N}^{\nu}$ with $0\leq |\vec{b}|\leq \rho-3$,

\quad(i). $\langle D_x \rangle^{m_1} \partial_{\varphi}^{\vec{b}} \Delta_{12}A \langle D_x \rangle^{m_2}$ is a bounded operator on $H^{p}$ in itself for any $m_1, m_2 \in \mathbb{R}$, $m_1, m_2 \geq 0$ and $m_1+m_2=\rho-|\vec{b}|-1$. More precisely, there is a positive constant $\mathfrak{N}_{\partial_{\varphi}^{\vec{b}}\Delta_{12}A}(-\rho+|\vec{b}|+1, p)$ such that for any $h \in H^{p}$,
\begin{equation*}
\sup_{\substack{m_1+m_2=\rho-|\vec{b}|-1,\\ m_1, m_2 \geq 0}}\| \langle D_x \rangle^{m_1} \partial_{\varphi}^{\vec{b}} \Delta_{12}A \langle D_x \rangle^{m_2}  h\|_{p} \leq \mathfrak{N}_{\partial_{\varphi}^{\vec{b}}\Delta_{12}A}(-\rho+|\vec{b}|+1, p) \| h \|_{p}.
\end{equation*}

\quad(ii). $\langle D_x \rangle^{m_1} [\partial_{\varphi}^{\vec{b}} \Delta_{12}A, \partial_x]\langle D_x \rangle^{m_2}$ is a bounded operator on $H^{p}$ in itself for any $m_1, m_2 \in \mathbb{R}$, $m_1, m_2 \geq 0$ and $m_1+m_2=\rho-|\vec{b}|-2$. More precisely, there is a positive constant $\mathfrak{N}_{[\partial_{\varphi}^{\vec{b}}\Delta_{12}A, \partial_x]}(-\rho+|\vec{b}|+2, p)$ such that for any $h \in H^{p}$,
\begin{equation*}
\sup_{\substack{m_1+m_2=\rho-|\vec{b}|-2,\\ m_1, m_2 \geq 0}}\| \langle D_x \rangle^{m_1} [\partial_{\varphi}^{\vec{b}} \Delta_{12}A, \partial_x] \langle D_x \rangle^{m_2}  h\|_{p} \leq \mathfrak{N}_{[\partial_{\varphi}^{\vec{b}}\Delta_{12}A, \partial_x]}(-\rho+|\vec{b}|+2, p) \| h \|_{p}.
\end{equation*}

We define for $0\leq b\leq \rho-3$
\begin{align*}
&\mathbb{M}_{\Delta_{12}A}(p, b)\\
&:= \max_{0\leq |\vec{b}| \leq b} \max\left\{\mathfrak{N}_{\partial_{\varphi}^{\vec{b}}\Delta_{12}A}(-\rho+|\vec{b}|+1, p),   \mathfrak{N}_{[\partial_{\varphi}^{\vec{b}}\Delta_{12}A, \partial_x]}(-\rho+|\vec{b}|+2, p) \right\}.
\end{align*}
\end{defi}

\begin{lemma}\label{pd-rhop-inv}
Let $a \in S^{-1}$, $T \in \mathfrak{L}_{\rho, p}$ with $\rho\geq 3$ and consider the operator $I-({\rm Op}(a)+T)$. There exists a constant $C(\mathcal{S}, \alpha, \rho)$ such that if
\begin{equation*}
C(\mathcal{S}, \alpha, \rho)\big(|a|_{-1, p+(\rho-1)(\rho-2)+3, \rho-2}^{\gamma, \mathcal{O}}+\mathbb{M}_{T}^{\gamma}(s_0, b)\big)<1,
\end{equation*}
where $\mathcal{S}$ is the fixed constant in Definition \ref{L-rhop}, then ${\rm{Id}}-({\rm Op}(a)+T)$ is invertible and
\begin{equation*}
({\rm{Id}}-({\rm Op}(a)+T))^{-1}={\rm{Id}}+{\rm Op}(c_1)+\mathfrak{R}_{\rho},
\end{equation*}
\begin{equation*}
({\rm{Id}}-({\rm Op}(a)+T))^{-1}-{\rm{Id}}-{\rm Op}(a)={\rm Op}(c_2)+\mathfrak{R}_{\rho},
\end{equation*}
where $c_1 \in S^{-1}, c_2 \in S^{-2}, \mathfrak{R}_{\rho} \in \mathfrak{L}_{\rho, p}$. Moreover, for all $s_0 \leq s \leq \mathcal{S}$, the following estimates hold:
\begin{equation*}
|c_1|_{-1, s, \alpha}^{\gamma, \mathcal{O}}, |c_2|_{-2, s, \alpha}^{\gamma, \mathcal{O}}
\leq_{s, \alpha, \rho} |a|_{-1, s+(\rho-2)(\rho-3), \alpha+\rho-3}^{\gamma, \mathcal{O}},
\end{equation*}
\begin{equation*}
|\Delta_{12}c_1|_{-1, p, \alpha}, |\Delta_{12}c_2|_{-2, p, \alpha}\leq_{p, \alpha, \rho}  |\Delta_{12}a|_{-1, p+(\rho-2)(\rho-3), \alpha+\rho-3},
\end{equation*}
\begin{equation*}
\mathbb{M}_{\mathfrak{R}_{\rho}}^{\gamma}(s, b)\leq_{s, \rho} |a|_{-1, s+(\rho-1)(\rho-2)+3, \rho-2}^{\gamma, \mathcal{O}}+\mathbb{M}_{T}^{\gamma}(s, b), \quad 0\leq b \leq \rho-2,
\end{equation*}
\begin{equation*}
\mathbb{M}_{\Delta_{12}\mathfrak{R}_{\rho}}(p, b)\leq_{p, \rho} |\Delta_{12}a|_{-1, p+(\rho-1)(\rho-2)+3, \rho-2}+\mathbb{M}_{\Delta_{12}T}(p, b), \quad 0\leq b \leq \rho-3.
\end{equation*}
\end{lemma}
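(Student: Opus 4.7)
\medskip

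\noindent\textbf{Proof proposal.} The plan is to invert ${\rm{Id}}-({\rm Op}(a)+T)$ via a Neumann series and then split the resulting sum into (i) a leading pseudo differential piece belonging to $OPS^{-1}$ or $OPS^{-2}$, and (ii) an $\mathfrak{L}_{\rho,p}$-smoothing remainder. First I would verify convergence of the Neumann series: by Lemma~\ref{R-tam-pd}, the operator norm of ${\rm Op}(a)$ on $H^{s_0}$ is controlled by $|a|_{-1,s_0,0}^{\gamma,\mathcal{O}}$, and by Definition~\ref{L-rhop}(2)(i) (specialized to $\vec b=0$, $m_1=m_2=0$) the operator norm of $T$ is controlled by $\mathbb{M}_T^{\gamma}(s_0,0)$; both are small by the hypothesis. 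Hence $A:={\rm Op}(a)+T$ satisfies $\|A\|_{\mathfrak{L}(H^{s_0})}\leq 1/2$ and the series $({\rm{Id}}-A)^{-1}=\sum_{k\geq 0}A^k$ converges.

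Next, I would expand each power $A^k$ multinomially into monomials in the letters ${\rm Op}(a)$ and $T$. Every monomial containing at least one factor $T$ sits in $\mathfrak{L}_{\rho,p}$, because $\mathfrak{L}_{\rho,p}$ is stable under composition with bounded operators of the form ${\rm Op}(a)$ (this requires an auxiliary composition lemma analogous to Lemma~\ref{pd-com}, which I would state and prove along the way by inserting/commuting powers of $\langle D_x\rangle$ past ${\rm Op}(a)$ using that $\langle D_x\rangle^m {\rm Op}(a)\langle D_x\rangle^{-m}\in OPS^{-1}$). For the purely pseudo differential monomials ${\rm Op}(a)^k$, I would invoke the symbolic calculus: ${\rm Op}(a)^k\in OPS^{-k}$ with symbol $a^{\sharp k}$ admitting, for each $N$, an expansion $a^{\sharp k}=a_0^{(k)}+\cdots+a_{N-1}^{(k)}+r_N^{(k)}$, where $a_j^{(k)}\in S^{-k-j}$ and the remainder symbol $r_N^{(k)}\in S^{-k-N}$. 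Choosing $N=\rho-k$ when $k<\rho$ (and the whole of ${\rm Op}(a)^k$ when $k\geq\rho$), the remainder ${\rm Op}(r_N^{(k)})$ and ${\rm Op}(a)^k$ for $k\geq\rho$ all lie in $\mathfrak{L}_{\rho,p}$ by their order. Summing on $k\geq 1$, the terms of order $-1$ (only $k=1$ contributes) give $a$ plus the $k\geq 2$ corrections; I would set $c_1:=\sum_{k\geq 1}\sum_{j\geq 0,\,k+j\leq \rho-1}a_j^{(k)}\in S^{-1}$ for the first identity and $c_2:=c_1-a\in S^{-2}$ for the second.

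The quantitative estimates follow by tracking norms through each step. The composition Lemma~\ref{pd-com} yields, by induction on $k$, $|a^{\sharp k}|_{-k,s,\alpha}^{\gamma,\mathcal{O}}\leq C(s,\alpha,k)\,(|a|^{\gamma,\mathcal{O}}_{-1,s_0+\alpha+k,\alpha+k-1})^{k-1}|a|_{-1,s,\alpha}^{\gamma,\mathcal{O}}$, so after the $\rho$-truncation the geometric series in $k$ converges and one obtains $|c_i|_{-i,s,\alpha}^{\gamma,\mathcal{O}}\lesssim |a|_{-1,s+(\rho-2)(\rho-3),\alpha+\rho-3}^{\gamma,\mathcal{O}}$; the loss of derivatives counts the number of compositions plus the expansion order. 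For $\mathbb{M}_{\mathfrak{R}_{\rho}}^{\gamma}(s,b)$ I would combine the same composition lemma with the stability of $\mathfrak{L}_{\rho,p}$ under composition with ${\rm Op}(a)$, together with Lemma~\ref{Tam-com} to handle the derivatives $\partial_\varphi^{\vec b}$ and the commutator $[\cdot,\partial_x]$ appearing in Definition~\ref{L-rhop}. The bounds on $\Delta_{12}c_i$ and $\mathbb{M}_{\Delta_{12}\mathfrak{R}_{\rho}}(p,b)$ follow by applying the Leibniz-type identity $\Delta_{12}(UV)=(\Delta_{12}U)V_2+U_1(\Delta_{12}V)$ to each composition and summing.

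The main obstacle will be the careful bookkeeping in the third step: one must show that at every stage of the expansion the non-extracted remainder verifies \emph{all} the requirements of Definition~\ref{L-rhop} — that is, Lip-$0$-tame bounds for $\langle D_x\rangle^{m_1}\partial_\varphi^{\vec b}(\cdot)\langle D_x\rangle^{m_2}$ and its commutator with $\partial_x$, uniformly in the admissible splittings $m_1+m_2=\rho-|\vec b|$ — and that these bounds close up geometrically when summed over $k$. The delicate point is that commutations of $\langle D_x\rangle^{m}$ with ${\rm Op}(a)$ generate lower-order pseudo differential operators whose symbols involve $\rho-|\vec b|$ derivatives in $\xi$, which is the origin of the loss $\alpha+\rho-3$ and $(\rho-1)(\rho-2)+3$ in the final estimates; once this is tracked, the conclusions of the lemma follow.
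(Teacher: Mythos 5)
Your plan — invert by a Neumann series, extract the pseudo-differential part via a truncated symbolic expansion at order $\rho$, keep everything else in $\mathfrak{L}_{\rho,p}$ by composition stability, and then set $c_2:=c_1-a$ — is exactly the mechanism behind this lemma. The paper itself just cites \cite{fgp-1}, Lemma~B.5 for the construction of $c_1$ and $\mathfrak{R}_\rho$ (which is the same truncated-symbol Neumann iteration, with $c^{(1)}=a$ and $c^{(n)}=a\sharp_{<\rho-2}c^{(n-1)}$) and only adds the one-line observation that $c_2=\sum_{n=2}^{\rho-1}c^{(n)}=c_1-a\in S^{-2}$, so your sketch essentially spells out the content of the cited lemma rather than taking a different route.
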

\begin{proof}
Most of the results have been proved in \cite{fgp-1}. We only have to prove the formulas involving $c_2$.
Since
\begin{equation*}
c_1=\sum_{n=1}^{\rho-1}c^{(n)}=c^{(1)}+\sum_{n=2}^{\rho-1}c^{(n)}=a+\sum_{n=2}^{\rho-1}c^{(n)},
\end{equation*}
where $c^{(n)}:=a\sharp_{<\rho-2}c^{(n-1)}$, $c^{(1)}:=a$.
We define $c_2=\sum_{n=2}^{\rho-1}c^{(n)}$. Then the estimates of $c_2$ follow from the estimates involving $c^{(n)}$ in Lemma B.5 of \cite{fgp-1}.
\end{proof}
\begin{lemma}\label{pd-rhop-com}
Let $a \in S^1$ and $B \in \mathfrak{L}_{\rho+1, p}$. Then ${\rm Op}(a)\circ B, B\circ {\rm Op}(a) \in \mathfrak{L}_{\rho, p}$. Moreover, for all $s_0 \leq s \leq \mathcal{S}$,
\begin{equation*}
\mathbb{M}_{B\circ {\rm Op}(a)}^{\gamma}(s, b), \mathbb{M}_{{\rm Op}(a)\circ B}^{\gamma}(s, b)\leq_{s, \rho} |a|_{1, s+\rho, 0}^{\gamma, \mathcal{O}}\mathbb{M}_{B}^{\gamma}(s_0, b)
+|a|_{1, s_0+\rho, 0}^{\gamma, \mathcal{O}}\mathbb{M}_{B}^{\gamma}(s, b)
\end{equation*}
for  $0\leq b \leq \rho-2$,
and
\begin{equation*}
\begin{split}
\mathbb{M}_{\Delta_{12}(B\circ {\rm Op}(a))}(p, b), \mathbb{M}_{\Delta_{12}({\rm Op}(a)\circ B)}(p, b) \leq_{p, \rho}& |\Delta_{12}a|_{1, p+\rho, 0} \mathbb{M}_{B}^{\gamma}(p, b)\\
&+|a|_{1, p+\rho, 0} \mathbb{M}_{\Delta_{12}B}(p, b)\\
\end{split}
\end{equation*}
for $0\leq b \leq \rho-3$.
\end{lemma}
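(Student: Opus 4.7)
The strategy is to verify each of the conditions in Definition \ref{L-rhop} for $B\circ {\rm Op}(a)$ (the argument for ${\rm Op}(a)\circ B$ is symmetric). The key mechanism is a ``loan and repay'' insertion of $\langle D_x\rangle^{\pm k}$ between the two factors: because $B$ lies in the \emph{larger} class $\mathfrak{L}_{\rho+1,p}$, it has one extra unit of $\langle D_x\rangle$-regularity to donate, and this exactly compensates for the order $+1$ of ${\rm Op}(a)$, landing us back in $\mathfrak{L}_{\rho,p}$.

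First I would fix $\vec b\in\mathbb{N}^{\nu}$ with $0\le|\vec b|\le\rho-2$ and $m_1,m_2\ge 0$ with $m_1+m_2=\rho-|\vec b|$, and apply the Leibniz formula
\begin{equation*}
\langle D_x\rangle^{m_1}\partial_{\varphi}^{\vec b}\bigl(B\circ{\rm Op}(a)\bigr)\langle D_x\rangle^{m_2}
=\sum_{\vec b_1+\vec b_2=\vec b}\binom{\vec b}{\vec b_1}\,\mathcal{A}_{\vec b_1,\vec b_2},
\end{equation*}
where, inserting the identity $\langle D_x\rangle^{k}\langle D_x\rangle^{-k}$ with $k:=\rho+1-|\vec b_1|-m_1\ge 1$,
\begin{equation*}
\mathcal{A}_{\vec b_1,\vec b_2}
=\Bigl(\langle D_x\rangle^{m_1}\partial_{\varphi}^{\vec b_1}B\,\langle D_x\rangle^{k}\Bigr)\circ
\Bigl(\langle D_x\rangle^{-k}\partial_{\varphi}^{\vec b_2}{\rm Op}(a)\,\langle D_x\rangle^{m_2}\Bigr).
\end{equation*}
The first parenthesis is Lip-$0$-tame because $m_1+k=\rho+1-|\vec b_1|$ and $B\in\mathfrak{L}_{\rho+1,p}$, with tame constant bounded by $\mathbb{M}_{B}^{\gamma}(s,b)$. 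The second parenthesis is a pseudo differential operator of order $-k+1+m_2=|\vec b_1|-|\vec b|\le 0$, and by Lemma \ref{pd-com} its symbol norm $|\cdot|_{0,s,0}^{\gamma,\mathcal{O}}$ is controlled by $|a|_{1,s+\rho,0}^{\gamma,\mathcal{O}}$ (losing $\rho$ derivatives in $x$ through the composition with $\langle D_x\rangle^{\pm k}$, $\partial_{\varphi}^{\vec b_2}$, and $\langle D_x\rangle^{m_2}$). Lemma \ref{R-tam-pd}(2) then makes it Lip-$0$-tame with constant $\lesssim |a|_{1,s+\rho,0}^{\gamma,\mathcal{O}}$, and Lemma \ref{Tam-com} combines the two factors, producing the interpolation-style bound
\begin{equation*}
\mathfrak{M}^{\gamma}_{\mathcal{A}_{\vec b_1,\vec b_2}}(0,s)\leq_{s,\rho} |a|_{1,s+\rho,0}^{\gamma,\mathcal{O}}\,\mathbb{M}_{B}^{\gamma}(s_0,b)+|a|_{1,s_0+\rho,0}^{\gamma,\mathcal{O}}\,\mathbb{M}_{B}^{\gamma}(s,b).
\end{equation*}

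Next I would treat the commutator condition of Definition \ref{L-rhop}(2)(ii). Expanding
\begin{equation*}
\bigl[\partial_{\varphi}^{\vec b}(B\circ{\rm Op}(a)),\partial_x\bigr]
=\sum_{\vec b_1+\vec b_2=\vec b}\binom{\vec b}{\vec b_1}\!\Bigl(\bigl[\partial_{\varphi}^{\vec b_1}B,\partial_x\bigr]\partial_{\varphi}^{\vec b_2}{\rm Op}(a)+\partial_{\varphi}^{\vec b_1}B\,\bigl[\partial_{\varphi}^{\vec b_2}{\rm Op}(a),\partial_x\bigr]\Bigr),
\end{equation*}
I apply the same loan-and-repay trick with $m_1+m_2=\rho-|\vec b|-1$: for the first piece I borrow tameness from $[\partial_{\varphi}^{\vec b_1}B,\partial_x]$ (which gives orders summing to $\rho+1-|\vec b_1|-1=\rho-|\vec b_1|$ in $\mathfrak{L}_{\rho+1,p}$) and spend it against ${\rm Op}(a)$; for the second piece I observe that $\bigl[{\rm Op}(a),\partial_x\bigr]={\rm Op}(-\partial_x a)\in OPS^{1}$ with symbol norm controlled by $|a|_{1,s+\rho,0}^{\gamma,\mathcal{O}}$, so the analysis reduces to the one already performed for the product, simply with the tame indices of $B$ lowered by one. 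Both pieces produce the same interpolation bound, establishing the estimate for $\mathbb{M}^{\gamma}_{B\circ{\rm Op}(a)}(s,b)$.

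Finally, to verify Definition \ref{L-rhop}(3), I write
\begin{equation*}
\Delta_{12}\bigl(B\circ{\rm Op}(a)\bigr)=(\Delta_{12}B)\circ{\rm Op}(a)+B\circ{\rm Op}(\Delta_{12}a),
\end{equation*}
expand $\partial_{\varphi}^{\vec b}$ by Leibniz for $0\le|\vec b|\le\rho-3$, and insert $\langle D_x\rangle^{\pm(\rho-|\vec b_1|-m_1)}$ (so the bookkeeping uses $m_1+m_2=\rho-|\vec b|-1$ and the second factor sits in $OPS^{\le 0}$). The first summand is estimated using the bound on $\mathbb{M}_{\Delta_{12}B}(p,b)$ paired with $|a|_{1,p+\rho,0}$, and the second using $\mathbb{M}_{B}^{\gamma}(p,b)$ paired with $|\Delta_{12}a|_{1,p+\rho,0}$; the commutator with $\partial_x$ is handled exactly as above, one differential index lower. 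The main bookkeeping obstacle—and the step I expect to require the most care—is ensuring that in every split $\vec b=\vec b_1+\vec b_2$ and every allocation $m_1+m_2=\rho-|\vec b|$ (or $\rho-|\vec b|-1$) the auxiliary exponent $k=\rho+1-|\vec b_1|-m_1$ is non-negative and the residual pseudo differential factor has non-positive order, so that Lemma \ref{pd-com} and Lemma \ref{R-tam-pd} apply uniformly; this is exactly why the hypothesis $B\in\mathfrak{L}_{\rho+1,p}$ (rather than $\mathfrak{L}_{\rho,p}$) is needed.
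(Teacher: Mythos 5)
Your proposal uses essentially the same mechanism as the paper's proof: a Leibniz expansion in $\partial_\varphi$ and in the $\partial_x$-commutator, followed by insertion of $\langle D_x\rangle^{\pm k}$ between the two factors so that the $\mathfrak{L}_{\rho+1,p}$ factor absorbs its full allotment of tameness while the pseudo differential factor is pushed to non-positive order, with Lemmas \ref{pd-com}, \ref{R-tam-pd} and \ref{Tam-com} finishing the estimate. The only cosmetic differences are that you lead with $B\circ{\rm Op}(a)$ where the paper does ${\rm Op}(a)\circ B$, and your choice of $k$ exactly saturates the $B$-factor constraint while the paper uses $k=m_1+1$ (resp.\ $m_1+2$ for the commutator term), which overshoots slightly and implicitly uses that lowering the exponent sum is harmless — both are valid.
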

\begin{proof}
For $0\leq b \leq \rho-2, 0 \leq |\vec{b}| \leq b$, let $|\vec{b}_1|+|\vec{b}_2|=|\vec{b}|, m_1+m_2=\rho-|\vec{b}|$, we  have
\begin{equation*}
\begin{split}
&\langle D_x \rangle^{m_1}\partial_{\varphi}^{\vec{b}_1}{\rm Op}(a)\circ \partial_{\varphi}^{\vec{b}_2}B\langle D_x \rangle^{m_2}\\
&=\langle D_x \rangle^{m_1}\partial_{\varphi}^{\vec{b}_1}{\rm Op}(a)\langle D_x \rangle^{-m_1-1}\circ \langle D_x \rangle^{m_1+1}\partial_{\varphi}^{\vec{b}_2}B\langle D_x \rangle^{m_2}.
\end{split}
\end{equation*}
 Note that $\langle D_x \rangle^{m_1}\partial_{\varphi}^{\vec{b}_1}{\rm Op}(a)\langle D_x \rangle^{-m_1-1}$ is a pseudo-differential operator of order $0$. By Lemma \ref{pd-com}, Lemma \ref{R-tam-pd} and \eqref{pd-c0}-\eqref{pd-c1}, it follows that
\begin{equation*}
\begin{split}
\mathfrak{M}_{\langle D_x \rangle^{m_1}\partial_{\varphi}^{\vec{b}_1}{\rm Op}(a)\langle D_x \rangle^{-m_1-1}}^{\gamma}(0, s)
\leq_s &|\langle D_x \rangle^{m_1}\partial_{\varphi}^{\vec{b}_1}{\rm Op}(a)\langle D_x \rangle^{-m_1-1}|_{0, s, 0}^{\gamma, \mathcal{O}}\\
\leq_s &|\partial_{\varphi}^{\vec{b}_1}{\rm Op}(a)|_{1, s+m_1, 0}^{\gamma, \mathcal{O}}
\leq_s |a|_{1, s+|\vec{b}_1|+m_1, 0}^{\gamma, \mathcal{O}} \leq_s |a|_{1, s+\rho, 0}^{\gamma, \mathcal{O}}.\\
 \end{split}
\end{equation*}
Then by Lemma \ref{Tam-com}, the Lip-0-tame constant of $\langle D_x \rangle^{m_1}\partial_{\varphi}^{\vec{b}}({\rm Op}(a)B)\langle D_x \rangle^{m_2}$ satisfies
\begin{equation*}
\begin{split}
\mathfrak{M}_{\partial_{\varphi}^{\vec{b}}({\rm Op}(a)B)}^{\gamma}(-\rho+|\vec{b}|, s)
\leq_{s, \rho} |a|_{1, s+\rho, 0}^{\gamma, \mathcal{O}}  \mathbb{M}_B^{\gamma}(s_0, b)+ |a|_{1, s_0+\rho, 0}^{\gamma, \mathcal{O}} \mathbb{M}_B^{\gamma}(s, b).
 \end{split}
\end{equation*}
In addition, let $|\vec{b}_1|+|\vec{b}_2|=|\vec{b}|, m_1+m_2=\rho-|\vec{b}|-1$, we deduce that
\begin{equation*}
\begin{split}
\langle D_x \rangle^{m_1}\partial_{\varphi}^{\vec{b}}[{\rm Op}(a)B, \partial_x]\langle D_x \rangle^{m_2}
=&\langle D_x \rangle^{m_1}\partial_{\varphi}^{\vec{b}}([{\rm Op}(a), \partial_x]B)\langle D_x \rangle^{m_2}\\
&+\langle D_x \rangle^{m_1}\partial_{\varphi}^{\vec{b}}({\rm Op}(a)[B, \partial_x])\langle D_x \rangle^{m_2}.\\
 \end{split}
\end{equation*}
For the second summand, by the similar discussion as above, it implies  that
\begin{equation*}
\begin{split}
\mathfrak{M}_{\partial_{\varphi}^{\vec{b}}({\rm Op}(a)[B, \partial_x])}^{\gamma}&(-\rho+|\vec{b}|+1, s)\\
&\leq_{s, \rho} |a|_{1, s+\rho, 0}^{\gamma, \mathcal{O}}  \mathbb{M}_B^{\gamma}(s_0, b)+ |a|_{1, s_0+\rho, 0}^{\gamma, \mathcal{O}} \mathbb{M}_B^{\gamma}(s, b).\\
\end{split}
\end{equation*}
 For the first summand,
\begin{equation*}
\begin{split}
\langle D_x \rangle^{m_1}&\partial_{\varphi}^{\vec{b}_1}[{\rm Op}(a), \partial_x]\partial_{\varphi}^{\vec{b}_2} B\langle D_x \rangle^{m_2}\\
&=\langle D_x \rangle^{m_1}\partial_{\varphi}^{\vec{b}_1}[{\rm Op}(a), \partial_x]\langle D_x \rangle^{-m_1-2}\langle D_x \rangle^{m_1+2}\partial_{\varphi}^{\vec{b}_2} B\langle D_x \rangle^{m_2}.\\
\end{split}
\end{equation*}
Note that $\langle D_x \rangle^{m_1}\partial_{\varphi}^{\vec{b}_1}[{\rm Op}(a), \partial_x]\langle D_x \rangle^{-m_1-2}$ is a pseudo differential operator of order $0$, from Lemma  \ref{pd-com}, Lemma \ref{R-tam-pd}, and \eqref{pd-c0}-\eqref{pd-c1}, we deduce that
\begin{equation*}
\begin{split}
\mathfrak{M}_{\langle D_x  \rangle^{m_1}\partial_{\varphi}^{\vec{b}_1}[{\rm Op}(a), \partial_x]\langle D_x \rangle^{-m_1-2}}^{\gamma}&(0, s)
\leq_s |\langle D_x \rangle^{m_1}\partial_{\varphi}^{\vec{b}_1}[{\rm Op}(a), \partial_x]\langle D_x \rangle^{-m_1-2}|_{0,s,0}^{\gamma, \mathcal{O}}\\
\leq_s &|\partial_{\varphi}^{\vec{b}_1}[{\rm Op}(a), \partial_x]|_{2, s+m_1, 0}^{\gamma, \mathcal{O}}
\leq_s |\partial_{\varphi}^{\vec{b}_1} {\rm Op}(a)|_{1, s+m_1+1, 0}^{\gamma, \mathcal{O}}\\
\leq_s & |a|_{1, s+|\vec{b}_1|+m_1+1, 0}^{\gamma, \mathcal{O}} \leq_s |a|_{1, s+\rho, 0}^{\gamma, \mathcal{O}}.\\
 \end{split}
\end{equation*}
Then by Lemma  \ref{Tam-com}, we obtain
\begin{equation*}
\mathfrak{M}_{\partial_{\varphi}^{\vec{b}}[{\rm Op}(a)B, \partial_x]}^{\gamma}(-\rho+|\vec{b}|+1, s)
\leq_{s, \rho} |a|_{1, s+\rho, 0}^{\gamma, \mathcal{O}}  \mathbb{M}_B^{\gamma}(s_0, b)+ |a|_{1, s_0+\rho, 0}^{\gamma, \mathcal{O}} \mathbb{M}_B^{\gamma}(s, b).
\end{equation*}

  For the proof of the second result, we only need to notice that
\begin{equation*}
 \Delta_{12}({\rm Op}(a)B)=  \big(\Delta_{12}{\rm Op}(a)\big)B+{\rm Op}(a)\Delta_{12}B,
 \end{equation*}
  and
  \begin{equation*}
\begin{split}
\partial_{\varphi}^{\vec{b}} [\Delta_{12}({\rm Op}(a)B), \partial_x]
  =&\partial_{\varphi}^{\vec{b}}([\Delta_{12}{\rm Op}(a), \partial_x]B)+  \partial_{\varphi}^{\vec{b}}((\Delta_{12}{\rm Op}(a))[B, \partial_x])\\
& + \partial_{\varphi}^{\vec{b}}([{\rm Op}(a), \partial_x]\Delta_{12}B)+  \partial_{\varphi}^{\vec{b}}({\rm Op}(a)[\Delta_{12}B, \partial_x]).\\
 \end{split}
\end{equation*}

  The proof for $B\circ {\rm Op}(a)$ is similar.
 \end{proof}

We now show the connection between the class $\mathfrak{L}_{\rho, p}$ and the class of Lip-$-1$-modulo-tame operators. It can be directly derived from Lemma C.8 and Lemma C.10 in \cite{fgp-2}.

\begin{lemma}\label{Lrho-mod}
Assume  $\rho, b \in \mathbb{N}$ satisfy  $\rho \geq s_0+b+3$, $A \in \mathfrak{L}_{\rho, p}$.
Then $A$ and $\langle \partial_{\varphi}\rangle^{b} A$ are  Lip-$-1$-modulo-tame operators with modulo-tame constants
\begin{equation*}
\mathfrak{M}_A^{\sharp, \gamma}(-1, s)\leq_{\rho, s} \mathbb{M}_A^{\gamma}(s, \rho-2),  \quad \mathfrak{M}_A^{\sharp, \gamma}(-1, s, b)\leq_{\rho, s} \mathbb{M}_A^{\gamma}(s, \rho-2) .
\end{equation*}
Moreover,
 \begin{eqnarray*}
 \begin{aligned}
 &\|\langle D_x \rangle^{1/2} \underline{\Delta_{12}A} \langle D_x \rangle^{1/2}\|_{\mathfrak{L}(H^{s_0})}\leq_{\rho} \mathbb{M}_{\Delta_{12}A}(s_0, \rho-3),\\
 &\| \langle D_x \rangle^{1/2}\underline{\Delta_{12}\langle \partial_{\varphi}\rangle^{b} A}\langle D_x \rangle^{1/2} \|_{\mathfrak{L}(H^{s_0})} \leq_{\rho} \mathbb{M}_{\Delta_{12}A}(s_0, \rho-3).
 \end{aligned}
 \end{eqnarray*}
 \end{lemma}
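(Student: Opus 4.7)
\medskip

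\noindent\textbf{Proof proposal.} The plan is to obtain Lemma \ref{Lrho-mod} as a direct translation of Lemma C.8 and Lemma C.10 of \cite{fgp-2}. First I would verify that Definition \ref{L-rhop} here (the class $\mathfrak{L}_{\rho,p}$ specified by the Lip-$0$-tame constants $\mathfrak{M}_{\partial_{\varphi}^{\vec{b}}A}^{\gamma}(-\rho+|\vec{b}|,s)$ and $\mathfrak{M}_{[\partial_{\varphi}^{\vec{b}}A,\partial_x]}^{\gamma}(-\rho+|\vec{b}|+1,s)$ for $|\vec{b}|\leq\rho-2$) provides exactly the hypotheses required by those two lemmas, and the same for the dependence-on-parameter part (3) of Definition \ref{L-rhop} for the $\Delta_{12}$ bounds.

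The underlying estimate I would then execute is the following. To see that $A$ is Lip-$-1$-modulo-tame one must control $\|\langle D_x\rangle^{1/2}\underline{A}\langle D_x\rangle^{1/2}u\|_s$. Passing to Fourier coordinates, the associated matrix has entries $\langle j\rangle^{1/2}|A_j^{j'}(l-l')|\langle j'\rangle^{1/2}$. I would split each summation $\sum_{j',l'}$ according to whether $\langle l-l',j-j'\rangle \leq \tfrac12\langle l,j\rangle$ (``low frequency'' side) or not (``high frequency'' side). On the low-frequency side, $\langle l,j\rangle \sim \langle l',j'\rangle$, so one can trade weights. On the high-frequency side, the bounds on $\langle D_x\rangle^{m_1}\partial_{\varphi}^{\vec{b}}A\langle D_x\rangle^{m_2}$ with $m_1+m_2=\rho-|\vec{b}|$ give decay $\langle j-j'\rangle^{-m_1}\langle j'\rangle^{-m_2}$ after moving derivatives, and the commutator bounds on $[\partial_{\varphi}^{\vec{b}}A,\partial_x]$ give extra $|j-j'|$-decay needed when $j,j'$ are large but $j-j'$ is not. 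A Schur test (two Cauchy-Schwarz applications with weights $\langle l-l',j-j'\rangle^{\pm(\rho-b-2)}$) then closes the two-term tame inequality required by Definition \ref{mod-def}, with constant proportional to $\mathbb{M}_A^{\gamma}(s,\rho-2)$. The $\langle\partial_{\varphi}\rangle^b$ version is identical: differentiation in $\varphi$ multiplies matrix entries by $(l-l')^{\vec{b}}$, which is absorbed by applying the tame bounds to $\partial_{\varphi}^{\vec{b}'}A$ with $|\vec{b}'|\leq b$, hence the hypothesis $\rho\geq s_0+b+3$ in the statement.

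For the $\Delta_{12}$-bounds I would run the same scheme but with the $H^{p}\to H^p$ bounds of part (3) of Definition \ref{L-rhop} playing the role of the Lip-$0$-tame constants; since here one only needs the estimate at the single regularity level $s_0$, the counting of derivatives costs one less unit and $\rho\geq s_0+b+3$ again suffices (this is why the $\Delta_{12}$-class is $\mathfrak{L}_{\rho,p}$ with $|\vec{b}|\leq \rho-3$ rather than $\rho-2$).

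The main obstacle is purely bookkeeping: tracking how many powers of $\langle D_x\rangle$, of $\langle\partial_{\varphi}\rangle$, and of the commutator $[\,\cdot\,,\partial_x]$ are consumed by the two $\langle D_x\rangle^{1/2}$ factors, by the $\langle\partial_{\varphi}\rangle^b$ factor, and by the Schur weight, and checking that the assumption $\rho\geq s_0+b+3$ leaves at least $s_0+1$ powers to spare so that the series $\sum_{l,j}\langle l,j\rangle^{-2(\rho-b-2)}$ converges. Because this matches exactly the bookkeeping in Lemma C.8--C.10 of \cite{fgp-2}, no genuinely new estimate is required, and the proof reduces to a notational verification that I would make explicit by pointing to the dictionary between our $\mathbb{M}_A^\gamma(s,b)$ and the quantity used there.
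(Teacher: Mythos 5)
Your proposal takes exactly the same route as the paper: the authors give no independent proof but simply remark that the lemma ``can be directly derived from Lemma C.8 and Lemma C.10 in \cite{fgp-2},'' which is precisely the reduction you lead with. Your additional Schur-test sketch is a plausible reconstruction of the argument inside \cite{fgp-2}, but since the paper supplies none of those details there is nothing there to check it against; the essential content of the paper's proof is the citation, and your proposal reproduces it.
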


\section{Change of variable}

 The following classical lemma states that the composition operator $u \mapsto u(x+p(x))$ induced by a diffeomorphism of the torus $\mathbb{T}$ is tame.
\begin{lemma}\label{Chan-var}(Change of variable)\cite{bbm-1, fgmp}
suppose  $p \in W^{s, \infty}(\mathbb{T}; \mathbb{R}), s\geq 1$ with $|p|_{1, \infty} \leq 1/2$ and let $f(x)=x+p(x)$. Then

(1). $f:\mathbb{T}\rightarrow \mathbb{T}$ is a diffeomorphism, its inverse is $g(y)=y+q(y)$ with $q \in W^{s, \infty}(\mathbb{T}; \mathbb{R})$ and
 \begin{equation}\label{q-p}
 |q|_{s, \infty} \leq_s |p|_{s, \infty}.
  \end{equation}
Moreover, suppose that $p=p(\lambda)$ depends  on a parameter $\lambda \in \Lambda \subset \mathbb{R}$ and $|D_x p(\lambda)|_{L^{\infty}} \leq 1/2$ for all $\lambda$. Then the following estimate holds:
 \begin{equation}\label{del-qp}
 |q(\lambda_1)-q(\lambda_2)|_{s, \infty} \leq_s  |p(\lambda_1)-p(\lambda_2)|_{s, \infty} + \sup_{\lambda \in \Lambda}|p(\lambda)|_{s+1, \infty} |p(\lambda_1)-p(\lambda_2)|_{ \infty} .
   \end{equation}

(2). If $u \in H^s(\mathbb{T}; \mathbb{C})$, then $u \circ f(x)=u(x+p(x)) \in H^s(\mathbb{T}, \mathbb{C})$, and
 \begin{equation*}
 \| u \circ f \|_s \leq_s \| u \|_s +|D_xp|_{s-1, \infty}\| u \|_1.
 \end{equation*}
Moreover, suppose that $p=p(\lambda)$ depends on a parameter $\lambda \in \Lambda \subset \mathbb{R}$ and suppose that
$|D_x u(\lambda)|_{L^{\infty}} \leq 1/2$ for all $\lambda$. Then the following estimate holds:
 \begin{equation}\label{del-uf}
 \begin{split}
 |(u \circ f)(\lambda_1)- &(u \circ f)(\lambda_2)|_{s, \infty}
\leq_s \big(1+\sup_{\lambda \in \Lambda}|p|_{s, \infty}\big) |u(\lambda_1)-u(\lambda_2)|_{s, \infty} \\
 &+ \left(\sup_{\lambda \in \Lambda}|p(\lambda)|_{s, \infty}+\sup_{\lambda \in \Lambda}|u(\lambda)|_{s+1, \infty}\right) |p(\lambda_1)-p(\lambda_2)|_{s, \infty} .\\
 \end{split}
\end{equation}

(3). Suppose  that $p$ depends in a Lipschitz way on a parameter $\omega \in \mathcal{O}\subset \mathbb{R}^{\nu}$
and $|p(\omega)|_{1, \infty} \leq 1/2$. Then $q=q(\omega)$ is also Lipschitz in $\omega$, and
 \begin{equation}\label{gam-qp}
|q|_{s, \infty}^{\gamma, \mathcal{O}} \leq_s |p|_{s, \infty}^{\gamma, \mathcal{O}},
\end{equation}
 \begin{equation}\label{gam-uf}
\| u \circ f \|_s^{\gamma, \mathcal{O}} \leq_s \| u  \|_s^{\gamma, \mathcal{O}} +\| u  \|_s^{\gamma, \mathcal{O}}|p|_{1, \infty}^{\gamma, \mathcal{O}}+ |p|_{s, \infty}^{\gamma, \mathcal{O}} \| u  \|_2^{\gamma, \mathcal{O}}.
\end{equation}
\end{lemma}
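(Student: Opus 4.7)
The plan is to prove the three parts in sequence, deriving the sup-norm and Lipschitz-in-parameter bounds of parts~(2) and~(3) as refinements of the corresponding pointwise-in-$\lambda$ statements of part~(1).

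First I would establish part~(1). Since $|p|_{1,\infty}\le 1/2$, the derivative $f'(x)=1+p'(x)$ is bounded below by $1/2$, so $f$ is a $C^1$-diffeomorphism of $\mathbb{T}$. Writing the inverse as $g(y)=y+q(y)$ and imposing $f\circ g=\mathrm{Id}$ yields the fixed-point relation $q(y)=-p(y+q(y))$. The bound $|q|_{s,\infty}\leq_s |p|_{s,\infty}$ is then obtained by differentiating this relation up to order $s$ via Fa\`a di Bruno (equivalently, by induction on $s$ using the multiplicative-algebra structure of $W^{s,\infty}$), with the smallness $|p|_{1,\infty}\leq 1/2$ preventing any derivative loss at the top order. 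For the parametric estimate \eqref{del-qp}, I would subtract the fixed-point equations for $\lambda_1$ and $\lambda_2$,
\begin{equation*}
q_1-q_2=-\bigl(p_1(y+q_1)-p_1(y+q_2)\bigr)-(p_1-p_2)(y+q_2),
\end{equation*}
recognize the first bracket as $\bigl(\int_0^1 p_1'(y+q_2+t(q_1-q_2))\,dt\bigr)(q_1-q_2)$ whose prefactor has $L^\infty$-norm at most $1/2$, solve for $q_1-q_2$ in $L^\infty$, and then bootstrap to $W^{s,\infty}$ using the bound from \eqref{q-p}.

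Next, for part~(2) I would proceed by induction on $s$. The base case $s=0$ reduces to the change of variable $y=f(x)$ whose Jacobian lies in $[1/2,3/2]$, giving $\|u\circ f\|_{L^2}\leq\sqrt{2}\|u\|_{L^2}$. For the inductive step, Fa\`a di Bruno expresses $\partial_x^s(u\circ f)$ as a sum of terms of the form $(u^{(k)}\circ f)\cdot P(p',\ldots,p^{(s)})$ with $P$ polynomial; the interpolation product estimate of Lemma~\ref{Ip} then gives $\|u\circ f\|_s\leq_s \|u\|_s+|D_x p|_{s-1,\infty}\|u\|_1$, with $u$ and $p$ coupled only linearly at the top order. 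The parametric estimate \eqref{del-uf} then follows from the telescoping identity
\begin{equation*}
(u_1\circ f_1)-(u_2\circ f_2)=(u_1-u_2)\circ f_1+\bigl(u_2\circ f_1-u_2\circ f_2\bigr),
\end{equation*}
where the first summand is handled by the tame bound just proved and the second is rewritten as $\bigl(\int_0^1(D_xu_2)\circ(f_2+t(f_1-f_2))\,dt\bigr)(p_1-p_2)$ and controlled by the same tame composition estimate.

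Finally, part~(3) is bookkeeping. From the definition $\|\cdot\|_s^{\gamma,\mathcal{O}}=\|\cdot\|_s^{\sup,\mathcal{O}}+\gamma\|\cdot\|_{s-1}^{\mathrm{lip},\mathcal{O}}$ and its analogue for $|\cdot|_{s,\infty}$, the sup-norm parts of \eqref{gam-qp} and \eqref{gam-uf} follow from (1)--(2) by taking $\sup_\omega$, while the Lipschitz parts follow from \eqref{del-qp} and \eqref{del-uf} upon dividing by $|\omega-\omega'|$ and multiplying by $\gamma$, with an interpolation step to convert the mixed norms on the right-hand side into the desired form. The main obstacle I anticipate is the nonlinear dependence of $q$ on $p$ in the fixed-point equation: controlling the Lipschitz-in-$\lambda$ norm of $q$ without losing a derivative requires closing a Neumann-type estimate whose small parameter is precisely $|p|_{1,\infty}\leq 1/2$, and invoking Lemma~\ref{Ip} at each step so that only the top-order derivatives of $u$ and $p$ appear linearly.
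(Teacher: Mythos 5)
The paper does not prove this lemma itself; it merely quotes it from \cite{bbm-1, fgmp} as a classical fact, so there is no ``paper proof'' to compare against. Your sketch reconstructs, soundly, the standard argument found in those references: part (1) via the fixed-point identity $q(y)=-p(y+q(y))$, Fa\`a di Bruno, and the mean-value bootstrap using $|D_xp|_{L^\infty}\le 1/2$ as the Neumann smallness; part (2) via tame Fa\`a di Bruno/induction together with the telescoping $(u_1\circ f_1)-(u_2\circ f_2)=(u_1-u_2)\circ f_1+(u_2\circ f_1-u_2\circ f_2)$; and part (3) by assembling the sup and divided-difference pieces of the weighted norm from the pointwise-in-parameter estimates. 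This is precisely the approach the cited papers take, so I regard your proposal as correct and matching.
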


\end{appendix}

\end{document}